\newcommand{\set}[1]{\left\{#1\right\}}
\newcommand{\M}{\mathcal{M}}
\newcommand{\cL}{\mathcal{L}}
\newcommand{\cA}{\mathcal{A}}
\newcommand{\cB}{\mathcal{B}}
\newcommand{\sfL}{\mathsf{L}}
\newcommand{\sfH}{\mathsf{H}}
\newcommand{\sfA}{\mathsf{A}}
\newcommand{\sfD}{\mathsf{D}}
\newcommand{\sfR}{\mathsf{R}}
\newcommand{\sfe}{\mathsf{e}}
\newcommand{\dt}{\partial_t}
\newcommand{\Rd}{{\R^d}}
\newcommand{\RRd}{{\R^d\times\R^d}}
\def\gradx{{\nabla_x}}
\numberwithin{equation}{section}
\numberwithin{figure}{section}
\DeclareMathOperator{\dive}{div}
\DeclareMathOperator{\sgn}{sgn}
\def\a{\frac{\gamma}{\gamma+1}}
\def\d{\,\mathrm{d}}
\def\\dv {\d v}
\def \ddt{\frac{\mathrm{d}}{\mathrm{d}t}}
\def \ddt{\frac{\mathrm{d}}{\mathrm{d}t}}
\newcommand{\sign}{\text{sgn}}
\newcommand{\bangle}[1]{\langle #1\rangle}
\newcommand{\wangle}[1]{\lfloor #1 \rceil}
\DeclarePairedDelimiter\abs{\lvert}{\rvert}%
\DeclarePairedDelimiter\norm{\lVert}{\rVert}%
\DeclarePairedDelimiter\Drange{\llbracket}{\rrbracket}
\newcommand{\inorm}[1]{{\left\vert\kern-0.1ex\left\vert\kern-0.1ex\left\vert #1 \right\vert\kern-0.1ex \right\vert\kern-0.1ex\right\vert}}
\definecolor{lpink}{rgb}{0.96, 0.76, 0.76}
\definecolor{dpink}{rgb}{0.97, 0.51, 0.47}
\definecolor{sky}{rgb}{0.53, 0.81, 0.92}
\definecolor{salmon}{rgb}{1.0, 0.55, 0.41}
\definecolor{orman}{rgb}{0.24, 0.7, 0.44}
\definecolor{aciksari}{rgb}{0.91, 0.84, 0.42}
\definecolor{dgrey}{rgb}{0.52, 0.52, 0.51}
\def\R{\mathbb{R}}
\def\S{\mathbb{S}}
\def\NN{\mathbb{N}}
\def\1{\mathds{1}}
\def\d{\,\mathrm{d}}
\def\dv{\,\mathrm{d}v}
\def\dx{\,\mathrm{d}x}
\let\eps\varepsilon
\renewcommand{\(}{\left(}
\renewcommand{\)}{\right)}
\newcommand{\wx}{\lfloor x \rceil}
\newtheorem{thm}{Theorem}[section]
\newtheorem{cor}[thm]{Corollary}
\newtheorem{lem}[thm]{Lemma}
\newtheorem{prop}[thm]{Proposition}
\newtheorem{hypothesis}[thm]{Hypothesis}
\theoremstyle{definition}
\theoremstyle{remark}
\newtheorem{remark}[thm]{Remark}
\title{Sub-exponential tails in biased run and tumble equations with unbounded velocities}
\author{\'Emeric Bouin}
\address[E. Bouin]{CEREMADE - Université Paris-Dauphine, PSL Research University, UMR CNRS 7534, Place du Mar\'echal de Lattre de Tassigny, 75775 Paris Cedex 16, France.}
\email{bouin@ceremade.dauphine.fr}
\author{Josephine Evans}
\address[J. Evans]{Warwick Mathematics Institute, Zeeman Building, University of Warwick, CV4 7AL.}
\email{Josephine.Evans@warwick.ac.uk}
\author{Luca Ziviani}
\address[L. Ziviani]{CEREMADE - Université Paris-Dauphine, PSL Research University, UMR CNRS 7534, Place du Mar\'echal de Lattre de Tassigny, 75775 Paris Cedex 16, France.}
\email{ziviani@ceremade.dauphine.fr}
\date\today
\begin{document}

\maketitle
\begin{abstract}
Run and tumble equations are widely used models for bacterial chemotaxis. In this paper, we are interested in the long time behaviour of run and tumble equations with unbounded velocities. We show existence, uniqueness and quantitative convergence towards a steady state. In contrast to the bounded velocity case, the equilibrium has sub-exponential tails and we have sub-exponential rate of convergence to equilibrium. This produces additional technical challenges. We are able to successfully adapt both Harris' type and $\sfL^2-$ hypocoercivity \textit{a la} Dolbeault-Mouhot-Schmeiser techniques.
\end{abstract}
\setcounter{tocdepth}{1}
\tableofcontents

\section{Introduction}

In this paper, we are interested in the behaviour of solutions to the \textit{run and tumble} model, which was first introduced by Stroock \cite{S74}, Alt \cite{A80}, and further studied by Othmer, Dunbar and Alt \cite{ODA88}. This model aims to investigate, at the mesoscopic level, the movement of bacteria in the presence of a chemotactic chemical substance and is given by
\begin{equation*}
    \partial_t f(t,x,v) +v \cdot \nabla_x f(t,x,v) =\int_\mathcal{V}(K(x,v,v')f(t,x,v')-K(x,v',v)f(t,x,v))\dv '.
\end{equation*}
The function $f :=f(t,x,v)$ represents for positive time $t \in \R^+$ the density of bacteria in the phase space $(x,v) \in \R^d \times \mathcal{V}$, where $\mathcal{V}\subseteq\R^d$ is the set of velocities. 

The movement of some microrganisms like \textit{E. Coli} can be seen as an alternation between two main movements: 
the \textit{run} phase, which, consists in a movement in a straight line at a constant speed, and the \textit{tumble}, that is the reorientation from the previous velocity $v$ to another velocity $v'$. The probability of choosing a velocity $v'$ after $v$ is described by the \textit{tumbling frequency} $K(x,v,v')$, that represents the distribution of post-tumbling velocities when the bacterium is located in $x$. This $x$-dependence of the collision operator can also produce a spatial confinement effect, despite the lack of a force field in the transport operator. In fact, depending on the position, a bacteria can be led to change its speed more frequently depending on whether it is heading towards an area with a lower chemo-attractant density. The kernel $K$ is then chosen in such a way that microorganisms move towards the regions with higher chemo-attractant density, causing the formation of clusters.

The novelty and challenge of this paper lies in the fact that we work with unbounded velocities. These were explored in \cite{BCN15} and following this subexponential rates of convergence to equilibrium were expected. Large velocities are a barrier to convergence. This is because even if a bacteria selects a velocity which points towards the peak of the chemoattractant density, if the velocity is sufficiently large (depending on the distance to the peak), the bacteria is likely to travel well past the peak and may be further from the peak when it next takes a tumble. It is a technical challenge in our paper to show that despite this effect we still have convergence to equilibrium and to adapt the technical tools to work with the stretch exponentials appearing in both the weighted functional spaces and rates of convergence. Briefly, our mathematical innovations are as follows.

\textbf{Finding appropriate weights in the space variable}. Run and Tumble equations have an unusual and mathematically challenging mechanism for producing spatial confinement of their solutions. Bacteria tumble more frequently when they are going down the gradient of the chemoattractant than up it. Over time we expect this to produce a skew in their velocity distribution towards velocities going towards peaks of chemoattractant. This skew in the velocity distribution then causes bacteria to aggregate around peaks of the chemoattractant. As described above this process is complicated in the presence of potentially very large velocities. Following the principles of Harris's theorem we weight our spaces with a \emph{Foster-Lyapunov function} which is a moment which remains bounded along the flow. Finding such a weight is challenging and we need to capture the third order confinement effects and deal with the challenges of high velocities. We adapt weights from \cite{MW17, EY24} which work in the case of bounded velocities. The immediate problem is that these weight functions are not positive in the case of unbounded velocities. In order to adapt our weight functions we need to use different reasoning when working in parts of the state space where $|v|$ is much larger than $|x|$ so our arguments are based around a splitting of the state space. Furthermore in order to make use of these alternative confinement mechanisms for large velocities we need to switch exponential terms to stretch exponential terms.

\textbf{Adapting $\cA+\cB$ splitting arguments in order to find $\sfL^\infty$ controls on the semi-group.}  The arguments based on Harris's theorem gives us $\sfL^1$-type information about the semi-group. We can use operator splitting techniques originally due to \cite{GMM17} to bootstrap this to $\sfL^\infty$. Here there are many technical challenges produces by the presence of many stretch exponential. We also needed to repeat similar arguments based around splitting the space into parts where $|v|$ is large compared to $|x|$ and the complement.

\textbf{Adapting $\sfL^2$ hypocoercivity methods to our context.} Hypocoercivity is a name given to a collection of tools (usually entropy based) for showing convergence to equilibrium for \emph{degenerately coercive equations}. Our final results involve adapting $\sfL^2$ hypocoercivity results in the style of Dolbeault-Mouhot-Schmeiser to the equation studied here. Since we already have $\sfL^1$ and $\sfL^\infty$ information it is likely we could get similar outcomes with interpolation. However our main motivation for using $\sfL^2$ hypocoercivity is to develop tools to extend the range of applicability of these techniques to equations with non-explicit steady states and sub-geometric rates of convergence induced by the spatial confinement. This program is particularly important if we wish, in the future, to look at related equations which are non-linear or linearised (so not preserving positivity) as these are outside the scope of the previous techniques. Here there are two main challenges. Firstly the steady state is non-explicit and we have limited information about its behaviour. It was shown in \cite{CRS15, EM24} that $\sfL^2$ style methods work well in this situation provided we can show the steady state still satisfies the appropriate functional inequalities. Secondly, due to the sub-exponential tails of our steady state the perturbation term used most $\sfL^2$ hypocoercivity works will not be bounded by the norm. We need to adapt our perturbation term, here we compare both the challenge and solution to those encountered in \cite{BDLS20, BDL22}.

In this paper, we assume that the set of velocities is $\mathcal{V}=\R^d$ and that the tumbling frequency has the expression
\[
K(x,v,v')=\Lambda\left(-v'\cdot \gradx M\right)\M(v) \qquad x,v,v'\in\R^d
\]
where we use the notation
\[
\wx=\sqrt{1+\abs{x}^2},\qquad \forall x\in\R^d.
\]
The function $\M$ is a fixed probability density of the form
\begin{equation}\label{eq:M}
    \M(v) = c_{0,\gamma}^{-1} \exp\left( - \frac{\abs{v}^\gamma}{\gamma} \right),\qquad \forall v\in\R^d
\end{equation}
for some $\gamma >0$, where 
\begin{equation}\label{eq:c_k}
    c_{k,\gamma} = \int_{v\in\Rd}\abs{v}^k \exp\left( - \frac{\abs{v}^\gamma}{\gamma} \right) \, \dv  = 2\pi^{\tfrac{d}{2}} \gamma^{\frac{d+k}{\gamma}-1} \frac{\Gamma\(\frac{d+k}{\gamma}\)}{\Gamma\(\frac{d}{2}\)}, \qquad \forall k\in\NN.
\end{equation}
The function $\Lambda$ is the \textit{tumbling rate} and it depends on the gradient of the external signal $M= \ln(S)$ along the direction of the velocity, where $S$ is a given
function for the density of the chemoattractant. This expression for $K$ means that, at every tumble, the new velocity is chosen randomly according to the density $\M$, but the frequency of such tumbles also depends on the mutual orientation between the initial velocity and the density of the chemoattractant. In this paper we suppose that the density of the chemoattractant is constant in time $\wangle{\cdot} = -\ln S$, so that our main equation is in fact
\begin{equation}\label{eq:main}
\partial_tf (t,x,v)+ v\cdot\nabla_x f (t,x,v)= \M(v) \int_{\R^d}\Lambda\left( \frac{x\cdot v'}{\wx}\right) f(t,x,v')\dv '-\Lambda\left(  \frac{x\cdot v}{\wx}\right)f(t,x,v),
\end{equation}
for $(t,x,v)\in \R^+\times\RRd$. For further use, we shall denote 
\begin{equation}\label{eq:cL}
    \cL(f)(x,v) :=-v\cdot\nabla_x f (x,v)+ \M(v) \int_{\R^d}\Lambda\left( \frac{x \cdot v'}{\wx}\right) f(x,v')\dv '-\Lambda\left(\frac{x\cdot v}{\wx}\right)f(x,v)
\end{equation}
and we shall call $S_\cL$ the associated semigroup. Moreover, we define the dual operator $\cL^*$ by
\[
\int_\RRd (\cL^*\phi)f\,\dx\dv  = \int_\RRd (\cL f)\phi\,\dx\dv ,
\]
for any $\phi\in W^{1,\infty}(\RRd)$ and $f\in \mathcal{C}_c(\RRd)$, that is
\begin{equation}\label{eq:cL*}
    \mathcal{L}^* \phi = v\cdot \nabla_x \phi + \Lambda\left(\frac{x\cdot v}{\wx}\right)   \left( \int \phi(x,v') \M(v') \mathrm{d}v' - \phi(x,v) \right). 
\end{equation}

\subsection{Summary of previous contributions}

The run and tumble model was first introduced by D. W. Stroock \cite{S74}. In that article, the author provides a stochastic modelling of the motion of E. Coli based on the experimental results of Berg and Brown \cite{BB72} (see also \cite{MK72}). The motion of a bacterium is broken into two modes, the run and the tumble, and can be described as a Markov process, whose backward equation is the run and tumble equation. Initially the set of velocities was considered to be the unit sphere. Some years later, the subject has been pushed further by W. Alt \cite{A80}, with an analysis on different models of biased random walks. In particular, it is shown that solutions to the underlying integro-differential equation allows to derive the well-known Patlak-Keller-Segel diffusion equation through the hydrodynamic limit. A development of this study can be also found in \cite{ODA88}. The non-linear version of the run and tumble has also been studied and it was first proposed by \cite{CMPS04}. 

The majority of the articles deal with the run and tumble equations with a bounded symmetric set of velocities $\mathcal{V}$, equipped with a certain probability measure. This means that bacteria cannot run at a velocity beyond a certain fixed threshold, furthermore they can choose post-reorientation velocities weighted with different probabilities. Among the most important works, we quote Calvez, Raoul and Schmeiser \cite{CRS15}, where they studied a one dimensional run and tumble model where the set of velocities $\mathcal{V}$ is an interval. They managed to prove existence and uniqueness of a stationary solution, showing in addition the exponential decay of its tails. Moreover, they applied the abstract hypocoercivity method developed in \cite{DMS15} in order to show an exponential convergence in time towards the stationary solution in a weighted $\sfL^2$ norm.

The cases of higher dimensions was later achieved by Mischler and Weng \cite{MW17}, thanks to a meticulous study of the semigroup associated to the equation. In that work, the set of velocities taken in consideration was the unit ball and the tumbling kernel is of the same type as in \cite{CRS15}. The authors exploited semigroup theory and the Krein-Rutman theorem to show an exponential convergence towards the stationary solution, effectively extending the same result obtained in dimension one to all dimensions. Many tools and ideas of the present article are inspired by this paper.

Similar results were also obtained by Evans and Yoldas \cite{EY23}, who discussed a linear run and tumble model with more general tumbling kernels which, for example, allow to model a bacterial sensitivity that varies more regularly. They made use of the Harris Theorem, which allows them to inherit the convergence for the non-linear equation too. Moreover, in \cite{EY24}, the same authors also considered non-uniform tumbling kernels, which are more physically relevant in terms of modelling the chemotactic bacterial motion, see \cite{BB72, M80}. In this work, a first result on the linear run and tumble model with unbounded velocities set is discussed, which has strongly inspired our work. The authors proved existence of a stationary solution and convergence estimates in total variation norm.

Another phenomenon that has been studied starting from the run and tumble model is the existence of travelling waves of bacterial concentration, see \cite{C19, BCN15} and the references therein. In addition to the reorientation mechanism due to the chemo-attractant, the presence of a nutrient \cite{C19} or a mechanism of reproduction and saturation of individuals \cite{BCN15} is introduced. It is proved that, always with bounded velocity sets, there exist travelling wave solutions. Concerning the unbounded velocity case, they also proved that there cannot exist travelling waves even in dimension 1, this is due to the fact that larger and larger velocities destroy the wave front.

The different behavior in unbounded velocities domains is the main motivation for this work. Our work can be seen as the continuation of \cite{MW17}, \cite{EY23} and \cite{EY24}. We explore the behaviour of solutions to the run and tumble equation when the set of velocity is the whole space $\mathcal{V}=\R^d$.


\subsection{Assumptions and main results}

For a positive weight function $m\colon \RRd\to \R$ we denote $\sfL^p(m)$ the functional space defined by the norm
\[
\norm{f}_{\sfL^p(m)}:=\(\iint_\RRd \abs{f(x,v)}^pm(x,v)\dx\dv\)^{\frac{1}{p}}.
\]
In our first result we provide existence of a stationary solution $G$ and convergence rates towards it in $\sfL^1$ norm. The function $\M$ has been introduced in \eqref{eq:M} and we make the following assumption on the tumbling rate $\Lambda$.

\begin{hypothesis}\label{Hyp:psi}
The tumbling rate $\Lambda\colon \R\to(0,\infty)$ is a function of the form
\[
\Lambda=1+\chi\psi,
\]
where $\chi\in (0,1)$ is a fixed value and $\psi$ is a bounded, odd, increasing function and $m\mapsto m\psi(m)\in W^{1,\infty}$, $\norm{\psi}_{L^\infty}\leq 1$, $\lim_{m\to\pm \infty}\psi(m)=\pm 1$. \end{hypothesis}

The parameter $\chi\in(0,1)$ is the \textit{sensitivity} of the bacterial population to the chemo-attractant. We use the notation $\mathsf a\lesssim \mathsf b$ if there exist a constant $\mathsf c>0$ such that $\mathsf a\leq \mathsf{c}\,\mathsf{b}$ and we write $\mathsf a\asymp \mathsf b$
if both $\mathsf a\lesssim \mathsf b$ and $\mathsf b\lesssim \mathsf a$ hold. Our first main result is the following.

\begin{thm}\label{thm:main}
Let $\M$ be defined by \eqref{eq:M} for some $\gamma>0$ and assume that \Cref{Hyp:psi} holds. Then we have the following.
\begin{enumerate}[label=(\roman*)]
    \item There exists a unique, normalised, invariant by rotations, stationary state $G$ to  \eqref{eq:main} whose density $\rho_G$ satisfies the bounds
    \begin{equation}\label{eq:mainbounds}
    e^{-\underline{\nu}\wx^{\frac{\gamma}{1+\gamma}}}\lesssim \rho_G(x)\lesssim e^{-\overline{\nu}\wx^{\frac{\gamma}{1+\gamma}}},
    \end{equation}
    for some constants $\underline \nu> \overline \nu>0$.
    \item For any $m \colon (x,v)\mapsto e^{\nu\wx^a} + e^{b\abs{v}^\gamma}$, with $a\in\Big(0,\tfrac{\gamma}{1+\gamma}\Big]$, $b\in\(0,\tfrac{1}{\gamma}\)$ and $\nu\in(0,\overline{\nu})$, for any normalised $f_0\in\sfL^1(m)$ we have
    \begin{equation}
    \norm{ S_\cL(t) f_0 - G}_{\sfL^1} \lesssim   e^{-\lambda\, t^{a}}\norm{f_0-G}_{\sfL^1(m)}, \qquad \forall t\geq 0,    
    \end{equation}
    where $\lambda= a^{-a}>0$.
    \item For any $m \colon (x,v)\mapsto \wx^k+\wangle{v}^{2k}$, with $k>1$, for any normalised $f_0\in\sfL^1(m)$, we have
    \begin{equation}
        \norm{ S_\cL(t) f_0 - G}_{\sfL^1} \lesssim   \frac{1}{\wangle{t}^k}\norm{f_0-G}_{\sfL^1(m)}, \qquad \forall t\geq 0.
    \end{equation} 
\end{enumerate}
\end{thm}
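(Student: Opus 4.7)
The proof follows the roadmap of subgeometric Harris-type theorems (Douc-Fort-Guillin, Hairer) adapted to kinetic equations as in \cite{EY23,EY24}. Three structural ingredients are needed: (a) a Foster-Lyapunov estimate for $\cL^*$ compatible with the weight $m$; (b) a Doeblin-type minorization on a compact set; (c) the abstract subgeometric Harris theorem. Part (i) will follow from the tightness provided by the Lyapunov function together with a Krein-Rutman or Banach fixed-point argument on the mass-normalised semigroup, while parts (ii)-(iii) are immediate consequences of the Harris theorem once (a) and (b) are secured.

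The central and hardest step is the construction of a Foster-Lyapunov function $\Psi\asymp m$ such that
\[
\cL^*\Psi \,\leq\, -\Phi(\Psi) + K\,\1_{C}
\]
for a concave sub-linear rate $\Phi$ and a compact set $C\subset\RRd$. For part (ii) I would start from $\Psi\asymp e^{\nu\wx^{a}} + e^{b|v|^\gamma}$ and add a small anisotropic correction of the form $\beta\,\psi(x\cdot v/\wx)\,e^{\nu\wx^{a}}\,\wx^{\alpha}$ in the spirit of \cite{MW17,EY24}. Applied to $\cL^*$ from \eqref{eq:cL*}, the transport term $v\cdot\nabla_x$ acting on the stretched exponential in $x$ produces a leading contribution $\sim \nu a\,(v\cdot x)\,\wx^{a-2}\,e^{\nu\wx^{a}}$, while the tumbling operator acting on the correction together with the odd bias $\chi\psi$ in $\Lambda$ yields a negative dissipation $\sim -c\,\chi\,\psi(x\cdot v/\wx)(x\cdot v)/\wx\cdot e^{\nu\wx^{a}}$; the exponents $\alpha$ and coefficient $\beta$ are tuned so that the two leading terms combine into $-\Phi(\Psi)$. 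The new technical difficulty compared with the bounded-velocity case is that this cancellation only closes in the region $|v|\lesssim \wx^{1/(1+\gamma)}$; in the complementary regime of very large velocities the chemotactic skew is ineffective and $v\cdot\nabla_x$ overwhelms the correction. There one instead exploits the velocity weight $e^{b|v|^\gamma}$: transport does not act on it, the tumbling loss gives at least $(1-\chi)e^{b|v|^\gamma}$, and the gain is bounded by $\M(v)\int e^{b|v'|^\gamma}\,\Lambda\,\Psi\,\mathrm{d}v'$, which is finite since $b\gamma<1$. Stitching the two regions consistently forces precisely the exponent $a=\gamma/(1+\gamma)$ appearing in \eqref{eq:mainbounds}, and the rate function becomes $\Phi(s)\sim s(\log s)^{1-1/a}$. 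The polynomial case (iii) is handled analogously with $\Psi\asymp\wx^k+\wangle{v}^{2k}$ and $\Phi(s)\sim s^{1-1/k}$.

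The minorization is then obtained by iterating Duhamel's formula: for $(x,v)$ in a compact set, at least one tumble occurs within a fixed time $t_0$ with probability uniformly bounded below, the post-tumble velocity is distributed like $\M$, and a second run-and-tumble spreads mass on a common neighbourhood of the origin, giving $S_\cL(t_0)(x,v,\cdot)\geq \delta\,\eta$ for some probability $\eta$. Combining (a) and (b), the subgeometric Harris theorem produces a unique normalised invariant measure $G$ (it is automatically rotationally invariant by averaging or by rotational uniqueness) and convergence in $\sfL^1$ at the rate $1/H_\Phi^{-1}(t)$ with $H_\Phi(s)=\int_1^{s} d\sigma/\Phi(\sigma)$. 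Explicit inversion yields $e^{-\lambda t^{a}}$ with $\lambda=a^{-a}$ in (ii) and $\wangle{t}^{-k}$ in (iii). The upper bound on $\rho_G$ in \eqref{eq:mainbounds} is the integrability $\int\Psi\,G\,\mathrm{d}x\,\mathrm{d}v<\infty$ combined with a local smoothing of the gain term; the matching lower bound $\rho_G\gtrsim e^{-\underline\nu\wx^{\gamma/(1+\gamma)}}$ follows from a backward Lyapunov/super-solution argument for $\cL^*$ using $e^{-\underline\nu\wx^{\gamma/(1+\gamma)}}$ as a barrier. The genuine obstacle throughout is the phase-space splitting forced by the unbounded coefficients of $v\cdot\nabla_x$: every estimate must be separately closed on $\{|v|\lesssim \wx^{1/(1+\gamma)}\}$ and its complement, and the matching at the interface is where the exponent $\gamma/(1+\gamma)$ is ultimately pinned down.
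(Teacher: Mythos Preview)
Your strategy for (ii) and (iii) is essentially the paper's: Foster--Lyapunov estimates plus a two-jump Duhamel minorization fed into the subgeometric Harris theorem of \cite{CM21}, with exactly the rate functions $\phi(m)=m(\ln m)^{-(1-a)/a}$ and $\phi(m)=m^{1-1/k}$ you name. One refinement: a single correction $\beta\,\psi(x\cdot v/\wx)\,\wx^{\alpha}e^{\nu\wx^{a}}$ is not enough. The paper's Lyapunov function carries three perturbations, $x\cdot v\,\wx^{a-2}e^{\nu\wx^{a}}$, $-\Psi(x\cdot v/\wx)\,\wx^{a-1}e^{\nu\wx^{a}}$ with $\Psi(z)=z\psi(z)$, and a quadratic-in-$v$ term $|v|^{2}\wx^{2a-2}e^{\nu\wx^{a}}$ needed to absorb the $|v|^{2}$ contributions that transport creates from the first two. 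The confinement is extracted not from the pointwise sign of $\Psi$ but from the lower bound $\int_{\Rd}\Psi(x\cdot v'/\wx)\M(v')\,\mathrm{d}v'\geq\zeta\,\1_{|x|\geq1}$.

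The paper departs from your outline on part (i). The bounds on $\rho_G$ are not obtained by barrier/super-solution arguments for $\cL^{*}$. Instead the paper first develops an $\sfL^{\infty}(m)$ theory for the semigroup via an $\cA+\cB$ splitting and iterated Duhamel (a full section of estimates), yielding $G\in\sfL^{1}(m)\cap\sfL^{\infty}(m)$ and existence by a Schauder--Tychonoff fixed point on a rotation-invariant convex set. With $G\in\sfL^{\infty}$ in hand, the representation formula $G(x,v)=\int_{0}^{\infty}e^{-\int_{0}^{s}\Lambda\,\mathrm{d}r}\M(v)\,\theta_{G}(x-vs)\,\mathrm{d}s$ gives the two-sided convolution inequality $\rho_{G}\star\phi_{1}\leq\rho_{G}\leq\rho_{G}\star\phi_{2}$ for explicit stretched-exponential kernels $\phi_{i}$. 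The lower bound then follows directly from strict positivity of $\rho_{G}$ on a ball together with $\rho_{G}\geq\rho_{G}\star\phi_{1}$; the upper bound is a bootstrap from $\rho_{G}\in\sfL^{1}(e^{\nu\wx^{\gamma/(1+\gamma)}})$ (furnished by Harris) to $\sfL^{\infty}$ using $\rho_{G}\leq\rho_{G}\star\phi_{2}$ and the near-origin integrability of $\phi_{2}$. Your barrier approach might be made to work, but the $\sfL^{\infty}$ control on $G$ is a genuine ingredient you have not accounted for, and without it the representation formula and hence the density bounds are not accessible pointwise.
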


The second main result of this paper is the adaptation of the Dolbeault-Mouhot-Schmeiser method \cite{DMS15} to the run and tumble equation \eqref{eq:main} to get convergence in weighted $\sfL^2$ norm. This method has been successfully applied to several kinetic equations \cite{DMS15,CRS15,BDLS20,BDL22,BDZ23} and is based on a micro-macro decomposition. Let us consider the Hilbert space $\sfL^2(G^{-1})$, where $G$ is the stationary solution of \eqref{eq:main}, whose existence is provided by \Cref{thm:main}. We denote by $\bangle{\cdot,\cdot}$  the natural scalar product and by $\norm{\cdot}_{\sfL^2(G^{-1})}$ the natural norm. We define the orthogonal projection $\Pi\colon \sfL^2(G^{-1})\to \sfL^2(G^{-1}) $ as 
\begin{equation}
    \Pi f=\frac{\rho_f}{\rho_G}G \qquad \forall f\in\sfL^2(G^{-1}).
\end{equation}
Thanks to the fact $\cL G=0$, we shall show the \textit{microscopic coercivity} property
\[
\bangle{\cL f,f}\lesssim- \norm{(1-\Pi)f}^2_{\sfL^2(G^{-1})},
\]
though it is not enough to deduce a convergence result. The main idea of the method is to define an entropy functional $\sfH\colon\sfL^2(G^{-1})\to \R$ that is equivalent to $\norm{\cdot}^2_{\sfL^2(G^{-1})}$ and for which we can quantify the dissipation
\[
-\ddt \sfH[f] = :\sfD[f]\qquad\forall f\in\sfL^2(G^{-1}), \; \iint_{\RRd}f\dx\dv =0.
\]
By estimating $\sfD[f]$ in terms of some weaker norm of $f$ it is possible to deduce a decay rate for $\sfH[f]$ for any $f$ with zero mass. Unlike many previous works, the fact that the steady state $G$ is not explicit represents an important difficulty for this method, as $G$ is the weight itself of the framework functional space. For these reasons, some features of the steady state must be known with sufficient precision, in particular we need assumptions on the density
\[
\rho_G(x)=\int_\Rd G(x,v)\dv,\qquad  x\in\Rd,
\]
and the matrix
\[
V_G(x)=\int_\Rd v\otimes v\, G(x,v)\dv, \qquad   x\in\Rd.
\]
Here is our precise assumption.

\begin{hypothesis}\label{Hyp:asympRho}
Assume that $\Lambda =1+\chi\,\sgn$ with $\chi\in(0,1)$ and that the density $\rho_G$ satisfies the bounds
\begin{equation}\label{eq:boundRHO}
    \rho_G(x)\asymp \wx^{\ell-\tfrac{\gamma}{1+\gamma}\(d-\tfrac{1}{2}\)} e^{-\nu \abs{x}^{\frac{\gamma}{1+\gamma}}}, \qquad \forall  x\in\R,
\end{equation}
where $\ell \in \(0, \frac{2}{1+\gamma}\)$ and $\nu= \frac{\gamma+1}{\gamma} (1+\chi)^{\frac{\gamma}{1+\gamma}}$. Moreover, the matrix  $V_G(x)$ satisfies
\begin{equation}\label{eq:AssV_G}
    \abs{V_G(x)^{-2}}\abs{\gradx V_G(x)}^2\wx^{\frac{2}{1+\gamma}}\lesssim 1\qquad\forall x\in\Rd.
\end{equation}
\end{hypothesis}
The choice $\Lambda =1 +\chi \,\sign$ is made in order to carry out explicit computations. To this assumption is supported by numerical results as shown in \Cref{fig:rhoG}.
\begin{figure}
    \centering
    \includegraphics[width=0.7\textwidth]{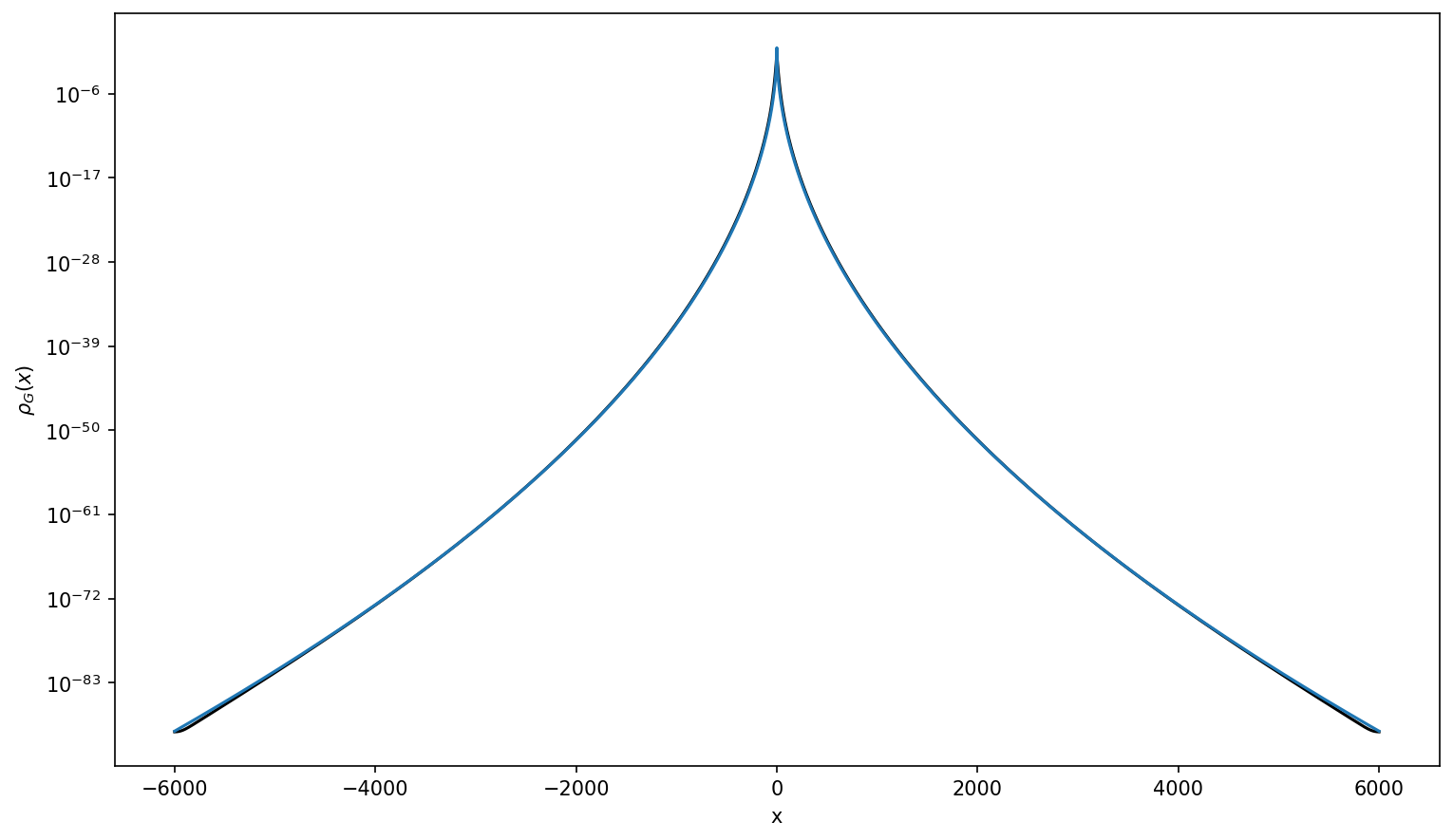}
    \caption{Black line: Plot of the density $\rho_G$ for $\gamma=1$ and $\chi=0.8$. Blue line: expected asymptotic behaviour $x\mapsto \abs{x}^{\frac{1}{4}}\exp(-2\sqrt{1.8}\,\abs{x}^{\frac{1}{2}})$.}
    \label{fig:rhoG}
\end{figure}
We considered a rectangular domain $(x,v)\in [-6000,\, 6000]\times[-100,100]$ and we numerically compute the stationary solution by integrating the \eqref{eq:main}. We can see that the density $\rho_G$ and the bound \eqref{eq:boundRHO}, with $\ell=\frac{1}{1+\gamma}$, almost overlap with each other. Moreover we also included an image of the full steady state $G$ in \Cref{fig:G}.
\begin{figure}
    \centering
    \includegraphics[width=0.7\textwidth]{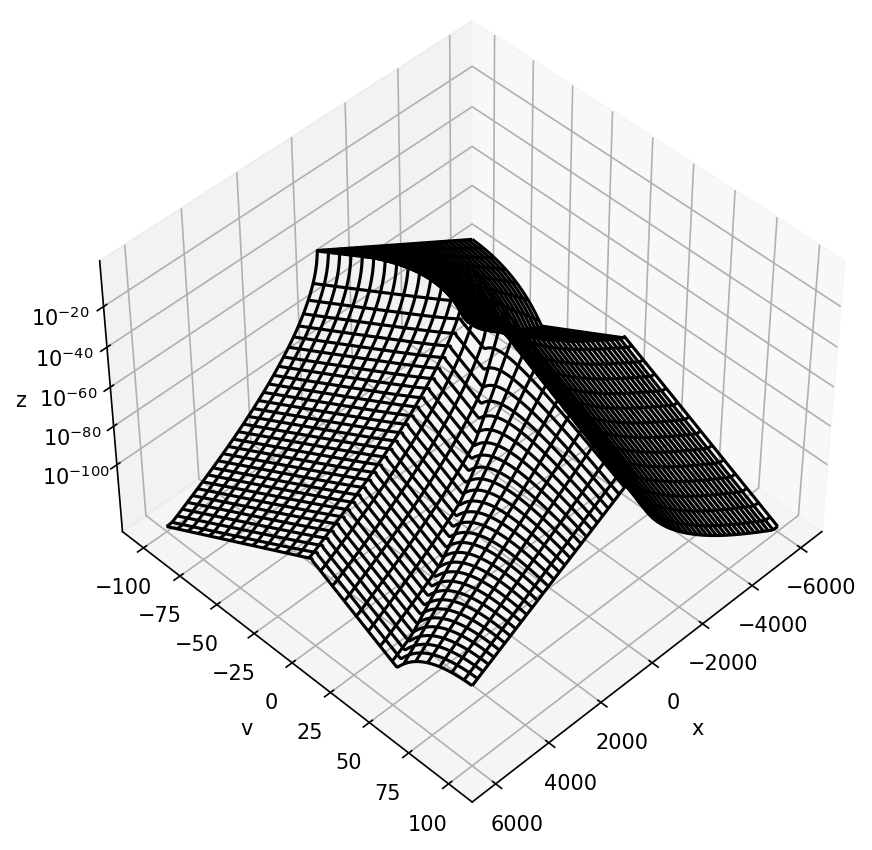}
    \caption{Plot of the stationary solution $G$ for $\gamma=1$ and $\chi=0.8$}
    \label{fig:G}
\end{figure}
It is surprising that, in region of the phase space where $x$ and $v$ have the same sign, the expression of $G$ seems to be piece-wise defined. The curve separating the two expressions has the form $v\approx x^{\frac{1}{1+\gamma}}$ for $x,v\geq 0$.

Our second main result is the following.
\begin{thm}\label{thm:DMS}
Let $\M$ be defined as in \eqref{eq:M} and assume \Cref{Hyp:asympRho}. Then we have the following estimates
\begin{enumerate}[label=(\roman*)]
    \item For any $m \colon (x,v)\mapsto e^{\nu\wx^a} + e^{b\abs{v}^\gamma}$ with $a\in\Big(0,\tfrac{\gamma}{1+\gamma}\Big]$, $b\in\(0,\tfrac{1}{\gamma}\)$ and $\nu\in(0,\overline{\nu})$, there exists a constant $\lambda>0$ such that for any initial datum $f_0\in\sfL^2(m\,G^{-1})$, we have
    \begin{equation}
        \norm{S_\cL(t)f_0-G}_{\sfL^2(G^{-1})}^2\lesssim e^{-\lambda\, t^{-\frac{a}{a+\ell}}}\norm{f_0-G}_{\sfL^2(m\,G^{-1})}^2\qquad\forall t\geq 0.
    \end{equation}
    \item For any $m \colon (x,v)\mapsto \wx^k+\wangle{v}^{2k}$, with $k>1$, there exists a constant $\lambda>0$ such that for any initial datum $f_0\in\sfL^2(m\,G^{-1})$ we have
    \begin{equation}
        \norm{S_\cL(t)f_0-G}_{\sfL^2(G^{-1})}^2\lesssim \frac{1}{(1+\lambda\, t)^{\frac{k}{\ell}}}\norm{f_0-G}_{\sfL^2(m\,G^{-1})}^2\qquad\forall t\geq 0.
    \end{equation}
\end{enumerate}
\end{thm}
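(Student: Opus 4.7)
The plan is to implement the Dolbeault--Mouhot--Schmeiser hypocoercivity framework for $\cL$ in the Hilbert space $\sfL^2(G^{-1})$, following the strategy developed for non-explicit steady states in \cite{CRS15, EM24} and for sub-exponential equilibria in \cite{BDLS20, BDL22}. Decompose $\cL = \sfT + \mathcal{K}$ where $\sfT f := -v\cdot\gradx f$ and $\mathcal{K}$ is the remaining collision part of \eqref{eq:cL}. I would introduce the modified entropy
\[
\sfH[f] := \tfrac12 \norm{f}^2_{\sfL^2(G^{-1})} + \varepsilon \,\langle \sfA f, f\rangle_{\sfL^2(G^{-1})},
\]
with $\varepsilon>0$ small, where $\sfA$ plays the role of the classical DMS operator $(1+(\sfT\Pi)^*(\sfT\Pi))^{-1}(\sfT\Pi)^*$. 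Because $G$ has stretched exponential tails, the classical operator is not bounded on $\sfL^2(G^{-1})$; following \cite{BDLS20, BDL22}, the idea is to insert into $\sfA$ a weight built from $V_G(x)^{-1}$ and $\wx^{-\ell/2}$, chosen to balance the factors of $\gradx G^{-1}$ arising in commutators with $\sfT$. With this modification, $\sfH[f]\asymp \norm{f}^2_{\sfL^2(G^{-1})}$ as soon as $\varepsilon$ is small enough.

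The first analytic step is microscopic coercivity
\[
\langle -\cL f, f\rangle_{\sfL^2(G^{-1})} \gtrsim \norm{(1-\Pi)f}^2_{\sfL^2(G^{-1})},
\]
which is immediate from $\cL G = 0$ and $\Lambda \geq 1-\chi$. The heart of the proof is macroscopic coercivity, namely the weighted Poincaré inequality
\[
\int_\Rd \bigl(\gradx u\bigr)^{\!\top} V_G(x)\, \gradx u\, \rho_G\, \dx \;\gtrsim\; \int_\Rd u^2 \,\rho_G \,\wx^{-\ell}\, \dx
\]
for functions $u$ with $\int u\,\rho_G\, \dx=0$. The polynomial loss $\wx^{-\ell}$ comes directly from the sub-exponential prefactor in \eqref{eq:boundRHO} combined with \eqref{eq:AssV_G}, and the inequality can be established via a Muckenhoupt--Bakry--\'Emery criterion after diagonalising $V_G$. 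Differentiating $\sfH$ along the semigroup and controlling the transport commutators $\langle \sfA \sfT (1-\Pi)f, f\rangle$ and the collision cross term $\langle \sfA \mathcal{K} f, f\rangle$ --- for which the weighted modification of $\sfA$ is precisely designed --- one obtains
\[
-\ddt\sfH[f] \;\gtrsim\; \norm{(1-\Pi)f}^2_{\sfL^2(G^{-1})} + \norm{\Pi f}^2_{\sfL^2(\wx^{-\ell} G^{-1})} =: \sfD[f].
\]

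Rates then follow from interpolating $\sfH$ between $\sfD$ and the strong norm $\norm{\cdot}^2_{\sfL^2(m G^{-1})}$, combined with the a priori bound $\sup_{t\geq 0} \norm{S_\cL(t)f_0}_{\sfL^2(m G^{-1})} \lesssim \norm{f_0}_{\sfL^2(m G^{-1})}$, proved by a Lyapunov argument mirroring the one behind \Cref{thm:main}. In case (ii), a pointwise Hölder relation between $G^{-1}$, $\wx^{-\ell} G^{-1}$ and $m G^{-1}$ yields $\sfH[f(t)] \lesssim \sfD[f(t)]^{k/(k+\ell)}\,\norm{f_0}^{2\ell/(k+\ell)}_{\sfL^2(m G^{-1})}$, so $\sfH' + c\, \sfH^{1+\ell/k}\leq 0$, giving $\sfH(t) \lesssim (1+t)^{-k/\ell}$. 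In case (i) such a pointwise Hölder fails, and one truncates at $\wx\leq R$: restricting to this ball costs a factor $R^\ell$ in the coercivity while the exponential weight $m$ provides a factor $e^{-\nu R^a}$ on the complement; optimising $R \sim t^{1/(a+\ell)}$ balances the two losses and delivers the stretched exponential rate $\exp(-\lambda t^{a/(a+\ell)})$.

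The main obstacle is the construction of the weighted operator $\sfA$: its weight must be small enough to keep $\sfA$ bounded on $\sfL^2(G^{-1})$ despite the stretched exponential growth of $G^{-1}$, yet large enough that macroscopic coercivity survives and that the transport commutators arising in $-\ddt \sfH$ are absorbed by the microscopic dissipation. Balancing these constraints under only the scarce asymptotic information on $G$ provided by \Cref{Hyp:asympRho}, and verifying the associated weighted Poincaré inequality when $V_G$ is not explicit, will be the most technical part of the proof and the place where the analogy with \cite{BDLS20, BDL22} is most heavily exploited.
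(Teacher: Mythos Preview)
Your outline is essentially the paper's strategy: a weighted DMS perturbation to obtain
\[
-\ddt\sfH[f] \gtrsim \norm{(1-\Pi)f}^2_{\sfL^2(G^{-1})} + \norm{\Pi f}^2_{\sfL^2(\wx^{-\ell} G^{-1})},
\]
followed by interpolation with the a priori bound in $\sfL^2(mG^{-1})$, via H\"older for the polynomial weight and via truncation $\wx\leq R$ with $R\sim t^{1/(a+\ell)}$ for the stretched exponential weight. The paper's concrete perturbation is $\varepsilon\,\langle m_f,\nabla_x(B^{-1}\rho_f)\rangle_{\sfL^2}$ with the elliptic operator $Bu=\rho_G u-\wx^{\ell}\dive_x(V_G\nabla_x u)$; the factor $\wx^\ell$ is precisely what makes this term bounded by $\norm{f}^2_{\sfL^2(G^{-1})}$ while still yielding the dissipation above. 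This is the explicit realisation of your weighted $\sfA$.

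The one place where your plan diverges and may not go through as written is the propagation of the strong norm $\sup_{t\geq 0}\norm{S_\cL(t)f_0}_{\sfL^2(mG^{-1})}\lesssim\norm{f_0}_{\sfL^2(mG^{-1})}$. You propose a Lyapunov argument mirroring \Cref{thm:main}, but a direct $\sfL^2$ computation would require differentiating $\int |f|^2 m G^{-1}$ along the flow, and the transport term produces $v\cdot\nabla_x(mG^{-1})$: the factor $G^{-1}\nabla_x G$ is not controlled pointwise under \Cref{Hyp:asympRho}, so there is no obvious way to close the estimate. The paper bypasses this entirely by \emph{interpolating}: the maximum principle gives $\norm{S_\cL(t)}_{\sfL^\infty(G^{-1})\to\sfL^\infty(G^{-1})}\leq 1$ (since $|f_0|\leq CG$ is preserved), the $\sfL^1$ Lyapunov structure and a Duhamel argument give $\norm{S_\cL(t)}_{\sfL^1(m)\to\sfL^1(m)}\lesssim 1$, and Stein--Weiss interpolation applied to the conjugated operator $h\mapsto G^{-1}S_\cL(t)(Gh)$ then yields boundedness on $\sfL^2(mG^{-1})$. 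This trick is the only point where the non-explicitness of $G$ is genuinely circumvented, and it is worth flagging as a separate ingredient rather than folding it into ``Lyapunov''.

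A minor remark: your macroscopic Poincar\'e inequality has an extra factor $\rho_G$ on the gradient side. The natural inequality here is $\int |u-\bar u|^2 \wx^{-2/(1+\gamma)} P_G\,\dx \lesssim \int |\nabla_x u|^2 P_G\,\dx$ with $\bar u=\int u\,\rho_G\,\dx$; after the moment bounds $P_G/\rho_G\asymp\wx^{2/(1+\gamma)-\ell}$ and $V_G\asymp P_G\,\mathrm{Id}$ this becomes equivalent to what you need. The paper proves it by the cut-off argument of \cite{BDLS20} rather than Muckenhoupt, since only asymptotic (not pointwise differential) information on $P_G$ is available.
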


Notice that in \Cref{thm:main} we were able to give some bounds on the tails of the density $\rho_G$, however we were not able to identify its precise asymptotic behavior for $\abs{x}\to \infty$. By using the refined bounds \eqref{eq:boundRHO} in place of \eqref{eq:mainbounds}, it is possible to recover some estimates on the moments
\begin{align*}
    P_G(x)&=\int_\Rd \abs{v}^2 G(x,v)\dv & P_G^{(4)}(x)&=\int_\Rd \abs{v}^4 G(x,v)\dv
\end{align*}
which are weights that will often come into play later. This information is later used to deduce that the entropy functional $\sfH$ satisfies 
\begin{equation}\label{eq:firstDISS}
    \ddt \sfH[f]\lesssim - \norm{f}^2_{\sfL^2(G^{-1}\wx^{-\ell})},
\end{equation}
where $\ell$ is defined in \Cref{Hyp:asympRho}. The loss of weight appearing in the norm is coming to the fact
\[
\frac{\rho_G P_G^{(4)}}{P_G^2}\asymp \wangle{\cdot}^{\ell},
\]
which turns out to be a crucial information for the method. In figure \Cref{fig:P4rhoP} we have plotted this ratio of weights and we see that it is actually unbounded and seems to suggest again that $\ell=\frac{1}{1+\gamma}$.
\begin{figure}
    \centering
    \includegraphics[width=0.7\textwidth]{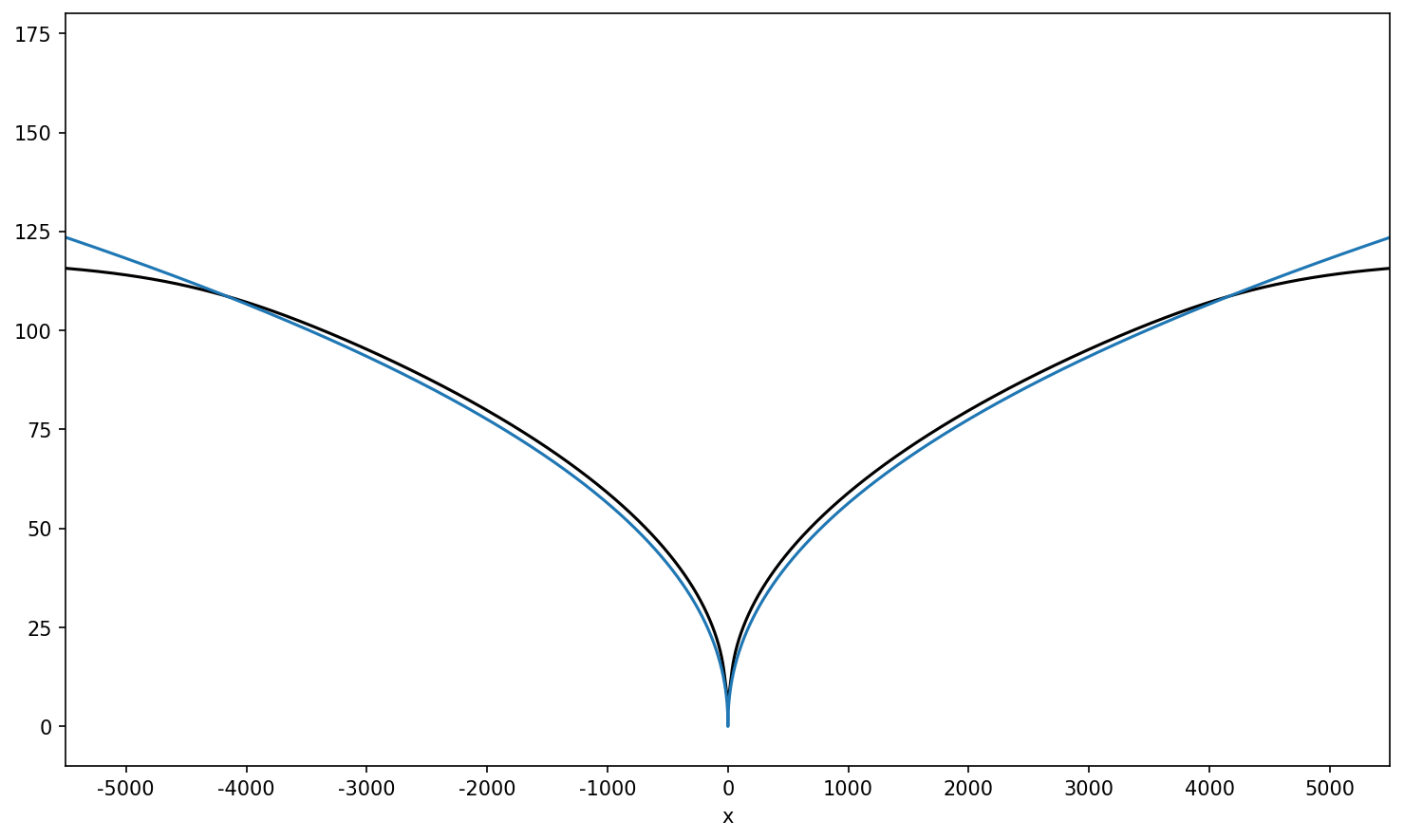}
    \caption{Black line: Plot of $\frac{\rho_G P_G^{(4)}}{P_G^2}$ for $\gamma=1$ and $\chi=0.8$. Blue line: extrapolation of the growth rate $ x\mapsto 2.35 \, \abs{x}^{0.46}$.}
    \label{fig:P4rhoP}
\end{figure}
Furthermore, we shall need the following weighted Poincaré inequality
\begin{equation*}
    \int_\R\abs{u-\Bar{u}}^2\wx^{-\frac{2}{1+\gamma}}P_G\dx\leq C_P \int_\Rd \abs{\gradx u}^2 P_G\dx.
\end{equation*}
where $\Bar{u}=\int_\Rd u\rho_G\dx$. This inequality is reminiscent of the weighted Poincaré inequality with the sub-exponential measure $e^{-\wx^\alpha}\dx$ with $\alpha\in(0,1)$, see \cite{BDLS20,Cao19,KMN21,BDZ23}. Notice that the weights involve the $P_G$ and the average involves $\rho_G$, that are not explicit. Hence we need a further argument to justify the change of measures and average.

Because of the loss of weight in \eqref{eq:firstDISS}, as a side effect we need to restore the norm $\norm{\cdot}^2_{\sfL^2(G^{-1})}$ in \eqref{eq:firstDISS} by some interpolation arguments with the bound $f_0\in\sfL^2(mG^{-1})$ the initial datum, with weight $m$ as in \Cref{thm:DMS}. This idea have been already used in kinetic theory in weak regimes \cite{BDLS20,Cao19,BDZ23}, and consists in proving that if $f_0\in\sfL^2(mG^{-1})$ for such $m$, then the whole solution $f=S_\cL f_0$ remains bounded in $\sfL^2(mG^{-1})$.

\subsection{Organisation of the paper}
In \Cref{Sec:Lyapunov}, we derive a family of Lyapunov functions for the run and tumble equation (in this article we mean Lyapunov functions in the probabilistic sense, also known as Foster-Lyapunov functions). Such functions are fundamental for the whole article, since they provide the correct weighted spaces to work in and determine the rate of convergence towards the equilibrium. In \Cref{Sec:Harris}, we use the sub-geometric Harris theorem to get the existence of a stationary measure and a convergence rate in total variation norm. A brief review of the Harris theorem is presented in \Cref{App:Harris}. In \Cref{sec:Duhamel}, we adapt the strategy of \cite{MW17} to study the semigroup $S_\cL$ in some suitable weighted spaces. The main purpose is to prove that $S_\cL$ is bounded in a weighted space $X=\sfL^1(m)\cap\sfL^\infty(m)$ and, in a second time, to prove existence of a stationary solution $G\in X$ through a fixed point argument. The main tool is the iterated Duhamel formula, which states that if the operator splits as $\cL=\cA+\cB$, then we can rewrite the semigroup $S_\cL$ in terms of $\cA$ and $S_\cB$ as
\[
S_\cL =S_{\cB}+ \sum_{j=1}^{d+1} S_{\cB}\star(\cA S_{\cB})^{\star j} + S_{\cB}\star (\cA S_{\cB})^{\star(d+1)} \star \cA S_\cL\,.
\]
The symbol $\star$ stands for the convolution operator on $\R_+$. In \Cref{Sec:ReprFormula}, we make use of a representation formula for the stationary solution $G$ to prove upper and lower pre-bound on the density $\rho_G$. Despite the fact these bounds are not optimal, we were able to prove them on the more general \Cref{Hyp:psi}. In \Cref{Sec:poincare}, we prepare the way to the proof of \Cref{thm:DMS} by studying the moments in velocity of the steady state $G$. For this we use tools of asymptotic Analysis of integrals of Laplace-type that are breafly recalled in \Cref{App:Laplace}. At the end of the section, we prove a weighted Poincaré inequality for the measure $P_G(x)\dx$. Finally, \Cref{sec:DMS} we give the proof of \Cref{thm:DMS} by adapting the Dolbeault-Mouhot-Schmeiser method. 

\section{Weak-Lyapunov functions in $\sfL^1$}\label{Sec:Lyapunov}

In this section, we find some Lyapunov functions $m$ for \eqref{eq:main}. Before that, we discuss a technical lemma about the function 
\[
\Psi(z) = z \psi(z), \qquad \forall z\in\Rd.
\]
This function plays a fundamental role for the proof of the Lyapunov condition, as has already been shown in \cite[Lemma 1.2]{EY23}.

\begin{lem}\label{lem:Psi}
Assume \Cref{Hyp:psi}. There exists a constant $\zeta>0$ such that 
\begin{equation}
    \int_\Rd\Psi\left(\frac{x\cdot v'}{\wx}\right) \M(v') \mathrm{d}v' \geq \zeta\, \1_{\abs{x}\geq 1 }.
\end{equation} 
\end{lem}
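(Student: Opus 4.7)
The plan is to reduce the integral to a one-dimensional expression via the rotational symmetry of $\M$, and then use the tail behaviour $\psi(z)\to\pm 1$ to extract a positive constant independent of $x$. Writing $x=\abs{x}\,e$ with $e=x/\abs{x}$ for $x\neq 0$, I would perform an orthogonal change of variables $v'=Ov$ with $Oe_1=e$. Since $\M$ depends on $v$ only through $\abs{v}$ and Lebesgue measure is preserved by $O$, the integral becomes
\[
I(x) := \int_{\Rd}\Psi\bigl(c(x)\, v_1\bigr)\M(v)\dv,\qquad c(x):=\frac{\abs{x}}{\wx},
\]
and one observes that $c(x)\geq 1/\sqrt{2}$ whenever $\abs{x}\geq 1$.

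Next I would exploit two structural facts about $\Psi$. Because $\psi$ is odd and increasing with $\psi(0)=0$, the function $\Psi(z)=z\psi(z)$ is even and pointwise nonnegative, so retaining only a subregion of the integration already yields a valid lower bound. Moreover, since $\psi(z)\to\pm 1$ at $\pm\infty$, I would fix $R>0$ such that $\abs{\psi(z)}\geq 1/2$ whenever $\abs{z}\geq R$. Then for $\abs{x}\geq 1$ and $\abs{v_1}\geq R\sqrt{2}$ one has $\abs{c(x)v_1}\geq R$, and by the oddness of $\psi$,
\[
\Psi\bigl(c(x)v_1\bigr) = c(x) v_1\,\psi\bigl(c(x)v_1\bigr)\geq \tfrac{1}{2}\,c(x)\,\abs{v_1}\geq \frac{\abs{v_1}}{2\sqrt{2}}.
\]
Discarding the complementary region I would conclude
\[
I(x)\geq \frac{1}{2\sqrt{2}}\int_{\{\abs{v_1}\geq R\sqrt{2}\}}\abs{v_1}\,\M(v)\dv =: \zeta > 0,
\]
a finite, strictly positive constant independent of $x$, which is exactly the claimed bound.

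I do not anticipate any serious obstacle: everything rests on the nonnegativity of $\Psi$, its linear growth at infinity coming from the saturation $\psi\to\pm 1$, and the rotational invariance of $\M$. The only step requiring a line of care is the verification that the orthogonal change of variables leaves both $\dv$ and $\M$ invariant, which is immediate from the isotropy of $\M$, and the elementary estimate $c(x)\ge 1/\sqrt 2$ on $\{\abs{x}\ge 1\}$, which comes from $\abs{x}^2/(1+\abs{x}^2)\ge 1/2$ iff $\abs{x}\ge 1$.
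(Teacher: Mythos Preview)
Your proof is correct and follows essentially the same route as the paper: both use the tail behaviour of $\psi$ to fix a threshold $R$ with $\Psi(z)\geq\abs{z}/2$ for $\abs{z}\geq R$, discard the complementary region by nonnegativity of $\Psi$, and then use $\abs{x}/\wx\geq 1/\sqrt{2}$ on $\{\abs{x}\geq 1\}$ to arrive at the same constant $\zeta=\tfrac{1}{2\sqrt{2}}\int_{\{\abs{v_1}\geq R\sqrt{2}\}}\abs{v_1}\M(v)\dv$. The only cosmetic difference is that you make the rotation explicit (writing $v_1$) whereas the paper keeps the notation $e\cdot v$ and appeals directly to the radial symmetry of $\M$.
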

\begin{proof}[{\bf Proof of \Cref{lem:Psi}}]
Since $\lim_{z\to\pm \infty}\psi(z)=\pm 1$, there exists $R>0$ such that $\abs{\psi(z)}\geq \frac{1}{2}$ for all $\abs{z}\geq R$. As a consequence
\[
\Psi(z)= z\psi(z)\geq \frac{\abs{z}}{2},\qquad \text{for all } \abs{z}\geq R.
\]
Then, thanks to the positivity of $\Psi$ and the symmetry of $\M$, we have
\begin{align*}
    \int_\Rd  \Psi\left(\frac{x\cdot v'}{\wx}\right) \M(v') \mathrm{d}v' &\geq \int_{\abs*{\frac{x\cdot v'}{\wx}}\geq R}  \Psi\left(\frac{x\cdot v'}{\wx}\right) \M(v') \mathrm{d}v'\\
    &\geq\frac12 \int_{\frac{\abs{x\cdot v'}}{\wx}\geq R}  \abs*{\frac{x\cdot v'}{\wx}} \M(v') \mathrm{d}v'\\
    &\geq\frac12\frac{\abs{x}}{\wx} \int_{\frac{\abs{x}}{\wx}\abs{\text{e}\cdot v}\geq R}  \abs{\text{e}\cdot v} \M(v') \mathrm{d}v'
\end{align*}
Next, we point out that if $\abs{x}\geq 1$, then $\frac{\abs{x}}{\wx}\geq \frac{1}{\sqrt{2}}$. We conclude
that
\begin{align*}
    \int_\Rd  \Psi\left(\frac{x\cdot v'}{\wx}\right) \M(v') \mathrm{d}v' &\geq\frac12\frac{\abs{x}}{\wx}\1_{\abs{x}\geq 1} \int_{\abs{\text{e}\cdot v}\geq \sqrt{2} R}  \abs{\text{e}\cdot v} \M(v') \mathrm{d}v'\\
    &\geq \zeta \1_{\abs{x}\geq 1}
\end{align*}
where $\zeta =\frac{\sqrt{2}}{4} \int_{\abs{\text{e}\cdot v}\geq \sqrt{2} R}  \abs{\text{e}\cdot v} \M(v') \mathrm{d}v'$.
\end{proof}

\subsection{A sub-exponential weak Lyapunov function}

\begin{prop}\label{Prop:Lyapunov}
Take any $a\in(0,\tfrac{\gamma}{1+\gamma}]$, $b\in(0,\tfrac{1}{\gamma})$ and let \Cref{Hyp:psi} hold. There exist explicit positive constants $B,C, \nu, \epsilon$ such that $m$, defined for any $(x,v) \in \RRd$ by 
\begin{equation}\label{LyapunovFunctionExp}
    m(x,v) = \left(1+ \nu a x\cdot v \wx^{a-2} - \frac{\nu a\chi}{1+\chi} \Psi\left(\frac{x\cdot v}{\wx}\right) \wx^{a-1} + \nu B\abs{v}^2 \wx^{2a-2} \right)e^{\nu \wx^a} +\nu  e^{b\abs{v}^\gamma}
\end{equation}
is positive and is a weak Lyapunov function for $\cL$ in $\mathsf\sfL^1(\RRd)$, in the sense that
\begin{equation}\label{eq:expLyab}
    \cL^\ast m \leq C\nu - \epsilon\wangle{x}^{a-1} m 
\end{equation}
Moreover, there exist constants $\delta_1, \delta_2>0$ such that
\begin{equation}\label{bounds:m}
(1-\delta_1)e^{\nu\wx^a}+\nu e^{b\abs{v}^\gamma}\leq m(x,v)\leq \delta_2(e^{\nu\wx^a}+e^{b\abs{v}^\gamma}), \qquad \forall (x,v)\in\RRd .
\end{equation}
\end{prop}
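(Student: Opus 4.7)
I would split the weight as $m = m_e + m_M$ where $m_e := m_1 e^{\nu\wx^a}$ carries the spatial decay (with polynomial correction factor $m_1$) and $m_M := \nu e^{b\abs{v}^\gamma}$ carries the velocity decay. These two pieces correspond to two distinct confinement mechanisms; $m_M$ directly exploits that $\int e^{b\abs{v'}^\gamma}\M(v')\dv' < \infty$ when $b < 1/\gamma$, while $m_e$ requires a careful cancellation argument between transport and tumbling, in the spirit of \cite{MW17,EY24}. The plan is to treat the two pieces separately and then recombine.

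\textbf{Bounds \eqref{bounds:m}.} Since $a \le \frac{\gamma}{1+\gamma} < 1$, the factors $\wx^{a-1}$, $\wx^{a-2}$, $\wx^{2a-2}$ are all bounded by $1$. For the lower bound on $m_1$, Young's inequality lets us absorb both the linear term $\nu a (x\cdot v)\wx^{a-2}$ and the term $\frac{\nu a\chi}{1+\chi}\Psi(x\cdot v/\wx)\wx^{a-1}$ (bounded above in absolute value by $\nu a \abs{v}\wx^{a-1}$) into the positive quadratic $\nu B\abs{v}^2\wx^{2a-2}$ plus a small constant; taking $B$ large enough and $\nu$ small gives $m_1 \geq 1-\delta_1$. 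For the upper bound, the term $\nu B\abs{v}^2 \wx^{2a-2} e^{\nu\wx^a}$ is controlled by $e^{b\abs{v}^\gamma} + e^{\nu\wx^a}$ since any polynomial in $\abs{v}$ is dominated by $e^{b\abs{v}^\gamma}$.

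\textbf{Contribution of $m_M$.} Since $m_M$ is independent of $x$, $v\cdot\nabla_x m_M = 0$ and
\[
\cL^\ast m_M = \Lambda\!\left(\tfrac{x\cdot v}{\wx}\right)\!\Bigl(\nu \!\int e^{b\abs{v'}^\gamma}\M(v')\dv' - \nu e^{b\abs{v}^\gamma}\Bigr) \le (1+\chi)\nu C_b - (1-\chi)\nu e^{b\abs{v}^\gamma},
\]
which gives strong dissipation for large $\abs{v}$; using $\wx^{a-1}\le 1$, the negative part dominates $-\epsilon\wx^{a-1} m_M$ outside a compact set in $v$, and the deficit absorbs into $C\nu$.

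\textbf{Contribution of $m_e$ (core step).} Writing $m_e = m_1 E$ with $E = e^{\nu\wx^a}$, the design of $m_1$ is dictated by two cancellations. First, the "drift correction" $A := \nu a(x\cdot v)\wx^{a-2}$ is matched to $v\cdot\nabla_x E = A\cdot E$, so when one expands $\cL^\ast(A\cdot E) - \Lambda\cdot A\cdot E$ using $\Lambda = 1+\chi\psi$ and $\psi(z)\cdot(x\cdot v) = \Psi(z)\wx$, the leading drift term disappears and the residue is $-\chi\nu a\Psi(x\cdot v/\wx)\wx^{a-1} E$, an unsigned quantity with a \emph{positive} velocity-average. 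Second, the correction $-B := -\frac{\nu a\chi}{1+\chi}\Psi(x\cdot v/\wx)\wx^{a-1}$ produces, through the tumble gain $\int B(x,v')\M(v')\dv' = \frac{\nu a\chi}{1+\chi}\bar\Psi(x)\wx^{a-1}$, a dissipative term $-\epsilon\bar\Psi(x)\wx^{a-1}E$; thanks to \Cref{lem:Psi}, $\bar\Psi(x) \geq \zeta\mathbf{1}_{\abs{x}\ge 1}$ and we obtain the sought $-\epsilon\wx^{a-1} m_e$ for $\abs{x}\ge 1$, while the region $\abs{x}\le 1$ is absorbed into $C\nu$. Finally, the quadratic correction $\nu B\abs{v}^2\wx^{2a-2}$ is precisely what is needed to dominate the $\abs{v}^2$ term arising from $v\cdot\nabla_x A$, provided $B$ is large enough relative to $a$.

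\textbf{Main obstacle.} The principal difficulty is the careful bookkeeping: expansion of $\cL^\ast m_e$ produces cross terms at various orders in $\nu$, coming from $v\cdot\nabla_x$ applied to $m_1$ (including the non-trivial derivative of $\Psi(x\cdot v/\wx)$, for which one uses $\Psi' \in L^\infty$ from \Cref{Hyp:psi}) and from products of corrections in the tumble operator. One must check that every such residual term is either bounded by a constant (to absorb into $C\nu$), controlled by $\nu\cdot\epsilon\wx^{a-1} m$ after choosing $\nu$ small, or dominated by the dissipation coming from $m_M$ on the regime $\abs{v}\gg\abs{x}^a$ where the quadratic correction alone does not suffice. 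This last point is the reason why the splitting $m = m_e + m_M$ is indispensable.
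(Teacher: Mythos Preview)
Your strategy is the paper's: same decomposition into spatial and velocity pieces, same two cancellations (the coefficient $\nu a$ on the $(x\cdot v)\wx^{a-2}$ correction kills the transport of $e^{\nu\wx^a}$, and the coefficient $\tfrac{\nu a\chi}{1+\chi}$ on the $\Psi$-correction kills the pointwise $\Psi$-residual via $\Lambda\Psi\le(1+\chi)\Psi$), same appeal to \Cref{lem:Psi} for the macroscopic dissipation, and same role for $m_M$ in the large-velocity regime.

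Two points need sharpening. First, the dangerous residual after the cancellations is the \emph{cubic} term $\sim \nu B\abs{v}^3\wx^{3a-3}e^{\nu\wx^a}$ coming from $v\cdot\nabla_x$ acting on the quadratic correction $\nu B\abs{v}^2\wx^{2a-2}e^{\nu\wx^a}$ itself; the quadratic correction cannot absorb its own transport. Second, the splitting threshold must be $\abs{v}\sim\wx^{1-a}$, not $\abs{v}\sim\abs{x}^a$: for $\abs{v}\wx^{a-1}\le c$ the cubic is bounded by $c$ times the quadratic (which the tumbling of $m_4$ then controls with $B$ large), while for $\abs{v}\wx^{a-1}\ge c$ one has $\wx^a\le(\abs{v}/c)^{a/(1-a)}\le(\abs{v}/c)^\gamma$ precisely because $a\le\gamma/(1+\gamma)$ gives $\tfrac{a}{1-a}\le\gamma$, so $e^{\nu\wx^a}$ is dominated by the $e^{b\abs{v}^\gamma}$ dissipation from $m_M$ for $\nu$ small. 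With your threshold $\abs{v}\gg\abs{x}^a$ the second estimate fails when $\gamma<1$. The same splitting at $\abs{v}\sim\wx^{1-a}$ is also what makes the upper bound in \eqref{bounds:m} work: the term $\abs{v}^2\wx^{2a-2}e^{\nu\wx^a}$ is not merely ``a polynomial in $\abs{v}$'' since the factor $e^{\nu\wx^a}$ is unbounded in $x$.
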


Possible constants in the statement can be 
\[
B=1+\max\left\{\frac{\nu a^2(1+2\chi)^2}{4(1+\chi)^2}, \frac{a(3-a+\nu a)(2+3\chi)}{2(1-\chi)(1+\chi)}\right\}
\]
and 
\begin{align*}
    \delta_1&=\frac{\nu a^2(1+2\chi)^2}{4(1+\chi)^2B} & \delta_2&=2 +\nu(a^2\nu+B)\max\Big\{\(\tfrac{4}{b\gamma}\)^\frac{2}{\gamma}e^{\frac{4}{\gamma}},(\tfrac{2\nu}{b})^{\frac{2-2a}{a}}\Big\}+ \nu.
\end{align*}

\begin{proof}[{\bf Proof of \Cref{Prop:Lyapunov}}]

Fix $(x,v)\in\RRd$. Write $m= m_1+ \nu a\,m_2+ \tfrac{\nu a \chi}{1+\chi}m_3+ \nu B \,m_4+ \nu\,m_5 $, where $\nu,\, B>0$,
\begin{align*}
    m_1(x,v)&=e^{\nu\wx^a}  & m_2(x,v)&=x\cdot v \wx^{a-2}e^{\nu\wx^a}  & m_3(x,v)&=-\Psi\(\frac{x\cdot v}{\wx}\)\wx^{a-1}e^{\nu\wx^a}
\end{align*}
and
\begin{align*}
    m_4(x,v)&=\abs{v}^2\wx^{2a-2}e^{\nu\wx^a} & m_5(x,v)&= e^{b\abs{v}^\gamma}.
\end{align*}
To get a condition on $B$ such that the bound \eqref{bounds:m} holds, use the Cauchy-Schwarz inequality and $\Psi\leq \abs{\cdot}$ to estimate
\begin{align*}
    \abs{m_2(x,v)}& \leq \wx^{a-1}\abs{v}e^{\nu\wx^a}, & \abs{m_3(x,v)}&\leq \wx^{a-1}\abs{v}e^{\nu\wx^a},
\end{align*}
which implies
\[
m(x,v)\geq \(1-\nu a\frac{1+2\chi}{1+\chi}\wx^{a-1}\abs{v} + \nu B\wx^{2(a-1)}\abs{v}^2\)e^{\nu\wx^a} + \nu \,m_5(x,v).
\]
The first term on the right hand side is, up to a factor $e^{\nu \wx^a}$, a quadratic form in $\abs{v}\wx^{a-1}$, thus it is bounded from below when its discriminant is negative. This condition gives
\[
B>\frac{\nu a^2(1+2\chi)^2}{4(1+\chi)^2}
\]
and, defining $\delta_1=\frac{\nu a^2(1+2\chi)^2}{4(1+\chi)^2B}<1$, we have
\begin{equation}
m\geq (1-\delta_1)e^{-\nu \wx^a} +\nu \,m_5.
\end{equation}
This proves the lower bound in \eqref{bounds:m}. 
To derive the upper bound, we can drop $m_3$ (that is non negative) and bound $m_2$ by the Cauchy-Schwarz inequality, we find
\[
m(x,v)\leq (2+\nu(a^2\nu +B)\abs{v}^2\wx^{2a-2})e^{\nu \wx^a}+ \nu e^{b\abs{v}^\gamma}.
\]
We split into two cases. If $\abs{v}\leq (\tfrac{2\nu}{b})^{\frac{1-a}{a}}\wx^{1-a}$, then
\[
m(x,v)\leq (2+\nu(a^2\nu +B)(\tfrac{2\nu}{b})^{\frac{2-2a}{a}})e^{\nu \wx^a}+ \nu e^{b\abs{v}^\gamma}.
\]
If instead $\abs{v}\geq (\tfrac{2\nu}{b})^{\frac{1-a}{a}}\wx^{1-a}$, we have
\begin{align*}
m(x,v)&\leq (2+\nu(a^2\nu +B)\abs{v}^2)e^{\frac{b}{2} \abs{v}^\frac{a}{1-a}}+ \nu e^{b\abs{v}^\gamma}\leq \(2+\nu(a^2\nu +B)\(\tfrac{4}{b\gamma}\)^\frac{2}{\gamma}e^{\frac{4}{\gamma}}\)e^{b\abs{v}^\gamma}+\nu e^{b\abs{v}^\gamma},
\end{align*}
where we used $a\leq \tfrac{\gamma}{1+\gamma}$ and maximized the function $v\mapsto \abs{v}^2e^{-\frac{b}{2}\abs{v}^\gamma}$. In both cases we have
\begin{equation}\label{eq:2701}
    m(x,v)\leq \delta_2(e^{\nu \wx^a}+e^{b\abs{v}^\gamma})
\end{equation}
where $\delta_2=2 +\nu(a^2\nu+B)\max\Big\{\(\tfrac{4}{b\gamma}\)^\frac{2}{\gamma}e^{\frac{4}{\gamma}},(\tfrac{2\nu}{b})^{\frac{2-2a}{a}}\Big\}+ \nu$.

Let us now check the Lyapunov condition \eqref{LyapunovFunctionExp}. With $\cL^*$ defined in \eqref{eq:cL*}, we have
\[
(\mathcal{L}^*m_1 )(x,v) = \nu a (x\cdot v) \wx^{a-2}e^{\nu \wx^a},
\]
and
\begin{align*} 
(\mathcal{L}^* m_2)(x,v) & = \left( \abs{v}^2\wx^{a-2} - (2-a) (x\cdot v)^2 \wx^{a-4} + a\nu (x\cdot v)^2 \wx^{2(a-2)} \right) e^{\nu \wx^a}  \\
& \quad\quad\quad - \Lambda\left(\frac{x\cdot v}{\wx}\right) (x\cdot v) \wx^{a-2} e^{\nu \wx^a},\\
&  \leq \abs{v}^2\left(\wx^{a-2}+(2-a)\abs{x}^2\wx^{a-4}+ a\nu \abs{x}^2\wx^{2a-4}\right)e^{\nu \wx^a}\\
& \quad\quad\quad -(x\cdot v) \wx^{a-2} e^{\nu \wx^a}-\chi \psi\left(\frac{x\cdot v}{\wx}\right) x\cdot v \wx^{a-2} e^{\nu \wx^a},\\
&  \leq (3-a+a\nu)\abs{v}^2\wx^{2a-2}e^{\nu \wx^a}\\
& \quad\quad\quad -(x\cdot v) \wx^{a-2} e^{\nu \wx^a}-\chi \Psi\left(\frac{x\cdot v}{\wx}\right) \wx^{a-1} e^{\nu \wx^a}
\end{align*}
and
\begin{align*}
(\mathcal{L}^* m_3)(x,v) &=-\left( \Psi'\left(\frac{x\cdot v}{\wx}\right)\left(\frac{\abs{v}^2}{\wx}-\frac{(x\cdot v)^2}{\wx^3}\right)\wx^{a-1}\right)e^{\nu \wx^a}\\
&\quad-\left( (a-1)\Psi\left(\frac{x\cdot v}{\wx}\right)(x\cdot v) \wx^{a-3}+ \nu a\Psi \left(\frac{x\cdot v}{\wx}\right) (x\cdot v) \wx^{2a-3}\right)e^{\nu \wx^a}\\
& \quad - \Lambda\left(\frac{x\cdot v}{\wx}\right) \left(\wx^{a-1} \int_\Rd   \Psi\left(\frac{x\cdot v'}{\wx}\right)\M(v') \mathrm{d}v' -   \Psi\left(\frac{x\cdot v}{\wx}\right)\wx^{a-1}\right)e^{\nu \wx^a}\\
&\leq \left( 2\norm{\Psi'}_{\infty}\abs{v}^2\wx^{a-2} +(1-a)\norm{\psi}_\infty \abs{v}^2 \wx^{a-2}+ \nu a\norm{\psi}_\infty \abs{v}^2 \wx^{2a-2}\right)e^{\nu \wx^a}\\
& \quad -(1-\chi) \left(\int_\Rd   \Psi\left(\frac{x\cdot v'}{\wx}\right)\M(v') \mathrm{d}v'\right)\wx^{a-1}e^{\nu \wx^a} + (1+\chi)  \Psi\left(\frac{x\cdot v}{\wx}\right)\wx^{a-1}e^{\nu \wx^a}\\
&\leq (3-a+\nu a)\abs{v}^2\wx^{2a-2}e^{\nu \wx^a}\\
& \quad -(1-\chi) \left(\int_\Rd   \Psi\left(\frac{x\cdot v'}{\wx}\right)\M(v') \mathrm{d}v'\right)\wx^{a-1}e^{\nu \wx^a} + (1+\chi)  \Psi\left(\frac{x\cdot v}{\wx}\right)\wx^{a-1}e^{\nu \wx^a}
\end{align*}
and
\begin{align*}
(\mathcal{L}^* m_4)(x,v) & = \abs{v}^2 v\cdot\left( (2a-2) x \wx^{2a-4} + \nu a x \wx^{3a-4} \right) e^{\nu \wx^a} \\
& \quad + \Lambda\left(\frac{x\cdot v}{\wx}\right) \left(\int_\Rd\abs{v'}^2\M(v')\dv '-\abs{v}^2 \right) \wx^{2a-2} e^{\nu \wx^a}\\
&\leq (2-2a+\nu a)\abs{v}^3 \wx^{3a-3}e^{\nu \wx^a}+(1+\chi) c_{2,\gamma}\wx^{2a-2} e^{\nu \wx^a}\\
&\quad -(1-\chi)\abs{v}^2\wx^{2a-2} e^{\nu \wx^a}
\end{align*}
where $c_{2,\gamma}$ is defined in \eqref{eq:c_k}. Finally 
\[
(\mathcal{L}^* m_5)(x,v) \leq C_b - (1-\chi) e^{b|v|^\gamma}
\]
where $C_b=(1+\chi)\int_\Rd e^{b|v|^\gamma} \M(v)\dv $. We can put the inequalities together to determine $\cL^*m$. The constants in front of $m_2$ and $m_3$ in the expression of $m$ are meant to simplify respectively the terms $(x\cdot v)\wx^{a-2}e^{\nu\wx^a}$ and $\Psi\(\frac{x\cdot v}{\wx}\)\wx^{a-1}e^{\nu\wx^a}$ after the summation. We have
\begin{align*}
&(\mathcal{L}^*m)(x,v) \\
&\leq \; \nu a(3-a+\nu a)\abs{v}^2\wx^{2a-2}e^{\nu \wx^a}+ \frac{\nu a\chi(3-a+\nu a)}{1+\chi}\abs{v}^2\wx^{2a-2}e^{\nu \wx^a}\\
& -\frac{\nu a\chi(1-\chi)}{1+\chi} \left(\int_\Rd \Psi\left(\frac{x\cdot v'}{\wx}\right)\M(v') \mathrm{d}v'\right)\wx^{a-1}e^{\nu \wx^a} + \nu B(2-2a+\nu a)\abs{v}^3 \wx^{3a-3}e^{\nu \wx^a} \\
&+\nu B(1+\chi)c_{2,\gamma}\wx^{2a-2} e^{\nu \wx^a}-\nu B(1-\chi)\abs{v}^2\wx^{2a-2} e^{\nu \wx^a} +\nu C_b - \nu(1-\chi) e^{b|v|^\gamma}\\[2ex]
\leq & \;\nu C_1\abs{v}^2\wx^{2a-2}e^{\nu \wx^a}+\nu C_2\abs{v}^3 \wx^{3a-3}e^{\nu \wx^a}-\nu B(1-\chi)\abs{v}^2\wx^{2a-2} e^{\nu \wx^a}\\
&-\frac{\nu a\chi(1-\chi)}{1+\chi} \left(\int_\Rd\Psi\left(\frac{x\cdot v'}{\wx}\right)\M(v') \mathrm{d}v'\right)\wx^{a-1}e^{\nu \wx^a} +\nu B(1+\chi)c_{2,\gamma}\wx^{2a-2} e^{\nu \wx^a} \\
& +\nu C_b - \nu(1-\chi) e^{b|v|^\gamma}
\end{align*}
where $C_1= a(3-a+\nu a)\frac{1+2\chi}{1+\chi}$ and $C_2= B(2-2a+\nu a)$. Notice that, if we take $B>\tfrac{C_1}{1-\chi}$, we can estimate the first line by
\begin{align*}
\nu C_1\abs{v}^2 \wx^{2a-2}e^{\nu \wx^a} + &\nu C_2|v|^3\wx^{3a-3}e^{\nu \wx^a}-\nu B(1-\chi)\abs{v}^2\wx^{2a-2} e^{\nu \wx^a}\\
=&\left[-\nu(B(1-\chi)-C_1)(\abs{v}\wx^{a-1})^2+\nu C_2(\abs{v}\wx^{a-1})^3\right]e^{\nu \wx^a}.
\end{align*}
Let now $c>0$ to be fixed later, we need to split into two cases.

If $\abs{v}\wx^{a-1}\leq c$, then we have 
\begin{align*}
\nu C_1\abs{v}^2 \wx^{2a-2}e^{\nu \wx^a} + &\nu C_2|v|^3\wx^{3a-3}e^{\nu \wx^a}-\nu B(1-\chi)\abs{v}^2\wx^{2a-2} e^{\nu \wx^a}\\
=&\;(\abs{v}\wx^{a-1})^2\left [-\nu(B(1-\chi)-C_1)+\nu C_2\,c\right]e^{\nu \wx^a}\leq 0
\end{align*}
by taking $c>0$ small enough. If, instead, $\abs{v}\wx^{a-1}\geq c$, we can simply estimate $\wx^{a-1}\leq 1$ and $e^{\nu\wx^a}\leq e^{\nu\big(\tfrac{\abs{v}}{c}\big)^{\frac{a}{1-a}}}$ and we obtain
\begin{align*}
\nu C_1\abs{v}^2 \wx^{2a-2}e^{\nu \wx^a} + &\nu C_2|v|^3\wx^{3a-3}e^{\nu \wx^a}-\nu B(1-\chi)\abs{v}^2\wx^{2a-2} e^{\nu \wx^a}\\
&\leq \nu \left(C_1\abs{v}^2+C_2\abs{v}^3\right)e^{\nu\big(\tfrac{\abs{v}}{c}\big)^{\frac{a}{1-a}}}
\end{align*}
Thus, in both cases
\begin{align*}
\nu C_1\abs{v}^2 \wx^{2a-2}e^{\nu \wx^a} + &\nu C_2|v|^3\wx^{3a-3}e^{\nu \wx^a}-\nu B(1-\chi)\abs{v}^2\wx^{2a-2} e^{\nu \wx^a}\\
&\leq \nu \left(C_1\abs{v}^2+C_2\abs{v}^3\right)e^{\nu\big(\tfrac{\abs{v}}{c}\big)^{\frac{a}{1-a}}}.
\end{align*}
Substituting above and using \Cref{lem:Psi}, we find
\begin{align*}
(\mathcal{L}^*m)(x,v)\leq&\,\nu\left(C_1\abs{v}^2+C_2\abs{v}^3\right)e^{\nu\big(\tfrac{\abs{v}}{c}\big)^{\frac{a}{1-a}}}-\frac{\nu a\chi(1-\chi)\zeta}{2(1+\chi)} \wx^{a-1}e^{\nu \wx^a}\1_{\abs{x}\geq 1 }\\
&+\nu B(1+\chi)\wx^{2a-2} e^{\nu \wx^a}+\nu C_b - \nu(1-\chi) e^{b|v|^\gamma}\\
\leq&\,\nu\left(C_1\abs{v}^2+C_2\abs{v}^3\right)e^{\nu\big(\tfrac{\abs{v}}{c}\big)^{\frac{a}{1-a}}}+\nu C-C_3 \wx^{a-1}e^{\nu \wx^a} - \nu(1-\chi) e^{b|v|^\gamma}
\end{align*}
where $C_3= \frac{\nu a\chi(1-\chi)\zeta}{4(1+\chi)}$ and for some $C>0$ large enough. Since $0<a\leq\tfrac{\gamma}{1+\gamma}$ and $\nu>0$ small enough for $a=\tfrac{\gamma}{1+\gamma}$, the first term can be dominated by the the last and we conclude that, enlarging $C$ if necessary, and for some $\epsilon>0$ small enough we have
\begin{align*}
\mathcal{L}^* m &\leq  \nu C -\epsilon\delta_2 (\wx^{a-1}e^{\nu\wx^a} +e^{b\abs{v}^\gamma}).
\end{align*}
Finally, using \eqref{eq:2701}, we have \eqref{eq:expLyab}.
\end{proof}

Notice that if $\chi=0$ or $\chi=1$, then $C_3=0$, so we could not conclude the existence of a Lyapunov function. This is coherent with the fact that no confinement holds in this case.

\subsection{A polynomial weak Lyapunov function}

In this section we provide a family of Lyapunov functions with polynomial growth at infinity which is very inspired by \cite{MW17, EY24}. 

\begin{prop}\label{Prop:PolyLyap}
Let $k\in (1,\infty)$ and let \Cref{Hyp:psi} hold. There exists a positive constant $B$ such that $m$ defined for any $(x,v) \in \RRd$ by 
\begin{equation}\label{LyapunovFunctionPoly}
    m(x,v) = \wangle{x}^k + k x\cdot v \wangle{x}^{k-2} - \frac{k\chi}{1+\chi} \Psi\(\frac{x\cdot v}{\wangle{x}}\) \wangle{x}^{k-1} +B \wangle{v}^{2k},
\end{equation}
is positive and 
\begin{equation}\label{eq:polyLyap}
    (\cL^\ast m)(x,v) \leq C\1_R - \eps(\wangle{x}^{k-1} + \wangle{v}^{2k}), \qquad \forall(x,v)\in \RRd,
\end{equation}
for some positive constants $C,R$ and $\eps$. Moreover $m$ satisfies the bound
\begin{equation}\label{bounds}
m(x,v)\asymp  \wangle{x}^k +\wangle{v}^{2k}.
\end{equation}
\end{prop}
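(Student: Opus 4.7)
The argument mirrors that of \Cref{Prop:Lyapunov}, with a polynomial weight replacing the sub-exponential one; crucially, since no exponential factor is present, no case split on $\abs{v}\wangle{x}^{a-1}$ is needed, and no cubic-in-$\abs{v}$ term is produced by $\cL^*m_4$. I decompose
\[
m = m_1 + k\, m_2 + \tfrac{k\chi}{1+\chi}\, m_3 + B\, m_4,
\]
with $m_1=\wangle{x}^k$, $m_2=(x\cdot v)\wangle{x}^{k-2}$, $m_3=-\Psi\!\left(\tfrac{x\cdot v}{\wangle{x}}\right)\wangle{x}^{k-1}$, $m_4=\wangle{v}^{2k}$. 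The numerical coefficients in front of $m_2$ and $m_3$ are chosen precisely so that, after applying $\cL^*$, the $(x\cdot v)\wangle{x}^{k-2}$ terms produced by $\cL^*m_1$ and $k\,\cL^*m_2$ cancel, and the $\Psi\!\left(\tfrac{x\cdot v}{\wangle{x}}\right)\wangle{x}^{k-1}$ terms produced by $k\,\cL^*m_2$ and $\tfrac{k\chi}{1+\chi}\cL^*m_3$ cancel.

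For the two-sided equivalence \eqref{bounds} (and in particular positivity), Cauchy--Schwarz together with $\abs{\Psi(z)}\leq\abs{z}$ gives $\abs{m_2}+\abs{m_3}\lesssim \wangle{x}^{k-1}\abs{v}$, so $m \geq \wangle{x}^k - \tfrac{k(1+2\chi)}{1+\chi}\wangle{x}^{k-1}\abs{v} + B\wangle{v}^{2k}$. Since $k>1$, Young's inequality with exponents $(\tfrac{k}{k-1}, k)$ yields $\wangle{x}^{k-1}\abs{v}\leq \delta\wangle{x}^k + C_\delta\abs{v}^k$ and the $\abs{v}^k$ term is in turn absorbed into $B\wangle{v}^{2k}$ as soon as $B$ is chosen large enough; the reverse inequality follows from the same estimate applied symmetrically.

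Next, using \eqref{eq:cL*}, the vanishing first moment $\int v'\M(v')\dv'=0$, and $\Lambda=1+\chi\psi$, a direct calculation (entirely analogous to the one in \Cref{Prop:Lyapunov} but without the exponential-induced extra terms) gives $\cL^*m_1 = k(x\cdot v)\wangle{x}^{k-2}$, $\cL^*m_4 \leq C_b - (1-\chi)\wangle{v}^{2k}$ with $C_b:=(1+\chi)\int\wangle{v'}^{2k}\M(v')\dv'$, and
\begin{align*}
\cL^*m_2 &= \abs{v}^2\wangle{x}^{k-2} + (k-2)(x\cdot v)^2\wangle{x}^{k-4} - (x\cdot v)\wangle{x}^{k-2} - \chi\Psi\!\left(\tfrac{x\cdot v}{\wangle{x}}\right)\wangle{x}^{k-1},\\
\cL^*m_3 &\leq C_0\abs{v}^2\wangle{x}^{k-2} - (1-\chi)\wangle{x}^{k-1}\!\!\int_\Rd\!\Psi\!\left(\tfrac{x\cdot v'}{\wangle{x}}\right)\M(v')\dv' + (1+\chi)\Psi\!\left(\tfrac{x\cdot v}{\wangle{x}}\right)\wangle{x}^{k-1},
\end{align*}
where $C_0$ collects the bounded gradient contributions from $\norm{\Psi'}_\infty$, $\norm{\psi}_\infty$ and $(k-1)$. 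Summing with the prescribed coefficients, the two announced cancellations take place exactly, leaving
\[
\cL^*m \leq C_2\abs{v}^2\wangle{x}^{k-2} - \tfrac{k\chi(1-\chi)}{1+\chi}\wangle{x}^{k-1}\!\!\int_\Rd\!\Psi\!\left(\tfrac{x\cdot v'}{\wangle{x}}\right)\M(v')\dv' - B(1-\chi)\wangle{v}^{2k} + B\,C_b,
\]
where $C_2 = k + k\abs{k-2} + \tfrac{k\chi C_0}{1+\chi}$.

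To conclude, apply \Cref{lem:Psi} to bound the integral from below by $\zeta\1_{\abs{x}\geq 1}$, producing the coercive contribution $-\tfrac{k\chi(1-\chi)\zeta}{1+\chi}\wangle{x}^{k-1}\1_{\abs{x}\geq 1}$. The cross term is dominated by the weighted Young inequality $\abs{v}^2\wangle{x}^{k-2}\leq \eta\wangle{v}^{2k} + C_\eta\wangle{x}^{k-1}$, which is legitimate because $k>1$ forces $\tfrac{k(k-2)}{k-1}\leq k-1$. Choosing first $\eta$ small enough so that $C_2 C_\eta < \tfrac{k\chi(1-\chi)\zeta}{2(1+\chi)}$ and then $B$ large enough so that $B(1-\chi)>2C_2\eta$ (both compatible with the $B$-requirement of the positivity step, since both demand ``$B$ large enough''), the positive $\wangle{v}^{2k}$ contribution is swallowed by $B(1-\chi)\wangle{v}^{2k}$ and the positive $\wangle{x}^{k-1}$ contribution is swallowed by half of the Lemma~\ref{lem:Psi} term on $\{\abs{x}\geq R\}$ for some large $R$; on the complementary ball $\{\abs{x}\leq R\}$, the bounded constant $B\,C_b$ is absorbed into $C\1_R$, yielding \eqref{eq:polyLyap}. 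The main obstacle, as in \Cref{Prop:Lyapunov}, is simply bookkeeping: verifying the two cancellations exactly (which dictate the precise coefficients $k$ and $\tfrac{k\chi}{1+\chi}$) and ensuring compatibility of the various ``$B$ large enough'' conditions.
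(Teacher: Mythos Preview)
Your approach is essentially identical to the paper's: same decomposition $m=m_1+k\,m_2+\tfrac{k\chi}{1+\chi}m_3+B\,m_4$, same cancellations dictated by the coefficients $k$ and $\tfrac{k\chi}{1+\chi}$, same appeal to \Cref{lem:Psi}, and the same Young-inequality treatment of the cross term $\abs{v}^2\wangle{x}^{k-2}$. The paper splits into the cases $k\in(1,2]$ (where $\wangle{x}^{k-2}\leq 1$ gives $\abs{v}^2\wangle{x}^{k-2}\leq\wangle{v}^{2k}$ directly) and $k>2$ (Young with exponents $(\tfrac{k-1}{k-2},k-1)$), but your unified statement is fine.

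There is, however, one slip in the final paragraph. In the inequality
\[
\abs{v}^2\wangle{x}^{k-2}\leq \eta\,\wangle{v}^{2k}+C_\eta\,\wangle{x}^{k-1},
\]
taking $\eta$ small forces $C_\eta$ \emph{large}, not small (for $k>2$ one has $C_\eta\asymp\eta^{-1/(k-1)}$). Hence you cannot ``choose $\eta$ small enough so that $C_2C_\eta<\tfrac{k\chi(1-\chi)\zeta}{2(1+\chi)}$''. The correct order --- which is exactly what the paper does --- is the reverse: choose the Young parameter so that the coefficient $C_\eta$ on $\wangle{x}^{k-1}$ is small enough to be absorbed by half of the $\zeta$-term; this forces the coefficient $\eta$ on $\wangle{v}^{2k}$ to be large, and \emph{then} one takes $B$ large so that $B(1-\chi)>2C_2\eta$. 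With the roles of ``small'' and ``large'' swapped in that sentence, your argument is complete and matches the paper's.
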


\begin{proof}[{\bf Proof of \Cref{Prop:PolyLyap}}]
First of all, we find a condition on $B$ so that $m$ is positive. Using $\Psi(z)\leq \abs{z}$ for all $z\in\Rd$, we have by Young's inequality
\begin{align*}
\abs*{k x\cdot v\wangle{x}^{k-2} - \frac{k\chi}{1+\chi} \Psi\(\frac{x\cdot v}{\wangle{x}}\) \wangle{x}^{k-1}}&\leq k\frac{1+2\chi}{1+\chi} \abs{v}\wangle{x}^{k-1}\\
&\leq \frac12\wangle{x}^k+ \(\frac{1+2\chi}{1+\chi}\)^k\(2k-2\)^{k-1}\wangle{v}^k,
\end{align*}
for $(x,v)\in\RRd$. This inequality gives \eqref{bounds}, provided we take
\[
B> \(\frac{1+2\chi}{1+\chi}\)^k\(2k-2\)^{k-1}.
\]
We compute separately all the different terms in \eqref{LyapunovFunctionPoly}. Let us write $m=m_1 +k\, m_2 + -\tfrac{k\chi}{1+\chi}m_3 + B\,m_4$, where
\begin{align*}
    m_1&=\wx^k, & m_2&=x\cdot v\wx^{k-2}, & m_3&=\Psi\(\frac{x\cdot v}{\wx}\)\wx^{k-1}, & m_4&=\wangle{v}^{2k}
\end{align*}
We have
\begin{align*}
    (\cL^*m_1)(x,v) &= k(x\cdot v)\wangle{x}^{k-2} - \Lambda\(\frac{x\cdot v}{\wx}\)\wangle{x}^k + \Lambda\(\frac{x\cdot v}{\wx}\)\wangle{x}^k\int_{\R^d}\M(v')\dv '\\
    &=k(x\cdot v)\wangle{x}^{k-2},
\end{align*}
\begin{align*}
    (\cL^*m_2)(x,v) &= \abs{v}^2\wangle{x}^{k-2} -(2-k) (x\cdot v)^2\wangle{x}^{k-4} - \Lambda\(\frac{x\cdot v}{\wx}\)(x\cdot v)\wangle{x}^{k-2}\\
    &=\abs{v}^2\wangle{x}^{k-2} -(2-k)  (x\cdot v)^2\wangle{x}^{k-4} - (x\cdot v)\wangle{x}^{k-2} - \chi\Psi\(\frac{x\cdot v}{\wangle{x}}\)\wangle{x}^{k-1},
\end{align*}
\begin{align*}
    (\cL^*m_3)(x,v)&=- \Psi'\left(\frac{x\cdot v}{\wangle{x}}\right)\left(\frac{\abs{v}^2}{\wangle{x}}-\frac{(x\cdot v)^2}{\wangle{x}^3}\right)\wangle{x}^{k-1} -  (k-1)\Psi\left(\frac{x\cdot v}{\wangle{x}}\right)x\cdot v \wangle{x}^{k-3}\\
    &- \Lambda\(\frac{x\cdot v}{\wx}\)\wangle{x}^{k-1}\int_{\R^d}\Psi\(\frac{v'\cdot x}{\wangle{x}}\)\M(v')\dv ' +  \Lambda\(\frac{x\cdot v}{\wx}\)\Psi\(\frac{x\cdot v}{\wangle{x}}\)\wangle{x}^{k-1}
\end{align*}
and
\begin{align}\label{eq:L*m4}
    (\cL^*m_4) &= \Lambda\(\frac{x\cdot v}{\wx}\) c_{2k,\gamma}-  \Lambda\(\frac{x\cdot v}{\wx}\)\wangle{v}^{2k}.
\end{align}
The constants in front of $m_2$ and $m_3$ in the expression of $m$ are meant to simplify respectively the term $(x\cdot v)\wx^{k-2}$ and $\Psi\(\tfrac{x\cdot v}{\wx}\)\wx^{k-1}$ after the summation. Using that $\Psi\geq 0$, we have
\begin{equation}\label{eq:2101}
    \begin{split}
        (\cL^*m)(x,v)\leq &\,  C_2\abs{v}^2\wangle{x}^{k-2}-\frac{k\chi}{1+\chi}\Lambda\(\frac{x\cdot v}{\wx}\)\wangle{x}^{k-1}\int_{\R^d}\Psi\(\frac{v'\cdot x}{\wangle{x}}\)\M(v')\dv ' \\
    &+B\Lambda\(\frac{x\cdot v}{\wx}\) c_{2k,\gamma}-  B\Lambda\(\frac{x\cdot v}{\wx}\)\wangle{v}^{2k}
    \end{split}
\end{equation}
with $C_2= 3k+k^2 + \frac{\chi k(k+1)}{1+\chi}$.
Now, if $k\in[1,2]$, then we estimate
\[
\abs{v}^2\wangle{x}^{k-2}\leq \wangle{v}^{2k},
\]
and we choose $B>\tfrac{2C_2}{1-\chi}$. If $k>2$, Young's inequality implies that for any $\delta>0$
\begin{equation}\label{eq:YoungP}
    \abs{v}^2\wangle{x}^{k-2}\leq \delta\,\frac{k-2}{k-1}\wangle{x}^{k-1} +\frac{1}{k-1} \frac{\wangle{v}^{2(k-1)}}{\delta^{\frac{1}{k-2}}}\leq \delta\,\frac{k-2}{k-1}\wangle{x}^{k-1} +\frac{\wangle{v}^{2k}}{(k-1)\delta^{\frac{1}{k-2}}} .
\end{equation}
Thus, first choosing $\delta<\tfrac{k\chi(k-1)(1-\chi)\zeta}{2\,C_2(k-2)(1+\chi)}$ and then $B>\tfrac{2\,C_2}{(1-\chi)(k-1)}\delta^{-\frac{1}{k-2}}$ we have 
\begin{align*}
    \cL^*m\leq& -\(\frac{k\chi(1-\chi)\zeta}{1+\chi}- \frac{C_2(k-2)\delta}{k-1}\)\wangle{x}^{k-1} - \(B(1-\chi)\abs{v}^{2k} -\frac{C_2}{(k-1)\delta^{\frac{1}{k-2}}}\wangle{v}^{2k}\)+ B(1+\chi)c_{2k, \gamma}\\
    \leq& -\frac{k\chi(1-\chi)\zeta}{2(1+\chi)}\wangle{x}^{k-1} -\frac{B(1-\chi)}{2}\wangle{v}^{2k}+B(1+\chi)c_{2k,\gamma}.
\end{align*}
This inequality proves \eqref{eq:polyLyap} provided $C$ and $R$ are large enough and $\eps>0$ is small enough.
\end{proof}

We conclude this section with some considerations about possible extensions. It is worth noticing that the same Lyapunov function \eqref{LyapunovFunctionPoly} can be used when the run and tumble equation is considered with fat tailed local equilibrium, that is of the form
\begin{equation}\label{eq:PolyM}
    \M(v) = c_\gamma \frac{1}{\wangle{v}^{d+\gamma}}\qquad \gamma>2.
\end{equation}
In fact, the polynomial decay \eqref{eq:PolyM} affects the proof of \Cref{Prop:PolyLyap} only at \eqref{eq:L*m4} when a moment of order $2k$ is computed. Since $k>1$, we have that the coefficient
\[
c_{2k,\gamma}=\int_\Rd \abs{v'}^{2k}\M(v')dv'
\]
is finite only if $\gamma>2$ and $1<k<\frac{\gamma}{2}$.
In such cases, we can reproduce the same proof of \Cref{Prop:PolyLyap} and find some Lyapunov functions, which furthermore can be exploited to show existence and uniqueness of a stationary state and estimate the rate of convergence towards it as in the next section.


\section{Existence via the Harris theorem}\label{Sec:Harris}

In this section, we give quantitative rates of convergence for the run and tumble model \eqref{eq:main} using the subgeometric version of Harris' Theorem \ref{thm:harris}. The main result of this section is the following.

\begin{prop}\label{thm:harrisapplied}
There exists a unique steady state measure $\mu_G(x,v) = G(x,v)\mathrm{d}x \mathrm{d}v$ to the run and tumble equation \eqref{eq:main}. Furthermore, 
\begin{itemize}
    \item for any $m(x,v) \asymp e^{\nu\wx^a} + e^{b\abs{v}^\gamma}$ defined as in \Cref{Prop:Lyapunov}, for any normalised $f_0$ in $\sfL^1(m)$, we have
    \begin{equation}\label{eq:ExpHarris}
    \norm{ S_\cL(t) f_0 - G}_{\sfL^1} \lesssim   e^{-a^{-a}\, t^{a}}\norm{f_0-G}_{\sfL^1(m)}, \qquad \forall t\geq 0.
    \end{equation}
    \item for any $m(x,v) \asymp \wx^k+\wangle{v}^{2k}$ as defined in \Cref{Prop:PolyLyap}, for any normalised $f_0$ in $\sfL^1(m)$, we have
    \begin{equation}\label{eq:PolyHarris}
        \norm{ S_\cL(t) f_0 - G}_{\sfL^1} \lesssim   \frac{1}{\wangle{t}^k}\norm{f_0-G}_{\sfL^1(m)}, \qquad \forall t\geq 0.
    \end{equation} 
\end{itemize}
\end{prop}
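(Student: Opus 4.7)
The plan is to invoke the sub-geometric Harris theorem recalled in \Cref{App:Harris}, applied in the two functional settings provided by the Lyapunov functions of \Cref{Prop:Lyapunov} and \Cref{Prop:PolyLyap}. The theorem requires two ingredients: a weak Lyapunov condition of the form $\cL^\ast m \leq C - \phi(m)$ with $\phi$ concave and increasing, and a minorisation (Doeblin) condition on the sub-level sets $\{m \leq R\}$, namely that for each $R$ there exist $t_\ast > 0$, $\beta \in (0,1)$ and a probability measure $\eta$ such that $S_\cL(t_\ast) \delta_{(x,v)} \geq \beta \eta$ uniformly for $(x,v) \in \{m \leq R\}$. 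Once these are verified, the theorem delivers existence and uniqueness of a normalised invariant probability $\mu_G = G\,\dx\dv$ together with the quantitative decay $\norm{S_\cL(t) f_0 - G}_{\sfL^1} \lesssim \Psi(t)\, \norm{f_0 - G}_{\sfL^1(m)}$, where $\Psi$ is determined by the inverse of $H_\phi(u) = \int_1^u \tfrac{\d s}{\phi(s)}$.

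The Lyapunov part is essentially already in hand. In the exponential setting, \Cref{Prop:Lyapunov} gives $\cL^\ast m \leq C\nu - \epsilon \wx^{a-1} m$; since $m \asymp e^{\nu \wx^a} + e^{b\abs{v}^\gamma}$, on the region where the $x$-exponential dominates we have $\wx^{a-1} \asymp (\log m)^{-(1-a)/a}$, while on the complementary region the $e^{b\abs{v}^\gamma}$ term supplies dissipation for free. One therefore takes $\phi(u) \asymp u (\log u)^{-(1-a)/a}$ above a threshold and extends smoothly below. In the polynomial setting, \Cref{Prop:PolyLyap} combined with $m \asymp \wx^k + \wangle{v}^{2k}$ directly yields $\phi(u) \asymp u^{(k-1)/k}$.

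The minorisation condition is the main analytical step, and I would follow the strategy of \cite{EY23, EY24, MW17}. Decompose $\cL = \cA + \cB$ with $\cB f = -v \cdot \nabla_x f - \Lambda(x\cdot v/\wx) f$ and $(\cA f)(x,v) = \M(v) \int_\Rd \Lambda(x \cdot v'/\wx) f(x,v')\dv '$. Since $1 - \chi \leq \Lambda \leq 1 + \chi$ and $\M$ is a positive continuous density with full support, the iterated Duhamel formula isolates a contribution to $S_\cL(t_\ast) \delta_{(x_0, v_0)}$ corresponding to two tumbles at times $0 < s_1 < s_2 < t_\ast$ with fresh velocities $v_1, v_2 \sim \M$. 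The two time parameters together with $v_1$ give enough free parameters to surject onto an open neighbourhood of any prescribed $(x_f, v_f)$ with a smooth, non-degenerate Jacobian, and the resulting density is bounded below by a positive continuous function on compacts. The main difficulty to address is that, in the exponential case, the sub-level set $\{m \leq R\}$ is unbounded in $\abs{v}$ (of order $(\log R / b)^{1/\gamma}$), so the target box and the time $t_\ast$ must be chosen so that free transport over $[0, t_\ast]$ together with the Gaussian-type tails of $\M$ still yields a quantitative lower bound, uniform over the level set. This is the step I expect to be the main obstacle.

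Finally, the rates follow from computing $H_\phi^{-1}$. For $\phi(u) = u(\log u)^{-(1-a)/a}$, the substitution $w = \log u$ gives $H_\phi(u) \asymp a (\log u)^{1/a}$, hence $H_\phi^{-1}(t) \asymp \exp(a^{-a} t^a)$, and $\Psi(t) = 1/H_\phi^{-1}(t)$ matches the stretched exponential $e^{-a^{-a} t^a}$ in \eqref{eq:ExpHarris}. For $\phi(u) = u^{(k-1)/k}$, one has $H_\phi(u) \asymp k u^{1/k}$ and $H_\phi^{-1}(t) \asymp (t/k)^k$, yielding the polynomial rate $\wangle{t}^{-k}$ in \eqref{eq:PolyHarris}.
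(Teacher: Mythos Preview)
Your overall approach matches the paper's: verify the weak Lyapunov condition via \Cref{Prop:Lyapunov} and \Cref{Prop:PolyLyap}, establish minorisation on sub-level sets via a two-tumble Duhamel expansion (this is the paper's \Cref{Prop:positivity}), and read off the rates by identifying $\phi(m)\asymp m(\ln m)^{-(1-a)/a}$ and $\phi(m)\asymp m^{1-1/k}$ respectively.

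However, the difficulty you flag in the minorisation step is a misconception. The sub-level sets $\{m\leq R\}$ are \emph{compact}, not unbounded in $v$: from the lower bound in \eqref{bounds:m} one has $m(x,v)\geq \nu e^{b|v|^\gamma}$, so $m\leq R$ forces $|v|\leq (b^{-1}\ln(R/\nu))^{1/\gamma}$, which is a fixed finite number once $R$ is fixed (and similarly for $x$). The paper's proof of minorisation (\Cref{Prop:positivity}) uses precisely this: it encloses $\{m\leq R\}\subset B_{X_0}\times B_{V_0}$ and then the two-tumble lower bound is completely elementary, with all constants allowed to depend on $R$. There is no need to fight Gaussian tails of $\M$ uniformly over an unbounded velocity set; this is a non-issue, and the minorisation is in fact the easier of the two hypotheses here. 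You may also wish to note, as the paper does at the end of its proof, that the invariant measure has an $\sfL^1$ density because the minorising measure itself does.
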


The proof directly follows from Harris theorem \ref{thm:harris}. We have already provided some Lyapunov function in \Cref{Sec:Lyapunov}, so we just have to check  \Cref{Hyp:minorisation}. This has already been shown in \cite{EY24}, we reproduce the proof for completeness.

\begin{lem}\label{Prop:positivity}
Let $m$ be defined as in \Cref{Prop:Lyapunov} or in \Cref{Prop:PolyLyap}. For every set 
\[
\mathcal{C} = \{ (x, v) \in \R^d \times \R^d \,:\, m(x,v) \leq C\},
\]
with $C>0$, there exist constants $X_0, V_0, C_0 >0$ depending on $C$ such that for any positive $f_0$ in $\sfL^1$
\[
\mathcal{S}_\cL(T) f_0 \geq \frac{C_0^2(1-\chi)^2}{4}  e^{-(1-\chi)T} 1_{|v| \leq V_0} 1_{|x| \leq X_0} \int_{\mathcal{C}} f_0(x_0,v_0) \mathrm{d}x_0\mathrm{d}v_0 
\]
for a certain $T>0$.
\end{lem}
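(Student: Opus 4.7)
The plan is to carry out a Doeblin-type minorisation via two iterations of the Duhamel formula previewed in the introduction, based on the splitting $\cL=\cA+\cB$ with
$$\cB f = -v\cdot\nabla_x f - \Lambda\!\left(\tfrac{x\cdot v}{\wangle{x}}\right) f,\qquad \cA f=\M(v)\int_{\R^d}\Lambda\!\left(\tfrac{x\cdot v'}{\wangle{x}}\right) f(x,v')\,\mathrm{d}v'.$$
First I note that, by the two-sided bounds \eqref{bounds:m} or \eqref{bounds}, the Lyapunov function $m$ grows to infinity in both $x$ and $v$, so $\mathcal{C}\subset\{|x|,|v|\leq R_1\}$ for some $R_1=R_1(C)$. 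Hypothesis~\ref{Hyp:psi} gives $1-\chi\leq\Lambda\leq 1+\chi$, which yields the two elementary pointwise estimates $S_\cB(s)g(x,v)\geq e^{-(1+\chi)s}g(x-sv,v)$ and $\cA h(x,v)\geq (1-\chi)\M(v)\int h(x,v')\,\mathrm{d}v'$ on nonnegative data.

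By positivity one has $S_\cL(T)f_0\geq [S_\cB\star(\cA S_\cB)^{\star 2}]f_0$. Unfolding the convolutions and parametrising the tumble times by $(s_1,s_2)\in\Delta_2(T):=\{s_1,s_2\geq 0,\ s_1+s_2\leq T\}$ together with the intermediate velocity $w_1\in\R^d$, a trajectory starting at $(x_0,w_0)$ reaches $(x,v)$ at time $T$ with $x=x_0+s_1 w_0+s_2 w_1+(T-s_1-s_2)v$. Plugging in the two elementary estimates one obtains
$$S_\cL(T)f_0(x,v)\geq (1-\chi)^2 e^{-(1+\chi)T}\M(v)\iint_{\Delta_2(T)}\!\!\!\int_{\R^d}\M(w_1)\!\int_{\R^d}\! f_0(x_0,w_0)\,\mathrm{d}w_0\,\mathrm{d}w_1\,\mathrm{d}s_1\,\mathrm{d}s_2,$$
with $x_0=x-Tv+s_1(v-w_0)+s_2(v-w_1)$. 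The crucial remark is that two tumbles already suffice in any dimension: for every fixed $s_2>0$, the partial map $w_1\mapsto x_0$ is a bijection $\R^d\to\R^d$ with Jacobian $s_2^{-d}$, so one has full coverage of $\R^d$ in $x_0$ without requiring more iterations (nor any dimension-dependent $(\cA S_\cB)^{\star d}$ unfolding).

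Finally, I restrict $(s_1,s_2)$ to the compact sub-simplex $[T/4,T/2]^2\cap\Delta_2(T)$ and change variables $w_1\mapsto x_0$. The reconstructed $w_1=v-(x_0-A)/s_2$, with $A=x-Tv+s_1(v-w_0)$, stays in a fixed compact set whenever $(x_0,w_0)\in\mathcal{C}$, $s_2\in[T/4,T/2]$, and $(x,v)$ belongs to a suitable target ball $\{|x|\leq X_0,\ |v|\leq V_0\}$; hence $\M(w_1)\geq C_0>0$ and $\M(v)\geq C_0$ there. Collecting the numerical factors (the volume of the restricted simplex, the Jacobian $s_2^{-d}$, and the two lower bounds on $\M$) and integrating in $(x_0,w_0)$ over $\mathcal{C}$ produces the claimed inequality with constant of the form $\tfrac{C_0^2(1-\chi)^2}{4}e^{-\lambda T}$ for an appropriate $\lambda>0$ matching the statement.

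The sole technical obstacle is the geometric compatibility: one must select $X_0,V_0,T$ as functions of $R_1$ so that the reconstructed $w_1$ stays within a region where $\M$ is uniformly bounded below, allowing the change of variables to cleanly transfer the Lebesgue mass of $f_0$ over $\mathcal{C}$ onto the target ball. Because $\mathcal{C}$ is compact and $\M$ is continuous and positive, this reduces to a finite-dimensional compactness bookkeeping; in particular, no use is made of the unbounded-velocity or sub-exponential features of the model at this stage, which is precisely why the same minorisation argument applies uniformly to both the exponential Lyapunov function of \Cref{Prop:Lyapunov} and the polynomial one of \Cref{Prop:PolyLyap}.
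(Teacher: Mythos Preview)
Your proposal is correct and follows essentially the same two-tumble Duhamel minorisation as the paper: split $\cL=\cB_0+\cA_1$ (your $\cB,\cA$), use $1-\chi\leq\Lambda\leq 1+\chi$ to bound the transport and jump parts, and exploit compactness of $\mathcal{C}$ together with positivity and continuity of $\M$ to get a uniform lower bound. The only stylistic difference is that the paper first proves the bound for Dirac initial data $\delta_{x_0,v_0}$ and then extends to general $f_0$ via the Markov kernel representation, whereas you work directly with $f_0$ and perform the change of variables $w_1\mapsto x_0$; your route is slightly more streamlined but yields the same constants up to the choice of time window (note the statement's $e^{-(1-\chi)T}$ is a typo for $e^{-(1+\chi)T}$, as the paper's own proof shows).
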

\begin{proof}[{\bf Proof of \Cref{Prop:positivity}}]
Consider the splitting $\cL=\cB_0+\cA_1$ where 
\[
\cB_0 f = - v \cdot \nabla_x f - \Lambda f, 
\]
and 
\[
(\cA_1 f)(x,v)=\M(v)\int_{\R^d}\Lambda\(\frac{x\cdot v'}{\wx}\)f(x,v')\dv 
\]
The semigroup $S_{\cB_0}$ represents the solution to a transport equation, therefore its explicit form is
\[
(S_{\cB_0}(t)f_0)(x,v)= f_0(x-vt, v) \exp\(-\int_0^t\Lambda\(\frac{(x-vs)\cdot v}{\wangle{x-vs}}\)\d s\)
\]
Thanks to Duhamel's formula we have
\[
S_\cL=S_{\cB_0}+S_{\cB_0}\star \cA_1 S_\cL,
\]
from which we deduce that, for all positive functions $f_0$, there holds
\begin{align*}
S_\cL(t)f_0&\geq S_{\cB_0}f_0, & &\text{and} & S_\cL(t)f_0&\geq (S_{\cB_0}\star \cA_1 S_\cL)(t)f_0.
\end{align*}
As a consequence, combining both  we deduce
\[
S_\cL(t)f_0\geq (S_{\cB_0}\star (\cA_1 S_{\cB_0})\star (\cA_1 S_{\cB_0}))f_0
\]
that is
\begin{align*}
(S_\cL f_0)(x,v) \geq     \int_{0}^{t} \int_{0}^{s} (S_{\cB_0}(t-s) \cA_1 S_{\cB_0}(s-r)\cA_1  S_{\cB_0}(r) f_0)(x,v) \d r \d s.
\end{align*}
Consider $m$ and $\mathcal{C}$ as in the statement, we first prove the statement with initial state the Dirac measure $\delta_{x_0,v_0}$ with $(x_0,v_0)\in\mathcal{C}$. 
We just need to progressively compute the integrals above. Consider $0\leq r\leq s\leq t$ and $(x,v)\in\RRd$. We have
\begin{align*}
(S_{\cB_0}(r)\delta_{x_0,v_0})(x,v)&\geq e^{-(1+\chi)r}\delta_{x_0,v_0}(x-vr,v)\geq e^{-(1+\chi)r}\delta_{x_0+rv_0,v_0}(x,v),
\end{align*}
then 
\begin{align*}
(\cA_1 S_{\cB_0}(r)\delta_{x_0,v_0})(x,v)&\geq (1-\chi)e^{-(1+\chi)r}\M(v)\int_{\R^d}\delta_{x_0+rv_0,v_0}(x,v')\dv '\\
&\geq (1-\chi)e^{-(1+\chi)r}\M(v)\delta_{x_0+rv_0}(x).
\end{align*}
Next,
\begin{align*}
(S_{\cB_0}(s-r)\cA_1 S_{\cB_0}(r)\delta_{x_0,v_0})(x,v)&\geq (1-\chi)e^{-(1+\chi)(s-r)}e^{-(1+\chi)r}\M(v)\delta_{x_0+rv_0}(x-v(s-r))\\
&\geq (1-\chi)e^{-(1+\chi)s}\M(v)\delta_{x_0+rv_0}(x-v(s-r))
\end{align*}
and
\begin{align*}
(\cA_1 S_{\cB_0}(s-r)\cA_1 S_{\cB_0}(r)\delta_{x_0,v_0})(x,v)&\geq (1-\chi)^2e^{-(1+\chi)s}\M(v)\int_{\R^d}\M(v')\delta_{x_0+rv_0}(x-v'(s-r))\dv '\\
&\geq (1-\chi)^2e^{-(1+\chi)s}\M(v)\M\(\frac{x-x_0-rv_0}{s-r}\)
\end{align*}
and finally
\begin{align*}
(S_{\cB_0}(t-s)\cA_1 S_{\cB_0}(s-r)\cA_1 S_{\cB_0}(r)\delta_{x_0,v_0})(x,v)&\geq (1-\chi)^2e^{-(1+\chi)t}\M(v)\M\(\frac{x-v(t-s)-x_0-rv_0}{s-r}\).
\end{align*}
Thanks to \eqref{bounds:m} and \eqref{bounds}, $m$ has pre-compact level sets, so we can fix two constants $X_0,V_0>0$ large enough such that
\[
\mathcal{C}\subseteq B_{X_0}\times B_{V_0}.
\]
Fix the constant $C_0>0 $ such that $\M(v)\geq C_0\1_{\abs{v}\leq V_0}$, then we have
\[
(S_{\cB_0}(t-s)\cA_1 S_{\cB_0}(s-r)\cA_1 S_{\cB_0}(r)\delta_{x_0,v_0})(x,v)\geq C_0^2(1-\chi)^2e^{-(1+\chi)t}\1_{\abs{v}\leq V_0}\1_{\abs{x-v(t-s)-x_0-rv_0}\leq V_0(s-r)}
\]
Let us restrict to the cases $r\leq \tfrac12$, $(s-r)\geq 1+2\tfrac{X_0}{V_0}$,  and $(t-s)\leq \tfrac12$, then
\begin{align*}
(s-r)V_0- \abs{(t-s)v-x_0-r v_0}&\geq(s-r)V_0- (t-s)\abs{v}-\abs{x_0}-r\abs{v_0}\\
&\geq V_0+2X_0-\frac{1}{2}V_0-X_0-\frac{1}{2}V_0\\
&\geq X_0.
\end{align*}
This means that, for such $t,s,r$, we have the implication
\begin{align*}
\abs{x}\leq X_0&\qquad \Longrightarrow \qquad \abs{x}\leq (s-r)V_0- \abs{(t-s)v-x_0-r v_0}\\
&\qquad \Longrightarrow \qquad \abs{x-v(t-s)-x_0-rv_0}\leq V_0(s-r)
\end{align*}
which allows to deduce
\[
(S_{\cB_0}(t-s)\cA_1 S_{\cB_0}(s-r)\cA_1 S_{\cB_0}(r)\delta_{x_0,v_0})(x,v)\geq C_0^2(1-\chi)^2e^{-(1+\chi)t}\1_{\abs{v}\leq V_0}\1_{\abs{x}\leq X_0}
\]
for such $t,s,r$. Therefore let us set $T = 2+2\tfrac{X_0}{V_0}$. Then we can restrict the time integrals to $r \in \(0,\tfrac12\)$, $s \in \(T-\tfrac{1}{2}, T\)$. Then we get 
\begin{align*}
S_\cL(T)\delta_{x_0,v_0}&\geq \int_{0}^{T} \int_{0}^{s} (S_{\cB_0}(t-s)\cA_1 S_{\cB_0}(s-r)\cA_1 S_{\cB_0}(r)\delta_{x_0,v_0})\d r \d s \\
&\geq C_0^2(1-\chi)^2 e^{-(1+\chi)T}  \int_{T-\tfrac{1}{2}}^{T} \int_{0}^{1/2} \1_{\abs{v}\leq V_0}\1_{\abs{x}\leq X_0} \d r \d s\\
&\geq \frac{1}{4} C_0^2(1-\chi)^2 e^{-(1+\chi)T}\1_{\abs{v}\leq V_0}\1_{\abs{x}\leq X_0}
\end{align*}
We can extend this inequality to more general initial measure by using the fact that $S_\cL$ is a Markov semigroup. More explicitly, since $S_\cL(T)$ is positive and mass preserving, by \cite[Prop. 1.2.3]{BGL13} it can be represented by a probability kernel $p$, i.e.
\[
S_\cL(T)\mu =\iint_\RRd p(x',v',\cdot) \d \mu(x',v')
\]
for all positive measures $\mu$. In particular, for any $(x_0,v_0)\in\RRd$,
\[
S_\cL(T)\delta_{x_0,v_0} = p(x_0,v_0,\cdot).
\]
Therefore, for any positive measure $\mu$, we have
\begin{align*}
    S_{\mathcal{L}}(T) \mu = \iint_\RRd \( S_{\mathcal{L}}(T) \delta_{x', v'} \) \d\mu(x', v')  \geq \frac{C_0^2(1-\chi)^2}{4}  e^{-(1+\chi)T}\1_{\abs{v}\leq V_0}\1_{\abs{x}\leq X_0} \mu(\mathcal{C}) 
\end{align*}
Hence the minorisation condition \eqref{con:minorisation} holds by choosing $T=2+2\tfrac{X_0}{V_0}$, $\mu_\ast = \tfrac{1}{\abs{B_{V_0}}\abs{B_{X_0}}}\1_{\abs{v}\leq V_0}\1_{\abs{x}\leq X_0}$ and $\alpha = \tfrac{C_0^2(1-\chi)^2}{4}  e^{-(1+\chi)T}\abs{B_{V_0}}\abs{B_{X_0}}$.
\end{proof}

\begin{proof}[\bf Proof of \Cref{thm:harrisapplied}]
Harris' theorem \ref{thm:harris} requires the minorisation condition \Cref{Hyp:minorisation}, which has been proved in \Cref{Prop:positivity}, and the weak Lyapunov condition \ref{Hyp:Lyapunov}, which has been proved \Cref{Sec:Lyapunov}. Consider first the function $m\asymp e^{\nu\wx^a} + e^{b\abs{v}^\gamma} $ as  defined in \Cref{Prop:Lyapunov}. Then we have, on $\RRd$,
\[
e^{\frac\nu2 \wx^a}\leq e^{\frac\nu2 \wx^a + \frac b 2\abs{v}^\gamma}\lesssim e^{\nu\wx^a}+ e^{b\abs{v}^\gamma} \lesssim m(x,v),
\]
so that 
\[
\wx^a\lesssim \ln(m).
\]
Hence, thanks to \eqref{eq:expLyab}, we have that weak Lyapunov condition \ref{Hyp:Lyapunov} holds with $\phi(m) = m\, (\ln m)^{-\tfrac{1-a}{a}}$ and the corresponding convergence rate is exactly \eqref{eq:ExpHarris}. If we consider the function $m  \asymp \wx^k+\wangle{v}^{2k}$ as defined in \Cref{Prop:PolyLyap}, then the 
weak Lyapunov condition holds with $\phi(m) = m^{1-\tfrac{1}{k}}$, and the corresponding convergence rate is \eqref{eq:PolyHarris}.

Lastly, we comment on the fact that the measure $\mu_G$ we find has a density. We refer to the proof of \Cref{thm:harris} in \cite{CM21} and we observe that the existence of the steady state measure comes from constructing a Cauchy sequence in a weighted total variation norm. If we work in the case where \Cref{Hyp:minorisation} is verified with a lower bound which has a density (as in our case) then we can repeat exactly the same argument but working in weighted $\sfL^1$ rather than weighted total variation. The completeness of $\sfL^1$ then gives us that the steady state is an $\sfL^1$ function. 
\end{proof}

\section{Existence via a splitting $\cL = \cA + \cB$ and the bound above on $G$ in $\sfL^\infty$}\label{sec:Duhamel}

In this section we study the semigroup $S_\cL$ through the same splitting strategy of \cite{MW17} and which is based on the Duhamel formula. Consider the following operators
\begin{equation}
    (\cA f)(x,v):=\M (v)\eta_R(x) \left(\int_{\R^d}\Lambda\left(\frac{x\cdot v'}{\wx}\right)f(x,v')\dv '\right) , 
\end{equation}
and
\begin{equation}
    (\cB  f)(x,v) = - v \cdot \nabla_x f(x,v) - \Lambda\left(\frac{x\cdot v}{\wx}\right) f(x,v) + (1- \eta_R(x))   \M (v)\(\int_{\R^d} \Lambda\left(\frac{x\cdot v'}{\wx}\right) f(x,v') \mathrm{d}v'\),
\end{equation}
where $\eta_R\colon \Rd\to\R$ is a smooth cut-off function such that $\1_{B(0,R)} \leq \eta_R \leq \1_{B(0,2R)}$, with $R>0$ large to be fixed later. Then we can write the generator of the run and tumble equation as $\cL = \cA + \cB$. The main interest of this splitting is that $\cA$ will enjoy good bounds in suitable weighted spaces, and $\cB$ is dissipative. The main result of this section is the following
\begin{prop}\label{Prop:SL_bounded}
Let $m(x,v)\asymp e^{\nu\wx^a}+e^{b\abs{v}^\gamma}$ be defined as in \Cref{Prop:Lyapunov}. The semigroup $S_\cL$ is bounded in $X:=\sfL^1(m)\cap\sfL^\infty(m)$, that is
\begin{equation}\label{SL bounded}
\norm{S_\cL(t)}_{X\to X}\lesssim 1.
\end{equation}
As a consequence, there exists a unique, normalised, invariant by rotation, steady state $G$ in $X$.
\end{prop}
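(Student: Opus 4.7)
The plan is to adapt the $\cA+\cB$ splitting strategy of \cite{MW17}, combining the $\sfL^1(m)$-boundedness of $S_\cL$ already provided by Harris (\Cref{thm:harrisapplied}) with the iterated Duhamel identity
\[
S_\cL = S_\cB + \sum_{j=1}^{d+1} S_\cB \star (\cA S_\cB)^{\star j} + S_\cB \star (\cA S_\cB)^{\star(d+1)}\star \cA S_\cL.
\]
The existence of a steady state in $X$ will then follow from Harris plus a short bootstrap argument reading off $G\in\sfL^\infty(m)$ from this formula applied to the invariant density.

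First, I would prove uniform-in-time boundedness of $S_\cB$ on both $\sfL^1(m)$ and $\sfL^\infty(m)$. Since the gain part $\cA^\ast m$ of $\cL^\ast m$ is supported in $B(0,2R)$ thanks to $\eta_R$, the Lyapunov estimate $\cL^\ast m \leq C - \eps\wx^{a-1} m$ from \Cref{Prop:Lyapunov} translates into $\cB^\ast m \leq -\eps\wx^{a-1} m + C'\1_{B(0,2R)}$. A weighted Gronwall bound applied to $\int \abs{S_\cB(t)f} m$ gives the bound in $\sfL^1(m)$ by choosing $R$ large, and the dual argument with $m$ as a test weight for the backward equation gives the $\sfL^\infty(m)$ bound. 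Secondly, one extracts the regularizing effect of $\cA$: from the explicit form $(\cA f)(x,v)=\eta_R(x)\M(v)\int\Lambda(\tfrac{x\cdot v'}{\wx})f(x,v')\dv'$, one sees that $\cA$ is compactly supported in $x$ with Gaussian profile in $v$, and since on $\mathrm{supp}(\eta_R)\times\R^d$ the weight $m$ is comparable to $1+e^{b\abs{v}^\gamma}$ with $b<\tfrac{1}{\gamma}$, the operator $\cA$ maps $\sfL^1(m)$ continuously into $\sfL^\infty_v\sfL^1_x(m)$. The $(d+1)$-fold time convolution $(\cA S_\cB)^{\star(d+1)}$ then produces enough velocity averaging along the free transport flow (in the spirit of classical averaging-lemma arguments, done here by an explicit change of variables in the $d+1$ intermediate velocities) to upgrade from $\sfL^1(m)$ to $\sfL^\infty(m)$.

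These ingredients fed into the Duhamel identity immediately produce $\norm{S_\cL(t)}_{X\to X}\lesssim 1$: the first term is bounded on both factors of $X$; every summand $S_\cB\star(\cA S_\cB)^{\star j}$ is $X\to X$ bounded by the two steps above; the final remainder combines the $\sfL^1(m)$-boundedness of $S_\cL$ from Harris with the $\sfL^1(m)\to\sfL^\infty(m)$ mapping property of $(\cA S_\cB)^{\star(d+1)}\star\cA$. For the steady state, Harris already provides a unique normalized, rotation-invariant density $G\in\sfL^1(m)$; applying the Duhamel identity to $G$ at a fixed time and using $S_\cL(t)G=G$, every term on the right lies in $\sfL^\infty(m)$, so $G\in X$. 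Uniqueness in $X$ is inherited from uniqueness in the larger space $\sfL^1(m)$.

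The main obstacle I anticipate is making the $(d+1)$-fold regularization in $(\cA S_\cB)^{\star(d+1)}$ rigorous while tracking the stretch exponential weight $m$: one must check that the averaging-lemma-type bound does not lose exponential factors that the weight $m$ cannot absorb, which is precisely the regime where the author's second innovation (splitting the phase space according to whether $\abs{v}\gg\wx$ or not) is expected to enter. A secondary delicate point is dissipativity of $\cB$ in $\sfL^\infty(m)$: the estimate must be pointwise in $(x,v)$ outside a compact set rather than merely integrated, which again requires balancing the sub-exponential decay $\wx^{a-1}m$ against the $\sfL^\infty$ norm in the large-velocity zone.
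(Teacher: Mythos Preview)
Your high-level architecture matches the paper's: Duhamel expansion of $S_\cL$ via the splitting $\cL=\cA+\cB$, uniform bounds on $S_\cB$, regularization through the iterated $\cA S_\cB$ terms, and closing the remainder via a priori control on $S_\cL$. However, there is a concrete gap and one genuine methodological difference worth noting.

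\textbf{Gap: the $\sfL^\infty(m)$ bound on $S_\cB$ is not a dual argument.} You write that ``the dual argument with $m$ as a test weight for the backward equation gives the $\sfL^\infty(m)$ bound''. This does not work: $\cB$ still contains the non-local gain term $(1-\eta_R)\M\int\Lambda f'\dv'$, which is not a multiplication operator, so the inequality $\cB^\ast m\leq 0$ (which you correctly derive) yields the $\sfL^1(m)$ bound but not an $\sfL^\infty(m)$ one by duality. The paper instead introduces a \emph{second-level} splitting $\cB=\cB_0+\cA_0$, with $\cB_0 f=-v\cdot\nabla_x f-\Lambda f$ explicit and $\cA_0$ the remaining (non-local, supported outside $B_R$) gain term. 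The $\sfL^\infty(m)$ bound on $S_\cB$ is then obtained from a Duhamel formula for $S_\cB$ in terms of $S_{\cB_0}$ and $\cA_0 S_{\cB_0}$, routed through the intermediate mixed space $\sfL^1_x\sfL^\infty_v(\omega)$ with $\omega=e^{\nu\wx^a+b|v|^\gamma}$ (\Cref{lem:dissipSB}). This is also where the regularization you invoke actually lives: the $t^{-d}$ smoothing comes from the explicit change of variables $z=x-v't$ in $\cA_0 S_{\cB_0}$ (\Cref{iterates Duhamel}), not from $\cA S_\cB$, for which there is no explicit formula. Your anticipated obstacle about tracking the stretch-exponential weight is resolved precisely here, via the Young-type inequality $\nu\wx^a\leq \nu\wangle{x-v't}^a+\lambda t+\delta|v'|^{a/(1-a)}$, which is why the paper needs $a\leq\tfrac{\gamma}{1+\gamma}$.

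\textbf{Differences in closing the argument.} For the final Duhamel term the paper does not use Harris at all: since $\cA$ is compactly supported in $x$ with $\M$ profile in $v$, one has $\|\cA g\|_{\sfL^1_x\sfL^\infty_v(\omega^\ell)}\lesssim\|g\|_{\sfL^1}$, and then mere \emph{mass conservation} of $S_\cL$ suffices. Your route via the Lyapunov-induced $\sfL^1(m)$-bound on $S_\cL$ also closes, but is stronger than needed and makes the section depend on \Cref{Sec:Harris}. Similarly, for existence of $G$ the paper does not bootstrap the Harris steady state; it runs a self-contained Schauder--Tychonoff fixed-point argument on a rotation-invariant, weak-$\ast$ compact convex subset of $\sfL^\infty(m)$, using the equivalent norm $\inorm{f}=\sup_{t\geq 0}\|S_\cL(t)f\|_X$. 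Your alternative (take $G$ from Harris in $\sfL^1(m)$, then read off $G\in\sfL^\infty(m)$ from the Duhamel identity applied to $G=S_\cL(t)G$) is valid and arguably shorter, but relies on the earlier section; the paper's approach makes \Cref{sec:Duhamel} logically independent of \Cref{Sec:Harris}.
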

This estimate is carried out through the Duhamel formula
\[
S_\cL =S_{\cB}+ \sum_{j=1}^{d+1} S_{\cB}\star(\cA S_{\cB})^{\star j} + S_{\cB}\star (\cA S_{\cB})^{\star(d+1)} \star \cA S_\cL
\]
and for this reason it requires some preliminary estimates on $S_\cB$ and $\cA$. In \Cref{SubSec:Duhamel1} we derive all the needed estimates about $S_\cB$ and in \Cref{SubSec:Duhamel2} we use them to study $S_\cL$.

\subsection{The semi-group $S_{\cB }$}\label{SubSec:Duhamel1}

The semi-group $S_{\cB }$ will be studied through a further splitting $\cB =\cB_0+\cA_0$ where 
\begin{equation}\label{eq:SB0}
    (\cB _0 f)(x,v) = - v \cdot \nabla_x f(x,v) - \Lambda\left(\frac{x\cdot v}{\wx}\right) f(x,v),\qquad \forall (x,v)\in \RRd 
\end{equation}
and 
\begin{equation}\label{eq:A0}
    (\cA_0 f)(x,v)=(1- \eta_R(x))\M (v)  \int_{\R^d} \Lambda\left(\frac{x\cdot v'}{\wx}\right) f(x,v') \mathrm{d}v' ,\qquad \forall (x,v)\in \RRd.
\end{equation}
Hence, let us first provide some useful lemmas concerning $\cB_0$ and $\cA_0$.

\begin{lem}\label{estimate:S_B0}
Let $m\asymp e^{\nu\wx^a}+e^{b\abs{v}^\gamma}$ be defined as in \Cref{Prop:Lyapunov}. If $\nu>0$ is small enough, then for all $1\leq p\leq \infty$ there holds
\begin{align}\label{SB_0 and A_0}
    \|S_{\cB _0}(t)\|_{\sfL^p (m)\to \sfL^p (m)}&\leq e^{-\frac{1}{2}(1-\chi) t} & \|\cA_0 \|_{\sfL^p (m)\to \sfL^p (m)}&\lesssim 1.
\end{align}

\end{lem}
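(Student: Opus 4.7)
My plan is to establish the two estimates separately, exploiting the explicit structure of $\cB_0$ and the tensorised form of $\cA_0$, together with the bounds of the weight $m$ from \Cref{Prop:Lyapunov}.

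\textbf{Step 1 (semigroup $S_{\cB_0}$).} Since $\cB_0 = -v\cdot\nabla_x - \Lambda$ is transport with absorption, I would integrate along the characteristics $s\mapsto(x-vs,v)$ to get
\[
(S_{\cB_0}(t)f_0)(x,v) = f_0(x-vt,v)\exp\left(-\int_0^t \Lambda\left(\tfrac{(x-vs)\cdot v}{\wangle{x-vs}}\right)\d s\right).
\]
The lower bound $\Lambda = 1+\chi\psi \geq 1-\chi$ (from $\norm{\psi}_\infty\leq 1$) immediately gives $\abs{(S_{\cB_0}(t)f_0)(x,v)} \leq e^{-(1-\chi)t}\abs{f_0(x-vt,v)}$. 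Plugging this into the $\sfL^p(m)$ norm and changing variables $y = x - vt$, the estimate reduces to the pointwise weight comparison
\[
m(y+vt,v) \leq C\,e^{\tfrac{1-\chi}{2}t}\,m(y,v)
\]
uniformly in $(y,v,t)$, which combined with the pointwise bound yields the advertised $e^{-\tfrac{1-\chi}{2}t}$ decay for every $p \in [1,\infty]$.

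\textbf{Step 2 (weight comparison).} This is the heart of the argument. Writing $m \asymp e^{\nu\wangle{\cdot}^a} + e^{b\abs{v}^\gamma}$ and using $\wangle{y+vt}^a \leq c(\wangle{y}^a + t^a\abs{v}^a)$ (from $\abs{y+vt}^2\leq 2\abs{y}^2+2t^2\abs{v}^2$ and subadditivity of $z\mapsto z^{a/2}$), I am reduced to controlling $e^{\nu t^a\abs{v}^a}$. A scaled Young inequality with Hölder-conjugate exponents $\gamma/(\gamma-a)$ and $\gamma/a$ yields
\[
\nu t^a\abs{v}^a \leq b\abs{v}^\gamma + C(\nu,b,\gamma)\,t^{a\gamma/(\gamma-a)},
\]
with a prefactor of order $\nu^{\gamma/(\gamma-a)}$. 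The hypothesis $a \leq \gamma/(1+\gamma)$ is precisely the condition $a\gamma/(\gamma-a)\leq 1$: for $a<\gamma/(1+\gamma)$ the $t$-exponent is strictly sub-linear and negligible against $e^{(1-\chi)t/2}$ uniformly in $t$; at the critical value $a=\gamma/(1+\gamma)$ the exponent equals $1$ and one needs $\nu^{(\gamma+1)/\gamma}$ strictly smaller than $(1-\chi)/2$, which forces $\nu$ to be small enough. This endpoint regime is the main technical obstacle and the sole source of the smallness condition on $\nu$.

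\textbf{Step 3 (operator $\cA_0$).} Since $\cA_0 f(x,v) = (1-\eta_R(x))\M(v)\rho_\Lambda(x)$ with $\rho_\Lambda(x) := \int \Lambda(\tfrac{x\cdot v'}{\wangle{x}})f(x,v')\d v'$, I would use the key moment identity
\[
\overline{m}_\M(x) := \int \M(v)m(x,v)\d v \lesssim e^{\nu\wangle{x}^a},
\]
which follows from $\int\M=1$, from the vanishing of $\int\M\,v\d v$ by symmetry killing the $x\cdot v$ term, from the uniform bound $\wangle{x}^{a-1}+\wangle{x}^{2a-2}\leq 2$ (as $a\leq 1$) applied to the remaining polynomial terms, and from $\int \M e^{b\abs{v}^\gamma}\d v <\infty$ since $b<1/\gamma$. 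Combined with $\Lambda \leq 1+\chi$, Fubini yields for $p=1$
\[
\norm{\cA_0 f}_{\sfL^1(m)} \leq (1+\chi)\int(1-\eta_R(x))\rho_{\abs{f}}(x)\,\overline{m}_\M(x)\,\d x \lesssim \int (1-\eta_R(x))\rho_{\abs{f}}(x)\, e^{\nu\wangle{x}^a}\d x,
\]
and the lower bound $m(x,v')\geq (1-\delta_1)e^{\nu\wangle{x}^a}$ from \Cref{Prop:Lyapunov} upgrades the right-hand side to $\lesssim \iint\abs{f(x,v')}m(x,v')\d v'\d x = \norm{f}_{\sfL^1(m)}$. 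For $1<p\leq\infty$ the same absorption mechanism works after applying Hölder to $\int\abs{f(x,v')}\d v'$: negative powers of $m$ are integrable in $v'$ thanks to $m\geq \nu e^{b\abs{v'}^\gamma}$, and the analogous bound $\int \M(v)^p m(x,v)\d v \lesssim e^{\nu\wangle{x}^a}$ combined with the same lower bound on $m$ implements the cancellation and concludes.
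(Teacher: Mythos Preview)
Your route via characteristics and a direct weight comparison is a reasonable alternative to the paper's approach (which instead proves the pointwise differential inequality $(v\cdot\nabla_x-\Lambda)\widetilde m\le -\tfrac{1-\chi}{2}\widetilde m$ for $\widetilde m=e^{\nu\wx^a}+e^{b|v|^\gamma}$ and then Gronwalls). However, your Step~2 has a genuine gap.

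After the subadditivity bound and Young's inequality you arrive at
\[
e^{\nu\wangle{y+vt}^a}\le e^{c\nu\wangle{y}^a}\,e^{b|v|^\gamma}\,e^{C_\nu t^{a\gamma/(\gamma-a)}},
\]
but the product $e^{\nu\wangle{y}^a}e^{b|v|^\gamma}$ is \emph{not} bounded by a constant times the sum $e^{\nu\wangle{y}^a}+e^{b|v|^\gamma}$ appearing in $m(y,v)$: take $\wangle y$ and $|v|$ both large with $\nu\wangle y^a=b|v|^\gamma$ to see the ratio blow up. So ``I am reduced to controlling $e^{\nu t^a|v|^a}$'' is not a valid reduction. (A secondary issue: your constant $c=2^{a/2}>1$ from $|y+vt|^2\le 2|y|^2+2t^2|v|^2$ already breaks the comparison, since $e^{c\nu\wangle{y}^a}/e^{\nu\wangle{y}^a}\to\infty$; one should use $\wangle{y+vt}\le\wangle{y}+t|v|$ and subadditivity of $z\mapsto z^a$ to get $c=1$, but even then the product-versus-sum obstruction remains.)

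What is missing is precisely the case split the paper performs: when $|v|\le c\,\wx^{1-a}$ the transport term $\nu a|v|\wx^{a-1}e^{\nu\wx^a}$ is absorbed by $\tfrac{1-\chi}{2}e^{\nu\wx^a}$; when $|v|>c\,\wx^{1-a}$ one has $\wx^a\le c^{-a/(1-a)}|v|^{a/(1-a)}\le c^{-a/(1-a)}|v|^\gamma$ (using $a\le\gamma/(1+\gamma)$), so the \emph{entire} factor $e^{\nu\wx^a}$, not just the increment, is swallowed by $e^{b|v|^\gamma}$ for $\nu$ small. This splitting is what makes the sum structure of $m$ compatible with the estimate, and it is the step your argument lacks. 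Your Step~3 for $\cA_0$ at $p=1$ is fine and matches the paper's computation.
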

\begin{proof}[{\bf Proof of \Cref{estimate:S_B0}}]
Let us consider the weight $\widetilde{m}(x,v)=e^{\nu \wx^a} + e^{b \abs{v}^\gamma}$ with $a\in(0,\tfrac{\gamma}{1+\gamma}]$ and $b\in(0,\tfrac{1}{\gamma})$ and notice that it generates an equivalent norm on $\sfL^p (m)$. If $f = S_{\cB_0}(t) f_0$ then we have
\begin{align*}
\frac{\mathrm{d}}{\mathrm{d}t} \int_{\R^d} |f|^p \widetilde{m}^p \,\dx\dv &= p\int_{\R^d} |f|^p \widetilde{m}^{p-1} \left( -v \cdot\nabla_x \widetilde{m} - \Lambda\widetilde{m} \right)\,\dx\dv  \\
&= p\int_{\R^d} |f|^p \widetilde{m}^{p-1} \left( \nu a\, x\cdot v \wx^{a-2} e^{ \nu \wx^a} - \Lambda \widetilde{m} \right)\,\dx\dv \\
&\leq p\int_{\R^d} |f|^p \widetilde{m}^{p-1} \left( \nu a\, \abs{v}  \wx^{a-1} e^{ \nu \wx^a} - \Lambda \widetilde{m} \right)\,\dx\dv .
\end{align*}
Let $ 1\leq c\leq \frac{1-\chi}{2\nu a}$, we distinguish the following two parts of the space. When $|v| \leq c\wx^{1-a}$ we have
\begin{align*}
\nu a\, \abs{v}  \wx^{a-1} e^{ \nu \wx^a} - \Lambda \widetilde{m} &\leq \nu a c e^{ \nu \wx^a}  - (1-\chi) \widetilde{m}\leq  -(1-\chi-\nu c a)\widetilde{m} \leq - \frac{1}{2}(1-\chi)\widetilde{m}.
\end{align*}
Consider now the part of the phase space where $\abs{v}\geq c\wx^{1-a}$, we have
\[
\nu a\, \abs{v}  \wx^{a-1} e^{ \nu \wx^a} - \Lambda \widetilde{m} \leq \nu a\abs{v}e^{\nu c^{-\frac{a}{1-a}}\abs{v}^\frac{a}{1-a}}- (1-\chi) \widetilde{m}.
\]
Since $a\leq \tfrac{\gamma}{1+\gamma}$, then $\tfrac{a}{1-a}\leq \gamma$, and since $\abs{v}\geq 1$ in this part of the space we have
\begin{align*}
\nu a\, \abs{v}  \wx^{a-1} e^{ \nu \wx^a} - \Lambda \widetilde{m} &\leq \nu a\abs{v}e^{\nu c^{-\frac{a}{1-a}}\abs{v}^\gamma}- (1-\chi) (e^{\nu\wx^a}+e^{b\abs{v}^\gamma}) \leq -\frac{1}{2}(1-\chi)\widetilde{m},
\end{align*}
if $\nu>0$ is small enough. In both cases we have
\[
\ddt \int_{\R^d} |f|^p \widetilde{m}^p\,\dx\dv   \leq -\frac{p}{2}(1-\chi) \int_{\R^d} |f|^p \widetilde{m}^p\,\dx\dv 
\]
and the first estimate in \eqref{SB_0 and A_0} is proved. 

Concerning $\cA_0$, it suffices to compute
\begin{align*}
\|\cA_0 f\|_{L^p(\widetilde{m})}^p&=\int_{\R^d \times \R^d} (1- \eta_R(x))^p\M (v)^p  \left(\int_{\R^d} \Lambda\left(\frac{x\cdot v'}{\wx}\right) f(x,v') \mathrm{d}v'\right)^p(e^{\nu\wx^a}+e^{b\abs{v}^2})^p \dx\dv \\
&\leq (1+\chi)^p\int_{\R^d \times \R^d} \(\M(v)e^{\nu\wx^a}+\M(v)e^{b\abs{v}^2}\)^p\left(\int_{\R^d} \abs{f}^p\dv '\right)\dx\dv \\
&\leq (1+\chi)^p\int_{\R^d \times \R^d} \(\M(v)e^{\nu\wx^a+b\abs{v}^2}+\M(v)e^{\nu\wx^a+ b\abs{v}^2}\)^p\left(\int_{\R^d} \abs{f}^p\dv '\right)\dx\dv \\
&\leq (1+\chi)^p\int_{\R^d \times \R^d} 2^p\(\M(v)e^{\nu\wx^a+b\abs{v}^2}\)^p\left(\int_{\R^d} \abs{f}^p\dv '\right)\dx\dv \\
&\leq 2^p(1+\chi)^p\int_{\R^d \times \R^d} \(\M(v)e^{b\abs{v}^2}\)^p\left(\int_{\R^d} \abs{f}^pe^{p\nu\wx^a}\dv '\right)\dx\dv \\
&= 2^p(1+\chi)^p\left(\int_{\R^d} \(\M(v)e^{b\abs{v}^2}\)^p \dv \right)\left(\int_{\R^d \times \R^d} \abs{f}^p e^{p\nu\wx^a}\dv '\dx\right).\\
&= 2^p(1+\chi)^p\Vert \M e^{b (\cdot)^2} \Vert_{\sfL^p}^p \|f\|^p_{\sfL^p(m)}.
\end{align*}
and also the second estimate of \eqref{SB_0 and A_0} is proved.
\end{proof}

\begin{lem}\label{iterates Duhamel}
Let $a\in\left(0,\tfrac{\gamma}{1+\gamma}\right]$, $b\in\(0,\tfrac{1}{\gamma}\)$, $\lambda\in(0,1-\chi)$ and $0<\delta<b$ be fixed. Consider the weight $\omega(x,v)=e^{\nu\wx^a +b|v|^\gamma}$  with $0<\nu\leq \left(\frac{1}{a}\right)^{a}\left(\frac{1}{1-a}\right)^{1-a}\lambda^{a}\delta^{1-a}$. We have the following estimates
\begin{equation}\label{AS_B 1}
\|\cA_0 S_{\cB _0}(t)\|_{\sfL^1_x\sfL^\infty_v(\omega)\longrightarrow \sfL^\infty(\omega)}\lesssim t^{-d}e^{-(1-\chi-\lambda)t},
\end{equation}
\begin{equation}\label{AS_B 2}
\|\cA_0 S_{\cB _0}(t)\|_{\sfL^1_x\sfL^\infty_v(\omega)\longrightarrow\sfL^1_x\sfL^\infty_v(\omega)}\lesssim e^{-(1-\chi-\lambda)t},
\end{equation}
\begin{equation}\label{AS_B 3}
\|\cA_0 S_{\cB _0}(t)\|_{\sfL^\infty(\omega)\longrightarrow \sfL^\infty(\omega)}\lesssim e^{-(1-\chi-\lambda)t},
\end{equation}
\begin{equation}\label{AS_B 4}
\|\cA_0 S_{\cB _0}(t)\|_{\sfL^1(\omega)\longrightarrow \sfL^1(\omega)}\lesssim e^{-(1-\chi-\lambda)t}
\end{equation}
and
\begin{equation}\label{AS_B 5}
\|\cA_0 S_{\cB _0}(t)\|_{\sfL^1_x\sfL^\infty_v(\omega)\longrightarrow \sfL^1(\omega)}\lesssim e^{-(1-\chi-\lambda)t}.
\end{equation}
As a consequence, we also have
\begin{equation}\label{AS_B 6}
\|(\cA_0 S_{\cB _0}(t))^{\star n}\|_{\sfL^1_x\sfL^\infty_v(\omega)\longrightarrow \sfL^\infty(\omega)}\lesssim t^{-(d-n+1)}e^{-(1-\chi-\lambda)t}
\end{equation}
\begin{equation}\label{AS_B 7}
\|(\cA_0 S_{\cB _0}(t))^{\star n}\|_{\sfL^1_x\sfL^\infty_v(\omega)\longrightarrow \sfL^1(\omega)}\lesssim e^{-(1-\chi-\lambda)t}
\end{equation}
for every $n\in\NN$. The same inequalities also hold with $\cA$ in place of $\cA_0$ and in \eqref{AS_B 6} and \eqref{AS_B 7} different combinations of $\cA S_{\cB_0}$ and $\cA_0 S_{\cB_0}$ can be used.
\end{lem}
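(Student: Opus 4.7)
My plan is to build everything from the explicit representation of the damped transport semigroup
\[
(S_{\cB_0}(t)f_0)(x,v) = f_0(x-vt,v)\,\exp\!\left(-\int_0^t \Lambda\!\left(\frac{(x-vs)\cdot v}{\wangle{x-vs}}\right)\mathrm{d}s\right),
\]
already recorded in the proof of \Cref{Prop:positivity}, together with the pointwise bound $|(S_{\cB_0}(t)f_0)(x,v)| \leq |f_0(x-vt,v)|\,e^{-(1-\chi)t}$ coming from $1-\chi \leq \Lambda \leq 1+\chi$. Composing with $\cA_0$ and using $\Lambda \leq 1+\chi$ again produces the starting point
\[
|(\cA_0 S_{\cB_0}(t)f_0)(x,v)| \lesssim (1-\eta_R(x))\,\M(v)\,e^{-(1-\chi)t}\int |f_0(x-v't,v')|\,\mathrm{d}v'.
\]
The change of variables $y = x - v't$, with Jacobian $t^{-d}$, is the only place where the singular factor $t^{-d}$ ever enters.

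The second ingredient is a weight-comparison step. Sub-additivity of $z \mapsto z^a$ for $a \in (0,1]$ gives $\wx^a \leq \wy^a + |x-y|^a$, and because $\M(v)\,\omega(x,v) \lesssim e^{\nu\wx^a}$ (using $b < 1/\gamma$) the $v$-dependence in the outer weight is harmless, so everything reduces to establishing $\int e^{\nu t^a|v'|^a - b|v'|^\gamma}\,\mathrm{d}v' \lesssim e^{\lambda t}$. I would prove this by Young's inequality with exponents $p = 1/a$ and $q = 1/(1-a)$: for any $\alpha > 0$,
\[
\nu t^a|v'|^a \leq a\,\alpha^{1/a}\nu^{1/a}\,t + (1-a)\,\alpha^{-1/(1-a)}|v'|^{a/(1-a)}.
\]
Since $a \leq \gamma/(1+\gamma)$ one has $a/(1-a) \leq \gamma$ and hence $|v'|^{a/(1-a)} \lesssim 1 + |v'|^\gamma$; choosing $\alpha = ((1-a)/\delta)^{1-a}$ with $\delta < b$ absorbs the $|v'|^{a/(1-a)}$ term into $-b|v'|^\gamma$ with margin $(b-\delta)|v'|^\gamma$, while the linear-in-$t$ contribution is at most $\lambda t$ exactly when $\nu \leq (1/a)^a(1/(1-a))^{1-a}\lambda^a\delta^{1-a}$, which is precisely the threshold in the statement.

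Armed with this key bound, the five single-iterate estimates \eqref{AS_B 1}--\eqref{AS_B 5} are essentially combinatorial. For \eqref{AS_B 1} I perform the change $y = x - v't$, bound $|f_0(y,v')|$ by $\omega(y,v')^{-1}\|f_0(y,\cdot)\omega(y,\cdot)\|_{\sfL^\infty_v}$, absorb the weight ratio using the estimate above, and extract $t^{-d}\|f_0\|_{\sfL^1_x\sfL^\infty_v(\omega)}$. Estimate \eqref{AS_B 3} is even simpler: without any change of variables, one integrates $\mathrm{d}v'$ directly, producing only the $e^{\lambda t}$ factor. Estimates \eqref{AS_B 2}, \eqref{AS_B 4} and \eqref{AS_B 5} interpolate between these two: one takes either $\sup_v$ or $\int \mathrm{d}x$ first and then applies the key bound in the remaining slot. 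The cutoff $(1-\eta_R)$ plays no quantitative role and the same arguments go through verbatim with $\cA$ in place of $\cA_0$.

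The hard part will be the iterates \eqref{AS_B 6} and \eqref{AS_B 7}, because a naive induction using \eqref{AS_B 1} produces non-integrable factors $(t-s)^{-d}$ in the time convolution. The plan is instead to expand $(\cA_0 S_{\cB_0})^{\star n}(t)$ directly as an integral over the time simplex $\Delta = \{\sigma = (s_1,\ldots,s_n) : s_i \geq 0,\ \sum_i s_i = t\}$ and over $n$ velocities, with $f_0$ evaluated at $(x - \sum_{i=1}^n v_i s_i,\,v_n)$ against the kernel $\M(v)\prod_{i=1}^{n-1}\M(v_i)$. I would then split $\Delta$ into the $n$ subregions $\{s_j = \max_i s_i\}$, on each of which $s_j \geq t/n$, and perform the single change $v_j \mapsto y$, producing exactly one Jacobian factor $s_j^{-d} \leq (n/t)^d$. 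The remaining velocity integrals are controlled as before, yielding $\prod_i e^{\lambda s_i} = e^{\lambda t}$, and integration over $\Delta$ contributes $|\Delta| = t^{n-1}/(n-1)!$. Combining these gives the bound $t^{-(d-n+1)}e^{-(1-\chi-\lambda)t}$ of \eqref{AS_B 6}; for \eqref{AS_B 7}, the same scheme without any velocity change produces only a polynomial factor $t^{n-1}$, which is absorbed into a marginally smaller value of $\lambda$. The stated remark about mixed products of $\cA S_{\cB_0}$ and $\cA_0 S_{\cB_0}$ follows because $\eta_R(x), (1-\eta_R(x)) \leq 1$ and the cutoffs do not interact with the simplex/Jacobian analysis.
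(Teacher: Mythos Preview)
Your approach is correct and, for the single-iterate bounds \eqref{AS_B 1}--\eqref{AS_B 5}, essentially identical to the paper's: both start from the explicit transport formula, compare weights via $\wx^a \leq \wangle{x-v't}^a + t^a|v'|^a$ and Young's inequality (your threshold computation for $\nu$ reproduces exactly the paper's condition), and then read off each norm bound by an appropriate choice of which variable to integrate or take suprema in.

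The only genuine difference is in the iterates \eqref{AS_B 6}--\eqref{AS_B 7}. The paper does not carry out any kernel-level argument here; it simply invokes an abstract convolution lemma (\cite[Proposition~2.5]{MQT}, see also \cite[Lemma~6.3]{Cao19}) which takes as input the three norm bounds $X\to Y$ with $s^{-d}e^{-\beta s}$, $X\to X$ with $e^{-\beta s}$, $Y\to Y$ with $e^{-\beta s}$, and outputs $\|U^{\star n}(t)\|_{X\to Y}\lesssim t^{-(d-n+1)}e^{-\beta t}$. Your simplex-splitting argument (partition $\{s_j=\max_i s_i\}$, use $s_j\geq t/n$, change the single variable $v_j\mapsto y$ to pay one Jacobian $s_j^{-d}$, integrate the rest against the remaining $\M$'s and the simplex volume $t^{n-1}/(n-1)!$) is precisely the \emph{proof} of that abstract lemma, specialised to the present kernel. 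So your route is more self-contained but not conceptually different. Your remark that the residual polynomial $t^{n-1}$ in \eqref{AS_B 7} is absorbed by slightly adjusting $\lambda$ is fine, since $\lambda\in(0,1-\chi)$ is a free parameter.
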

\begin{proof}[{\bf Proof of \Cref{iterates Duhamel}}]
The method of characteristics gives the representation formula
\[
(S_{\cB _0}(t)f_0)(x, v) =f_0(x-vt, v)\exp\(-\int_0^t\Lambda\(\frac{(x-vs)\cdot v}{\wangle{x-vs}}\)\d s\) \qquad \forall(x,v)\in\RRd,
\]
hence
\begin{align*}
(\cA_0 S_{\cB _0}(t)f_0)(x, v)&=\left(\int_{\R^d}\Lambda(x,v')f_0(x-v't, v')\exp\(-\int_0^t\Lambda\(\frac{(x-v's)\cdot v'}{\wangle{x-v's}}\)\d s\)\dv '\right) \M (v)\\
&\lesssim e^{-(1-\chi)t}e^{-\frac{|v|^\gamma}{\gamma}}\int_{\R^d} f_0(x-v't,v')\dv '\\
&\lesssim e^{-(1-\chi)t}e^{-\frac{|v|^\gamma}{\gamma}}\int_{\R^d} \sup_{w\in\R^d}|f_0(x-v't,w)e^{\nu\wangle{x-v't}^a+b|w|^\gamma}|e^{-\nu\wangle{x-v't}^{a}-b|v'|^\gamma}\dv '.
\end{align*}
By convexity we have
\[
\wangle{x-vt}\geq \wx-\frac{x\cdot v}{\wx}t\geq \wx -t\abs{v}
\]
and by subadditivity and the Young inequality we have
\[
\nu\wx^{a}\leq\nu\wangle{x-v't}^{a} + \nu t^a\abs{v'}^a\leq \nu\wangle{x-v't}^{a}+ \lambda t + \delta \abs{v'}^{\frac{a}{1-a}}.
\]
As a consequence
\begin{align*}
(\cA_0 S_{\cB _0}(t)f_0)(x, v)e^{\nu\wx^{a}+\frac{|v|^\gamma}{\gamma}}&\lesssim e^{-(1-\chi-\lambda)t}\int_{\R^d} \sup_{w\in\R^d}|f_0(x-v't,w)e^{\nu\wangle{x-v't}^a+b|w|^\gamma}|e^{-(b-\delta)\abs{v'}^\gamma}\dv '
\end{align*}
To prove inequality \eqref{AS_B 1}, we estimate $e^{-(b-\delta)\abs{v'}^\gamma}\leq 1$ and we make the change of variables $z=x-v't$ to get
\[
\norm{\cA_0 S_{\cB _0}(t)f_0}_{\sfL^\infty\left(e^{\nu\wx^{a}+\frac{|v|^\gamma}{\gamma}}\right)}\lesssim t^{-d} e^{-(1-\chi-\lambda)t}\norm{f}_{\sfL^1_x\sfL^\infty_v\left(e^{\nu\wx^{a}+b|v|^\gamma}\right)}.
\]
To prove inequality \eqref{AS_B 2} we simply integrate and change variables $z=x-v't$ and $u=v'$
\begin{align*}
\norm{\cA_0 S_{\cB _0}(t)f_0}_{\sfL^1_x\sfL_v^\infty\left(e^{\nu\wx^{a}+\frac{|v|^\gamma}{\gamma}}\right)}&\lesssim e^{-(1-\chi-\lambda)t} \iint_{\R^d\times\R^d}\sup_{w\in\R^d}|f_0(x-v't,w)e^{\nu\wangle{x-v't}^{a}+b|w|^\gamma}|e^{-(b-\delta)\abs{v'}^\gamma}\dv '\dx\\
&\lesssim e^{-(1-\chi-\lambda)t} \iint_{\R^d\times\R^d}\sup_{w\in\R^d}|f_0(z,w)e^{\nu\wangle{z}^{a}+b|w|^\gamma}|e^{-(b-\delta)\abs{u}^\gamma}\d u\d z\\
&\lesssim e^{-(1-\chi-\lambda)t} \int_{\R^d}\sup_{w\in\R^d}|f_0(z,w)e^{\nu\wangle{z}^{a}+b|w|^\gamma}|\d z.
\end{align*}
To prove inequality \eqref{AS_B 3} we compute
\begin{align*}
\sup_{x,v\in\R^d}\abs*{(\cA_0 S_{\cB _0}(t)f_0)(x, v)e^{\nu\wx^{a}+\frac{|v|^\gamma}{\gamma}}}&\lesssim e^{-(1-\chi-\lambda)t} \sup_{z,w\in\R^d}\abs*{f_0(z,w)e^{\nu\wangle{z}^{a}+b|w|^\gamma}}\int_{\R^d}e^{-(b-\delta)\abs{v'}^\gamma}\dv '\\
&\lesssim e^{-(1-\chi-\lambda)t} \sup_{z,w\in\R^d}\abs*{f_0(z,w)e^{\nu\wangle{z}^{a}+b|w|^\gamma}}.
\end{align*}
To prove inequality \eqref{AS_B 4}, we notice that 
\begin{align*}
\abs*{\cA_0 S_{\cB_0}(t)f_0(x,v)}\omega(x,v)&\lesssim e^{-(1-\chi)t}\M(v)e^{b\abs{v}^\gamma}\int_{\R^d}\abs{f_0(x-v't,v')}e^{\nu\wx^a}\dv '\\
&\lesssim e^{-(1-\chi-\lambda)t}\M(v)e^{b\abs{v}^\gamma}\int_{\R^d}\abs{f_0(x-v't,v')}e^{\nu\wangle{x-v't}^a+\delta\abs{v'}^\gamma}\dv ',
\end{align*}
so integrating in $x$ and $v$ we find
\begin{align*}
\norm{\cA_0 S_{\cB_0}(t)f_0}_{\sfL^1(\omega)}&\lesssim e^{-(1-\chi-\lambda)t}\(\int_{\R^d}\M(v)e^{b\abs{v}^\gamma}\dv \)\iint_{\R^d\times\R^d} \abs{f_0(x-v't,v')}e^{\nu\wangle{x-v't}^a+b\abs{v}^\gamma}\dx\dv '\\
&\lesssim e^{-(1-\chi-\lambda)t}\norm{f_0}_{\sfL^1(\omega)}.
\end{align*}
To prove inequality \eqref{AS_B 5} we just integrate
\begin{align*}
\norm{\cA_0 S_{\cB_0}(t)&f_0}_{\sfL^1\(e^{\nu\wx^a+b\abs{v}^\gamma}\)}\\
&\lesssim e^{-(1-\chi-\lambda)t}\(\int_{\R^d}e^{b\abs{v}^\gamma}\M(v)\dv \)\iint_{\R^d\times\R^d}\sup_{w\in\R^d}\abs{f_0(x-v't,w)e^{\nu\wangle{x-v't}^a+b\abs{v'}^\gamma}}e^{-(b-\delta)\abs{v'}^\gamma}\dx\dv \\
&\lesssim e^{-(1-\chi-\lambda)t}\int_{\R^d}\sup_{w\in\R^d}\abs*{f_0(z,w)e^{\nu\wangle{z}^a+b\abs{v'}^\gamma}}\dx
\end{align*}
Finally \eqref{AS_B 6} is a direct consequence of \eqref{AS_B 1}, \eqref{AS_B 2} and \eqref{AS_B 3} and \eqref{AS_B 7} is consequence of \eqref{AS_B 2}, \eqref{AS_B 4} and \eqref{AS_B 5}; see for example \cite[Proposition 2.5]{MQT}.
\end{proof}

\begin{lem}[Dissipativity of $S_{\cB }$]\label{lem:dissipSB}
Let $m(x,v)\asymp e^{\nu\wx^a}+e^{b\abs{v}^\gamma}$ be defined as in \Cref{Prop:Lyapunov}. Once $R>0$ is chosen sufficiently large, $S_{\cB }$ is dissipative in $\sfL^1(m)\cap \sfL^p (m)$ for every $1\leq p\leq \infty$, namely, 
\begin{equation}\label{dissipSB}
\forall t \in \R^+, \qquad \|S_{\cB }(t)\|_{\sfL^1(m)\cap \sfL^p (m) \to\sfL^1 (m)\cap \sfL^p (m)}\lesssim 1.
\end{equation}
\end{lem}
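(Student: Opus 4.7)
The plan is to apply Duhamel's formula to the splitting $\cB = \cB_0 + \cA_0$ of \eqref{eq:SB0}--\eqref{eq:A0} and sum the resulting Neumann series, exploiting the exponential decay of $S_{\cB_0}$ in weighted spaces (Lemma \ref{estimate:S_B0}) together with the operator bounds for $\cA_0 S_{\cB_0}$ (Lemma \ref{iterates Duhamel}). Specifically, iterating the Duhamel identity $N$ times yields
\[
S_\cB(t) = \sum_{j=0}^{N} S_{\cB_0} \star (\cA_0 S_{\cB_0})^{\star j}(t) + \big(S_{\cB_0} \star (\cA_0 S_{\cB_0})^{\star N} \star \cA_0 S_\cB\big)(t),
\]
so it suffices to bound each iterate and the remainder.

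For $\sfL^p(m)$ with $1 \leq p < \infty$, combine $\|S_{\cB_0}(t)\|_{\sfL^p(m)\to\sfL^p(m)} \leq e^{-(1-\chi)t/2}$ and $\|\cA_0\|_{\sfL^p(m)\to\sfL^p(m)} \lesssim 1$ from Lemma \ref{estimate:S_B0}: the $n$-fold convolution is controlled by a Gamma kernel, $\|(\cA_0 S_{\cB_0})^{\star n}(t)\|_{\sfL^p(m)} \lesssim C^n t^{n-1} e^{-(1-\chi)t/2}/(n-1)!$, and the full series sums to $C\, e^{(C-(1-\chi)/2)t}$. We need this uniformly bounded, which is where the choice of $R$ sufficiently large enters: either the constant $C$ is driven below $(1-\chi)/2$, or, equivalently, one uses the Lyapunov inequality inherited from Proposition \ref{Prop:Lyapunov}: since $\cA$ is positive, $\cA^\ast m \geq 0$ and hence $\cB^\ast m \leq \cL^\ast m \leq C_0 - \epsilon \wx^{a-1} m$. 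Combined with the mass-decrease $\int \cB f = - \int \eta_R \Lambda f \leq 0$ for $f = S_\cB(t)f_0 \geq 0$, this yields
\[
\ddt \|f(t)\|_{\sfL^1(m)} \leq C_0 \|f_0\|_{\sfL^1} - \epsilon \int f\, \wx^{a-1} m,
\]
whose dissipation on a large compact set in $x$ absorbs the linear growth to produce a uniform bound, and similar computations in $\sfL^p(m)$ follow.

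The case $p = \infty$ is the main obstacle since $\cA_0$ is \emph{not} bounded on $\sfL^\infty(m)$: the pointwise estimate $|\cA_0 f(x,v)| \lesssim \M(v) \int |f(x,v')|\,\d v'$ produces a factor $\int \d v'/m(x,v')$ which is not uniformly bounded in $x$ on the support $|x| \geq R$, so the direct Neumann argument fails. Following \cite{MW17}, the remedy is to iterate $N = d+1$ times and invoke the mixed gain-of-integrability bounds \eqref{AS_B 6}--\eqref{AS_B 7} of Lemma \ref{iterates Duhamel}: after $d+1$ compositions, $(\cA_0 S_{\cB_0})^{\star(d+1)}$ maps $\sfL^1_x \sfL^\infty_v(\omega) \to \sfL^\infty(\omega)$ with an integrable-in-time singularity at $t = 0$ balanced by the exponential decay $e^{-(1-\chi-\lambda)t}$; the remainder $S_{\cB_0} \star (\cA_0 S_{\cB_0})^{\star(d+1)} \star \cA_0 S_\cB$ is then controlled by feeding in the $\sfL^1(m)$-boundedness of $S_\cB$ already obtained, and the finite sum of the first $d+1$ iterates is handled by the same mixed bounds starting from the assumption $f_0 \in \sfL^\infty(m) \subset \sfL^1_x \sfL^\infty_v(\omega)$. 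The deepest technical difficulty in this whole scheme is that the sub-exponential weight $m \sim e^{\nu \wx^a}$ with $a<1$ produces a Lyapunov dissipation $-\epsilon \wx^{a-1} m$ that vanishes at spatial infinity, so a one-shot exponential Gronwall closure is impossible and the argument must genuinely rely on either the large-$R$ localisation of $\cA_0$ or the smoothing effect of the $(d+1)$-fold composition.
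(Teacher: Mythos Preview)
Your treatment of the $p=\infty$ case is essentially the paper's argument: iterate Duhamel $d+1$ times, use the mixed $\sfL^1_x\sfL^\infty_v(\omega)\to\sfL^\infty(\omega)$ bounds of \Cref{iterates Duhamel} on the iterates, and close the remainder by feeding in the $\sfL^1(m)$ bound already established. Your observation that $\cA_0$ is not bounded on $\sfL^\infty(m)$ (the integral $\int m(x,v')^{-1}\,\d v'$ grows polynomially in $\wx$) is correct and is precisely why the $(d+1)$-fold smoothing is needed.

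The gap is in the $p=1$ case. Neither of your two mechanisms works. For the Neumann series, the constant $C=\|\cA_0\|_{\sfL^p(m)\to\sfL^p(m)}$ from \Cref{estimate:S_B0} is \emph{independent of $R$}: the operator $\cA_0$ is supported on $\{|x|\geq R\}$, but $f$ may live there too, so enlarging $R$ does not shrink the norm. For the Lyapunov route, writing $\cB^\ast m\leq\cL^\ast m$ throws away exactly the term that makes the argument close. From $\cB^\ast m\leq C_0-\epsilon\wx^{a-1}m$ together with the mass decrease you only obtain $\ddt\|f\|_{\sfL^1(m)}\leq C_0\|f_0\|_{\sfL^1}$, hence linear growth in $t$; the ``dissipation on a large compact set'' cannot absorb this because $\wx^{a-1}m$ does not dominate a multiple of $m$ uniformly.

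The paper instead keeps the full identity $\cB^\ast m=\cL^\ast m-\cA^\ast m$ where $\cA^\ast m(x,v)=\eta_R(x)\Lambda\!\int \M m\,\d v'\geq \eta_R(x)(1-\chi)(1-\delta_1)e^{\nu\wx^a}$. This extra negative term, supported on $B_{2R}$, is what kills the constant: one first observes that for $R$ large the dissipation $-\epsilon\wx^{a-1}m$ already absorbs $C\nu$ on $\{|x|\geq R\}$, and then on $B_R$ the inequality $\nu C\leq(1-\chi)(1-\delta_1)$ (valid for $\nu$ small) gives $\cB^\ast m\leq -\epsilon\wx^{a-1}m\leq 0$ everywhere. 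This yields the genuine contraction $\|S_\cB(t)\|_{\sfL^1(m)\to\sfL^1(m)}\leq 1$, which is then used as input for the $p=\infty$ step. The intermediate $1<p<\infty$ case is obtained by interpolation, not by repeating the $\sfL^1$ computation.
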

\begin{proof}[{\bf Proof of \Cref{lem:dissipSB}}]

$\#$\textit{Step 1.} The case $p=1$. The dual operator of $\cB$ is defined by
\[
(\cB^*\phi)(x,v)=(\mathcal{L}^*\phi)(x,v)-\eta_R(x)\Lambda(x,v') \int_{\R^d} \M(v')\phi(x,v')\dv '
\]
for every $\phi\in W^{1,\infty}(\RRd)$. Let $R>0$ large enough according to the Lyapunov condition as in \Cref{Prop:Lyapunov}. We have
\begin{align*}
(\cB ^*m)(x,v) &\leq \nu C\1_{B_R}(x,v) -\epsilon\wx^{a-1} m-\eta_R(x)(1-\chi)\int_{\R^d} \left[(1-\delta_1)e^{\nu\wx^a}+ \nu e^{b\abs{v'}^2}\right]\M(v')\dv '\\
&\leq \nu C\1_{B_R}(x,v) -\epsilon\wx^{a-1} m-\eta_R(x)(1-\chi)(1-\delta_1)\int_{\R^d} e^{\nu\wx^a}\M(v')\dv '\\
&\leq \nu C\1_{B_R}(x,v) -\epsilon\wx^{a-1} m-\eta_R(x)(1-\chi)(1-\delta_1)
\end{align*}
Remembering that $\delta_1=\frac{\nu a^2(2+3\chi)^2}{16(1+\chi)^2B} $, we can take $\nu>0$ small enough such that $\nu C\leq (1-\chi)(1-\delta_1)$, and we conclude
\[
\cB ^*m\leq -\epsilon\wx^{a-1} m \leq 0.
\]
and it follows that
\begin{equation}\label{dissipSB L1}
\forall t \in \R^+, \qquad\|S_\cB(t)\|_{\sfL^1(m)\to\sfL^1(m)}\leq 1.
\end{equation}

$\#$\textit{Step 2.} The case $p=\infty$. 
Let us consider the splitting $\cB =\cB_0+\cA_0$ defined in \eqref{eq:SB0} and \eqref{eq:A0}.
From Duhamel's formula we have
\begin{equation*}
S_\cB =S_{\cB_0}+ \sum_{j=1}^{d+1} S_{\cB_0}\star(\cA_0 S_{\cB_0})^{\star j} + S_{\cB_0}\star (\cA_0 S_{\cB_0})^{\star(d+1)} \star \cA_0 S_\cB.
\end{equation*}
Thanks to \Cref{estimate:S_B0} we have 
\begin{equation}\label{est1}
\|S_{\cB_0}\star(\cA_0 S_{\cB_0})^{\star j}\|_{\sfL^\infty(m)\to L^\infty(m)}\leq \|S_{\cB_0}\|_{\sfL^\infty(m)\to \sfL^\infty(m)}\star  \|\cA_0 S_{\cB_0}\|^{\star j}_{\sfL^\infty(m)\to \sfL^\infty(m)}
\lesssim e^{-\frac{1}{2}(1-\chi)t}
\end{equation}
because all the convolution terms are exponentially decaying. Writing this inequality for all the integers $j\in\Drange{0,d+1}$, we see that
\begin{equation}\label{Duhamel1}
\| S_{\cB_0}\|_{\sfL^\infty(m)\to \sfL^\infty(m)}+ \sum_{j=1}^{d+1} \|S_{\cB_0}\star(\cA_0 S_{\cB_0})^{\star j}\|_{\sfL^\infty(m)\to \sfL^\infty(m)}\lesssim e^{\frac{1}{2}(1-\chi)t}
\end{equation}
Concerning the last term of the Duhamel formula, consider the weight $\omega(x,v)=e^{\nu\wx^{a} +b|v|^\gamma}$ as defined in \Cref{iterates Duhamel}. We have
\begin{align}
\|\cA_0 S_{\cB}(t)f\|_{\sfL^1_x \sfL^\infty_v(\omega)}&=\int_{\R^d} \sup_{v\in\R^d} \left\{\abs{(\cA_0 S_\cB(t)f)(x,v)}e^{\nu\wx^{a} +b|v|^\gamma}\right\}\dx \notag\\
&\leq \iint_{\R^d \times \R^d}(1-\eta_R(x))\Lambda(x,v')\sup_{v\in\R^d}\left\{\M( v )e^{\nu\wx^{a} +b|v|^\gamma} \right\}\abs{(S_\cB (t) f)(x,v)}\dx\dv' \notag\\
&\lesssim\norm{\M( v )e^{b|v|^2} }_{\sfL^\infty}\iint_{\R^d \times \R^d} \abs{(S_\cB (t) f)(x,v)}e^{\nu\wx^{a}}\dx\dv' \notag\\
&\lesssim \iint_{\R^d \times \R^d} \abs{(S_\cB (t) f)(x,v)}m(x,v')\dx\dv ' \label{hereDissipativity}\\
&\lesssim\|f\|_{\sfL^1(m)},\notag
\end{align}
where in the last step we used the dissipativity in $\sfL^1(m)$. Moreover, thanks to \eqref{estimate:S_B0} and the fact that $m\leq \delta_2 (e^{\nu\wx^a}+e^{b\abs{v}^\gamma})\leq 2\delta_2\, e^{\nu\wx^a+b\abs{v}^\gamma}$, we also have
\begin{align*}
\|S_{\cB_0} (t)f\|_{\sfL^\infty (m)}&\leq e^{-\frac{1}{2}(1-\chi)t}\|f\|_{\sfL^\infty (m)} \\
&\lesssim e^{-\frac{1}{2}(1-\chi)t}\|f\|_{\sfL^\infty (\omega)}
\end{align*}
Now, the fact that 
\begin{align*}
\|\cA_0 S_\cB(t)\|_{\sfL^1(m)\to\sfL^1_x\sfL^\infty_v(\omega)}\lesssim 1\in \sfL_t^\infty([0,+\infty))
\end{align*}
and
\[
\|S_{\cB_0}(t)\|_{\sfL^\infty(\omega)\to \sfL^\infty(m)}\lesssim e^{-\frac{1}{2}(1-\chi)t}\in \sfL^1_t([0,+\infty))
\]
proved few lines above, and
\[
\|(\cA_0 S_{\cB_0})^{\star(d+1)}\|_{\sfL^1_x\sfL^\infty_v(\omega)\to \sfL^\infty(\omega)}\lesssim e^{-(1-\chi-\gamma)t}\in \sfL^1_t([0,+\infty))
\]
proved in \Cref{iterates Duhamel}, imply that
\begin{equation}\label{Duhamel2}
\|S_{\cB_0}\star (\cA_0 S_{\cB_0})^{\star(d+1)} \star \cA_0 S_\cB\|_{\sfL^1(m)\to \sfL^\infty(m)}\lesssim 1
\end{equation}
Using \eqref{Duhamel1} and \eqref{Duhamel2} in the Duhamel formula we obtain
\begin{align*}
\|S_\cB(t)f\|_{\sfL^\infty(m)}&\lesssim e^{-\frac{1}{2}(1-\chi)t}\|f\|_{\sfL^\infty(m)}+\|f\|_{\sfL^1(m)}\\
&\lesssim\|f\|_{\sfL^1(m)\cap \sfL^\infty(m)}
\end{align*}
that combined with \eqref{dissipSB L1} gives inequality \eqref{dissipSB} for $p=\infty$.

$\#$\textit{Step 3.} We also conclude that \eqref{dissipSB} holds for $1\leq p\leq \infty$ because $\sfL^1(m)\cap \sfL^p (m)$ is an interpolation space between $\sfL^1(m)$ and $\sfL^1(m)\cap \sfL^\infty (m)$. 
\end{proof}

\begin{lem}\label{lem:SB decay}
Let $m(x,v)\asymp e^{\nu\wx^a}+e^{b\abs{v}^\gamma}$ be defined as in \Cref{Prop:Lyapunov}, $\omega=e^{\nu\wx^a+b\abs{v}^\gamma}$ as defined in \Cref{iterates Duhamel} and $R$ chosen sufficiently large. Then for $\ell>1$, sufficiently close to $1$, there exists $\lambda_\ell>0$ such that, for any $1\leq p\leq \infty$, we have the decay estimate 
\begin{equation}\label{SB decay}
\forall t \in \R^+, \qquad \|S_{\cB }(t)\|_{\sfL^1(\omega^\ell)\cap \sfL^p (\omega^\ell) \to\sfL^1 (m)\cap \sfL^p (m)}\lesssim e^{-\lambda_\ell t^a}.
\end{equation}
\end{lem}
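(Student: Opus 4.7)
The proof rests on the Lyapunov-type inequality $\cB^* m \leq -\eps \wx^{a-1} m$ derived in the first step of the proof of \Cref{lem:dissipSB}. Testing the equation against $m$, first for $f_0\ge 0$ and then extending by $|f_t|\le S_\cB(t)|f_0|$ (positivity of $S_\cB$), yields for $U(t):=\|S_\cB(t)f_0\|_{\sfL^1(m)}$ the dissipation
\[
U'(t) \;\leq\; -\eps \iint_\RRd |f_t|\,\wx^{a-1}\,m\,\dx\dv.
\]
Because $\wx^{a-1}$ vanishes at infinity this dissipation alone yields no decay; following the classical sub-geometric scheme, for each $R>0$ we split the integral into $\{\wx\le R\}$ (where $\wx^{a-1}$ is bounded below by $R^{a-1}$) and $\{\wx>R\}$ (where the tail must be controlled through the stronger source weight $\omega^\ell$), then optimize $R$ in terms of $U(t)$.

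\textbf{Tail control via a strengthened weight.} For $\ell>1$ sufficiently close to $1$, the parameters $\ell\nu$ and $\ell b$ remain in the admissible range of \Cref{Prop:Lyapunov}, producing a Lyapunov function $m_\ell \asymp e^{\ell\nu\wx^a}+e^{\ell b|v|^\gamma}$. By \Cref{lem:dissipSB} applied with $m_\ell$, $S_\cB$ is bounded on $\sfL^1(m_\ell)$, and since $\omega^\ell\geq \tfrac{1}{2}(e^{\ell\nu\wx^a}+e^{\ell b|v|^\gamma})\gtrsim m_\ell$ we obtain
\[
\|S_\cB(t)f_0\|_{\sfL^1(m_\ell)}\;\lesssim\; \|f_0\|_{\sfL^1(\omega^\ell)}\;=:\;M.
\]
A case analysis --- either $\nu\wx^a\ge b|v|^\gamma$ (whence $m\lesssim e^{\nu\wx^a}$ and $m_\ell\gtrsim e^{\ell\nu\wx^a}$) or $b|v|^\gamma>\nu\wx^a>\nu R^a$ (whence $m\lesssim e^{b|v|^\gamma}$ and $m_\ell\gtrsim e^{\ell b|v|^\gamma}$) --- yields the pointwise comparison $m\lesssim e^{-(\ell-1)\nu R^a}\,m_\ell$ on $\{\wx>R\}$, hence
\[
\int_{\{\wx>R\}}|f_t|\,m\,\dx\dv \;\lesssim\; Me^{-(\ell-1)\nu R^a}.
\]

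\textbf{Sub-exponential ODE and $\sfL^p$ extension.} On $\{\wx\le R\}$ one has $\int_{\{\wx\le R\}}|f_t|m \le R^{1-a}\int_{\{\wx\le R\}}|f_t|\wx^{a-1}m \le R^{1-a}(-U'(t)/\eps)$, so
\[
U(t)\;\leq\; \frac{R^{1-a}}{\eps}(-U'(t))\;+\;CMe^{-(\ell-1)\nu R^a}.
\]
The choice $R^a = \frac{1}{(\ell-1)\nu}\log(2CM/U(t))$ makes the second term equal to $U(t)/2$ and produces the differential inequality
\[
-U'(t)\;\gtrsim\; \frac{U(t)}{\bigl(\log(CM/U(t))\bigr)^{(1-a)/a}}.
\]
Setting $W(t):=\log(CM/U(t))$ this reads $W^{(1-a)/a}W'\gtrsim 1$, which integrates to $W(t)\gtrsim t^a$ and hence $U(t)\lesssim Me^{-\lambda_\ell t^a}$ for an explicit $\lambda_\ell>0$. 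For $1<p\le\infty$ the same decay (with a possibly smaller rate) follows by weighted Riesz-Thorin interpolation between this $\sfL^1$ estimate and the uniform $\sfL^\infty(\omega^\ell)\to\sfL^\infty(m)$ bound obtained from \Cref{lem:dissipSB} together with $\omega^\ell\gtrsim m$; absorbing the resulting factor $1/p$ into $\lambda_\ell$ gives the full claim on the intersection space.

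\textbf{Main obstacle.} The delicate step is the pointwise comparison $m\lesssim e^{-(\ell-1)\nu R^a}m_\ell$ on $\{\wx>R\}$, particularly in the regime where $|v|^\gamma$ dominates $\wx^a$ in both weights: there the gain factor $e^{-(\ell-1)\nu R^a}$ must be recovered from $b|v|^\gamma>\nu\wx^a>\nu R^a$ rather than directly from $\wx>R$. A secondary subtlety is the consistent choice of $\ell$ so that simultaneously $\ell b<1/\gamma$ and $\ell\nu$ satisfies the smallness constraint of \Cref{Prop:Lyapunov}, which holds when $\ell-1$ is small enough depending on the fixed $\nu$ and $b$.
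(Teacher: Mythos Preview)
Your $\sfL^1$ argument is essentially the paper's: the same Lyapunov inequality $\cB^*m\le -\eps\wx^{a-1}m$, the same sub-geometric scheme of splitting near/far and controlling the tail by a strengthened weight $\omega^\ell$. Your variant splits only in $x$ (the paper splits a ball in $(x,v)$) and integrates a nonlinear ODE in $U(t)$ (the paper keeps $\rho$ fixed, applies Gr\"onwall, then optimizes $\rho$ in $t$); both are fine, and your pointwise comparison $m\lesssim e^{-(\ell-1)\nu R^a}m_\ell$ on $\{\wx>R\}$ is correct.

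The genuine gap is the $\sfL^\infty$ endpoint. Riesz--Thorin between your decaying $\sfL^1$ bound and a merely bounded $\sfL^\infty$ bound gives at best $\|S_\cB(t)\|_{\sfL^p\to\sfL^p}\lesssim e^{-\lambda_\ell t^a/p}$, whose rate degenerates to $0$ as $p\to\infty$; you cannot ``absorb $1/p$ into $\lambda_\ell$'' because the lemma demands a single $\lambda_\ell$ valid for all $p\le\infty$, including $p=\infty$. The paper does \emph{not} get $\sfL^\infty$ by interpolation. It first proves the $\sfL^\infty$ decay directly via the Duhamel expansion for the further splitting $\cB=\cB_0+\cA_0$,
\[
S_\cB=S_{\cB_0}+\sum_{j=1}^{d+1}S_{\cB_0}\star(\cA_0 S_{\cB_0})^{\star j}+S_{\cB_0}\star(\cA_0 S_{\cB_0})^{\star(d+1)}\star\cA_0 S_\cB.
\]
The first $d{+}2$ terms decay exponentially in $\sfL^\infty$ by \Cref{estimate:S_B0}. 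For the last term one observes that $\cA_0 S_\cB:\sfL^1(\omega^\ell)\to \sfL^1_x\sfL^\infty_v(\omega)$ inherits the sub-exponential decay from your $\sfL^1$ result (this is exactly the step \eqref{hereDissipativity} in the proof of \Cref{lem:dissipSB}, now fed with the $\sfL^1$ decay instead of mere dissipativity), and then $(\cA_0 S_{\cB_0})^{\star(d+1)}$ and $S_{\cB_0}$ upgrade this to $\sfL^\infty(m)$ via \Cref{iterates Duhamel}. Only once both endpoints $p=1$ and $p=\infty$ carry the decay does interpolation give the uniform statement for all $p$.
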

\begin{proof}[{\bf Proof of \Cref{lem:SB decay}}]
$\#$\textit{Step 1.} The case $p=1$. We have already seen in \Cref{lem:dissipSB} that
\begin{equation*}
\cB ^* m\leq  -\epsilon\wx^{a-1}m.
\end{equation*}
Set $f=S_\cB (t)f_0$, for any $\rho>0$ we denote by $B_{\rho,\,\rho^{a/\gamma}}=\set{(x,v)\in \R^d\times\R^d\big\vert \abs{x}\leq \rho,\; \abs{v}\leq \rho^{a/\gamma}}$. We have,
\begin{align*}
\ddt\int_{\R^d\times\R^d}\abs{f}m \,\dx\dv &\leq  -\epsilon\int_{\R^d\times\R^d}\abs{f}\wx^{a-1}m \,\dx\dv \\
&\leq-\epsilon\int_{B_{\rho,\,\rho^{a/\gamma}}}\abs{f}\wx^{a-1}m \,\dx\dv \\
&\leq-\epsilon\wangle{\rho}^{a-1}\int_{B_{\rho,\,\rho^{a/\gamma}}}\abs{f}m \,\dx\dv \\
&\leq-\epsilon\wangle{\rho}^{a-1}\int_{\R^d\times\R^d}\abs{f}m \,\dx\dv +\epsilon\wangle{\rho}^{a-1}\int_{B_{\rho,\,\rho^{a/\gamma}}^c}\abs{f}m \,\dx\dv \\
&\leq-\epsilon\wangle{\rho}^{a-1}\int_{\R^d\times\R^d}\abs{f}m \,\dx\dv \\
&\quad\quad+\epsilon\wangle{\rho}^{a-1}\sup_{B_{\rho,\,\rho^{a/\gamma}}^c}\left\{\frac{m}{e^{\ell\nu\wx^a}+e^{\ell b\abs{v}^\gamma}}\right\}\int_{B_{\rho,\,\rho^{a/\gamma}}^c}\abs{f}(e^{\ell\nu\wx^a}+e^{\ell b\abs{v}^\gamma}) \,\dx\dv 
\end{align*}
with $\ell>1$. Since $m\leq \delta_2(e^{\nu\wx^a}+e^{b\abs{v}^\gamma})$, we can easily compute
\[
\frac{m}{e^{\ell\nu\wx^a}+e^{\ell b\abs{v}^\gamma}}\leq \delta_2 \frac{ e^{\nu\wx^a}+e^{b\abs{v}^\gamma}}{e^{\ell\nu\wx^a}+e^{\ell b\abs{v}^\gamma}}\leq 2^{1-1/\ell}\delta_2 \frac{\(e^{\ell\nu\wx^a}+e^{\ell b\abs{v}^\gamma}\)^{1/\ell}}{e^{\ell\nu\wx^a}+e^{\ell b\abs{v}^\gamma}}\leq 2\delta_2 \(e^{\ell\nu\wx^a}+e^{\ell b\abs{v}^\gamma}\)^{-\frac{\ell-1}{\ell}}
\]
Therefore
\[
\sup_{(B_{\rho,\,\rho^{a/\gamma}})^c}\left\{\frac{m}{e^{\ell\nu\wx^a}+e^{\ell b\abs{v}^\gamma}}\right\}\leq 
2 \delta_2 \(e^{\ell\nu\wangle{\rho}^a}+e^{\ell b\rho^a}\)^{-\frac{\ell-1}{\ell}} \leq 2\delta_2 e^{-\lambda(\ell-1) \rho^a}
\]
where $\lambda=\min\{\nu,b\}>0$. Moreover, if $\ell>1$ is sufficiently close to 1, we can use \eqref{dissipSB} so that 
\[
\int_{B_{\rho,\,\rho^{a/\gamma}}^c}\abs{f}(e^{\ell\nu\wx^a}+e^{\ell b\abs{v}^\gamma}) \,\dx\dv \leq \int_{\R^d\times\R^d}\abs{f_0}(e^{\ell\nu\wx^a}+e^{\ell b\abs{v}^\gamma}) \,\dx\dv \leq 2\int_{\R^d\times\R^d}\abs{f_0}\omega^\ell \,\dx\dv  
\]
Substituting we find
\begin{align*}
\ddt\int_{\R^d\times\R^d}\abs{f}m \,\dx\dv &\leq-\epsilon\wangle{\rho}^{a-1}\int_{\R^d\times\R^d}\abs{f}m \,\dx\dv +4\delta_2\epsilon\wangle{\rho}^{a-1} e^{-\lambda(\ell-1) \rho^a}\norm{f_0}_{\sfL^1(\omega^\ell)}
\end{align*}
By Gronwall's Lemma, we find that for all $t\geq 0$,
\begin{align*}
\iint_{\R^d\times \R^d} \abs{f}m\,\dx\dv &\leq 4\delta_2 e^{-\lambda(\ell-1) \rho^a}\norm{f_0}_{\sfL^1(\omega^\ell)}+ e^{-\epsilon\wangle{\rho}^{a-1}t}\norm{f_0}_{\sfL^1 (m)}\\
&\lesssim (e^{-\lambda(\ell-1) \rho^a}+e^{-\epsilon\rho^{a-1}t})\norm{f_0}_{\sfL(\omega^\ell)}.
\end{align*}
Taking $\rho=\frac{\epsilon\,t}{\lambda(\ell-1)} $ we have
\begin{equation}\label{decaySB_L1}
\norm{f}_{\sfL^1(m)}\lesssim e^{-\lambda_\ell t^a}\norm{f_0}_{\sfL^1(\omega^\ell)},
\end{equation}
with $\lambda_\ell=\lambda^{1-a}\epsilon^a(\ell-1)^{1-a}$, that is \eqref{SB decay} for $p=1$.

$\#$\textit{Step 2.} The case $p=\infty$. We start again as in the proof of \textit{Step 2} of \Cref{lem:dissipSB}, in particular we recall the Duhamel formula
\begin{equation}\label{DuhamelFormula2}
S_\cB =S_{\cB_0}+ \sum_{j=1}^{d+1} S_{\cB_0}\star(\cA_0 S_{\cB_0})^{\star j} + S_{\cB_0}\star (\cA_0 S_{\cB_0})^{\star(d+1)} \star \cA_0 S_\cB.
\end{equation}
Since $m\lesssim  \omega^\ell$, we also have $\norm{\mathsf{Id}}_{\sfL^\infty(\omega^\ell)\to\sfL^\infty(m)}\lesssim 1$, therefore by \eqref{est1} we deduce
\begin{align*}
\|S_{\cB_0}\star(\cA_0 S_{\cB_0})^{\star j}\|_{\sfL^\infty(\omega^\ell)\to \sfL^\infty(m)}\leq\|S_{\cB_0}\star(\cA_0 S_{\cB_0})^{\star j}\|_{\sfL^\infty(m)\to \sfL^\infty(m)}\norm{\mathsf{Id}}_{\sfL^\infty(\omega^\ell)\to\sfL^\infty(m)}\lesssim e^{-\frac{1}{2}(1-\chi)t}.
\end{align*}
Writing this estimate for all $j\in\Drange{0,\,d+1}$ we find
\begin{equation}\label{DuhamelDecay1}
\| S_{\cB_0}\|_{\sfL^\infty(\omega^\ell)\to \sfL^\infty(m)}+ \sum_{j=1}^{d+1} \|S_{\cB_0}\star(\cA_0 S_{\cB_0})^{\star j}\|_{\sfL^\infty(\omega^\ell)\to \sfL^\infty(m)}\lesssim e^{-\frac{1}{2}(1-\chi)t},
\end{equation}
which allows us to estimate the first $d+1$ terms of the Duhamel formula \eqref{DuhamelFormula2}. We now have to improve the estimate on the last term. From \eqref{hereDissipativity} we have 
\begin{align*}
\|\cA_0 S_{\cB}(t)f\|_{\sfL^1_x \sfL^\infty_v(\omega)}&\lesssim\iint_{\R^d \times \R^d} \abs{(S_\cB (t) f)(x,v)}m(x,v')\,\dx\dv ' \\
&\lesssim e^{-\lambda_\ell t^a} \norm{f_0}_{\sfL^1(\omega^\ell)},
\end{align*}
where in the last inequality we used \eqref{decaySB_L1}. Therefore now we can concatenate 
\begin{align*}
\|\cA_0 S_\cB(t)\|_{\sfL^1(\omega^\ell)\to\sfL^1_x\sfL^\infty_v(\omega)}\lesssim e^{-\lambda_\ell t^a}
\end{align*}
with
\[
\|(\cA_0 S_{\cB_0})^{\star(d+1)}\|_{\sfL^1_x\sfL^\infty_v(\omega)\to \sfL^\infty(\omega)}\lesssim e^{-(1-\chi-\gamma)t}
\]
and
\[
\|S_{\cB_0}(t)\|_{\sfL^\infty(\omega)\to \sfL^\infty(m)}\lesssim e^{-\frac{1}{2}(1-\chi)t}.
\]
By convolving two exponential decays and a sub-exponential decay we obtain again the sub-exponential decay (see Appendix), therefore 
\begin{equation}\label{DuhamelDecay2}
\|S_{\cB_0}\star (\cA_0 S_{\cB_0})^{\star(d+1)} \star \cA_0 S_\cB\|_{\sfL^1(\omega^\ell)\to \sfL^\infty(m)}\lesssim e^{-\lambda_\ell t^a}.
\end{equation}
Putting \eqref{DuhamelDecay2} together with \eqref{DuhamelDecay1} we finally have
\begin{equation}
\norm{S_\cB}_{\sfL^\infty(\omega^\ell)\cap\sfL^1(\omega^\ell)\to \sfL^\infty(m)}\lesssim e^{-\lambda_\ell t^a},
\end{equation}
that combined with \eqref{decaySB_L1} gives \eqref{SB decay} for $p=\infty$.

$\#$\textit{Step 3.} As in \Cref{lem:dissipSB}, we use the fact that $\sfL^1(m)\cap \sfL^p (m)$ is an interpolation space between $\sfL^1(m)$ and $\sfL^1(m)\cap \sfL^\infty (m)$ and an analogous statements holds with weight $\omega^\ell$. Thus conclude that \eqref{SB decay} also holds for all $1\leq p\leq \infty$. 
\end{proof}

\begin{lem}\label{lem:SB Polydecay}
Let $m\asymp\wangle{x}^k +\wangle{v}^{2k}$ be defined as in \Cref{Prop:PolyLyap}. Then for all $\ell\geq 1$ we have the decay estimate 
\begin{equation}
\forall t \in \R^+, \qquad \|S_{\cB }(t)\|_{\sfL^1(m^\ell) \to\sfL^1 (m)}\lesssim \frac1{\wangle{t}^{k(\ell-1)}}.
\end{equation}
\end{lem}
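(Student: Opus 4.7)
My plan follows the same template as the proof of \Cref{lem:SB decay} Step 1, but replaces the direct spatial truncation, which would produce a logarithmic loss in the polynomial regime, with a Hölder interpolation that yields the sharp rate.

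The first step is to upgrade the Lyapunov estimate of \Cref{Prop:PolyLyap} into a dissipativity bound for $\cB$. Writing $\cB^\ast \phi = \cL^\ast \phi - \eta_R \Lambda \int \phi\, \M\,\dv'$, using \Cref{Prop:PolyLyap} together with the lower bound $m \gtrsim \wx^k + \wangle{v}^{2k}$ (which yields $\eta_R \Lambda \int m\, \M\,\dv' \gtrsim \eta_R(x)\,(\wx^k + 1)$), one absorbs the $C\,\mathbf{1}_R$ term by enlarging $R$. Then the elementary pointwise inequality
\[
\wx^{k-1} + \wangle{v}^{2k} \geq \wx^{-1}\bigl(\wx^k + \wangle{v}^{2k}\bigr) \asymp \wx^{-1} m
\]
yields
\[
\cB^\ast m \leq -\epsilon\, \wx^{-1} m.
\]
Applying the same argument to the Lyapunov function that \Cref{Prop:PolyLyap} produces for the (still admissible) exponent $k\ell > 1$, which is norm-equivalent to $m^\ell$, also gives $\|S_\cB(t)\|_{\sfL^1(m^\ell)\to\sfL^1(m^\ell)}\lesssim 1$ (and $R$ can be chosen uniformly for both).

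Setting $f(t) = S_\cB(t) f_0$, $X(t) = \|f(t)\|_{\sfL^1(m)}$ and $Y = \|f_0\|_{\sfL^1(m^\ell)}$, the dissipativity above gives
\[
\ddt X(t) \leq -\epsilon \int_{\RRd} |f|\, \wx^{-1} m \,\dx\dv.
\]
The decisive step is to interpolate. With $p = 1 + \tfrac{1}{k(\ell-1)}$ and $q = 1+k(\ell-1)$ (conjugate exponents satisfying $\tfrac{1}{p} = \tfrac{k(\ell-1)}{q}$), the pointwise identity $m = (\wx^{-1} m)^{1/p}(\wx^{k(\ell-1)}m)^{1/q}$ combined with Hölder's inequality produces
\[
X(t) \leq \(\int_\RRd |f|\, \wx^{-1} m \,\dx\dv\)^{1/p}\(\int_\RRd |f|\, \wx^{k(\ell-1)} m \,\dx\dv\)^{1/q}.
\]
From $m \gtrsim \wx^k$ one derives $\wx^{k(\ell-1)} m \lesssim m^\ell$, so the second factor is bounded by $Y^{1/q}$ through the propagation of the $\sfL^1(m^\ell)$ norm.

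Combining these two ingredients gives the nonlinear differential inequality
\[
\ddt X \leq -\epsilon'\, X^p\, Y^{-(p-1)},
\]
which integrates to $X(t)^{-(p-1)} \geq (p-1)\epsilon'\, Y^{-(p-1)}\, t$ and thus $X(t) \lesssim Y\, t^{-k(\ell-1)}$ since $1/(p-1) = k(\ell-1)$. The trivial bound $X(t) \leq X(0) \leq Y$, valid by the $\sfL^1(m)$ contractivity of Step 1 and the embedding $\sfL^1(m^\ell)\hookrightarrow \sfL^1(m)$, then glues with the long-time bound to give $X(t) \lesssim Y\, \wangle{t}^{-k(\ell-1)}$ for all $t\geq 0$, as required. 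The main technical hurdle is pinpointing the correct Hölder pair that interpolates simultaneously between the dissipation functional $\int |f|\wx^{-1} m$ and the propagated high-moment control $\int |f| m^\ell$; this is what avoids the logarithmic loss that a direct ball-splitting argument, in the spirit of \Cref{lem:SB decay}, would incur in this polynomial regime.
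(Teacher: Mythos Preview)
Your proof is correct and follows essentially the same approach as the paper: both derive a pointwise dissipativity bound for $\cB^\ast m$, then use a H\"older interpolation between the dissipation functional and the propagated $\sfL^1(m^\ell)$ norm to obtain the nonlinear differential inequality $\ddt X \lesssim -X^{1+\frac{1}{k(\ell-1)}} Y^{-\frac{1}{k(\ell-1)}}$, which integrates to the stated decay. The only cosmetic difference is that the paper writes the dissipation rate as $m^{1-1/k}$ and interpolates directly with $m^\ell$, while you write it as $\wx^{-1}m$ and interpolate with $\wx^{k(\ell-1)}m \lesssim m^\ell$; the resulting exponents and conclusion are identical.
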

\begin{proof}[{\bf Proof of \Cref{lem:SB Polydecay}}]
Let $m$ be as in \eqref{LyapunovFunctionPoly}. Then, using \eqref{eq:2101} and \Cref{lem:Psi}, we have
\begin{align*}
    \cB^* m&=\cL^*m - \eta_R(x)\Lambda\(\frac{x\cdot v}{\wx}\)\int_\Rd m(x,v')\M(v')\,\dv '\\
    &=\cL^*m -\eta_R(x)\Lambda\(\frac{x\cdot v}{\wx}\)\( \wx^{k} -\frac{k\chi}{1+\chi}\wx^{k-1}\int_\Rd \Psi\(\frac{x\cdot v'}{\wx}\)\M(v')\dv ' +Bc_{2k,\gamma}\)\\
    &\leq C_2\abs{v}^2\wx^{k-2} - \frac{k\chi(1-\chi)\zeta}{1+\chi}(1-\eta_R(x))\wx^{k-1} + B(1+\chi)c_{2k,\gamma}(1-\eta_R(x))\\
    &\qquad - B(1-\chi)\wangle{v}^{2k} - (1-\chi)\eta_R(x)\wx^k
\end{align*}
If $k\in(1,2]$, we can estimate $\abs{v}^2\wx^{k-2}\leq\wangle{v}^{2k}$ and then take $B\geq \tfrac{2C_2}{1-\chi}$. If $k>2$, we can proceed as in \eqref{eq:YoungP} by Young’s inequality and we have that for any $\delta>0$
\begin{align*}
    \cB^* m &\leq \(\frac{C_2(k-2)}{k-1}\delta - \frac{k\chi(1-\chi)\zeta}{1+\chi}\)(1-\eta_R(x))\wx^{k-1} + \(\frac{C_2(k-2)}{k-1}\delta\wx^{k-1} - (1-\chi)\wx^k\)\eta_R(x)\\
    &\qquad + \(\frac{C_2}{(k-1)\delta^{\tfrac{1}{k-2}}} - B(1-\chi)\)\wangle{v}^{2k}  + B(1+\chi)c_{\gamma,k}(1-\eta_R(x)).
\end{align*}
We can first choose
\[
\delta<\frac{k-1}{2C_2(k-2)}\min\left\{\frac{k\chi(1-\chi)\zeta}{1+\chi}, 1-\chi\right\},
\]
and then enlarge
\[
B> \frac{2C_2}{(1-\chi)(k-1)\delta^{\tfrac{1}{k-2}}}
\]
to get
\begin{align*}
    \cB^*m&\leq -\frac{k\chi(1-\chi)\zeta}{2(1+\chi)}(1-\eta_R(x))\wx^{k-1} -\frac{B(1-\chi)}{2}\wangle{v}^{2k} - \frac{1-\chi}{2}\eta_R(x) +B(1+\chi)c_{\gamma,k}(1-\eta_R(x)).
\end{align*}
Finally, taking $R>0$ large enough such that $\tfrac{k\chi(1-\chi)\zeta}{4(1+\chi)}\wangle{R}^{k-1}\geq B(1+\chi)$, we can conclude
\begin{align*}
    \cB^*m\lesssim -(\wx^{k-1}+\wangle{v}^{2k})\lesssim - m^{1-\tfrac{1}{k}}.
\end{align*}

This means that, for all $k>1$, the semigroup $S_\cB$ is dissipative in $\sfL^1(m)$, i.e.
\[
\forall t \in \R^+, \qquad \|S_{\cB }(t)\|_{\sfL^1(m) \to\sfL^1 (m)}\leq 1.
\]
Moreover, let $g=S_\cB(t)g_0$ and, for any $\ell>0$, we have
\begin{align*}
    \ddt \norm{g}_{\sfL^1(m)}&\leq \int_\RRd \abs{g}\cB^*m\,\dx\dv \\
    &\lesssim-\int_\RRd \abs{g} m^{1-\tfrac{1}{k}}\,\dx\dv .
\end{align*}
By the H\"older inequality and dissipativity, we have
\[
\norm{g}_{\sfL^2(m)}  \lesssim \(\int_\RRd \abs{g} m^{1-\tfrac{1}{k}} \,\dx\dv \)^{\frac{k(\ell-1)}{k(\ell-1)+1}}\(\int_\RRd \abs{g_0} m^\ell\,\dx\dv \)^{\frac{1}{k(\ell-1)+1}},
\]
which gives
\begin{align*}
    \ddt \norm{g}_{\sfL^1(m)}\lesssim \norm{g}_{\sfL^1(m)}^{1+\frac{1}{k(\ell-1)}} \norm{g_0}_{\sfL^1(m^\ell)}^{-\frac{1}{k(\ell-1)}}.
\end{align*}
The conclusion follows by Gronwall's Lemma.
\end{proof}

\subsection{The iterated semi-groups $\left(\cA S_B\right)^{\star j}$ and $S_\cB \star \left(\cA S_B\right)^{\star j}$}\label{SubSec:Duhamel2}

In this section we provide some useful estimates on $\cA$ and $S_{\cB}$. The main difference between $S_\cB$ and $S_{\cB_0}$ is that the former does not have an explicit form, so we cannot carry out the estimates through explicit computations as in the previous sub-section. For this reason, the spirit of this section is to systematically use the Duhamel formula for the splitting $\cB=\cA_0+\cB_0$.

\begin{lem}\label{lem:A}
Let $m$ be the Lyapunov function \eqref{LyapunovFunctionExp} and $\omega$ as defined in \Cref{iterates Duhamel}. Then for $\ell\geq 1$ sufficiently close to 1 we have
\begin{align}\label{estm: A}
\norm{\cA}_{\sfL^1(m)\cap\sfL^\infty(m)\to\sfL^1(\omega^\ell)\cap\sfL^\infty(\omega^\ell)}&\lesssim 1, & \norm{\cA}_{\sfL^1\to\sfL^1_x\sfL^\infty_v(\omega^\ell)}&\lesssim 1 .
\end{align}
\end{lem}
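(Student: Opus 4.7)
Plan: The operator $\cA$ enjoys two features which make the enlargement of weights $m\rightsquigarrow \omega^\ell$ essentially cost-free. First, the cut-off $\eta_R(x)$ confines the $x$-support to $\{|x|\leq 2R\}$, on which $e^{\ell\nu\wx^a}$ is uniformly bounded by a constant depending only on $R,\nu,\ell$. Second, the Maxwellian $\M(v)=c_{0,\gamma}^{-1}e^{-|v|^\gamma/\gamma}$ in the kernel beats any factor $e^{\ell b|v|^\gamma}$ provided $\ell b<1/\gamma$. Since $b<1/\gamma$ by definition of $\omega$, we have slack to choose $\ell>1$ close to $1$ so that $\ell b<1/\gamma$; this fixes $\ell$ throughout the argument.

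The starting pointwise bound, using $\Lambda\leq 1+\chi$, is
\[
|(\cA f)(x,v)|\,\omega^\ell(x,v)\;\leq\;(1+\chi)\,\eta_R(x)\,\M(v)\,e^{\ell\nu\wx^a+\ell b|v|^\gamma}\int_{\R^d}|f(x,v')|\,dv'.
\]
The two observations above collapse the prefactor into $C\,\eta_R(x)\,e^{-c|v|^\gamma}$ with $c=1/\gamma-\ell b>0$ and some $C=C(R,\nu,\ell,\chi,\gamma)$. This inequality is the common starting point for all three target norms.

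For the $\sfL^\infty(\omega^\ell)$ bound, I dominate the velocity integral using the lower bound $m(x,v')\geq \nu e^{b|v'|^\gamma}$ from \eqref{bounds:m}, which yields $\int|f(x,v')|\,dv'\lesssim\norm{f}_{\sfL^\infty(m)}\int e^{-b|v'|^\gamma}dv'\lesssim\norm{f}_{\sfL^\infty(m)}$. For the $\sfL^1(\omega^\ell)$ bound, I integrate the pointwise inequality in $(x,v)$, pull out $\int e^{-c|v|^\gamma}dv<\infty$ by Fubini, and use that $\eta_R$ has compact support together with $m\gtrsim 1$ to dominate the remaining $L^1_{x,v'}$-integral by $\norm{f}_{\sfL^1(m)}$. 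The second inequality $\cA\colon \sfL^1\to\sfL^1_x\sfL^\infty_v(\omega^\ell)$ follows directly from the same pointwise bound by taking the supremum in $v$ first: $\sup_v|(\cA f)(x,v)|\omega^\ell(x,v)\lesssim \eta_R(x)\int|f(x,v')|dv'$, and integration in $x$ gives a bound by $\norm{f}_{\sfL^1}$.

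There is no real obstacle here; the only point requiring minor care is the quantitative choice of $\ell$ close to $1$, which must simultaneously satisfy $\ell b<1/\gamma$ and be compatible with any later use (notably the forthcoming iterated Duhamel argument in \Cref{SubSec:Duhamel2}). Everything else is bookkeeping of a pointwise Maxwellian decay in $v$ and the compactly supported cut-off in $x$.
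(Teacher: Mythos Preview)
Your proposal is correct and follows essentially the same approach as the paper: both exploit that $\eta_R$ has compact support (so $e^{\ell\nu\wx^a}$ is bounded there) and that $\M(v)e^{\ell b|v|^\gamma}$ is bounded and integrable for $\ell b<1/\gamma$, then control the remaining velocity integral either by $\norm{f}_{\sfL^1}$ or, via the lower bound $m\gtrsim e^{b|v'|^\gamma}$, by $\norm{f}_{\sfL^\infty(m)}$. The only cosmetic difference is that the paper writes the $\sfL^1(\omega^\ell)$ bound directly in terms of $\norm{f}_{\sfL^1}$ rather than $\norm{f}_{\sfL^1(m)}$, which is the same up to $m\gtrsim 1$.
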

\begin{proof}[{\bf Proof of \Cref{lem:A}}]
We recall that
\[
(\cA f)\omega^\ell=\eta_R(x)\M(v)e^{\ell\nu\wx^a+\ell b\abs{v}^\gamma}\int_{\R^d}\Lambda\(\frac{x\cdot v'}{\wx}\)f(x,v')\dv '.
\]
Then we have
\begin{align*}
\norm{\cA f}_{\sfL^1(\omega^\ell)}&\lesssim \sup_{x\in\R^d}\{\eta_R(x)e^{\ell\nu\wx^a}\}\(\int_{\R^d}e^{\ell b\abs{v}^\gamma}\M(v)\dv \) \iint_{\R^d\times\R^d}\abs{f(x,v')}\dx\dv '\\
&\lesssim \norm{f}_{\sfL^1}\lesssim \norm{f}_{\sfL^1(m)}
\end{align*}
and
\begin{align*}
\norm{\cA f}_{\sfL^\infty(\omega^\ell)}&\lesssim \sup_{v\in\R^d}\left\{e^{\ell b\abs{v}^\gamma}\M(v)\right\}\sup_{x\in\R^d}\left\{ \eta_R(x)e^{\ell\nu\wx^a}\int_{\R^d}\abs{f(x,v')}\dv '\right\}\\
&\lesssim \sup_{x\in\R^d}\left\{ \eta_R(x)e^{\ell\nu\wx^a}\(\int_{\R^d}m(x,v')^{-1}\dv '\)\sup_{v'\in\R^d}\{f(x,v')m(x,v')\}\right\}\\
&\lesssim \sup_{x\in\R^d}\left\{ \eta_R(x)e^{\ell\nu\wx^a}\(\int_{\R^d}m(x,v')^{-1}\dv '\)\right\}\norm{f}_{\sfL^\infty(m)}\lesssim \norm{f}_{\sfL^\infty(m)}.
\end{align*}
Hence the first inequality of \eqref{estm: A} is proved by noting that
\[
\norm{\cA f}_{\sfL^1(\omega^\ell)\cap\sfL^\infty(\omega^\ell)}=\norm{\cA f}_{\sfL^1(\omega^\ell)}+\norm{\cA f}_{\sfL^\infty(\omega^\ell)}\lesssim \norm{f}_{\sfL^1(m)}+\norm{f}_{\sfL^\infty(m)}=\norm{f}_{\sfL^1(m)\cap\sfL^\infty(m)}.
\]
For the last inequality of \eqref{estm: A} we jut have to compute
\begin{align*}
\norm{\cA f}_{\sfL^1_x\sfL^\infty_v}&\lesssim \norm{e^{\ell b\abs{v}^\gamma}\M(v)}_{\sfL^\infty}\iint_{\R^d\times\R^d}\eta_R(x)e^{\ell\nu\wx^a}\abs{f(x,v')}\dx\dv '\\
&\lesssim \norm{\eta_R(x)e^{\ell\nu\wx^a}}_{\sfL^\infty}\iint_{\R^d\times\R^d}\abs{f(x,v')}\dx\dv '\\
&\lesssim \norm{f}_{\sfL^1}.
\end{align*}
\end{proof}

\begin{lem}\label{lem:ASB decay}
Let $m(x,v)\asymp e^{\nu\wx^a}+e^{b\abs{v}^\gamma}$ be defined as in \Cref{Prop:Lyapunov}, $\omega=e^{\nu\wx^a+b\abs{v}^\gamma}$ as defined in \Cref{iterates Duhamel}. Then for $\ell > 1$, sufficiently close to $1$, we have
\begin{align}\label{AS_B 1bis}
\norm{\cA S_{\cB}(t)}_{\sfL^1(\omega^\ell)\cap\sfL^\infty(\omega^\ell) \to\sfL^1(\omega^\ell)\cap\sfL^\infty(\omega^\ell)}&\lesssim e^{-\lambda_\ell t^a},
\end{align}
\begin{align}\label{AS_B 2bis}
\norm{\cA S_{\cB}(t)}_{\sfL^1_x\sfL^\infty_v(\omega^\ell)\to\sfL^1(\omega^\ell)\cap\sfL^\infty(\omega^\ell)}&\lesssim t^{-d}e^{-\lambda_\ell t^a},
\end{align}
and
\begin{align}\label{AS_B 3bis}
\norm{\cA S_{\cB}(t)}_{\sfL^1_x\sfL^\infty_v(\omega^\ell)\to \sfL^1_x\sfL^\infty_v(\omega^\ell)}&\lesssim e^{-\lambda_\ell t^a}.
\end{align}
where $\lambda_\ell$ has been found in \Cref{lem:SB decay}. As a consequence, for every $n\in\NN$,
\begin{equation}\label{AS_B 4bis}
\|(\cA S_{\cB}(t))^{\star n}\|_{\sfL^1_x\sfL^\infty_v(\omega^\ell)\longrightarrow \sfL^1(\omega^\ell)\cap\sfL^\infty(\omega^\ell)}\lesssim t^{-(d-n+1)}e^{-\lambda_\ell t^a}.
\end{equation}
\end{lem}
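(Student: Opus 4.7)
Estimate \eqref{AS_B 1bis} is immediate by composition. By \Cref{lem:SB decay} we have
\begin{equation*}
\|S_\cB(t)\|_{\sfL^1(\omega^\ell)\cap\sfL^\infty(\omega^\ell)\to\sfL^1(m)\cap\sfL^\infty(m)}\lesssim e^{-\lambda_\ell t^a},
\end{equation*}
and by \Cref{lem:A}, $\|\cA\|_{\sfL^1(m)\cap\sfL^\infty(m)\to\sfL^1(\omega^\ell)\cap\sfL^\infty(\omega^\ell)}\lesssim 1$, and composing delivers the claim.

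For \eqref{AS_B 2bis} and \eqref{AS_B 3bis}, the plan is to apply the iterated Duhamel identity associated with the splitting $\cB=\cB_0+\cA_0$, composed with $\cA$ on the left:
\begin{equation*}
\cA S_\cB \;=\; \sum_{j=0}^{d+1}(\cA S_{\cB_0})\star (\cA_0 S_{\cB_0})^{\star j} \;+\; (\cA S_{\cB_0})\star (\cA_0 S_{\cB_0})^{\star(d+1)}\star(\cA_0 S_\cB).
\end{equation*}
For the first $d+2$ terms, the remark at the end of \Cref{iterates Duhamel} allows one $\cA_0$ to be replaced by $\cA$, and the weight $\omega$ to be enlarged to $\omega^\ell$ by reparameterising $\nu\mapsto \ell\nu$, $b\mapsto \ell b$ (which preserves the hypotheses of \Cref{iterates Duhamel} as soon as $\ell>1$ is close enough to $1$). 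The $j=0$ term $\cA S_{\cB_0}$ carries the required singular factor $t^{-d}$ with exponential decay, while the terms $j\geq 1$ carry weaker singularities together with exponential decay, all dominated by $t^{-d}e^{-\lambda_\ell t^a}$ (respectively $e^{-\lambda_\ell t^a}$) into the appropriate target spaces.

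For the remainder term, the plan is to establish the auxiliary bound
\begin{equation*}
\|\cA_0 S_\cB(s)\|_{\sfL^1_x\sfL^\infty_v(\omega^\ell)\to\sfL^1_x\sfL^\infty_v(\omega)}\lesssim e^{-\lambda_{\ell'} s^a}
\end{equation*}
for some $1<\ell'<\ell$, as follows: the identity \eqref{hereDissipativity} reduces the bound to controlling $\|S_\cB(s)f\|_{\sfL^1(m)}$; next, one invokes the continuous embedding $\sfL^1_x\sfL^\infty_v(\omega^\ell)\hookrightarrow \sfL^1(\omega^{\ell'})$, which follows from the pointwise estimate $|f|\omega^{\ell'}\leq \sup_v(|f|\omega^\ell)\,\omega^{\ell'-\ell}$ combined with $\int_\Rd \omega^{\ell'-\ell}(x,v)\,\dv \lesssim e^{(\ell'-\ell)\nu\wx^a}\leq 1$; finally \Cref{lem:SB decay} at parameter $\ell'$ yields the stated sub-exponential decay. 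Convolving this with the exponential decays of $\cA S_{\cB_0}$ and $(\cA_0 S_{\cB_0})^{\star(d+1)}$ preserves the sub-exponential rate, exactly as in the last step of the proof of \Cref{lem:SB decay}. The final bound on \eqref{AS_B 4bis} is then obtained by induction on $n$: writing $(\cA S_\cB)^{\star n}=(\cA S_\cB)^{\star(n-1)}\star(\cA S_\cB)$, one iterates \eqref{AS_B 3bis} $n-1$ times to remain in $\sfL^1_x\sfL^\infty_v(\omega^\ell)$ and closes with one application of \eqref{AS_B 2bis}; a direct convolution computation produces the exponent $-(d-n+1)$ of the singular prefactor and sub-exponential decay at each step.

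The main obstacle is the simultaneous bookkeeping of three slightly different weights $\omega\subset\omega^{\ell'}\subset\omega^\ell$, since \Cref{iterates Duhamel} is stated for a single weight $\omega$. This forces $\ell>1$ to be close enough to $1$ that the reparameterisation of \Cref{iterates Duhamel} and the embedding $\sfL^1_x\sfL^\infty_v(\omega^\ell)\hookrightarrow\sfL^1(\omega^{\ell'})$ are simultaneously valid; the mild discrepancy between the decay constants $\lambda_{\ell'}$ and $\lambda_\ell$ is absorbed into the $\lesssim$ by selecting $\ell'$ appropriately.
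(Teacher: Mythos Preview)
Your proof is correct, but it takes a different—and somewhat more laborious—route than the paper for the remainder term. The paper uses the \emph{reversed} Duhamel expansion
\[
\cA S_\cB \;=\; \cA S_{\cB_0}+\sum_{j=1}^{n}\cA S_{\cB_0}\star(\cA_0 S_{\cB_0})^{\star j}+\cA S_\cB\star(\cA_0 S_{\cB_0})^{\star(n+1)},
\]
so that the remainder carries $\cA S_\cB$ on the \emph{left}. This is the key simplification: since \eqref{AS_B 1bis} is already established, the remainder for \eqref{AS_B 2bis} factors as $\|\cA S_\cB\|_{\sfL^1(\omega^\ell)\cap\sfL^\infty(\omega^\ell)\to\sfL^1(\omega^\ell)\cap\sfL^\infty(\omega^\ell)}\star\|(\cA_0 S_{\cB_0})^{\star(d+1)}\|_{\sfL^1_x\sfL^\infty_v(\omega^\ell)\to\sfL^1(\omega^\ell)\cap\sfL^\infty(\omega^\ell)}$, and both factors live entirely in $\omega^\ell$-weighted spaces. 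No auxiliary weight $\omega^{\ell'}$, no embedding $\sfL^1_x\sfL^\infty_v(\omega^\ell)\hookrightarrow\sfL^1(\omega^{\ell'})$, no bookkeeping of three weights is needed. For \eqref{AS_B 3bis} the paper goes further and uses the expansion with $n=1$ only, bounding $\|\cA S_\cB\|_{\sfL^1(\omega^\ell)\to\sfL^1_x\sfL^\infty_v(\omega^\ell)}$ directly via $\cA\colon\sfL^1\to\sfL^1_x\sfL^\infty_v(\omega^\ell)$ composed with $S_\cB\colon\sfL^1(\omega^\ell)\to\sfL^1(m)$ from \Cref{lem:SB decay}. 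Your forward Duhamel with $\cA_0 S_\cB$ on the right forces you to control $\cA_0 S_\cB$ out of $\sfL^1_x\sfL^\infty_v(\omega^\ell)$, which is precisely the complication you flag as ``the main obstacle''; the paper sidesteps it entirely by reversing the order.
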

\begin{proof}[{\bf Proof of \Cref{lem:ASB decay}}]
The idea is to use a variant of the Duhamel formula
\begin{equation}\label{A Duhamel}
\cA S_\cB =\cA S_{\cB_0}+ \sum_{j=1}^{n} \cA S_{\cB_0}\star(\cA_0 S_{\cB_0})^{\star j} + \cA S_{\cB}\star (\cA_0 S_{\cB_0})^{\star(n+1)}.
\end{equation}
We consider $\ell>1$ small enough so that the estimates in \Cref{iterates Duhamel} can still be used. 

$\#$\textit{Step 1.} We prove \eqref{AS_B 1bis}. We have 
\begin{align*}
\norm{\cA S_\cB}_{\sfL^1(\omega^\ell)\cap\sfL^\infty(\omega^\ell)\to \sfL^1(\omega^\ell)\cap\sfL^\infty(\omega^\ell)}&\leq \norm{\cA}_{\sfL^1(m)\cap\sfL^\infty(m)\to \sfL^1(\omega^\ell)\cap\sfL^\infty(\omega^\ell)}\norm{ S_\cB}_{\sfL^1(\omega^\ell)\cap\sfL^\infty(\omega^\ell)\to \sfL^1(m)\cap\sfL^\infty(m)} \\
&\lesssim e^{-\lambda_\ell t^a},
\end{align*}
thanks to \Cref{lem:A} and \Cref{lem:SB decay}.

$\#$\textit{Step 2.} Proof of the \eqref{AS_B 2bis}. We consider \eqref{A Duhamel} with $n=d$. The first terms can be easily estimated by \eqref{AS_B 6} and \eqref{AS_B 7}, indeed we have
\begin{equation}
\norm{\cA S_{\cB_0}\star (\cA_0 S_{\cB_0})^{\star j}}_{\sfL^1_x\sfL^\infty_v(\omega^\ell)\to \sfL^1(\omega^\ell)\cap\sfL^\infty(\omega^\ell)}\lesssim t^{-(d-j)}e^{-(1-\chi-\lambda)t},
\end{equation}
which, for $j\in\Drange{0,\,d}$, yields
\begin{equation}\label{1604}
\norm{\cA S_{\cB_0}}_{\sfL^1_x\sfL^\infty_v(\omega^\ell)\to \sfL^1(\omega^\ell)\cap\sfL^\infty(\omega^\ell)}+\sum_{j=1}^d \norm{\cA S_{\cB_0}\star (\cA_0 S_{\cB_0})^{\star j}}_{\sfL^1_x\sfL^\infty_v(\omega^\ell)\to \sfL^1(\omega^\ell)\cap\sfL^\infty(\omega^\ell)}\lesssim t^{-d}e^{-(1-\chi-\lambda)t}
\end{equation}
Using \eqref{AS_B 1bis} just demonstrated, \eqref{AS_B 6} and \eqref{AS_B 7} we find
\begin{align*}
\norm{\cA S_{\cB}\star &(\cA_0 S_{\cB_0})^{\star d+1}}_{\sfL^1_x\sfL^\infty_v(\omega^\ell)\to \sfL^1(\omega^\ell)\cap\sfL^\infty(\omega^\ell)}\\
&\lesssim \norm{\cA S_\cB}_{\sfL^1(\omega^\ell)\cap\sfL^\infty(\omega^\ell)\to \sfL^1(\omega^\ell)\cap\sfL^\infty(\omega^\ell)}\star \norm{(\cA_0 S_{\cB_0})^{\star d+1}}_{\sfL^1_x\sfL^\infty_v(\omega^\ell)\to \sfL^1(\omega^\ell)\cap\sfL^\infty(\omega^\ell)}\\
&\lesssim e^{-\lambda_\ell t^a}\star e^{-(1-\chi-\lambda)t}\lesssim e^{-\lambda_\ell t^a}
\end{align*}
Substituting this and \eqref{1604} in \eqref{A Duhamel} we prove \eqref{AS_B 2bis}.

$\#$\textit{Step 3.} Proof of the \eqref{AS_B 3bis}. We consider the Duhamel formula \eqref{A Duhamel} for $n=1$. Then we have
\begin{align*}
\norm{\cA S_\cB&}_{\sfL^1_x\sfL^\infty_v(\omega^\ell)\to \sfL^1_x\sfL^\infty_v(\omega^\ell)}\\
&\leq \norm{\cA S_{\cB_0}}_{\sfL^1_x\sfL^\infty_v(\omega^\ell)\to \sfL^1_x\sfL^\infty_v(\omega^\ell)}+\norm{\cA S_\cB}_{\sfL^1(\omega^\ell)\to \sfL^1_x\sfL^\infty_v(\omega^\ell)}\star\norm{\cA_0 S_{\cB_0}}_{\sfL^1_x\sfL^\infty_v(\omega^\ell)\to \sfL^1(\omega^\ell)} 
\end{align*}
We estimate the first term by \eqref{AS_B 2}, the last term by \eqref{AS_B 5} and 
\begin{align*}
\norm{\cA S_\cB}_{\sfL^1(\omega^\ell)\to \sfL^1_x\sfL^\infty_v(\omega^\ell)}&\leq \norm{\cA}_{\sfL^1(m)\to \sfL^1_x\sfL^\infty_v(\omega^\ell)}\norm{S_\cB}_{\sfL^1(\omega^\ell)\to\sfL^1(m)}\\
&\leq \norm{\cA}_{\sfL^1\to \sfL^1_x\sfL^\infty_v(\omega^\ell)}\norm{S_\cB}_{\sfL^1(\omega^\ell)\to\sfL^1(m)}\leq e^{-\lambda_\ell t^a}.
\end{align*}
We conclude that
\begin{align*}
\norm{\cA S_{\cB}(t)}_{\sfL^1_x\sfL^\infty_v(\omega^\ell)\to \sfL^1_x\sfL^\infty_v(\omega^\ell)}\lesssim e^{-(1-\chi-\lambda)t}+e^{-\lambda_\ell t^a}\star e^{-(1-\chi-\lambda)t}\lesssim e^{-\lambda_\ell t^a}.
\end{align*}

$\#$\textit{Step 4.} Finally, inequality  \eqref{AS_B 4bis} is a direct consequence of \eqref{AS_B 1bis}, \eqref{AS_B 2bis} and \eqref{AS_B 3bis} thanks to \cite[Proposition 2.5]{MQT} (see also \cite[Lemma 6.3]{Cao19}).
\end{proof}


\begin{proof}[{\bf Proof of \Cref{Prop:SL_bounded}}]
Consider the splitting $\cL=\cA+\cB$, then we have the Duhamel formula
\[
S_\cL =S_{\cB}+ \sum_{j=1}^{d+1} S_{\cB}\star(\cA S_{\cB})^{\star j} + S_{\cB}\star (\cA S_{\cB})^{\star(d+1)} \star \cA S_\cL\,,
\]
and we are interested in the norm $\norm{S_\cL}_{X\to X}$, where $X:=\sfL^1(m)\cap\sfL^\infty(m)$ and $m(x,v)\asymp e^{\nu\wx^a}+e^{b\abs{v}^\gamma}$ is defined as in \Cref{Prop:Lyapunov}. We can bound the first terms as follows
\begin{align*}
\norm{S_\cB\star(\cA S_\cB)^{\star j}}_{X\to X}&=\norm{(S_B \cA)^{\star j}\star S_\cB}_{X\to X}\\
&\leq \(\norm{ S_\cB \cA}_{X\to X}\)^{\star j}\star\norm{S_\cB}_{X\to X}\\
&\leq \(\norm{S_\cB}_{\sfL^1(\omega^\ell)\cap \sfL^\infty(\omega^\ell)\to X}\norm{\cA}_{X\to \sfL^1(\omega^\ell)\cap \sfL^\infty(\omega^\ell)}\)^{\star j}\star \norm{S_\cB}_{X\to X}\\
&\lesssim \(e^{-\lambda_\ell t^a}\)^{\star j}\star 1\\
&\lesssim 1,
\end{align*}
thanks to \Cref{lem:SB decay}, \Cref{lem:A} and \Cref{lem:dissipSB}. Taking $j\in\Drange{0,\,d+1}$ we deduce that
\begin{equation}\label{SL first duhamel}
\norm{S_{\cB}}_{X\to X}+ \sum_{j=1}^{d+1} \norm{S_{\cB}\star(\cA S_{\cB})^{\star j}}_{X\to X}\lesssim 1.
\end{equation}
Concerning the last term of Duhamel's formula, we just have to use the mass conservation of $S_\cL$ as follows
\begin{align*}
\norm{\cA S_\cL(t)f_0}_{\sfL^1_x\sfL^\infty_v(\omega^\ell)}\lesssim \norm{S_\cL(t)f_0}_{\sfL^1}=\norm{f_0}_{\sfL^1}\lesssim \norm{f_0}_{X}.
\end{align*}
In this way we can concatenate 
\[
\norm{\cA S_\cL}_{X\to\sfL^1_x\sfL^\infty_v(\omega^\ell)}\lesssim 1
\]
with 
\begin{align*}
\|(\cA S_{\cB}(t))^{\star d+1}\|_{\sfL^1_x\sfL^\infty_v(\omega^\ell)\longrightarrow \sfL^1(\omega^\ell)\cap\sfL^\infty(\omega^\ell)}&\lesssim e^{-\lambda_\ell t^a}, & \norm{S_\cB}_{\sfL^1(\omega^\ell)\cap \sfL^\infty(\omega^\ell)\to X}&\lesssim e^{-\lambda_\ell t^a},
\end{align*}
proven respectively in \Cref{lem:ASB decay} and in \Cref{lem:SB decay}. We then obtain
\begin{equation}
\norm{S_{\cB}\star (\cA S_{\cB})^{\star(d+1)} \star \cA S_\cL}_{X\to X}\lesssim 1,
\end{equation}
that with \eqref{SL first duhamel} gives \eqref{SL bounded}. 

The proof of the existence of a steady state follows the same steps as in \cite{MW17}. We introduce the following definition
\[
\forall f\in X,\qquad \inorm{f}:=\sup_{t\geq 0}\;\norm{S_{\cL}(t)f}_X.
\]
Clearly $\inorm{\cdot}$ is an equivalent norm to $\norm{\cdot}_X$, indeed
\[
\norm{f}_X=\norm{S_{\cL}(0)f}_X\leq \sup_{t\geq 0}\;\norm{S_{\cL}(t)f}_X=\inorm{f}
\]
and
\[
\inorm{f}=\sup_{t\geq 0}\;\norm{S_{\cL}(t)f}_X\lesssim \sup_{t\geq 0}\;\norm{f}_X=\norm{f}_X.
\]
For any rotation $\Omega\in SO(d)$ we set $f_\Omega(x,v):=f(\Omega x,\Omega v)$, in this way we say that $f$ is rotationally symmetric if $f_\Omega=f$ for every $\Omega\in SO(d)$. Consider the set
\[
\mathcal{C}:=\left\{f\in X\Big| f\geq 0,\; \int_{\R^d\times\R^d}f(x,v)\dx\dv =1,\; \inorm{f}\leq C,\; f_\Omega=f, \;  \forall\, \Omega\in SO(d)\right\}
\]
with $C>0$ large enough so that $\mathcal{C}$ is not empty. In addition, the set $\mathcal{C}$ is convex, closed and bounded for the weak-$*$ topology of $\sfL^\infty(m)$, as a consequence it is compact for the weak-$*$ topology of $\sfL^\infty(m)$ by the Banach-Alaoglu Theorem. We also remind that $S_\cL$ is weakly-$*$ continuous (this comes from the well-posedness Theorem).

Let us show that $\mathcal{C}$ is invariant for $S_\cL$. Clearly $S_\cL$ preserves positivity and total mass, and for any $t\geq 0$,
\[
\inorm{S_\cL(t)f}=\sup_{s\geq 0}\, \norm{S_\cL (t+s)f}_X\leq \sup_{t\geq 0}\norm{S_\cL(t)f}_X=\inorm{f}
\]
Let $f_\Omega(t,x,v)=f(t,\Omega x,\Omega v)$ with $\Omega\in SO(d)$, then er have $\partial_t f_\Omega=(\partial_t f)_\Omega$, 
\[
v\cdot \nabla_x f_\Omega=v\cdot \Omega(\nabla_xf)_\Omega=(v\cdot\nabla_x f)_\Omega
\]
and
\begin{align*}
\int_{\R^d}\Lambda \(\frac{x\cdot v'}{\wx}\)f_\Omega(x,v')\dv '\M(v)&=\int_{\R^d}\Lambda\(\frac{x\cdot v'}{\wx}\)f(\Omega x,\Omega v')\dv '\M(v)\\
&=\int_{\R^d}\Lambda\(\frac{x\cdot \Omega^{-1}v'}{\wx}\) f(\Omega x,v')\dv '\M(v)\\
&=\int_{\R^d}\Lambda \(\frac{\Omega x\cdot v'}{\wangle{\Omega x}}\)f(\Omega x,v')\dv '\M(\Omega v)\\
&=\(\int_{\R^d}\Lambda (x,v')f(x,v')\dv '\M(v)\)_\Omega
\end{align*}
because $\M$ is rotationally symmetric. Therefore if $f(t,x,v)$ is a solution with initial datum $f(0,x,v)$, then $f_\Omega(t,x,v)$ is a solution with initial datum $f_\Omega(0,x,v)$, in other words
\[
S_\cL(t)f_\Omega=(S_\cL(t) f)_\Omega.
\]
Thus we conclude that $\mathcal{C}$ is an invariant set for $S_\cL$. As a consequence, the Brouwer-Schauder-Tychonoff fixed point theorem (see for example \cite{MW17,C21}), there exists at least one non-negative, invariant by rotation and normalized stationary state $G\in X $ to \eqref{eq:main}.
\end{proof}


\section{Representation formula and pre-bounds above and below for the density}\label{Sec:ReprFormula}

In this section we prove an upper and a lower bound on the density $\rho_G$ of the steady state $G$ of the run and tumble equation, whose existence has been shown in the previous paragraphs. We start with a representation formula.

\begin{lem}[Representation formula]\label{lem:representation}
The stationary solution to \eqref{eq:main}, $G$ satisfies the following equation 
\begin{equation}\label{eq:representation}
\forall x,v\in\R^d \qquad G(x,v)=\int_0^{+\infty}e^{-\int_0^s\Lambda(x-vr,v)dr}\M(v)\theta_G(x-vs)\d s
\end{equation}
where 
\begin{equation}
    \theta_G(x):=\int_{\R^d}\Lambda(x,v')G(x,v')\dv '.
\end{equation}
\end{lem}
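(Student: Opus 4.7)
The plan is to treat the stationary equation $\cL G = 0$ as a linear first-order ODE along the free-transport characteristics $s \mapsto x - vs$, solve it by the integrating factor method, and pass to the limit at infinity using the boundedness of $G$ provided by \Cref{Prop:SL_bounded}.

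First I would rewrite $\cL G = 0$ (see \eqref{eq:cL}) in the equivalent form
\[
v \cdot \nabla_x G(x,v) + \Lambda\!\left(\frac{x\cdot v}{\wx}\right) G(x,v) = \M(v)\,\theta_G(x),
\]
using the definition of $\theta_G$. Next, for fixed $(x,v) \in \RRd$, I would set $h(s) := G(x-vs, v)$, so that $h'(s) = -v\cdot \nabla_x G(x-vs,v)$. Substituting into the above identity evaluated at the point $(x-vs, v)$ gives the scalar ODE
\[
h'(s) - \Lambda(x-vs, v)\, h(s) = -\M(v)\,\theta_G(x-vs),
\]
where I write $\Lambda(y,v) := \Lambda(y\cdot v/\wangle{y})$ for brevity. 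Multiplying by the integrating factor $E(s) := \exp\!\left(-\int_0^s \Lambda(x-vr, v)\,\d r\right)$ yields
\[
\frac{\d}{\d s}\bigl[ h(s)\, E(s) \bigr] = -\M(v)\,\theta_G(x-vs)\, E(s),
\]
and integration from $0$ to $T>0$ produces
\[
G(x,v) = G(x-vT, v)\, E(T) + \int_0^T \M(v)\,\theta_G(x-vs)\, E(s)\, \d s.
\]

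The remaining step is to let $T\to +\infty$ and show that the boundary term vanishes. Since $\Lambda \geq 1-\chi > 0$ by \Cref{Hyp:psi}, we have the uniform pointwise bound $E(T) \leq e^{-(1-\chi)T}$. Moreover, \Cref{Prop:SL_bounded} gives $G \in \sfL^\infty(m)$, so $G$ is globally bounded (since $m \geq 1-\delta_1$) and in particular $|G(x-vT,v)| \lesssim 1$. Hence $G(x-vT,v)E(T) \to 0$ as $T\to+\infty$ pointwise, which yields the claimed representation formula~\eqref{eq:representation}. The only mildly delicate point is not the computation itself but invoking the a priori $\sfL^\infty$ bound on $G$ from \Cref{Prop:SL_bounded} to kill the boundary term; without that bound one would be integrating against an a priori unknown quantity at infinity. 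Positivity of $\theta_G$ and $\M$ together with the monotone convergence theorem then guarantees that the integral on the right is well-defined and finite for every $(x,v)$.
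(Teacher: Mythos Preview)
Your proposal is correct and is essentially the same argument as the paper's, just phrased in direct ODE/characteristics language rather than semigroup notation. The paper writes the same computation as the Duhamel identity $G - \int_0^t S_{\cB_0}(s)\cA_1 G\,\d s = S_{\cB_0}(t)G$ (with $\cB_0 = -v\cdot\nabla_x - \Lambda$ and $\cA_1 f = \M\int \Lambda f'\,\d v'$), and kills the boundary term via the decay $\|S_{\cB_0}(t)\|_{\sfL^\infty(m)\to\sfL^\infty(m)}\le e^{-\frac12(1-\chi)t}$ from \Cref{estimate:S_B0} together with $G\in\sfL^\infty(m)$; this is exactly your integrating-factor computation and your use of $E(T)\le e^{-(1-\chi)T}$ with the $\sfL^\infty$ bound on $G$.
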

\begin{proof}[{\bf Proof of \Cref{lem:representation}}]
Let us remind the splitting $\cL=\cB_0+\cA_1$ where 
\[
(\cB_0 f)(x,v)  = - v \cdot \nabla_x f(x,v) - \Lambda(x,v) f(x,v), 
\]
and 
\[
(\cA_1 f)(x,v)=\M(v)\int_{\R^d}\Lambda(x,v')f(x,v')\dv 
\]
By applying the Duhamel formula to $G$ ad remembering that $S_\cL(t)G=G$, we have
\begin{align*}
G-\int_0^t S_{\cB_0}(s)\cA_1 G \d s &=S_{\cL}(t)G-\int_0^t S_{\cB_0}(s)\cA_1 S_{\cL}(t-s)G \d s\\
&=S_{\cB_0}(t)G
\end{align*}
Thanks to \Cref{estimate:S_B0} and \Cref{thm:main} we have that 
\begin{equation*}
\norm*{G-\int_0^t S_{\cB_0}(s)\cA_1 G \d s}_{\sfL^\infty(m)}\leq e^{-\frac{1}{2}(1-\chi)t}\norm{G}_{\sfL^\infty(m)}\longrightarrow 0 
\end{equation*}
as $t\to 0$, therefore we have 
\[
\forall x,v\in\R^d \qquad G(x,v)= \int_0^\infty S_{\cB_0}(s)\cA_1 G(x,v) \d s
\]
This equation is nothing other than \eqref{eq:representation}.
\end{proof}

\begin{remark}
It is worth emphasizing that some more explicit computations can be made in the one dimensional case. Indeed we have
\begin{equation}\label{eq:representation1D}
G(x,v)=
\begin{cases}
\int_{u=-\infty}^{x}\frac{1}{\abs{v}}\M(v) e^{-\frac{1}{\abs{v}}\int_u^x\Lambda(\tau,v)d\tau}\theta_G(u)\d u & \text{if } v\geq 0\\
\int_{u=0}^{\infty}\frac{1}{\abs{v}}\M(v) e^{-\frac{1}{\abs{v}}\int_x^{x+u}\Lambda(\tau,v)d\tau}\theta_G(x+u)\d u & \text{if } v< 0.
\end{cases}
\end{equation}
This expression is obtained from \eqref{eq:representation} by changing varibles
\begin{align*}
    &\begin{cases}
    u=x-vs\\
    s=\frac{x-u}{v}
    \end{cases}
    &
    &\begin{cases}
    \tau=x-vr\\
    r= \frac{x-\tau}{v}
    \end{cases}
\end{align*}
when $v\leq 0$ and by
\begin{align*}
    &\begin{cases}
    u=\abs{v} s \\
    s=\frac{u}{\abs{v}}
    \end{cases}
    &
    &\begin{cases}
    \tau=x+\abs{v}r\\
    r= \frac{\tau-x}{\abs{v}}
    \end{cases}
\end{align*}
when $v\geq 0$. The representation \eqref{eq:representation1D} will turn out to be useful when $\Lambda(x,v) =1+\chi\sign(xv)$ and we will study the behavior of $\theta_G(x)$ as $x\to \infty$.
\end{remark}

Before moving on to the upper and lower bound on $\rho_G$, we introduce an important and recurrent function that we will often use in the following, $\Phi\colon\Rd\to\R$ defined by 
\begin{equation}\label{eq:PHI}
    \Phi(y):= \frac{1}{\abs{y}^{d-1}}\int_0^\infty u^{d-2} e^{-\frac{\abs{y}}{u}-\frac{u^\gamma}{\gamma}}\d u.
\end{equation}
This definition will allows us to simplify the notation for the results of this section. Thanks to \Cref{lem:phi2} in the Appendix, we deduce that the asymptotic approximations of $\Phi(y)$ are
\begin{equation}\label{eq:d>2 asymptINF}
\Phi(y)\sim \sqrt{\frac{2\pi}{1+\gamma}} \, \abs{y}^{-\frac{\gamma}{\gamma+1}\(d-\frac{1}{2}\)}e^{-\frac{1+\gamma}{\gamma}\abs{y}^{\frac{\gamma}{1+\gamma}}}\qquad \text{as } \abs{y}\to \infty,
\end{equation}
and
\begin{equation}\label{eq:d>2 asympt0}
\Phi(y)\sim
\begin{cases}
    \gamma^{\frac{d-1}{\gamma}-1}\Gamma\(\frac{d-1}{\gamma}\)\frac{1}{\abs{y}^{d-1}} & \text{if } d\geq 2\\
    \abs{\ln\abs{y}} & \text{if } d=1
\end{cases} \qquad \text{as } \abs{y}\to 0.
\end{equation}

\begin{lem}\label{lem:conv_inequalities}
Let $G$ be the stationary solution to \eqref{eq:main}, then the density $\rho_G$ satisfies the following inequalities
\begin{equation}
\rho_G\star \phi_1\leq \rho_G\leq \rho_G\star \phi _2
\end{equation}
where
\begin{equation}\label{eq:phi_1}
\phi_1(y)=(1-\chi)c_{0,\gamma}^{-1}\Phi((1+\chi) y), 
\end{equation}
\begin{equation}\label{eq:phi_2}
\phi_2(y)=(1+\chi)c_{0,\gamma}^{-1} \Phi((1-\chi)y)
\end{equation}
and $\Phi$ is given in \eqref{eq:PHI}.
\end{lem}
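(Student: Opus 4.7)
The plan is to start from the representation formula \eqref{eq:representation} of \Cref{lem:representation}, integrate in $v$, and convert the resulting double integral into a convolution whose kernel can be identified with $\Phi$. Under \Cref{Hyp:psi} the tumbling rate $\Lambda = 1+\chi\psi$ satisfies $1-\chi \leq \Lambda \leq 1+\chi$ pointwise, which gives at once the two sandwich bounds $(1-\chi)\rho_G(y) \leq \theta_G(y) \leq (1+\chi)\rho_G(y)$ and $e^{-(1+\chi)s} \leq \exp(-\int_0^s \Lambda(x-vr,v)\,dr) \leq e^{-(1-\chi)s}$ for every $x,y,v,s$. Inserting these into \eqref{eq:representation} and integrating in $v$ gives, for the upper bound,
$$\rho_G(x) \;\leq\; (1+\chi)\int_{\R^d}\M(v)\int_0^{+\infty} e^{-(1-\chi)s}\,\rho_G(x-vs)\,\d s\,\dv,$$
and symmetrically for the lower bound, with $(1-\chi)$ as prefactor and $(1+\chi)$ in the exponential.

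Next I would turn each right-hand side into a convolution. For fixed $s$, perform the change of variable $y = x-vs$, so that $v = (x-y)/s$ and $\dv = s^{-d}\dy$. Using $\M(w) = c_{0,\gamma}^{-1}\exp(-|w|^\gamma/\gamma)$ and Fubini--Tonelli (justified by positivity of the integrand) one obtains
$$\rho_G(x) \;\leq\; (1+\chi)\,c_{0,\gamma}^{-1}\int_{\R^d}\rho_G(y)\,K_{1-\chi}(x-y)\,\dy,\qquad K_\mu(z) := \int_0^{+\infty} s^{-d}\exp\!\Bigl(-\mu\, s - \tfrac{|z|^\gamma}{\gamma\, s^\gamma}\Bigr)\,\d s,$$
and the analogous lower bound with the constant $(1-\chi)\,c_{0,\gamma}^{-1}$ and $\mu = 1+\chi$.

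It only remains to identify $K_\mu$ with $\Phi(\mu\,\cdot)$ up to an explicit constant. Applying the substitution $u = |y|/s$ to \eqref{eq:PHI} yields the equivalent representation
$$\Phi(y) = \int_0^{+\infty} s^{-d}\exp\!\Bigl(-s - \tfrac{|y|^\gamma}{\gamma\, s^\gamma}\Bigr)\,\d s,$$
and then rescaling $s \mapsto t/\mu$ in $K_\mu(z)$ gives $K_\mu(z)$ proportional to $\Phi(\mu\,z)$. Matching the resulting constants together with the prefactors $(1\pm\chi)$ coming from the bounds on $\theta_G$ produces exactly $\phi_1$ and $\phi_2$ as in \eqref{eq:phi_1}--\eqref{eq:phi_2}, concluding both inequalities. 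The only technical step is this bookkeeping of rescaling factors; no conceptual obstacle arises once the representation formula is in hand.
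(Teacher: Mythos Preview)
Your approach is correct and essentially identical to the paper's: both start from the representation formula, use the pointwise bounds $1-\chi\le\Lambda\le1+\chi$ on both $\theta_G$ and the exponential factor, integrate in $v$, and perform the change of variable $y=x-vs$ followed by a rescaling in $s$ to recognise $\Phi$. The only difference is cosmetic: the paper does the second substitution as $s=|x-z|/u$ directly inside the double integral, whereas you first isolate the kernel $K_\mu$ and then rescale.

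One small caveat on your last sentence: the rescaling $s\mapsto t/\mu$ actually gives $K_\mu(z)=\mu^{d-1}\Phi(\mu z)$, so the kernels you obtain are $(1+\chi)^{d-1}\phi_1$ and $(1-\chi)^{d-1}\phi_2$, not exactly $\phi_1$ and $\phi_2$. This is harmless, since $(1+\chi)^{d-1}\ge1$ and $(1-\chi)^{d-1}\le1$ push the inequalities in the right direction; the paper's own computation has the same innocuous discrepancy.
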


\begin{proof}[{\bf Proof of \Cref{lem:conv_inequalities}}]
By the representation formula we have
\begin{align*}
G(x,v)&=\int_0^{+\infty}e^{-\int_0^s\Lambda(x-vr,v)dr}\M(v)\theta_G(x-vs)\d s\\
&\geq (1-\chi) \int_0^{+\infty}e^{-(1+\chi)s}\M(v)\rho_G(x-vs)\d s
\end{align*}
Integrating in $v$ and changing the variables $z=x-sv$, $v=\frac{x-z}{s}$ we have
\begin{align*}
\rho_G(x)&\geq (1-\chi)\int_{\R^d} \int_0^{+\infty}e^{-(1+\chi)s}\M(v)\rho_G(x-vs)\d s\dv \\
&\geq(1-\chi)\int_{\R^d} \int_0^{+\infty}\frac{1}{s^d}e^{-(1+\chi)s}\M\(\frac{x-z}{s}\)\rho_G(z)\d s\d z 
\end{align*}
Next, we change variable $s=\frac{\abs{x-z}}{u}$, $u=\frac{\abs{x-z}}{s}$ and we obtain
\begin{align*}
\rho_G(x)&\geq(1-\chi)\int_{\R^d} \int_0^{+\infty}\frac{u^{d-2}}{\abs{x-z}^{d-1}}e^{-(1+\chi)\frac{\abs{x-z}}{u}}\M\(\frac{x-z}{\abs{x-z}}u\)\rho_G(z)\d u\d z \\
&\geq (\rho_G\star \phi_1)(x)
\end{align*}
where $\phi_1$ is given by \eqref{eq:phi_1}.

In the same way we prove the other inequality
\begin{align*}
\rho_G(x)&\leq (1+\chi)\int_{\R^d} \int_0^{+\infty}e^{-(1-\chi)s}\M(v)\rho_G(x-vs)\d s\dv \\
&\leq(1+\chi)\int_{\R^d} \int_0^{+\infty}\frac{1}{s^d}e^{-(1-\chi)s}\M\(\frac{x-z}{s}\)\rho_G(z)\d s\d z\\
&\leq(1+\chi)\int_{\R^d} \int_0^{+\infty}\frac{u^{d-2}}{\abs{x-z}^{d-1}}e^{-(1-\chi)\frac{\abs{x-z}}{u}}\M\(\frac{x-z}{\abs{x-z}}u\)\rho_G(z)\d u\d z \\
&\leq (\rho_G\star \phi_2)(x)
\end{align*}
where $\phi_2$ is given by \eqref{eq:phi_2}.
\end{proof}

\begin{cor}[Positivity]\label{cor:positivity}
The density $\rho_G$ is strictly positive, i.e.
\[
\rho_G(x)>0 \qquad \forall x\in \R^d.
\]
\end{cor}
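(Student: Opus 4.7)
The plan is to extract positivity from the lower convolution inequality $\rho_G \geq \rho_G \star \phi_1$ provided by \Cref{lem:conv_inequalities}, combined with the strict positivity of the kernel $\phi_1$ and the fact that $\rho_G$ is a non-trivial non-negative density.

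First I would observe that the kernel $\Phi$ defined in \eqref{eq:PHI} is strictly positive on all of $\R^d$: for any $y\in\R^d$ the integrand $u\mapsto u^{d-2}e^{-\abs{y}/u - u^\gamma/\gamma}$ is continuous and strictly positive on $(0,\infty)$, so $\Phi(y)>0$. Consequently $\phi_1(y)=(1-\chi) c_{0,\gamma}^{-1}\Phi((1+\chi)y)$ is strictly positive on $\R^d$ (recall $\chi\in(0,1)$ by \Cref{Hyp:psi}).

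Next, by the construction of $G$ in \Cref{Prop:SL_bounded}, $G$ is a non-negative function with $\iint G\dx\dv = 1$, so its marginal $\rho_G$ is a non-negative $\sfL^1$ function with $\int_{\Rd}\rho_G\dx = 1$. In particular, the set $E:=\set{y\in\Rd\,:\,\rho_G(y)>0}$ has strictly positive Lebesgue measure.

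Finally I apply \Cref{lem:conv_inequalities}: for every $x\in\Rd$,
\[
\rho_G(x) \;\geq\; (\rho_G\star\phi_1)(x) \;=\; \int_{\Rd}\rho_G(y)\,\phi_1(x-y)\dy \;\geq\; \int_E\rho_G(y)\,\phi_1(x-y)\dy \;>\;0,
\]
since the integrand is strictly positive on $E$ and $E$ has positive measure. There is no real obstacle here — the only thing to verify carefully is the pointwise positivity of $\Phi$, but that is immediate from its definition as an integral of a continuous, strictly positive function.
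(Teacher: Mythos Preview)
Your proof is correct and follows essentially the same approach as the paper: both use the lower convolution inequality $\rho_G\geq \rho_G\star\phi_1$ from \Cref{lem:conv_inequalities}, the strict positivity of $\phi_1$, and the fact that $\rho_G$ is not identically zero. Your version merely spells out more explicitly why $\Phi$ (and hence $\phi_1$) is strictly positive, which the paper takes for granted.
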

\begin{proof}[{\bf Proof of \Cref{cor:positivity}}]
Since $G$ is a non-zero steady state, we have that $\rho_G$ is not identically zero. But in this case, for all possible $x\in\R^d$
\[
\rho_G(x)\geq \int_{\R^d}\rho_G(z)\phi_1(x-z)\d z>0
\]
where in the last inequality we used that $\phi_1>0$.
\end{proof}

\subsection{Lower bound for the density $\rho_G$}
\begin{lem}[Lower bound]\label{lem:lowerbound}
Let $G$ be the stationary solution to \eqref{eq:main}, then the density $\rho_G$ satisfies
\begin{equation}
\rho_G(x)\gtrsim \wx^{-\frac{\gamma}{\gamma+1}\(d-\frac{1}{2}\)}e^{-\nu \wx^{\frac{\gamma}{1+\gamma}}}
\end{equation}
where $\nu=\frac{\gamma+1}{\gamma}(1+\chi)^{\frac{\gamma}{1+\gamma}}$.
\end{lem}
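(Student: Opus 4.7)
The bound is a direct consequence of the convolution inequality $\rho_G \geq \rho_G \star \phi_1$ from \Cref{lem:conv_inequalities} combined with the asymptotic \eqref{eq:d>2 asymptINF} of $\Phi$ at infinity. Writing $\phi_1(y) = (1-\chi)c_{0,\gamma}^{-1}\Phi((1+\chi)y)$ and evaluating \eqref{eq:d>2 asymptINF} at the argument $(1+\chi)y$, one sees that for some $r_0 > 0$ and $c_\star > 0$,
\[
\phi_1(y) \geq c_\star \abs{y}^{-\frac{\gamma}{\gamma+1}\left(d-\frac{1}{2}\right)} e^{-\nu \abs{y}^{\frac{\gamma}{1+\gamma}}}, \qquad |y| \geq r_0,
\]
with $\nu = \frac{\gamma+1}{\gamma}(1+\chi)^{\frac{\gamma}{1+\gamma}}$ arising exactly as $\tfrac{1+\gamma}{\gamma}((1+\chi)\abs{y})^{\frac{\gamma}{1+\gamma}}$. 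Thus $\phi_1$ already carries the precise tail we want to transfer to $\rho_G$.

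Next, I would fix $R > r_0$ and, for $\abs{x} \geq 2R$, restrict the convolution to the ball $B_R = \{\abs{z}\leq R\}$, so that $\abs{x-z} \in [R, \abs{x}+R] \subset [r_0, \infty)$ and the pointwise bound above on $\phi_1(x-z)$ is available. The key step is the subadditivity inequality
\[
(\abs{x}+R)^{\frac{\gamma}{1+\gamma}} \leq \abs{x}^{\frac{\gamma}{1+\gamma}} + R^{\frac{\gamma}{1+\gamma}},
\]
valid since $\frac{\gamma}{1+\gamma} \in (0,1)$; this gives $e^{-\nu \abs{x-z}^{\frac{\gamma}{1+\gamma}}} \geq e^{-\nu R^{\frac{\gamma}{1+\gamma}}} e^{-\nu \abs{x}^{\frac{\gamma}{1+\gamma}}}$, while $\abs{x-z}^{-\frac{\gamma}{\gamma+1}(d-\tfrac{1}{2})} \gtrsim \wx^{-\frac{\gamma}{\gamma+1}(d-\tfrac{1}{2})}$. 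By \Cref{cor:positivity} the mass $m_R := \int_{B_R}\rho_G\,\dx$ is strictly positive, so
\[
\rho_G(x) \geq \int_{B_R}\rho_G(z)\phi_1(x-z)\,\dx \gtrsim m_R\,\wx^{-\frac{\gamma}{\gamma+1}\left(d-\frac{1}{2}\right)} e^{-\nu \abs{x}^{\frac{\gamma}{1+\gamma}}}.
\]
Replacing $\abs{x}^{\frac{\gamma}{1+\gamma}}$ by $\wx^{\frac{\gamma}{1+\gamma}}$ in the exponential costs only a bounded multiplicative factor since the difference of the two quantities is uniformly bounded on $\R^d$.

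To finish, one needs to cover the compact region $\abs{x}\leq 2R$; there the target right-hand side is bounded above, so it suffices to exhibit a uniform positive lower bound on $\rho_G$. This can be achieved by one more application of the convolution inequality, choosing a ball $B^\prime$ far enough from the origin so that $\phi_1(x-z)$ is uniformly bounded below by a positive constant for $x \in \overline{B(0,2R)}$ and $z \in B^\prime$ (thereby avoiding the singularity of $\Phi$ at $0$) and so that $\int_{B^\prime}\rho_G > 0$ by \Cref{cor:positivity}. The main and essentially only subtle point is securing the sharp constant $\nu$ rather than some strictly smaller one: this is precisely what the concavity-based subadditivity inequality in the second paragraph does, by absorbing the shift $R$ into a multiplicative constant rather than into the exponent.
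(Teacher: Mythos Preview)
Your proposal is correct and follows essentially the same approach as the paper: restrict the convolution inequality $\rho_G \geq \rho_G\star\phi_1$ to a fixed ball, use the strict positivity of $\rho_G$ (\Cref{cor:positivity}) to extract a positive constant, and feed in the asymptotic \eqref{eq:d>2 asymptINF} for $\phi_1$. The only cosmetic differences are that the paper bounds $\phi_1(x-z)$ on the ball via the radial monotonicity of $\Phi$ (giving $\phi_1(x-z)\geq \phi_1\big((\abs{x}+A)\tfrac{x}{\abs{x}}\big)$ directly) rather than your subadditivity of $t\mapsto t^{\gamma/(1+\gamma)}$, and it handles the compact region simply by invoking $\min_{\overline{B_A}}\rho_G>0$ rather than a second convolution step; your variant is slightly more robust in that it never needs continuity of $\rho_G$.
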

\begin{proof}[{\bf Proof of \Cref{lem:lowerbound}}]
Fix $A>0$ such that the ball in $\R^d$ with radius $A$ has volume 1. From the inequalities of \Cref{lem:conv_inequalities} we have
\begin{align*}
\rho_G(x)&\geq \int_{\R^d}\rho_G(z)\phi_1(x-z)\d z\\
&\geq \int_{\abs{z}\leq A}\rho_G(z)\phi_1(x-z)\d z\\
&\geq \(\min_{\abs{z}\leq A}\rho_G(z)\)\int_{\abs{z}\leq A}\phi_1(x-z)\d z
\end{align*}
The minimum is strictly positive since it is taken over a compact set and $\rho_G$ is strictly positive. Concerning the other factor, notice that $\phi_1$ is radially decreasing, so for $\abs{x}\geq A$ we have 
\begin{align*}
\int_{\abs{z}\leq A}\phi_1(x-z)\d z&\geq\int_{\abs{z}\leq A}\phi_1\(x+A\frac{x}{\abs{x}}\)\d z\geq \phi_1\(x+A\frac{x}{\abs{x}}\)
\end{align*}
Using the definition of $\phi_1$ and \eqref{eq:d>2 asymptINF} we conclude
\[
\phi_1\(x+A\frac{x}{\abs{x}}\)\gtrsim \wangle{\abs{x}+A}^{-\frac{\gamma}{\gamma+1}\(d-\frac{1}{2}\)}e^{-\nu (\abs{x}+A)^{\frac{\gamma}{1+\gamma}}} \sim \wx^{-\frac{\gamma}{\gamma+1}\(d-\frac{1}{2}\)}e^{-\nu \wx^{\frac{\gamma}{1+\gamma}}}.
\]
as $\abs{x}\to\infty$. Thus we have
\[
\rho_G(x)\gtrsim \wx^{-\frac{\gamma}{\gamma+1}\(d-\frac{1}{2}\)}e^{-\nu \wx^{\frac{\gamma}{1+\gamma}}}
\]
for $\abs{x}\geq A$, that gives the statement since $\rho_G$ is bounded on $B_A$.
\end{proof}

\subsection{Upper bound for the density $\rho_G$}

From the main theorem we have that for $m=e^{\nu \wx^\frac{\gamma}{1+\gamma}}+e^{b\abs{v}^\gamma}$ the stationary solution $G$ belongs to $\sfL^1(m)$, as a consequence
\[
\int_{\R^d}\rho_G(x)e^{\nu\wx^\frac{\gamma}{1+\gamma}}\dx\leq \iint_{\R^d\times\R^d}G(x,v)(e^{\nu \wx^\frac{\gamma}{1+\gamma}}+e^{b\abs{v}^\gamma})\dx\dv <\infty.
\]
Thus we have $\rho_G\in\sfL^1(e^{\nu\wx^\a})$ with $\nu>0$ small enough. The aim of this section is to turn this bound into an $\sfL^\infty$ bound, in the sense that we want to prove $\rho_G\in\sfL^\infty(e^{\nu\wx^\frac{\gamma}{1+\gamma}})$.

\begin{lem}\label{lem:eststatupper}
Let $G$ be the stationary solution to \eqref{eq:main}, then the density $\rho_G$ satisfies
\begin{equation}
\rho_G(x)\lesssim e^{-\nu \wx^{\frac{\gamma}{1+\gamma}}}
\end{equation}
where $\nu>0$ is small enough.
\end{lem}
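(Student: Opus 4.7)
The plan is to use the upper convolution inequality $\rho_G \leq \rho_G\star \phi_2$ of \Cref{lem:conv_inequalities} in order to bootstrap the weighted $\sfL^1$ information recalled at the beginning of this subsection into the desired weighted $\sfL^\infty$ bound, via iteration and Young's inequality. Throughout write $a := \tfrac{\gamma}{1+\gamma}$ and $\nu_0 := \tfrac{1+\gamma}{\gamma}(1-\chi)^a$.

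First I would rewrite the inequality in weighted form. From $|x|\leq |z|+|x-z|$ and squaring one gets $\wx \leq \wangle{z}+|x-z|$, and since $a\in(0,1]$ the sub-additivity of $t\mapsto t^a$ on $[0,\infty)$ yields $\wx^a \leq \wangle{z}^a + |x-z|^a$. Multiplying $\rho_G \leq \rho_G\star \phi_2$ by $e^{\nu\wx^a}$ and setting $U(x) := e^{\nu\wx^a}\rho_G(x)$ and $k(y) := e^{\nu|y|^a}\phi_2(y)$, one obtains the pointwise inequality $U \leq U\star k$.

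The asymptotics \eqref{eq:d>2 asymptINF}--\eqref{eq:d>2 asympt0}, applied to the explicit form \eqref{eq:phi_2} of $\phi_2$, give, for any $\nu\in(0,\nu_0)$,
\[
k(y) \lesssim |y|^{-a(d-\tfrac{1}{2})} e^{-(\nu_0-\nu)|y|^a} \ \text{as } |y|\to\infty, \qquad k(y)\lesssim |y|^{-(d-1)} \ \text{as } |y|\to 0,
\]
(with $|\ln|y||$ replacing the local singularity when $d=1$). Fixing $\nu$ small enough to guarantee both $\nu<\nu_0$ and $\rho_G\in\sfL^1(e^{\nu\wx^a})$, one has $U\in \sfL^1$ and $k\in \sfL^p(\R^d)$ for every $p\in [1,\tfrac{d}{d-1})$ (for every finite $p$ when $d=1$). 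Since $k\geq 0$, the inequality can be iterated to $U\leq U\star k^{\star n}$ for every $n\geq 1$. Young's convolution inequality yields $\|k^{\star n}\|_{\sfL^\infty}\lesssim \|k\|^n_{\sfL^{n/(n-1)}}$ as soon as $\tfrac{n}{n-1}<\tfrac{d}{d-1}$, that is $n>d$. Fixing such an $n$ (for instance $n=d+1$), one last application of Young's inequality gives
\[
\|U\|_{\sfL^\infty} \leq \|U\star k^{\star n}\|_{\sfL^\infty} \leq \|U\|_{\sfL^1}\|k^{\star n}\|_{\sfL^\infty} < \infty,
\]
which is the claim.

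The main obstacle is that the kernel $k$ fails to be bounded near the origin (singularity $|y|^{-(d-1)}$), so a single use of the $\sfL^1\star \sfL^\infty\subset \sfL^\infty$ version of Young's inequality is not available: one must iterate the weighted convolution a number of times growing with the dimension to smooth out this local singularity, while simultaneously preserving the stretched-exponential decay at infinity, which is exactly what constrains us to $\nu<\nu_0$ and explains why the bound is only a pre-bound with $\nu$ small. Reaching a sharper exponent matching that of \Cref{lem:lowerbound} would require a Laplace-type asymptotic analysis of the iterated convolutions $\phi_2^{\star n}$ or of the fixed-point representation \eqref{eq:representation}, and this is precisely the reason why the asymptotically sharp two-sided behaviour is later imposed as \Cref{Hyp:asympRho} rather than derived here.
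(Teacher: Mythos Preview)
Your argument is correct and takes a genuinely different route from the paper. The paper performs a single step of the convolution inequality, splits the integral into $|x-z|\leq\epsilon$ and $|x-z|\geq\epsilon$, and then absorbs the weighted $\sfL^\infty$ norm of $\rho_G$ appearing on the right (from the local singularity $|x-z|^{-(d-1)}$) into the left-hand side by choosing $\epsilon$ small. You instead iterate the weighted convolution $U\leq U\star k$ to $U\leq U\star k^{\star n}$ and use Young's inequality with exponent $\tfrac{n}{n-1}<\tfrac{d}{d-1}$ to smooth the kernel into $\sfL^\infty$, so that only the known $\sfL^1$ norm of $U$ enters.

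Your route has the advantage of never putting the quantity to be bounded on the right-hand side: the paper's absorption argument implicitly assumes $\|\rho_G\|_{\sfL^\infty(e^{\nu\wx^{a}})}<\infty$ a priori (which, to be fair, can be justified from \Cref{Prop:SL_bounded}), whereas your argument only uses $U\in\sfL^1$. The paper's approach is shorter and avoids the dimension-dependent iteration, but both yield the same non-sharp exponent $\nu<\nu_0$ for the same reason you identify, namely the need to keep $k$ integrable at infinity.
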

\begin{proof}[{\bf Proof of \Cref{lem:eststatupper}}]
We show the case $d\geq 2$, as the case $d=1$ is very similar. Let $\nu<\overline{\nu}=\frac{\gamma+1}{\gamma}(1-\chi)^\a$ small enough such that $\rho_G\in\sfL^1(e^{\nu\wx^\a})$. Using \Cref{lem:conv_inequalities} and the inequalities \eqref{eq:d>2 asympt0} and \eqref{eq:d>2 asymptINF} we have
\begin{align*}
\rho_G(x) & \leq (\rho_G\star\phi_2)(x)\\
&\lesssim \int_{\abs{x-z}\leq \epsilon}\frac{\rho_G(z)}{\abs{x-z}^{d-1}}e^{-\overline{\nu}\wangle{x-z}^{\frac{\gamma}{1+\gamma}}}\d z+\int_{\abs{x-z}\geq \epsilon}\frac{\rho_G(z)}{\abs{x-z}^{\frac{\gamma}{\gamma+1}\(d-\frac{1}{2}\)}}e^{-\overline{\nu}\wangle{x-z}^{\frac{\gamma}{1+\gamma}}}\d z\\
&\lesssim \int_{\abs{x-z}\leq \epsilon}\frac{\rho_G(z)}{\abs{x-z}^{d-1}}e^{-\nu\wangle{x-z}^{\frac{\gamma}{1+\gamma}}}\d z+\int_{\abs{x-z}\geq \epsilon}\frac{\rho_G(z)}{\abs{x-z}^{\frac{\gamma}{\gamma+1}\(d-\frac{1}{2}\)}}e^{-\nu\wangle{x-z}^{\frac{\gamma}{1+\gamma}}}\d z\\
&\lesssim \int_{\abs{x-z}\leq \epsilon}\frac{\rho_G(z)}{\abs{x-z}^{d-1}}e^{\nu\wangle{z}^{\frac{\gamma}{1+\gamma}}-\nu\wangle{x}^{\frac{\gamma}{1+\gamma}}}\d z+\int_{\abs{x-z}\geq \epsilon}\frac{\rho_G(z)}{\abs{x-z}^{\frac{\gamma}{\gamma+1}\(d-\frac{1}{2}\)}}e^{\nu\wangle{z}^{\frac{\gamma}{1+\gamma}}-\nu\wangle{x}^{\frac{\gamma}{1+\gamma}}}\d z\\
&\leq e^{-\nu\wangle{x}^{\frac{\gamma}{1+\gamma}}}\norm{\rho_G}_{\sfL^\infty\(e^{\nu\wangle{z}^{\frac{\gamma}{1+\gamma}}}\)}\int_{\abs{x-z}\leq \epsilon}\frac{1}{\abs{x-z}^{d-1}}\d z+ \frac{e^{-\nu\wx^{\frac{\gamma}{1+\gamma}}}}{\epsilon^{\frac{\gamma}{\gamma+1}\(d-\frac{1}{2}\)}}\norm{\rho_G}_{\sfL^1\(e^{\nu\wangle{z}^{\frac{\gamma}{1+\gamma}}}\)}
\end{align*}
Then we have
\begin{align*}
\norm{\rho_G}_{\sfL^\infty\(e^{\nu\wangle{x}^{\frac{\gamma}{1+\gamma}}}\)}\lesssim \epsilon \norm{\rho_G}_{\sfL^\infty\(e^{\nu\wangle{z}^{\frac{\gamma}{1+\gamma}}}\)}+\frac{1}{\epsilon^{\frac{\gamma}{\gamma+1}\(d-\frac{1}{2}\)}}\norm{\rho_G}_{\sfL^1\(e^{\nu\wangle{z}^{\frac{\gamma}{1+\gamma}}}\)}
\end{align*}
Then choosing $\epsilon$ sufficiently small allows us to conclude.
\end{proof}

\section{Refined bounds and a Poincaré inequality under \Cref{Hyp:asympRho} }\label{Sec:poincare}

\subsection{Upper and lower bounds for $P^{(k)}_G$}
In this section we show some more precise $\sfL^\infty$ bounds under the stronger assumptions made in \Cref{Hyp:asympRho}. Specifically, we give upper and lower bounds on the quantities
\begin{equation}\label{eq:H_G}
    H_G(x)=\int_\Rd h(v)G(x,v)\dv. 
\end{equation}
where $h\colon\Rd\to \R$ is an homogeneous function of degree $k\in\NN$, that is 
\[
h(\lambda v)=\lambda^kh(v)\qquad \forall \lambda> 0.
\]
The most interesting cases are $h(v)=\abs{v}^k$ with $k\in\NN$ and $h(v)=(\sfe\cdot v)^2$ with $\sfe\in\S^{d-1}$, the former allows to study the  $k$-th moments of $G$ in velocity
\[
P_G^{(k)}(x)=\int_\Rd \abs{v}^kG(x,v)\dv \qquad k\in \NN,
\]
the latter will show the equivalence of the weights $V_G$ and $P_G$.
\begin{lem}\label{lem:asympMomenta}
Assume \Cref{Hyp:asympRho} and let $h\colon\Rd\to\R$ be an homogeneous function of degree $k\geq 1$. Then
\begin{align*}
    H_G(x)\asymp \(\wangle{x}^{\ell} + \wangle{x}^{\frac{k}{1+\gamma} }\)\wx^{-\frac{\gamma}{1+\gamma}\(d-\frac12\)}e^{-\nu\abs{x}^{\frac{\gamma}{1+\gamma}}}
\end{align*}
In particular we have
\begin{align}\label{eq:weightsAsymp}
    \rho_G(x)&\asymp \wx^{\ell-\frac{\gamma}{1+\gamma}\(d-\frac12\)}e^{-\nu\abs{x}^{\frac{\gamma}{1+\gamma}}}, & P_G^{(k)}(x)&\asymp \wx^{\frac{k}{1+\gamma}-\frac{\gamma}{1+\gamma}\(d-\frac12\)}e^{-\nu\abs{x}^{\frac{\gamma}{1+\gamma}}}
\end{align}
for $k\geq 2$.
\end{lem}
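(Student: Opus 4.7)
The plan is to start from the representation formula of \Cref{lem:representation} applied to $H_G$, which gives
\[
H_G(x) = \int_0^\infty \int_\Rd h(v)\, e^{-\int_0^s \Lambda(x-vr,v)\,\d r}\, \M(v)\, \theta_G(x-vs)\, \dv\, \d s.
\]
Under \Cref{Hyp:asympRho} we have $\Lambda = 1+\chi\,\sgn$, so $(1-\chi)\rho_G \leq \theta_G \leq (1+\chi)\rho_G$; moreover, setting $r_\star := (x\cdot v)/|v|^2$, the line integral evaluates explicitly to $(1-\chi)s$ when $x\cdot v \leq 0$, to $(1+\chi)s$ when $x\cdot v>0$ and $s\leq r_\star$, and to $(1+\chi)r_\star + (1-\chi)(s-r_\star)$ when $x\cdot v>0$ and $s>r_\star$. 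The successive changes of variables $z = x-vs$ and $u = |x-z|/s$, as in \Cref{lem:conv_inequalities}, then convert the inner integral into a convolution against the rescaled kernel
\[
\Phi_k^{(c)}(y) := \frac{1}{|y|^{d-1}}\int_0^\infty u^{k+d-2}\, e^{-c|y|/u - u^\gamma/\gamma}\, \d u,
\]
whose asymptotic $\Phi_k^{(c)}(y)\asymp|y|^{\alpha_k} e^{-\nu_c|y|^{\gamma/(1+\gamma)}}$ with $\alpha_k = \tfrac{k}{1+\gamma} - \tfrac{\gamma}{1+\gamma}\bigl(d-\tfrac12\bigr)$ and $\nu_c = \tfrac{1+\gamma}{\gamma}c^{\gamma/(1+\gamma)}$ follows by the Laplace-type analysis of \Cref{App:Laplace}, exactly as in the derivation of \eqref{eq:d>2 asymptINF}. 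The crucial point is that $\nu_{1+\chi}$ coincides with the sharp rate $\nu$ of \Cref{Hyp:asympRho}.

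For the lower bound I use $e^{-\int\Lambda\,\d r}\geq e^{-(1+\chi)s}$ together with $\theta_G\geq (1-\chi)\rho_G$ to obtain
\[
H_G(x)\gtrsim \int_\Rd \tilde h\!\left(\frac{x-z}{|x-z|}\right)\rho_G(z)\,\Phi_k^{(1+\chi)}(x-z)\,\d z,
\]
where $\tilde h$ denotes the angular factor of the homogeneous function $h$. Since both $\rho_G$ and $\Phi_k^{(1+\chi)}$ have matching sub-exponential tail at rate $\nu$ and $\gamma/(1+\gamma)<1$, I isolate the $z$-integration in the two separated balls $\{|z|\leq R\}$ and $\{|x-z|\leq R\}$: the first contributes $\gtrsim \Phi_k^{(1+\chi)}(x)\int_{|z|\leq R}\rho_G \asymp |x|^{\alpha_k}e^{-\nu|x|^{\gamma/(1+\gamma)}}$, and the second contributes $\gtrsim \rho_G(x)\int_{|y|\leq R}\tilde h(\hat y)|y|^{\alpha_k}e^{-\nu|y|^{\gamma/(1+\gamma)}}\d y \asymp \wx^{\ell+\alpha_0}e^{-\nu|x|^{\gamma/(1+\gamma)}}$ with $\alpha_0 := -\gamma(d-\tfrac12)/(1+\gamma)$. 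Summing the two estimates yields the claimed lower bound.

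The upper bound is the main obstacle, because the naive estimate $e^{-\int\Lambda\,\d r}\leq e^{-(1-\chi)s}$ only produces the slower rate $\nu_{1-\chi}<\nu$. To recover the sharp rate I split the $(v,s)$-integration into the three regions above. In Region A, where $x\cdot v\leq 0$, the identity $|x-vs|^2 = |x|^2 + 2s|x\cdot v| + s^2|v|^2 \geq |x|^2$ forces $\rho_G(x-vs)\lesssim \rho_G(x)$ up to polynomial corrections, and the elementary integrals $\int h(v)\M(v)\dv$ and $\int_0^\infty s^j e^{-(1-\chi)s}\d s$ control the remainder, producing a contribution of order $\wx^{\ell+\alpha_0}e^{-\nu|x|^{\gamma/(1+\gamma)}}$. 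In Region B, where $x\cdot v>0$ and $s\leq r_\star$, the weight is exactly $e^{-(1+\chi)s}$ and the computation reproduces the lower-bound convolution with the correct rate $\nu$, giving $\lesssim (\wx^{\ell+\alpha_0}+|x|^{\alpha_k})e^{-\nu|x|^{\gamma/(1+\gamma)}}$. In Region C, where $x\cdot v>0$ and $s>r_\star$, I write $s = r_\star+t$ and use the orthogonal decomposition $x-vs = x_\perp^v - tv$ with $|x_\perp^v|^2 = |x|^2 - (x\cdot v)^2/|v|^2$; the weight becomes $e^{-(1+\chi)r_\star-(1-\chi)t}$ and a direct minimisation of $(1+\chi)r_\star + (1-\chi)t + |v|^\gamma/\gamma + \nu|x-vs|^{\gamma/(1+\gamma)}$ over $(v,t)$ attains its minimum $\nu|x|^{\gamma/(1+\gamma)}$ on the boundary $t=0$, so the boundary Laplace prefactor is again absorbed by $\wx^{\ell+\alpha_0}e^{-\nu|x|^{\gamma/(1+\gamma)}}$. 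Summing the three regions closes the upper bound, and the specialisations in \eqref{eq:weightsAsymp} follow by inserting $h\equiv 1$ (which recovers $\rho_G$) and $h(v) = |v|^k$ with $k\geq 2$, where the constraint $\ell\in(0,\tfrac{2}{1+\gamma})$ gives $\ell<k/(1+\gamma)$, so only the larger power $\wx^{k/(1+\gamma)}$ survives.
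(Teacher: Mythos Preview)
Your lower bound is correct and matches the spirit of \Cref{lem:lowerbound}: restricting the convolution $\int\rho_G(z)\Phi_k^{(1+\chi)}(x-z)\,\d z$ to the two balls $\{|z|\le R\}$ and $\{|x-z|\le R\}$ picks out the two contributions $\wx^{\alpha_k}$ and $\wx^{\ell+\alpha_0}$ respectively, and this is exactly how the paper identifies the two terms (they arise from the two minima $y=\mathsf e_x$ and $y=0$ of the rescaled phase).

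The upper bound, however, has a genuine gap. In Region~B you say the computation ``reproduces the lower-bound convolution with the correct rate $\nu$, giving $\lesssim(\wx^{\ell+\alpha_0}+|x|^{\alpha_k})e^{-\nu|x|^{\gamma/(1+\gamma)}}$''. But the lower-bound computation only bounds the convolution \emph{below}; to bound $\int\rho_G(z)\Phi_k^{(1+\chi)}(x-z)\,\d z$ \emph{above} with the sharp polynomial prefactors you must carry out a genuine Laplace analysis of this convolution, i.e.\ understand precisely how the phase grows away from its minima. This is the core of the proof and you have not done it. The same issue recurs in Region~C: you correctly show the exponent minimum is $\nu|x|^{\gamma/(1+\gamma)}$ at $t=0$, but ``the boundary Laplace prefactor is again absorbed'' is unjustified. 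The minimum of your function $F(v,0)=(1+\chi)r_\star+|v|^\gamma/\gamma+\nu|x_\perp^v|^{\gamma/(1+\gamma)}$ over $v$ is attained at an interior point ($v\parallel x$) \emph{and} degenerately as $\theta\to\pi/2$, $|v|\to0$, so extracting the prefactor is not routine.

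The paper avoids the splitting altogether. It keeps the exact quantity $\int_0^{|y|}\Lambda(\mathsf e_x\cdot\mathsf e_y-\tau)\,\d\tau$ in the exponent, rescales $z=|x|y$ to obtain a single Laplace integral $\int g(y)e^{-Xw(y)}\,\d y$ with $X=|x|^{\gamma/(1+\gamma)}$, checks that $w$ has exactly two minima (at $y=0$ and $y=\mathsf e_x$) with common minimum value $\nu$, and applies Watson's Lemma near each after passing to spherical coordinates. This yields two-sided $\asymp$ bounds with explicit prefactors in one stroke: the minimum at $y=0$ produces the $\wx^{\ell}$ contribution (through the local behaviour of $\theta_G$ near $x$), and the minimum at $y=\mathsf e_x$ produces the $\wx^{k/(1+\gamma)}$ contribution (through the $u^{k}$ factor). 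Your Region decomposition can in principle be made to work, but the hard step---the Laplace prefactor computation---is precisely what you have deferred in each region.
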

\begin{proof}[{\bf Proof of \Cref{lem:asympMomenta}}]
$\#$\textit{Step 1: Rewriting of $H_G$ } Under \Cref{Hyp:asympRho} we have $\Lambda=1+\chi\,\sign$, so  the representation formula \eqref{eq:representation} rewrites as
\[
\qquad G(x,v)=\int_0^{+\infty}e^{-\int_0^s\Lambda(x\cdot v-\abs{v}^2r)dr}\M(v)\theta_G(x-vs)\d s.
\]
Let us denote $\sfe_z=\tfrac{z}{\abs z}$ for any $z\in\Rd$. By changing the variables $z=sv$ and $u=\tfrac{\abs{z}}{s}$, we find
\begin{align*}
    H_G(x)&=\int_\Rd \int_0^{+\infty}h(v)e^{-\int_0^s\Lambda(x\cdot v-\abs{v}^2r)dr}\M(v)\theta_G(x-vs)\d s\dv\\
    &=\int_\Rd \int_0^{+\infty}h\(\frac{z}{s}\)e^{-\int_0^s\Lambda(x\cdot\frac{z}{s}-\frac{\abs{z}^2}{s^2}r)dr}\M\(\frac{z}{s}\)\theta_G(x-z)\frac{1}{s^d}\d s\d z\\
    &=\int_\Rd \int_0^{+\infty}h(\sfe_z)\frac{u^{k+d-2}}{\abs{z}^{d-1}}e^{-\int_0^\frac{\abs{z}}{u}\Lambda(x\cdot\sfe_z-ur)dr}\M\(\sfe_z u\)\theta_G(x-z)\d u\d z\\
    &\asymp \int_\Rd \int_0^{+\infty}h(\sfe_z)\frac{u^{k+d-2}}{\abs{z}^{d-1}}e^{-\frac{u^\gamma}{\gamma}-\frac{1}{u}\int_0^{\abs{z}}\Lambda(x\cdot \sfe_z-r)dr}\theta_G(x-z)\d u\d z
\end{align*}
Take $\sfe_x=\tfrac{x}{\abs{x}}$ fixed and change variables $z=\abs{x}y$ and $r=\abs{x}\tau$. We have
\begin{align*}
    H_G(x)&\asymp \abs{x}\int_\Rd \int_0^{+\infty}h(\sfe_y)\frac{u^{k+d-2}}{\abs{y}^{d-1}}e^{-\frac{u^\gamma}{\gamma}-\frac{1}{u}\int_0^{\abs{x}\abs{y}}\Lambda(x\cdot \sfe_y-r)dr}\theta_G(\abs{x}(\sfe_x-y))\d u\d y\\
    &\asymp \abs{x}\int_\Rd \int_0^{+\infty}h(\sfe_y)\frac{u^{k+d-2}}{\abs{y}^{d-1}}e^{-\frac{u^\gamma}{\gamma}-\frac{\abs{x}}{u}\int_0^{\abs{y}}\Lambda(\sfe_x\cdot \sfe_y-\tau)\d\tau}\theta_G(\abs{x}(\sfe_x-y))\d u\d y\\
    &\asymp \abs{x}\int_\Rd \frac{h(\sfe_y)\theta_G(\abs{x}(\sfe_x-y))}{\abs{y}^{d-1}}\(\int_0^{+\infty}u^{k+d-2}e^{-\frac{u^\gamma}{\gamma}-\frac{\abs{x}}{u}\int_0^{\abs{y}}\Lambda(\sfe_x\cdot \sfe_y-\tau)\d\tau}\d u\)\d y.
\end{align*}
Thanks to \Cref{lem:phi2} in the Appendix, we have
\[
\int_0^\infty u^{k+d-2}e^{-\frac{u^\gamma}{\gamma}-\frac{X}{u}}\d u\asymp  \wangle{X}^{\frac{k}{1+\gamma}+(d-1)-\frac{\gamma}{1+\gamma}\(d-\frac12\)}e^{-\frac{1+\gamma}{\gamma}X^{\frac{\gamma}{1+\gamma}}} \qquad \forall X\in\R_+,
\]
so, by using it with $X=\abs{x} \int_0^{\abs{y}} \Lambda(\sfe_x\cdot \sfe_y-\tau)\d\tau\asymp \abs{x}\abs{y}$, we deduce
\begin{align*}
    H_G(x)&\asymp \abs{x}\int_\Rd\frac{h(\sfe_y)\theta_G(\abs{x}(\sfe_x-y))}{\abs{y}^{d-1}}\wangle{\abs{x}\abs{y}}^{\frac{k}{1+\gamma}+(d-1)-\frac{\gamma}{1+\gamma}\(d-\frac12\)} e^{-\frac{1+\gamma}{\gamma}\( \abs{x} \int_0^{\abs{y}} \Lambda(\sfe_x\cdot \sfe_y-\tau)\d\tau\)^{\frac{\gamma}{1+\gamma}}} \d y
\end{align*}
Use the bound \eqref{eq:boundRHO} and replace $\wangle{\cdot}$ with $\abs{\cdot}$ (this does not modify the asymptotic behavior), so
\begin{align*}
    H_G(x)&\asymp\abs{x}^{d+ \frac{k}{1+\gamma}+\ell-\frac{2\gamma}{1+\gamma}\(d-\frac12\)}\int_\Rd g(y)e^{-\abs{x}^{\frac{\gamma}{1+\gamma}}\(\frac{1+\gamma}{\gamma}\(  \int_0^{\abs{y}} \Lambda(\sfe_x\cdot \sfe_y-\tau)\d\tau\)^{\frac{\gamma}{1+\gamma}} + \nu\abs{\sfe_x-y}^{\frac{\gamma}{1+\gamma}}\)}\d y
\end{align*}
where
\[
g(y)=h(\sfe_y)\abs{\sfe_x-y}^{\ell-\frac{\gamma}{1+\gamma}\(d-\frac12\)} \abs{y}^{\frac{k}{1+\gamma}-\frac{\gamma}{1+\gamma}\(d-\frac12\)}.
\]
Now we study the integral 
\[
I(X)=\int_\Rd g(y)e^{-X w(y)}\d y
\]
where $X=\abs{x}^{\frac{\gamma}{1+\gamma}}$ and
\[
w(y)=\frac{1+\gamma}{\gamma}\(  \int_0^{\abs{y}} \Lambda(\sfe_x\cdot \sfe_y-\tau)\d\tau\)^{\frac{\gamma}{1+\gamma}} + \nu\abs{\sfe_x-y}^{\frac{\gamma}{1+\gamma}}
\]
$\#$\textit{Step 2: Study of $I(X)$.} The function $w$ has two minima in $\Rd$, that are $0$ and $\sfe_x$, and the minimum value is $\nu=\tfrac{1+\gamma}{\gamma}(1+\chi)^{\tfrac{\gamma}{1+\gamma}}$. Let $\epsilon>0$ small and split
\[
I(X)= \int_{w\geq \nu+\epsilon} g(y)e^{-X w(y)}\d y+\int_{w< \nu+\epsilon} g(y)e^{-X w(y)}\d y
\]
The first integral is negligible as $X\to \infty$, indeed
\begin{align*}
    \abs*{\int_{w\geq \nu+\epsilon} g(y)e^{-X w(y)}\d y}\leq {e^{-(\nu+\tfrac{\epsilon}{2})X}}\int_{w\geq \nu+\epsilon} \abs{g(y)}e^{-\frac{\epsilon}{2}X w(y)}\d y=o(X^n{e^{-\nu X}})
\end{align*}
as $X\to\infty$ for all $n\in\NN$.

Concerning the second integral, we take $\epsilon>0$ small enough such that the set $\{w(z)< \nu+\epsilon\}$ is the union of two disjoint neighborhoods $D_0$ and $D_x$ of $0$ and $\sfe_x$ respectively. We now consider separately the two domains 
\[
\int_{w< \nu+\epsilon} g(y)e^{-X w(y)}\d y=\int_{D_0} g(y)e^{-X w(y)}\d y+\int_{D_x} g(y)e^{-X w(y)}\d y=:I_0(X) + I_1(X).
\]
Concerning $I_0(X)$, we pass to spherical coordinates
\[
I_0(X)= \int_{\rho\omega\in D_0} g(\rho\omega)\rho^{d-1}e^{-X \(\frac{1+\gamma}{\gamma}\(  \int_0^{\rho} \Lambda(\sfe_x\cdot \omega-\tau)\d\tau\)^{\frac{\gamma}{1+\gamma}} + \nu\abs{\sfe_x-\rho\omega}^{\frac{\gamma}{1+\gamma}}\)}\d \rho\d \omega
\]
If $\epsilon>0$ is small enough, the function 
\[
\rho\mapsto \frac{1+\gamma}{\gamma}\(  \int_0^{\rho} \Lambda(\sfe_x\cdot \omega-\tau)\d\tau\)^{\frac{\gamma}{1+\gamma}} + \nu\abs{\sfe_x-\rho\omega}^{\frac{\gamma}{1+\gamma}} -\nu
\]
is strictly increasing for any $\omega\in\mathbb{S}^{d-1}$, so we can change variables
\begin{equation*}
    \begin{cases}
        r=\frac{1+\gamma}{\gamma}\(  \int_0^{\rho} \Lambda(\sfe_x\cdot \omega-\tau)\d\tau\)^{\frac{\gamma}{1+\gamma}} + \nu\abs{\sfe_x-\rho\omega}^{\frac{\gamma}{1+\gamma}} -\nu\\
        \sigma=\omega.
    \end{cases}
\end{equation*}
Notice that the Jacobian of the transformation is
\begin{equation*}
\abs{J(r,\sigma)}=\abs*{
\begin{matrix}
    \frac{\partial \rho(r,\sigma)}{\partial r} & \frac{\partial \rho(r,\sigma)}{\partial \sigma}\\[1ex]
    \frac{\partial \omega(r,\sigma)}{\partial r} & \frac{\partial \omega(r,\sigma)}{\partial \sigma}
\end{matrix}}
=\abs*{
\begin{matrix}
    \frac{\partial \rho(r,\sigma)}{\partial r} & \frac{\partial \rho(r,\sigma)}{\partial \sigma}\\[1ex]
    0 & \mathbb{I}_{d-1}
\end{matrix}}
=\abs*{\frac{\partial \rho(r,\sigma)}{\partial r}}
\end{equation*}
So we have
\begin{align*}
I_0(X)&=e^{-\nu X}\int_{r=0}^\epsilon\int_{\sigma\in\mathbb{S}^{d-1}} g(\rho(r,\sigma)\sigma)\rho(r,\sigma)^{d-1}\abs*{\frac{\partial \rho(r,\sigma)}{\partial r}} e^{-X r}\d r d\sigma\\
&=e^{-\nu X}\int_{r=0}^\epsilon\(\int_{\sigma\in\mathbb{S}^{d-1}} g(\rho(r,\sigma)\sigma)\rho(r,\sigma)^{d-1}\abs*{\frac{\partial \rho(r,\sigma)}{\partial r}}\d \sigma\) e^{-X r} \d r
\end{align*}
We want to apply the Watson \Cref{lem:Watson} about the asymptotic behavior of the Laplace integrals as $X\to \infty$, so we have to study the behavior of 
\[
r\mapsto\int_{\sigma\in\mathbb{S}^{d-1}} g(\rho(r,\sigma)\sigma)\rho(r,\sigma)^{d-1}\abs*{\frac{\partial \rho(r,\sigma)}{\partial r}}\d \sigma
\]
as $r\to 0$. Since $D_0$ is a small neighborhood of $0$ we have that, $\rho(r,\sigma)\to0$ if and only if $r\to 0$. Moreover, combining 
\[
\nu\abs{\sfe_x-\rho\omega}^{\frac{\gamma}{1+\gamma}} -\nu=- \frac{\nu\gamma}{1+\gamma}(\sfe_x\cdot\omega)\,\rho +o(\rho)\qquad \text{as } \rho\to 0
\]
and
\[
\(  \int_0^{\rho} \Lambda(\sfe_x\cdot \omega-\tau)\d\tau\)^{\frac{\gamma}{1+\gamma}} = \(1+\chi\sign(\sfe_x\cdot \omega)\)^{\frac{\gamma}{1+\gamma}} \rho^{\frac{\gamma}{1+\gamma}} + o(\rho^\a)\qquad\textbf{as $\rho\to 0$},
\]
we find
\[
r(\rho,\omega)\sim \frac{1+\gamma}{\gamma}\(1+\chi\sign(\sfe_x\cdot \omega)\)^{\frac{\gamma}{1+\gamma}} \rho^{\frac{\gamma}{1+\gamma}} \qquad\textbf{as $\rho\to 0$}
\]
uniformly in $\omega\in\mathbb{S}^{d-1}$. As a consequence
\begin{align*}
    \rho(r,\sigma)&\sim \(\frac{\gamma}{1+\gamma}\)^{\frac{1+\gamma}{\gamma}}\frac{r^{\tfrac{1+\gamma}{\gamma}}}{1+\chi\sign(\sfe_x\cdot \sigma)}& \abs*{\frac{\partial\rho(r,\sigma)}{\partial r}}&\sim \(\frac{\gamma}{1+\gamma}\)^{\frac{1}{\gamma}}\frac{r^{\tfrac{1+\gamma}{\gamma}-1}}{1+\chi\sign(\sfe_x\cdot \sigma)}
\end{align*}
as $r\to 0$, uniformly in $\sigma\in\mathbb{S}^{d-1}$. Therefore, as $r\to0$, we have
\begin{align*}
    g(\rho(r,\sigma)\sigma)\rho(r,\sigma)^{d-1} &\sim h(\sigma) \rho(r,\sigma)^{d-1+\frac{k}{1+\gamma}-\frac{\gamma}{1+\gamma}\(d-\frac12\)}\\
    &\sim\(\frac{\gamma}{1+\gamma}\)^{\frac{1+\gamma}{\gamma}(d-1)+\frac{k}{\gamma}-\(d-\frac12\)}\frac{h(\sigma)r^{\frac{1+\gamma}{\gamma}(d-1)+\frac{k}{\gamma}-\(d-\frac12\)}}{(1+\chi\sign(\sfe_x\cdot \sigma))^{d-1+\frac{k}{1+\gamma}-\frac{\gamma}{1+\gamma}\(d-\frac12\)}}
\end{align*}
and 
\begin{align*}
    \int_{\sigma\in\mathbb{S}^{d-1}} &g(\rho(r,\sigma)\sigma)\rho(r,\sigma)^{d-1}\abs*{\frac{\partial \rho(r,\sigma)}{\partial r}}\d \sigma\sim \alpha_0\, r^{\frac{1+\gamma}{\gamma}d+\frac{k}{\gamma}-\(d-\frac12\)-1}, 
\end{align*}
where the constant $\alpha_0$ is
\[
\alpha_0 =\(\frac{\gamma}{1+\gamma}\)^{\frac{1+\gamma}{\gamma}(d-1)+\frac{k+1}{\gamma}-\(d-\frac12\)}\int_{\sigma\in\mathbb{S}^{d-1}}h(\sigma)(1+\chi\sign(\sfe_x\cdot\sigma))^{-d+1-\frac{k}{1+\gamma}+\frac{\gamma}{1+\gamma}\(d-\frac12\)}\d\sigma.
\]
By the Watson Lemma, we conclude that, up to a constant, 
\[
I_0(X)\sim e^{-\nu X}X^{-\frac{1+\gamma}{\gamma}d-\frac{k}{\gamma} +d-\frac12}=e^{-\nu \abs{x}^{\frac{\gamma}{1+\gamma}}}\abs{x}^{-d-\frac{k}{1+\gamma}+\frac{\gamma}{1+\gamma}\(d-\frac12\)}
\]

In a similar way we can study $I_1(X)$. By considering the spherical coordinates in $\Rd$ centered in $\sfe_x$ we find
\begin{align*}
    I_1(X)&=\int_{\sfe_x-D_x}g(\sfe_x-y)e^{-X\(\frac{1+\gamma}{\gamma}\(  \int_0^{\abs{\sfe_x-y}} \Lambda(\sfe_x\cdot \frac{\sfe_x-y}{\abs{\sfe_x-y}}-\tau)\d\tau\)^{\frac{\gamma}{1+\gamma}} + \nu\abs{y}^{\frac{\gamma}{1+\gamma}}\)}\d y\\
    &=\int_{\rho\omega\in \sfe_x-D_x}g(\sfe_x-\rho\omega)e^{-X\(\frac{1+\gamma}{\gamma}\(  \int_0^{\abs{\sfe_x-\rho\omega}} \Lambda(\sfe_x\cdot \frac{\sfe_x-\rho\omega}{\abs{\sfe_x-\rho\omega}}-\tau)\d\tau\)^{\frac{\gamma}{1+\gamma}} + \nu\rho^{\frac{\gamma}{1+\gamma}}\)}\rho^{d-1}\d \rho\d\omega\\
    &=e^{-\nu X}\int_{r=0}^\epsilon \int_{\sigma\in\mathbb{S}^{d-1}}g(\sfe_x-\rho(r,\sigma)\sigma)\rho(r,\sigma)^{d-1}e^{-Xr}\abs*{\frac{\partial\rho(r,\sigma)}{\partial r}}\d r\d\sigma\\
    &=e^{-\nu X}\int_{r=0}^\epsilon\( \int_{\sigma\in\mathbb{S}^{d-1}}g(\sfe_x-\rho(r,\sigma)\sigma)\rho(r,\sigma)^{d-1}\abs*{\frac{\partial\rho(r,\sigma)}{\partial r}}d\sigma\)e^{-Xr}\d r
\end{align*}
This time 
\begin{align*}
    \rho(r,\sigma)&\sim \frac{1}{\nu}r^{\tfrac{1+\gamma}{\gamma}}& \abs*{\frac{\partial\rho(r,\sigma)}{\partial r}}&\sim \frac{1+\gamma}{\nu\gamma}r^{\tfrac{1+\gamma}{\gamma}-1}
\end{align*}
and
\[
g(\sfe_x-\rho(r,\sigma)\sigma)\rho(r,\sigma)^{d-1}\sim \frac{h(\sfe_x)}{\nu^{\ell- \frac{\gamma}{1+\gamma}\(d-\frac12\)+d-1}}r^{(d-1+\ell)\frac{1+\gamma}{\gamma} - \(d-\frac12\)}
\]
We conclude
\begin{align*}
    \int_{\sigma\in\mathbb{S}^{d-1}}g(\sfe_x-\rho(r,\sigma)\sigma)\rho(r,\sigma)^{d-1}\abs*{\frac{\partial\rho(r,\sigma)}{\partial r}}d\sigma\sim \alpha_1\,r^{(d+\ell)\frac{1+\gamma}{\gamma} - \(d-\frac12\)-1}
\end{align*}
where the constant $\alpha_1$ is 
\[
\alpha_1 = \abs{\mathbb{S}^{d-1}}\frac{1+\gamma}{\gamma \nu^{\ell- \frac{\gamma}{1+\gamma}\(d-\frac12\)+d}}h(\sfe_x).
\]
Hence, up to a constant,
\[
I_1(X)\sim X^{-(d+\ell)\frac{1+\gamma}{\gamma}+\(d-\frac12\)}e^{-\nu X} =\abs{x}^{-d-\ell+\frac{\gamma}{1+\gamma}\(d-\frac12\)}e^{-\nu \abs{x}^{\frac{\gamma}{1+\gamma}}} 
\]
Putting the pieces together
\begin{align*}
    H_G(x)&\asymp \wangle{x}^{d+ \frac{k}{1+\gamma}+\ell-\frac{2\gamma}{1+\gamma}\(d-\frac12\)}\(\wangle{x}^{-d-\frac{k}{1+\gamma}+\frac{\gamma}{1+\gamma}\(d-\frac12\)} + \wangle{x}^{-d-\ell+\frac{\gamma}{1+\gamma}\(d-\frac12\)}\)e^{-\nu\abs{x}^{\frac{\gamma}{1+\gamma}}}\\
    &\asymp \(\wangle{x}^{\ell} + \wangle{x}^{\frac{k}{1+\gamma}}\)\wangle{x}^{-\frac{\gamma}{1+\gamma}\(d-\frac12\)}e^{-\nu\abs{x}^{\frac{\gamma}{1+\gamma}}}
\end{align*}
that is the statement.
\end{proof}

This result is useful for the following. Let the matrix $V_G$ be defined by
\[
V_G(x)=\int_\Rd v\otimes v\,G(x,v)\dv,
\]

\begin{lem}\label{lem:VG_PG}
The matrix $V_G(x)$ is symmetric and positive definite for all $x\in\Rd$. Moreover, we have
\[
\xi\cdot(V_G(x)\xi)\asymp P_G(x)\abs{\xi}^2\qquad \forall \xi\in\Rd,\,\forall x\in\Rd.
\]
\end{lem}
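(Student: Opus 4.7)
The plan is to obtain symmetry and the upper bound for free from the definition, and to reduce the lower bound to a comparison of two explicit scalar functions by exploiting the rotational symmetry of $G$. Symmetry of $V_G(x)$ is immediate since $v\otimes v$ is symmetric, and the trivial estimate $(\xi\cdot v)^2\leq |\xi|^2|v|^2$ at once gives $\xi\cdot V_G(x)\xi\leq |\xi|^2 P_G(x)$. All the real work lies in the matching lower bound.

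Since the normalised steady state produced in \Cref{Prop:SL_bounded} is invariant by rotations, I will deduce that $V_G(\Omega x)=\Omega V_G(x)\Omega^T$ for every $\Omega\in SO(d)$. For $x\neq 0$ this forces $V_G(x)$ to commute with every rotation fixing $\sfe_x:=x/|x|$, so it decomposes as
\[
V_G(x)=a(|x|)\,\sfe_x\otimes\sfe_x+b(|x|)\,(I-\sfe_x\otimes\sfe_x),
\]
where, for any orthonormal pair $\sfe,\sfe'$,
\[
a(r)=\int_\Rd(\sfe\cdot v)^2 G(r\sfe,v)\dv,\qquad b(r)=\int_\Rd(\sfe'\cdot v)^2 G(r\sfe,v)\dv,
\]
both well-defined by rotation invariance. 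Pointwise positivity of $G$ (which follows by iterating the representation formula of \Cref{lem:representation} once, using $\theta_G\geq (1-\chi)\rho_G>0$ from \Cref{cor:positivity}) gives $a(r),b(r)>0$; at $x=0$ spherical symmetry already yields $V_G(0)$ proportional to $I$. Thus
\[
\xi\cdot V_G(x)\xi\geq \min(a(|x|),b(|x|))\,|\xi|^2,\qquad P_G(x)=a(|x|)+(d-1)b(|x|),
\]
and the lemma reduces to showing $a(r)\asymp b(r)$ uniformly in $r\geq 0$.

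To handle that, I will apply \Cref{lem:asympMomenta} to both $a(|x|)$ and $b(|x|)$, each of which is an $H_G$-quantity of the form \eqref{eq:H_G} with a degree-two homogeneous $h$: namely $h_a(v)=(\sfe_x\cdot v)^2$ and $h_b(v)=(\sfe_\perp\cdot v)^2$ for some $\sfe_\perp\perp\sfe_x$. Under \Cref{Hyp:asympRho} one has $\ell<\frac{2}{1+\gamma}$, so the $\wx^{2/(1+\gamma)}$ term in the formula of \Cref{lem:asympMomenta} dominates the $\wx^\ell$ term for $|x|\gg 1$, which will produce
\[
a(|x|)\;\asymp\;b(|x|)\;\asymp\;\wx^{\tfrac{2}{1+\gamma}-\tfrac{\gamma}{1+\gamma}\left(d-\tfrac{1}{2}\right)}\,e^{-\nu|x|^{\gamma/(1+\gamma)}},
\]
i.e.\ exactly the same asymptotic profile as $P_G(x)$ (case $k=2$ of \Cref{lem:asympMomenta}). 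For $|x|$ in a compact set, continuity of $V_G$ (inherited from the representation formula) together with the strict positivity of $a,b$ closes the comparison by a compactness argument.

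The main delicate point I anticipate concerns $b(|x|)$: in the proof of \Cref{lem:asympMomenta} the coefficient $\alpha_1$ is proportional to $h_b(\sfe_x)=0$, so the contribution $I_1$ from the neighborhood of $\sfe_x$ vanishes at leading order. I will need to check that the $I_0$ contribution from the neighborhood of $0$, whose constant is proportional to $\int_{\mathbb{S}^{d-1}}(\sfe_\perp\cdot\sigma)^2(1+\chi\,\sgn(\sfe_x\cdot\sigma))^{-\mathrm{exp}}\d\sigma>0$, still supplies a strictly positive $\wx^{2/(1+\gamma)}$ leading term; by continuity and the compactness of $\mathbb{S}^{d-1}$, this positivity is moreover uniform in $\sfe_x,\sfe_\perp$. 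The analogous analysis for $h_a$ gives non-trivial $\alpha_0$ and $\alpha_1$, and this is precisely where the strict inequality $\ell<\frac{2}{1+\gamma}$ of \Cref{Hyp:asympRho} is essential to ensure that $a$ and $b$ remain of the same order at infinity.
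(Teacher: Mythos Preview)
Your overall strategy---exploiting the rotation invariance of $G$ to diagonalize $V_G(x)$ as $a(|x|)\,\sfe_x\otimes\sfe_x+b(|x|)\,(I-\sfe_x\otimes\sfe_x)$ and then comparing the two scalar eigenvalues---is more explicit than the paper's, which simply takes $\min_{\sfe\in\S^{d-1}}\int(\sfe\cdot v)^2G(x,v)\dv$ and appeals to \Cref{lem:asympMomenta}. Both routes ultimately rest on that asymptotic lemma, so the additional structure does not change the essential content.

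There is, however, a genuine gap in your treatment of the tangential eigenvalue $b(|x|)$: you have swapped the roles of $I_0$ and $I_1$ from the proof of \Cref{lem:asympMomenta}. Tracing that proof, near $y=0$ the function $g$ carries the factor $|y|^{k/(1+\gamma)-\cdots}$, so the Watson exponent of $I_0$ involves $k$; this $k$ cancels against the $k$ in the outer prefactor $|x|^{d+k/(1+\gamma)+\ell-\cdots}$ and what remains is the $\wx^{\ell}$ term. Conversely, near $y=\sfe_x$ the function $g$ carries $|\sfe_x-y|^{\ell-\cdots}$, the Watson exponent of $I_1$ involves $\ell$, and after cancellation against the prefactor it is $I_1$ that produces the $\wx^{k/(1+\gamma)}$ term. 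Since you correctly observe that $\alpha_1\propto h_b(\sfe_x)=0$, it is precisely the dominant $\wx^{2/(1+\gamma)}$ contribution that disappears at leading order for $h_b$, not the subleading one. What your $I_0$ analysis actually delivers is
\[
b(|x|)\asymp\wx^{\ell-\tfrac{\gamma}{1+\gamma}\left(d-\tfrac12\right)}e^{-\nu|x|^{\gamma/(1+\gamma)}},
\]
and with $\ell<\tfrac{2}{1+\gamma}$ this yields $b(|x|)/P_G(x)\asymp\wx^{\ell-2/(1+\gamma)}\to 0$. Hence your argument, as written, does not establish the uniform lower bound $\xi\cdot V_G(x)\xi\gtrsim P_G(x)|\xi|^2$ in the direction $\sfe_\perp$. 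The paper's brief proof does not engage with this point either---it simply asserts that \Cref{lem:asympMomenta} applied to any degree-$2$ homogeneous $h$ gives the $\wx^{2/(1+\gamma)}$ scale---so your more structural decomposition has in fact surfaced a delicate issue rather than resolved it.
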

\begin{proof}[{\bf Proof of \Cref{lem:VG_PG}}]
Clearly $V_G$ is symmetric and positive definite because 
\[
\xi\cdot (V_G(x) \xi) =\int_\Rd (\xi\cdot v)^2G(x,v)\dv> 0 \qquad\forall \xi\in\Rd,\;x\in\Rd
\]
Moreover 
\[
\xi\cdot (V_G(x) \xi)\leq \int_\Rd \abs{\xi}^2 \abs{v}^2G(x,v)\dv =\abs{\xi}^2 P_G(x)\qquad\forall \xi\in\Rd,\;x\in\Rd
\]
and
\[
\xi\cdot (V_G(x) \xi)\geq\abs{\xi}^2\min_{\mathsf e\in\S^{d-1}} \int_\Rd (\mathsf e\cdot v)^2G(x,v)\dv \qquad\forall \xi\in\Rd,\;x\in\Rd
\]
Since $\S^{d-1}$ is compact, the minimum is achieved in a point $\Bar{\sfe}$ and we can study the integral $\int_\Rd (\Bar\sfe\cdot v)^2G(x,v)\dv$ as in \Cref{lem:asympMomenta}. Since both functions $v\mapsto (\Bar\sfe\cdot v)^2$ and $v\mapsto\abs v^2$ are homogeneous of degree $2$, we have
\[
\xi\cdot (V_G(x) \xi)\gtrsim \abs{\xi}^2\wx^{\frac{2}{1+\gamma}-\frac{\gamma}{1+\gamma}\(d-\frac12\)}e^{-\nu\abs{x}^{\a}}\gtrsim \abs{\xi}^2 P_G(x).
\]
\end{proof}

\subsection{Weighted Poincaré inequality for the variance $P_G$}

The aim of this section is to establish a weighted Poincaré inequality with weight $P_G$ and averaged by $\rho_G$, in particular we prove the following.
\begin{prop}\label{prop:Poincaré}
Let $\rho_G$ be the density of the steady state $G$ of the run and tumble equation and $P_G$ its variance. Then there exists an optimal constant $C_P>0$ such that for any function $u\in \mathcal C^1_b(\R)$
\begin{equation}\label{eq:Poincaré}
    \int_\Rd\abs{u-\Bar{u}}^2\wx^{-\frac{2}{1+\gamma}}P_G\dx\leq C_P \int_\Rd \abs{\gradx u}^2 P_G\dx.
\end{equation}
where $\Bar{u}=\int_\Rd u\rho_G\dx$.
\end{prop}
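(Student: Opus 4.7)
The strategy is to reduce \eqref{eq:Poincaré} to a standard weighted Poincar\'e inequality for a sub-exponential reference measure via the asymptotic bounds on $P_G$ and $\rho_G$, and then to handle the discrepancy between the integration measure $P_G\dx$ and the averaging measure $\rho_G\dx$ by a variance argument.

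\emph{Step 1: reduction to an explicit reference measure.} By \Cref{lem:asympMomenta} one has the two-sided bounds
\[
\rho_G(x)\asymp \wx^A e^{-\nu \wx^{\a}}, \qquad P_G(x)\asymp \wx^B e^{-\nu \wx^{\a}},
\]
with $\a \in (0,1)$, $A = \ell - \a(d-\tfrac12)$ and $B = \tfrac{2}{1+\gamma} - \a(d-\tfrac12)$; note that $B - \tfrac{2}{1+\gamma} = A - \ell$. The gap $\wx^{2/(1+\gamma)} = \wx^{2(1-\a)}$ between the weights on the two sides of \eqref{eq:Poincaré} is exactly the gain characteristic of sub-exponential Poincar\'e inequalities. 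Up to multiplicative constants, proving \eqref{eq:Poincaré} therefore reduces to establishing, for the explicit probability measure $\d\mu_B := Z_B^{-1}\wx^B e^{-\nu \wx^{\a}}\dx$, the inequality
\[
\int_\Rd \abs{u - \bar u_{\mu_B}}^2\, \wx^{-\tfrac{2}{1+\gamma}}\,\d\mu_B \;\leq\; C \int_\Rd \abs{\gradx u}^2\,\d\mu_B,
\]
where $\bar u_{\mu_B} := \int u\,\d\mu_B$. Such weighted Poincar\'e inequalities for polynomially-perturbed sub-exponential measures are by now classical, see \cite{BDLS20,Cao19,KMN21,BDZ23}; they can be obtained either by checking the Muckenhoupt $A_2$ condition for the pair of weights (in $1$D, then tensorizing along radial and angular directions), or by a Bakry-\'Emery-type analysis of the smooth convex potential $\nu\wx^{\a} - B\log\wx$ far from the origin, combined with a local positivity argument in a neighbourhood of the origin.

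\emph{Step 2: change of averaging measure.} Set $\d\sigma := Z_\sigma^{-1}\wx^{-2/(1+\gamma)} P_G\dx$ and $\d\nu_G := \rho_G\dx$, both probability measures, and note that $\sigma$ is comparable to $\wx^{-2/(1+\gamma)}\d\mu_B$ by Step~1. The elementary identity
\[
\int \abs{u - \bar u}^2\,\d\sigma \;=\; \operatorname{Var}_\sigma(u) + \abs{\bar u_\sigma - \bar u}^2
\]
reduces the task to bounding the second term. Since $\int (u - \bar u_\sigma)\,\d\sigma = 0$, one has $\bar u_\sigma - \bar u = \int (u - \bar u_\sigma)\bigl(1 - \d\nu_G/\d\sigma\bigr)\,\d\sigma$, and Cauchy-Schwarz yields $\abs{\bar u_\sigma - \bar u}^2 \leq \operatorname{Var}_\sigma(u)\,\int (\d\nu_G/\d\sigma)^2\,\d\sigma$. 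By Step~1 the Radon-Nikodym derivative satisfies $\d\nu_G/\d\sigma \asymp \wx^{\ell}$, which is square-integrable against the sub-exponential measure $\sigma$, so the second factor is a finite constant. Combining with the variance bound from Step~1 applied to $\operatorname{Var}_\sigma(u)$, the required inequality follows.

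\emph{Main obstacle.} The main technical hurdle is the sub-exponential weighted Poincar\'e inequality of Step~1 for the polynomially-perturbed measure $\mu_B$ in general dimension. The exponent $B$ may well be negative depending on $d$ and $\gamma$, and the logarithmic correction $-B\log\wx$ makes the potential non-convex near the origin, so one must either tensorize the 1D Muckenhoupt argument along radial and angular directions, or handle the neighbourhood of the origin separately using the strict local positivity of $P_G$ (an immediate consequence of \Cref{cor:positivity}). A secondary, more bookkeeping-type difficulty is to track the implicit multiplicative constants hidden in all the ``$\asymp$'' signs through the three steps in order to guarantee that the final constant $C_P$ is finite and depends quantitatively only on $\gamma$, $\chi$, $d$ and $\ell$.
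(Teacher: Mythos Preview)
Your proposal is correct, and the overall architecture --- reduce to an explicit sub-exponential reference measure via the two-sided bounds of \Cref{lem:asympMomenta}, then repair the mismatch between the $P_G$-weighted integration and the $\rho_G$-weighted averaging by a variance/Cauchy--Schwarz argument --- is sound. One cosmetic point: in Step~1 the natural Poincar\'e for $\mu_B$ comes with the \emph{weighted} average $\bar u_{\sigma'}=\int u\,\wx^{-2/(1+\gamma)}\d\mu_B/\int \wx^{-2/(1+\gamma)}\d\mu_B$, not $\bar u_{\mu_B}$; since variances are defined as minima over constants this makes no difference, and your Step~2 goes through unchanged with $\sigma$ replaced by its explicit surrogate.

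The paper takes a more hands-on route. Rather than citing the sub-exponential Poincar\'e for $\mu_B$ as a black box, it writes $P_G\asymp e^{-\vartheta}$ with an explicit $\vartheta$, performs an IMS localisation $u^2=u_{1,R}^2+u_{2,R}^2$ via a smooth cutoff at scale $R$, and treats the two pieces separately: outside $B_R$ the substitution $u_{2,R}=w\,e^{\vartheta/2}$ and an integration by parts yield directly $\int|\nabla u_{2,R}|^2 P_G\gtrsim \int |u_{2,R}|^2(|\nabla\vartheta|^2/4-\Delta\vartheta/2)P_G$, and one checks that the bracket $\sim c\,\wx^{-2/(1+\gamma)}$ at infinity; inside $B_{2R}$ one invokes \cite[Prop.~9]{BDLS20} for $e^{-\vartheta}$. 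The averaging switch is then handled locally: after minimising over constants the error is $\bigl(\int u_{1,R}\rho_G\bigr)^2$, which (using $\bar u=0$) equals $\bigl(\int_{B_R^c}(\varrho_R-1)u\,\rho_G\bigr)^2$ and is controlled by Cauchy--Schwarz times a tail integral that vanishes as $R\to\infty$.

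In short: your argument is cleaner and more modular, treating the explicit-measure Poincar\'e as known and doing the average switch once, globally, via integrability of $(\d\nu_G/\d\sigma)^2$ against $\sigma$. The paper's argument is more self-contained --- it essentially \emph{proves} your Step~1 via the IMS decomposition and the $|\nabla\vartheta|^2/4-\Delta\vartheta/2$ computation --- and thereby also shows precisely how the polynomial prefactor $\wx^B$ and the possible non-convexity near the origin, which you flag as the main obstacle, are absorbed: they only affect a bounded region, handled by the compactly supported piece $u_{1,R}$.
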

Many proves of this kind of inequality are based on estimates on the first and second derivatives of the weight, see for example \cite{KMN21, BDLS20}. However, in our case we cannot determine explicitly the first and second derivatives of $P_G$ and the representation formula \eqref{eq:representation} is not giving suitable information about them.

\begin{proof}[\bf Proof of Proposition \ref{prop:Poincaré}]
We know from \Cref{lem:asympMomenta} that
\begin{equation}\label{eq:varthetaDEF}
    P_G(x)\asymp e^{-\vartheta(x)}.
\end{equation}
where 
\[
\vartheta(x): =  \nu \abs{x}^{\frac{\gamma}{1+\gamma}}-\(\tfrac{2}{1+\gamma}-\tfrac{\gamma}{1+\gamma}\(d-\tfrac{1}{2}\)\)\ln(\wx).
\]
In particular we can see that $\abs{\gradx\vartheta(x)}^2\sim \frac{\nu^2\gamma^2}{(1+\gamma)^2}\abs x^{-\frac{2}{1+\gamma}}$ and $\abs{\Delta_x\vartheta}\sim \frac{\nu\gamma}{(1+\gamma)^2}\abs x^{-\frac{2+\gamma}{1+\gamma}} $ as $\abs{x}\to \infty$, therefore
\begin{equation}\label{eq:vartheta}
    \frac{\abs{\gradx \vartheta}^2}{4} - \frac{\Delta_x \vartheta}{2}\sim \frac{\nu^2\gamma^2}{4(1+\gamma)^2}\wx^{-\frac{2}{1+\gamma}}
\end{equation}
as $\abs x\to \infty$.

We follow the proof of \cite[Cor. 10]{BDLS20}. Without loss of generality we can assume $\Bar{u}=\int_\Rd u\rho_G\dx=0$ up to replacement of $u$ by $u-\Bar u$. Let $\varrho\colon [0,\infty)$ be a cut off such that $0\leq \varrho\leq 1$, $\varrho\equiv 1$ on $[0,1]$, $\varrho\equiv 0$ on $[2,\infty)$ and such that
\[
\frac{(\varrho')^2}{1-\varrho^2}\leq \kappa
\]
for some $\kappa>0$. Then, for any $R>0$, we define $\varrho_R(x)=\varrho(\abs x/R)$ and the functions
\begin{align*}
    u_{1,R}&=u\,\varrho_R & u_{2,R}&=u\,\sqrt{1-\varrho_R^2},
\end{align*}
so that $u_{1,R}$ is supported in $B_{2R}$ and $u_{2,R}$ is supported in $B_R^c$. Elementary computations show that 
$u^2 = u_{1,R}^2 + u_{2,R}^2$ and $\abs{\gradx u_{1,R}}^2+\abs{\gradx u_{1,R}}^2 = \abs{\gradx u}^2  +\abs u^2 \frac{(\varrho_R')^2}{1-\varrho_R^2}$, so that
\begin{equation}\label{eq:U12R}
    \abs*{\abs{\gradx u}^2-\abs{\gradx u_{1,R}}^2-\abs{\gradx u_{1,R}}^2 } =  \abs u^2 \frac{(\varrho_R')^2}{1-\varrho_R^2}\leq \abs u^2 \frac{\kappa}{R^2}.
\end{equation}
We estimate $u_{1,R}$ and $u_{2,R}$ separately. Concerning $u_{2,R}$, we set $u_{2,R}=w \,e^{\vartheta/2}$ and using \eqref{eq:varthetaDEF} we get
\begin{align*}
    \int_\Rd \abs{\gradx u_{2,R}}^2P_G \dx&\gtrsim \int_\Rd\abs{\gradx (w \,e^{\vartheta/2})}^2\,e^{-\vartheta} \dx\\
    &=\int_\Rd \left( \abs{\gradx w}^2+ \abs w^2\left(\frac{\abs{\gradx \vartheta}^2}{4} - \frac{\Delta_x \vartheta}{2}\right) \right) \dx\\
    &\geq\int_\Rd  \abs w^2\left(\frac{\abs{\gradx \vartheta}^2}{4} - \frac{\Delta_x \vartheta}{2}\right) \dx\\
    &=\int_\Rd  \abs{u_{2,R}}^2\left(\frac{\abs{\gradx \vartheta}^2}{4} - \frac{\Delta_x \vartheta}{2}\right)e^{-\vartheta} \dx.
\end{align*}
Thanks to \eqref{eq:vartheta}, for $R>0$ large enough we have
\[
\frac{\abs{\gradx \vartheta}^2}{4} - \frac{\Delta_x \vartheta}{2}\geq 0  \qquad \text{ for all } \abs x\geq R.
\]
Since $u_{2,R}$ is supported in $B_{R}^c$, we have
\begin{align}\label{eq:gradU2}
    \begin{split}
         \int_\Rd \abs{\gradx u_{2,R}}^2P_G \dx&\gtrsim \int_\Rd  \abs{u_{2,R}}^2\left(\frac{\abs{\gradx \vartheta}^2}{4} - \frac{\Delta_x \vartheta}{2}\right)P_G \dx \geq \mathsf q(R)\int_\Rd \abs{u_{2,R}}^2\wx^{-\frac{2}{1+\gamma}}P_G\, \dx
    \end{split}
\end{align}
where
\[
\mathsf q (R)=\inf_{x\in B_{R}^c}\left(\frac{\abs{\gradx \vartheta}^2}{4} - \frac{\Delta_x \vartheta}{2}\right)\wx^{\frac{2}{1+\gamma}} \to \frac{\nu^2\gamma^2}{4(1+\gamma)^2}
\]
as $R\to \infty$.

Concerning $u_{1,R}$, we can use \eqref{eq:vartheta} as in  \cite[Prop. 9]{BDLS20} to get the inequality
\[
\int_\Rd \abs{u_{1,R}- \widetilde{u}_{1,R}}^2\wx^{-\frac{2}{1+\gamma}} e^{-\vartheta}\dx\lesssim \int_{\Rd}\abs{\gradx u_{1,R}}^2 e^{-\vartheta}\dx
\]
where $\widetilde{u}_{1,R}=\frac{\int_\Rd u\wx^{-\frac{2}{1+\gamma}}e^{-\vartheta}\dx}{\int_\Rd\wx^{-\frac{2}{1+\gamma}}e^{-\vartheta}\dx}$. As a consequence, using \eqref{eq:varthetaDEF}, we have
\begin{align*}
    \int_\Rd\abs{\gradx u_{1,R}}^2P_G\dx&\gtrsim \int_\Rd \abs{u_{1,R}-\widetilde{u}_{1,R}}^2\wx^{-\frac{2}{1+\gamma}}P_G\, \dx\\
    &\geq  \int_{B_{2R}} \abs{u_{1,R}-\widetilde{u}_{1,R}}^2\wx^{-\frac{2}{1+\gamma}}P_G\, \dx\\
    &\gtrsim \wangle{2R}^{-\ell} \int_{B_{2R}} \abs{u_{1,R}-\widetilde{u}_{1,R}}^2\rho_G\, \dx\\
    &\geq \wangle{2R}^{-\ell} \min_{c\in\R}\int_{B_{2R}} \abs{u_{1,R}-c}^2\rho_G\, \dx\\
    &= \wangle{2R}^{-\ell}\int_\Rd \abs{u_{1,R}}^2\rho_G\, \dx- \frac{ \wangle{2R}^{-\ell}}{\int_{B_{2R}}\rho_G\dx}\left(\int_\Rd u_{1,R}\rho_G\,\dx\right)^2
\end{align*}
where in the last step we explicitly computed the minimum and used that the $u_{1,R}$ is supported in $B_{2R}$. By the assumption $\Bar{u}=0$ we have
\[
\int_{B_R}u\,\rho_G\,\dx=-\int_{B_R^c} u\,\rho_G\,\dx
\]
from which we deduce
\begin{align*}
    \left(\int_\Rd u_{1,R}\rho_G\,\dx\right)^2 &= \left(\int_{B_R} u\,\rho_G\,\dx + \int_{B_R^c} \varrho_R u\,\rho_G\,\dx\right)^2\\
    &= \left( \int_{B_R^c} (\varrho_R-1) u\,\rho_G\,\dx\right)^2\\
    &\leq \left( \int_\Rd \abs{u}^2\,\wx^{-\frac{2}{1+\gamma}}P_G\,\dx\right)\left( \int_{B_R^c}\rho_G^2 \wx^{\frac{2}{1+\gamma}}\,P_G^{-1}\,\dx\right),
\end{align*}
where the last step is just a Cauchy-Schwarz inequality and $\abs{\varrho_R-1}\leq 1$. Let 
\[
\eps(R)=\frac{ \wangle{2R}^{-\ell}}{\int_{B_{2R}}\rho_G\dx}\left( \int_{B_R^c} \rho_G^2\wx^{\frac{2}{1+\gamma}}\,P_G^{-1}\,\dx\right).
\]
We know that
\begin{align*}
    \lim_{R\to\infty}\eps(R)&=0 & \lim_{R\to\infty}\eps(R)\wangle{2R}^{\ell}&=0
\end{align*}
and
\begin{align}\label{eq:gradU1}
    \int_\R\abs{\gradx u_{1,R}}^2P_G\dx&\gtrsim \wangle{2R}^{-\ell}\int_\Rd \abs{u_{1,R}}^2\rho_G\, \dx- \eps(R)\int_\Rd \abs{u}^2\,\wx^{-\frac{2}{1+\gamma}}P_G\,\dx \notag\\
    &\gtrsim \wangle{2R}^{-\ell}\int_\Rd \abs{u_{1,R}}^2\wx^{-\frac{2}{1+\gamma}}P_G\, \dx- \eps(R)\int_\Rd \abs{u}^2\,\wx^{-\frac{2}{1+\gamma}}P_G\,\dx.
\end{align}
Finally we can use \eqref{eq:U12R}, \eqref{eq:gradU2} and \eqref{eq:gradU1} to find
\begin{align*}
    \int_\R\abs{\gradx u}^2P_G\dx\geq& \int_\R\abs{\gradx u_{1,R}}^2P_G\dx+\int_\R\abs{\gradx u_{2,R}}^2P_G\dx - \frac{\kappa}{R^2}\int_\Rd \abs u^2\,P_G\, \dx\\
    \gtrsim& \,\mathsf q(R)\int_\Rd  \abs{u_{2,R}}^2\wx^{-\frac{2}{1+\gamma}}P_G\, \dx + \wangle{2R}^{-\ell}\int_\Rd \abs{u_{1,R}}^2\wx^{-\frac{2}{1+\gamma}}P_G\, \dx+\\
    &-\eps(R)\int_\Rd \abs{u}^2\,\wx^{-\frac{2}{1+\gamma}}\rho_G\,\dx - \frac{\kappa}{R^2}\int_\Rd \abs u^2\,P_G\, \dx\\
    \geq & \left(\min\{\mathsf{q}(R), \wangle{2R}^{-\ell}\} - \eps(R)- \frac{\kappa}{R^2} \right)\int_\Rd \abs{u}^2\,\wx^{-\frac{2}{1+\gamma}}P_G\,\dx
\end{align*}
and $\min\{\mathsf{q}(R), \wangle{2R}^{-\ell}\} - \eps(R)- \frac{\kappa}{R^2}$ is positive for $R>0$ large enough.

\end{proof}


\section{$\sfL^2$ bounds via a Dolbeault - Mouhot - Schmeiser method}\label{sec:DMS}

In this section we prove \Cref{thm:DMS}. Let us define the entropy functional $\sfH\colon\sfL^2(G^{-1})\to\R$ as 
\begin{equation}\label{eq:H}
    \sfH[f]= \norm{f}^2_{\sfL^2(G^{-1})} + \eps\bangle{ m_f(x) ,\,\nabla_x \(B^{-1}\rho_f\)}_{\sfL^2}
\end{equation}
where $\eps>0$,
\begin{align*}
    \rho_f(x)&=\int_\Rd f(x,v)\dv  & m_f(x)&=\int_\Rd v\,f(x,v)\dv 
\end{align*}
and $B$ is the elliptic operator defined by
\begin{equation}\label{eq:B}
    Bu:=\rho_G\,u-  \wangle{x}^{\ell} \dive_x( V_G\nabla_x {u}).
\end{equation}
The diffusivity matrix $V_G$ is defined by
\[
V_G(x)=\int_\Rd v\otimes v\,G(x,v)\dv
\]
and the number $\ell>0$ is defined in \Cref{Hyp:asympRho}.

In most applications of the Dolbeault-Mouhot-Schmeiser method, the quantities $\rho_G$, $P_G$, $P_G^{(4)}$ generate weighted norms that are all equivalent to one another, here that is not the case, as shown in \Cref{lem:asympMomenta}. To compensate for this imbalance, we added the polynomial factor $\wx^\ell$ to our elliptic operator $B$. With this choice, we have been able to get both a perturbation term which is bounded by $\|f\|_{\sfL^2(G^{-1})}^2$ and an estimate on the dissipation of entropy. The main motivation of this correction is that, due to \Cref{lem:asympMomenta}, 
\[
\frac{P_G^{(4)}\rho_G}{P_G^2} \asymp \wangle{\cdot}^\ell
\]
and it turns out that this is actually the loss of weight we will recover in the dissipation estimate. Along the proof we repeatedly use the bounds in \Cref{lem:asympMomenta} in order to switch between $\rho_G, P_G, P^(4)_G$ appearing in the weights in various norms.

First of all we want to prove that $\sfH$ is a bounded perturbation of $\norm{\cdot}^2_{\sfL^2(G^{-1})}$, namely the scalar product in \eqref{eq:H} can be bounded by $\norm{f}^2_{\sfL^2(G^{-1})}$. By \Cref{lem:VG_PG},  the vector norm with weight $V_G$ is equivalent to the vector norm with weight $P_G$. With this definition we can bound $m_f$ by the microscopic component as follows
\begin{align*}
    \norm{m_f}^2_{\sfL^2(P_G^{-1})}&=\int_\Rd \(\int_\Rd v \, f\dv\)^2\,P_G^{-1}\dx\\
    &=\int_\Rd \( \int_\Rd v \,(1-\Pi) f\dv\)^2\,P_G^{-1}\dx\\
    &\leq \int_\Rd \(\int_\Rd\abs{ v}^2G \dv\)\(\int_\Rd \abs{(1-\Pi)f}^2\,G^{-1}\dv\) P_G^{-1}\dx\\
    &= \norm{(1-\Pi)f}^2_{\sfL^2(G^{-1})}.
\end{align*}
Moreover, 
\begin{align}\label{eq:mfMicro}
    \norm{m_f}_{\sfL^2(V_G^{-1})}&\lesssim\norm{m_f}_{\sfL^2(P_G^{-1})} \leq \norm{(1-\Pi)f}_{\sfL^2(G^{-1})}, & \norm{\rho_f}_{\sfL^2(\rho_G^{-1})}&\leq \norm{\Pi f}_{\sfL^2(G^{-1})}.
\end{align}

\begin{lem}\label{lem:Hequiv}
Let $\sfH[\cdot]$ be defined as in \eqref{eq:H}. If $\eps>0$ is small enough, then 
\begin{equation}\label{eq:equiv}
    \norm{f}^2_{\sfL^2(G^{-1})}\lesssim\sfH[f]\lesssim \norm{f}^2_{\sfL^2(G^{-1})}
\end{equation}
for any $f\in\sfL^2(G^{-1})$.
\end{lem}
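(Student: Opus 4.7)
The plan is to control the perturbation term
\[
\Delta[f] := \bangle{m_f,\,\nabla_x(B^{-1}\rho_f)}_{\sfL^2}
\]
by $\norm{f}^2_{\sfL^2(G^{-1})}$ and then take $\eps$ small. The upper bound in \eqref{eq:equiv} is immediate from the definition of $\sfH$, so everything reduces to an absolute bound on $|\Delta[f]|$.

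First I would set $u := B^{-1}\rho_f$, which solves the elliptic equation $\rho_G u - \wx^\ell \dive_x(V_G\,\nabla_x u) = \rho_f$. Dividing by $\wx^\ell$ and testing against $u$ in $\sfL^2$ produces, after integration by parts, the energy identity
\[
\int_\Rd u^2\,\rho_G\,\wx^{-\ell}\dx + \int_\Rd \nabla_x u \cdot V_G\,\nabla_x u\,\dx = \int_\Rd u\,\rho_f\,\wx^{-\ell}\dx.
\]
Cauchy--Schwarz on the right-hand side followed by Young's inequality absorbs the first term on the left, giving
\[
\norm{\nabla_x u}^2_{\sfL^2(V_G)} \lesssim \norm{\rho_f}^2_{\sfL^2(\rho_G^{-1}\wx^{-\ell})}.
\]
By \Cref{lem:VG_PG}, $\xi\cdot V_G\xi \asymp P_G\abs{\xi}^2$, so this also controls $\norm{\nabla_x u}_{\sfL^2(P_G)}$. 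This is precisely the role of the weight $\wx^\ell$ inside the definition \eqref{eq:B} of $B$: it balances the decay of $P_G$ against $\rho_G$ (recall $P_G/\rho_G \asymp \wx^{2/(1+\gamma)}$ and $\wx^{2/(1+\gamma)-\ell}$ is not too large by the assumption $\ell\in(0,\tfrac{2}{1+\gamma})$), so the elliptic estimate lands in the space compatible with the other weighted norms we need.

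Next I would bound $\Delta[f]$ by Cauchy--Schwarz in $\sfL^2$ with weights $P_G^{-1}$ and $P_G$:
\[
|\Delta[f]| \leq \norm{m_f}_{\sfL^2(P_G^{-1})}\,\norm{\nabla_x u}_{\sfL^2(P_G)}.
\]
The first factor is controlled by the microscopic component thanks to \eqref{eq:mfMicro}, namely $\norm{m_f}_{\sfL^2(P_G^{-1})} \leq \norm{(1-\Pi)f}_{\sfL^2(G^{-1})}$. For the second factor I use the elliptic bound above together with $\wx^{-\ell}\leq 1$, which gives
\[
\norm{\nabla_x u}_{\sfL^2(P_G)} \lesssim \norm{\rho_f}_{\sfL^2(\rho_G^{-1}\wx^{-\ell})} \leq \norm{\rho_f}_{\sfL^2(\rho_G^{-1})} = \norm{\Pi f}_{\sfL^2(G^{-1})}.
\]
Combining, $|\Delta[f]| \lesssim \norm{(1-\Pi)f}_{\sfL^2(G^{-1})}\,\norm{\Pi f}_{\sfL^2(G^{-1})} \leq \tfrac{1}{2}\norm{f}^2_{\sfL^2(G^{-1})}$ by Young, so for $\eps$ small enough $\sfH[f] \geq (1-C\eps)\norm{f}^2_{\sfL^2(G^{-1})}$, yielding \eqref{eq:equiv}.

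The one step I expect to require care is justifying the invertibility and the integration by parts for $B$ in this weighted setting, since $V_G$ and $\rho_G$ both degenerate at infinity. The rigorous framework is a Lax--Milgram argument on the Hilbert space obtained from completing $\mathcal{C}^\infty_c(\Rd)$ under the norm $\|u\|^2 = \int u^2\rho_G\wx^{-\ell}\dx + \int \nabla_x u \cdot V_G\nabla_x u\,\dx$; the bilinear form $(u,w)\mapsto \int uw\rho_G\wx^{-\ell} + \int \nabla_x u\cdot V_G\nabla_x w$ is coercive by construction, and $w\mapsto \int w\rho_f\wx^{-\ell}$ is continuous on it as soon as $\rho_f\in \sfL^2(\rho_G^{-1}\wx^{-\ell})$, which is implied by $f\in\sfL^2(G^{-1})$ via the second estimate in \eqref{eq:mfMicro}. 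Everything else is an application of Cauchy--Schwarz.
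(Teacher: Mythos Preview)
Your proof is correct and follows essentially the same route as the paper: test the elliptic equation for $u=B^{-1}\rho_f$ against $u\,\wx^{-\ell}$ to get $\norm{\nabla_x u}_{\sfL^2(V_G)}\lesssim\norm{\rho_f}_{\sfL^2(\rho_G^{-1}\wx^{-\ell})}\leq\norm{\Pi f}_{\sfL^2(G^{-1})}$, then Cauchy--Schwarz on the perturbation term with \eqref{eq:mfMicro}. The only cosmetic difference is that the paper keeps the weight $V_G$ throughout while you pass to $P_G$ via \Cref{lem:VG_PG}; your additional Lax--Milgram remark on the well-posedness of $B^{-1}$ is a point the paper leaves implicit.
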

\begin{proof}[{\bf Proof of \Cref{lem:Hequiv}}]
Let $g=B^{-1}\rho_f$, that is $g$ is the solution to the elliptic equation
\begin{equation*}
    \rho_Gg-\wangle{x}^{\ell} \dive_x (  V_G\nabla_x {g}) = \rho_f.
\end{equation*}
Testing with $g\,\wx^{-\ell}$ and integrating by parts we have
\begin{align*}
    \norm{g}^2_{\sfL^2\(\rho_G\wx^{-\ell}\)} +\norm{\gradx g}^2_{\sfL^2\( V_G\)} &=\bangle{g,\,\rho_f}_{\sfL^2\(\wx^{-\ell}\)} \\
    &\leq \norm{g}^2_{\sfL^2\(\rho_G\wx^{-\ell}\)} +\frac{1}{4}\norm*{\rho_f}^2_{\sfL^2\(\rho_G^{-1}\wx^{-\ell}\)}.
\end{align*}
Dropping the first term we have
\begin{equation*}
    \norm{\gradx g}_{\sfL^2\(V_G\)}\leq \frac{1}{2}\norm*{\rho_f}_{\sfL^2\(\rho_G^{-1}\wx^{-\ell}\)}\lesssim\norm{\Pi f}_{\sfL^2(G^{-1})}.
\end{equation*}
Using \eqref{eq:mfMicro}, we conclude that 
\begin{align*}
    \abs*{\bangle{ m_f(x) ,\,\nabla_x \(B^{-1}\rho_f\)}_{\sfL^2}}&\leq\norm{m_f}_{\sfL^2\(V_G^{-1}\)} \norm{\gradx g}_{\sfL^2\(V_G\)}\\
    &\lesssim \norm{(1-\Pi)f}_{\sfL^2(G^{-1})}\norm{\Pi f}_{\sfL^2(G^{-1})}\\
    &\lesssim \norm{f}_{\sfL^2(G^{-1})}.
\end{align*}
Therefore, choosing $\eps>0$ small enough, we prove \eqref{eq:equiv}. 
\end{proof}
Suppose now that $f$ is a solution to the run and tumble equation \eqref{eq:main} with initial datum $f_0$ such that $\iint_\RRd f_0\dx\dv=0$. We need to determine the dissipation of entropy 
\[
-\ddt\sfH[f]=\sfD[f],
\]
hence, we have to write the evolution equation for the corresponding $\rho_f$ and $m_f$. On the one hand, integrating equation \eqref{eq:main} with respect to $v$ we find
\begin{equation}
    \dt \rho_f+\dive_x m_f=0.
\end{equation}
On the other hand, integrating equation \eqref{eq:main} multiplied by $v$ we find
\begin{equation*}
    \dt m_f + \gradx\cdot V_f = -A_f,
\end{equation*}
where
\begin{align*}
    A_f(x)&= \int_\Rd \Lambda(x,v) v f(x,v)\dv 
\end{align*}
With $\Pi f=\frac{\rho_f}{\rho_G}G$, we see that
\begin{align*}
    V_{\Pi f}&= \int_\Rd v\otimes v\, \frac{\rho_f}{\rho_G}G\dv  = \frac{\rho_f}{\rho_G}V_G, &
    A_{\Pi f} &= \int_\Rd \Lambda \,v\,\frac{\rho_f}{\rho_G}G\dv  = \frac{\rho_f}{\rho_G}A_G,
\end{align*}
and
\begin{align*}
    \gradx\cdot V_G&=\int_\Rd v\,( v\cdot \gradx G)\dv \\
    &=\int_\Rd v\,\M(v)\(\int_\Rd \Lambda'G'\dv '\)\dv - \int_\Rd v\Lambda G\dv \\
    &=-A_G
\end{align*}
This allows us to rewrite the equation for $m_f$ as 
\begin{equation}
    \dt m_f = -\gradx\cdot (V_f-V_{\Pi f}) - V_G\gradx\(\frac{\rho_f}{\rho_G}\) - (A_f-A_{\Pi f}).
\end{equation}
We finally conclude that the dissipation of entropy is
\begin{align*}
    \sfD[f]
    =& \,- 2\bangle{\cL f,f}+ \,\eps\bangle{V_G \nabla_x \(\frac{\rho_f}{\rho_G}\),\,\nabla_x \(B^{-1}\rho_f\) }_{\sfL^2} + \eps\bangle{ \nabla_x \cdot(V_f-V_{\Pi f}),\,\nabla_x \(B^{-1}\rho_f\)}_{\sfL^2} \\
    &+ \eps\bangle{ A_f-A_{\Pi f},\,\nabla_x \(B^{-1}\rho_f\)}_{\sfL^2} +\eps\bangle{ m_f,\, \nabla_x \(B^{-1}\(\dive_x m_f\)\)}_{\sfL^2}
\end{align*}

The advantage of this decomposition is that the matrix $V_f-V_{\Pi f}$ and the vector $A_f-A_{\Pi f}$ can be controlled by the microscopic component $(1-\Pi)f$. Indeed, the euclidean matrix norm of $V_f-V_{\Pi f}$ is 
\begin{align*}
    \abs{V_f-V_{\Pi f}}^2 &\leq\int_\Rd \abs{v\otimes v}^2 \,\abs{(1-\Pi) f}^2\,\dv  \leq P_G^{(4)}\(\int_\Rd\abs{(1-\Pi)f}^2G^{-1}\,\dv \)
\end{align*}
where $P_G^{(4)}(x)= \int_\Rd \abs{v}^4 G(x,v)\dv $.
Integrating in $x$ we have
\begin{align}\label{eq:VG_Micro}
    \norm{V_f-V_{\Pi f}}_{\sfL^2(P_G^{(4){-1}})}&\leq \norm{(1-\Pi)f}_{\sfL^2(G^{-1})}.
\end{align}
Concerning $A_f-A_{\Pi f}$ we have,
\begin{align*}
    \norm{A_f-A_{\Pi f}}^2_{\sfL^2(P_G^{-1})}&=\int_\Rd \( \int_\Rd \Lambda\,v \,(1-\Pi) f\dv\)^2P_G^{-1}\dx\\
    &\lesssim \int_\Rd \(\int_\Rd\abs{v}^2G \dv\)\(\int_\Rd \abs{(1-\Pi)f}^2\,G^{-1}\dv\)P_G^{-1} \dx = \norm{(1-\Pi)f}^2_{\sfL^2(G^{-1})},
\end{align*}
in particular
\begin{equation}\label{eq:AG_Micro}
    \norm{A_f-A_{\Pi f}}^2_{\sfL^2(V_G^{-1})}\lesssim\norm{A_f-A_{\Pi f}}^2_{\sfL^2(P_G^{-1})}\lesssim \norm{(1-\Pi)f}^2_{\sfL^2(G^{-1})}.
\end{equation}

\begin{lem}\label{lem:microcoercivity}
Let $\cL$ be defined by \eqref{eq:cL}. Then
\[
\bangle{\cL f,f}\leq -\frac{1-\chi}{2}\norm{(1-\Pi)f}^2_{\sfL^2(G^{-1})}
\]
\end{lem}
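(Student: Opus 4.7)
The plan is to perform the substitution $h := f/G$, so that
\[
\langle \cL f, f\rangle_{\sfL^2(G^{-1})} = \iint_\RRd \cL(hG)\, h\, \dx\dv,
\]
and then to exploit the stationarity of $G$. The identity $\cL G = 0$ is equivalent to $v\cdot\nabla_x G = \M(v)\,\theta_G(x) - \Lambda\,G$ with $\theta_G(x) := \int_\Rd \Lambda\, G\,\dv'$, and substituting this into the expansion of $\cL(hG)$ collapses the cross terms into
\[
\cL(hG) = -G\,(v\cdot\nabla_x h) + \M(v)\!\int_\Rd \Lambda(x,v')\, G(x,v')\bigl(h(v') - h(v)\bigr)\dv'.
\]
In this form the collision contribution appears as a kernel operator on differences of $h$, which is the structure that will deliver a non-positive quadratic form after integration.

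Next, the two resulting integrals should be computed separately. For the transport piece, an integration by parts in $x$ gives $-\iint G h\,(v\cdot\nabla_x h)\dx\dv = \tfrac{1}{2}\iint h^2\,v\cdot\nabla_x G\,\dx\dv$, and a second use of the steady state identity rewrites this as $\tfrac{1}{2}\int_\Rd \theta_G \langle h^2\rangle_\M\dx - \tfrac{1}{2}\iint_\RRd h^2\,\Lambda G\,\dx\dv$, where I abbreviate $\langle \varphi\rangle_\M := \int \varphi\, \M\dv$ and $\langle \varphi\rangle_{\Lambda G} := \int \varphi\,\Lambda G\dv$. Expanding the double integral in the collision piece yields $\int_\Rd\bigl(\langle h\rangle_\M \langle h\rangle_{\Lambda G} - \theta_G \langle h^2\rangle_\M\bigr)\dx$. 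Summing and completing the square with $a(x) := \langle h\rangle_\M$ should produce the clean identity
\[
\langle \cL f, f\rangle_{\sfL^2(G^{-1})} = -\tfrac{1}{2}\int_\Rd \theta_G\, \mathrm{Var}_\M(h)\,\dx - \tfrac{1}{2}\iint_\RRd (h-a)^2\, \Lambda\, G\,\dx\dv,
\]
in which both right-hand terms are manifestly non-positive.

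The coercive estimate would then follow by discarding the first (variance) term and invoking the lower bound $\Lambda \geq 1 - \chi$ from Hypothesis \ref{Hyp:psi} to obtain $\iint(h-a)^2\,\Lambda G \geq (1-\chi)\iint(h-a)^2 G$. Pointwise in $x$, the scalar map $c\mapsto \int(h-c)^2 G\dv$ is minimised at $c = \rho_f/\rho_G$ with minimum value exactly $\int |(1-\Pi)f|^2\, G^{-1}\dv$, since $\Pi f = (\rho_f/\rho_G)G$. Integrating in $x$ then delivers the claimed bound $\langle \cL f,f\rangle \leq -\tfrac{1-\chi}{2}\|(1-\Pi)f\|_{\sfL^2(G^{-1})}^2$.

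The central obstacle — and the reason the argument is not bookkeeping on an explicit collision invariant — is that the kernel $\M(v)\Lambda(x,v')$ is not symmetric in $(v,v')$ and $\M$ is not a local equilibrium of $\cL$: the local equilibrium is the non-explicit density $G$. The identity $\M\theta_G = v\cdot\nabla_x G + \Lambda G$ furnished by $\cL G = 0$ is precisely what cancels all the cross-terms and exposes the final non-positive quadratic form; this is the only place where the specific definition of $G$ enters the proof.
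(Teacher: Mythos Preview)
Your proof is correct and follows essentially the same approach as the paper: both use the steady state identity $v\cdot\nabla_x G = \M\,\theta_G - \Lambda G$ to recast $\langle \cL f,f\rangle$ as a non-positive quadratic form in $h=f/G$, then bound $\Lambda\geq 1-\chi$ and pass to $\|(1-\Pi)f\|^2$. The only cosmetic difference is that the paper keeps the expression in the double-integral form $-\tfrac12\iiint \Lambda'\M G'(h-h')^2$ and substitutes $(1-\Pi)f$ directly, whereas you split it as $-\tfrac12\int\theta_G\,\mathrm{Var}_\M(h)-\tfrac12\iint(h-a)^2\Lambda G$ and finish with the variational characterisation of $\Pi$; the two identities are algebraically equivalent.
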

\begin{proof}[{\bf Proof of \Cref{lem:microcoercivity}}]
We have
\begin{align*}
\bangle{\cL f,f}&=-\iint_{\RRd}(v\cdot\gradx f) f\,G^{-1}\dx\dv  +\iiint_{\RRd\times\Rd}\(\Lambda' \M G' \frac{ff'}{GG'}-\Lambda \M' G \frac{f^2}{G^2}\)\dv '\dx\dv \\
&=-\frac{1}{2}\iint_{\RRd}f^2\,\frac{v\cdot\gradx G }{G^2}\dx\dv + \iiint_{\RRd\times\Rd}\(\Lambda' \M G'  \frac{ff'}{GG'}-\Lambda' \M G' \(\frac{f'}{G'}\)^2\)\dv '\dx\dv \\
&=-\frac{1}{2}\iiint_{\Rd\times\Rd\times\Rd}\Lambda'\M G'\(\(\frac{f}{G}\)^2-2\frac{ff'}{GG'}+\(\frac{f'}{G'}\)^2\)\dv '\dx\dv \\
&=-\frac{1}{2}\iiint_{\Rd\times\Rd\times\Rd}\Lambda'\M G'\(\frac{f}{G}-\frac{f'}{G'}\)^2\dv '\dx\dv \\
&=-\frac{1}{4}\iiint_{\Rd\times\Rd\times\Rd}(\Lambda'\M G'+\Lambda \M' G)\(\frac{f}{G}-\frac{f'}{G'}\)^2\dv '\dx\dv \\
&\leq 0.
\end{align*}
Write $f=\Pi f+(1-\Pi)f$, and notice that
\[
\frac{f}{G}-\frac{f'}{G'}=\frac{(1-\Pi)f}{G}-\frac{(1-\Pi)f'}{G'}
\]
and $\int_{\R}(1-\Pi) f \dv =0$. Using these facts we compute
\begin{align*}
    \bangle{\mathcal{L}f,f}&\leq -\frac{1}{4}(1-\chi)\iiint_{\Rd\times\Rd\times\Rd}(\M G'+\M 'G)\left(\frac{f}{G}-\frac{f'}{G'}\right)^2\\
    &=-\frac{1}{4}(1-\chi)\iiint_{\Rd\times\Rd\times\Rd}(\M G'+\M 'G)\left(\frac{(1-\Pi)f}{G}-\frac{(1-\Pi)f'}{G'}\right)^2\\
    &=-\frac{1}{4}(1-\chi)\iiint_{\Rd\times\Rd\times\Rd}(\M G'+\M 'G)\left(\(\frac{(1-\Pi)f}{G}\)^2+\(\frac{(1-\Pi)f'}{G'}\)^2\right)\\
    &=-\frac{1}{2}(1-\chi)\iint_{\Rd\times\Rd}\left(\M \frac{\rho_G}{G}+1\right)\frac{\abs{(1-\Pi)f}^2}{G}\\
    &\leq- \frac{1}{2}(1-\chi)\norm{(1-\Pi)f}_{\sfL^2(G^{-1})}^2.
\end{align*}
\end{proof}

\subsection{An estimate on the entropy production}
Now we study the decay of $\sfH$ and we have the following Lemma
\begin{lem}\label{lem:disspation}
There exists $\kappa>0$ such that
\begin{align*}
    \ddt\sfH[f]\leq -\kappa (\norm{(1-\Pi)f}^2_{\sfL^2(G^{-1})} + I_1),
\end{align*}
where 
\begin{equation}\label{eq:I_1}
    I_1= \norm{\nabla_x  u}^2_{\sfL^2\( V_G \)} + \norm{\dive_x \( V_G\nabla_x  u\)}^2_{\sfL^2\(\rho_G^{-1}\wangle{x}^{-\ell}\)}
\end{equation}
and $u=B^{-1}\rho_f$. 
\end{lem}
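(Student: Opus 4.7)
The starting point is the expression for $\sfD[f]=-\ddt\sfH[f]$ derived just before the statement, consisting of five summands: $-2\bangle{\cL f,f}$ plus four $\eps$-weighted pairings. The plan is to show that the first two already produce the two desired coercive terms, while the remaining three are errors absorbed by choosing $\eps$ small enough. The first term is handled by the microscopic coercivity of \Cref{lem:microcoercivity}, giving $-2\bangle{\cL f,f}\geq (1-\chi)\|(1-\Pi)f\|^2_{\sfL^2(G^{-1})}$. For the second term $\eps\bangle{V_G\nabla_x(\rho_f/\rho_G),\nabla_x u}_{\sfL^2}$, I would use the defining relation $\rho_f=Bu=\rho_G u-\wangle{x}^\ell \dive_x(V_G\nabla_x u)$ to write
\[
\frac{\rho_f}{\rho_G}=u-\rho_G^{-1}\wangle{x}^\ell \dive_x(V_G\nabla_x u),
\]
and then one integration by parts splits the pairing into $\|\nabla_x u\|^2_{\sfL^2(V_G)}$ plus a divergence-squared term carrying the weight that appears in $I_1$.

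The three remaining terms are treated as cross terms via Cauchy--Schwarz combined with the microscopic bounds \eqref{eq:mfMicro}, \eqref{eq:VG_Micro}, \eqref{eq:AG_Micro} and elliptic estimates on $B$. For $\eps\bangle{A_f-A_{\Pi f},\nabla_x u}$, \eqref{eq:AG_Micro} gives $\|A_f-A_{\Pi f}\|_{\sfL^2(V_G^{-1})}\lesssim \|(1-\Pi)f\|_{\sfL^2(G^{-1})}$, and Young's inequality then splits the pairing into a fraction of $\|\nabla_x u\|^2_{\sfL^2(V_G)}$ (absorbed by $I_1$) and a fraction of $\|(1-\Pi)f\|^2_{\sfL^2(G^{-1})}$ (absorbed by the microscopic term). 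For $\eps\bangle{\dive_x(V_f-V_{\Pi f}),\nabla_x u}$, I would integrate by parts to move the divergence onto $u$, use \eqref{eq:VG_Micro} on the first factor, and then the Hessian factor is handled via elliptic regularity for $B$, relating $\nabla_x^2 u$ back to the ingredients of $I_1$. The remaining term $\eps\bangle{m_f,\nabla_x(B^{-1}\dive_x m_f)}$ is treated by setting $w=B^{-1}\dive_x m_f$ and exploiting the energy identity for $Bw=\dive_x m_f$: testing against $w\wangle{x}^{-\ell}$ and one integration by parts give $\|\nabla_x w\|^2_{\sfL^2(V_G)}\lesssim \|m_f\|^2_{\sfL^2(V_G^{-1})}$, which combined with $\|m_f\|_{\sfL^2(V_G^{-1})}\lesssim \|(1-\Pi)f\|_{\sfL^2(G^{-1})}$ from \eqref{eq:mfMicro} lets Cauchy--Schwarz bound the pairing by a constant times $\|(1-\Pi)f\|^2_{\sfL^2(G^{-1})}$.

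The main obstacle I expect is the elliptic estimate on $B$ required to dominate the Hessian piece arising from the $V_f-V_{\Pi f}$ term: since $V_G$ and $\rho_G$ are non-explicit and only controlled through the asymptotic profiles of \Cref{Hyp:asympRho} and \Cref{lem:asympMomenta}, standard elliptic theory does not directly apply, and one needs dedicated energy estimates for $B$ that match the degenerate weights. The factor $\wangle{x}^\ell$ in the definition of $B$ is precisely tuned to the ratio $P_G^{(4)}\rho_G/P_G^2\asymp\wangle{\cdot}^\ell$ flagged in the introduction, which is what allows the $V_f-V_{\Pi f}$ bound to fit into $I_1$ without any further loss of weight. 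Once every error term is bounded in this fashion, choosing $\eps$ small enough leaves a positive proportion of $\|(1-\Pi)f\|^2_{\sfL^2(G^{-1})}+I_1$ on the right-hand side, which yields the lemma with $\kappa$ of the order of $\min\{(1-\chi)/2,\eps/2\}$.
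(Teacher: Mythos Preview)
Your proposal is correct and follows essentially the same route as the paper: the same decomposition $\sfD[f]=-2\bangle{\cL f,f}+\eps(I_1+I_2+I_3+I_4)$, the same identification of $I_1$ via $\rho_f/\rho_G=u-\rho_G^{-1}\wangle{x}^\ell\dive_x(V_G\nabla_x u)$ and integration by parts, and the same treatment of the three error terms, including the auxiliary function $w=B^{-1}\dive_x m_f$ for $I_4$. You also correctly single out the Hessian bound $\|\nabla_x^2 u\|_{\sfL^2(P_G^{(4)})}\lesssim I_1^{1/2}$ as the main technical step; the paper obtains it by testing the equation for $u$ against $\dive_x(\wangle{x}^{2/(1+\gamma)}\nabla_x u)$ and invoking the weight comparisons of \Cref{lem:asympMomenta} and the bound on $|V_G^{-2}||\nabla_x V_G|^2\wangle{x}^{2/(1+\gamma)}$ from \Cref{Hyp:asympRho}.
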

\begin{proof}[{ \bf Proof of Lemma \ref{lem:disspation}}]
Using the microscopic coercivity we have
\begin{align*}
    \ddt \sfH[f] \leq -(1-\chi)\norm{(1-\Pi)f}^2_{\sfL^2(G^{-1})} - \eps \sfR[f]
\end{align*}
where 
\begin{align*}
    \sfR[f] 
    =& \,\bangle{V_G \nabla_x \(\frac{\rho_f}{\rho_G}\),\,\nabla_x \(B^{-1}\rho_f\) }_{\sfL^2} + \bangle{ \nabla_x \cdot(V_f-V_{\Pi f}),\,\nabla_x \(B^{-1}\rho_f\)}_{\sfL^2} \\
    &+ \bangle{ A_f-A_{\Pi f},\,\nabla_x \(B^{-1}\rho_f\)}_{\sfL^2} +\bangle{ m_f,\, \nabla_x \(B^{-1}\(\dive_x m_f\)\)}_{\sfL^2}\\
    =&:I_1+I_2+I_3+I_4.
\end{align*}

$\#$\textit{Step 1: Representation of the macroscopic term $I_1$.}
Let $u=u(x)$ be defined by $u=B^{-1}\rho_f$, that is the solution of the elliptic equation
\begin{equation}\label{u}
    \rho_G\,u- \wangle{x}^{\ell} \dive_x (  V_G\nabla_x {u}) = \rho_f.
\end{equation}
Then after integration by part we have
\begin{align*}
    I_1&=-\bangle{ \frac{\rho_f}{\rho_G} ,\,\dive_x \(V_G\nabla_x \(B^{-1}\rho_f\)\)}_{\sfL^2}\\
    &=-\bangle{u- \rho_G^{-1}\,\wangle{x}^{\ell} \dive_x (V_G\nabla_x {u})\,,\, \dive_x\(V_G\nabla_x  u\)} =\norm{\nabla_x  u}^2_{\sfL^2\( V_G \)} + \norm{\dive_x\(V_G\nabla_x  u\)}^2 _{\sfL^2\(\rho_G^{-1}\wx^\ell\)}.
\end{align*}

$\#$\textit{Step 2: Bound on the error terms.} 
Here we have to estimate $I_2$, $I_3$, $I_4$. Concerning $I_2$, we integrate by parts and use \eqref{eq:VG_Micro}
\begin{align*}
    \abs{I_2}&=\abs{\bangle{V_f-V_{\Pi f},\,\nabla_x^2   u}_{\sfL^2}} \leq \norm{V_f-V_{\Pi f}}_{\sfL^2(P_G^{(4)-1})} \norm{\nabla_x^2 u}_{\sfL^2(P_G^{(4)})} \leq \norm{(1-\Pi)f}_{\sfL^2(G^{-1})} \norm{\nabla_x^2 u}_{\sfL^2(P_G^{(4)})},
\end{align*} 
so we need to bound the norm of Hessian of $u$. Rewrite the equation $u=B^{-1}\rho_f$ as
\[
\rho_G\wx^{-\ell}\, u - \dive(V_G\gradx u)=\rho_G\wx^{-\ell} u_f
\]
where $u_f=\tfrac{\rho_f}{\rho_G}$. Multiplying by $u-u_f$ and integrating we have
\begin{align*}
    \norm{u-u_f}^2_{\sfL^2(\rho_G\wx^{-\ell})} &= \bangle{ u-u_f\,,\,\dive(V_G\gradx u)}_{\sfL^2} \leq \norm{u-u_f}_{\sfL^2(\rho_G\wx^{-\ell})}\norm{\dive(V_G\gradx u)}_{\sfL^2(\rho_G^{-1}\wx^\ell)},
\end{align*}
that is
\begin{equation}\label{eq:u-uf}
    \norm{u-u_f}_{\sfL^2(\rho_G\wx^{-\ell})}\leq \norm{\dive(V_G\gradx u)}_{\sfL^2(\rho_G^{-1}\wx^\ell)}\leq I_1^{\tfrac{1}{2}}.
\end{equation}
On the other hand, multiply by $\dive(\wx^{{\frac{2}{1+\gamma}}}\gradx u)$ we get
\begin{align*}
    \rho_G\wx^{-\ell}(u-u_f) \dive(\wx^{{\frac{2}{1+\gamma}}}\gradx u)=\dive(V_G\gradx u) \dive(\wx^{{\frac{2}{1+\gamma}}}\gradx u).
\end{align*}
The choice of the factor $\wx^{{\frac{2}{1+\gamma}}}$ is motivated by the fact that this is exactly the gain of polynomial weight of $P_G^{(4)}$ with respect to $P_G$, as shown in \eqref{eq:weightsAsymp}. After integrating by parts to switch divergences to gradients, we have
\begin{align*}
    \int_\Rd \rho_G\wx^{-\ell}&(u-u_f) \dive(\wx^{{\frac{2}{1+\gamma}}}\gradx u) \dx = \int_\Rd \gradx(V_G\gradx u)\colon \gradx(\wx^{{\frac{2}{1+\gamma}}}\gradx u) \\
    &=\int_\Rd (\nabla_x^2u\,\colon V_G\,\nabla_x^2u)\wx^{\frac{2}{1+\gamma}} \dx + \int_\Rd ((\gradx(\wx^{\frac{2}{1+\gamma}}) \otimes \gradx u)\colon V_G\nabla_x^2 u) \,\dx \\
    & + \int_\Rd ((\gradx V_G\,\cdot \gradx u)\colon\nabla_x^2 u)  \wx^{\frac{2}{1+\gamma}} \dx+ \int_\Rd ((\gradx V_G\cdot\gradx u)\colon(\gradx (\wx^{\frac{2}{1+\gamma}}) \otimes \gradx u) )  \dx
\end{align*}
where $\sfA\colon\mathsf{B}:=\sum_{i,j}\sfA_{ij}\mathsf{B}_{ij}$. Reordering the terms we get
\begin{align*}
    \norm{\nabla_x^2 u}^2_{\sfL^2(V_G\wx^{\frac{2}{1+\gamma}})} =&\,\int_\Rd (\nabla_x^2u\,\colon V_G\,\nabla_x^2u)\wx^{\frac{2}{1+\gamma}} \dx\\
    =&\, \int_\Rd (u-u_f)\Delta_x u \,\rho_G\wx^{{\frac{2}{1+\gamma}}-\ell}\dx + \int_\Rd (u-u_f) \gradx u\cdot \gradx (\wx^{\frac{2}{1+\gamma}})\rho_G\wx^{-\ell}\dx \\
    &\,- \int_\Rd ((\gradx(\wx^{\frac{2}{1+\gamma}}) \otimes \gradx u)\colon V_G\nabla_x^2 u) \,\dx - \int_\Rd ((\gradx V_G\,\cdot \gradx u)\colon\nabla_x^2 u)  \wx^{\frac{2}{1+\gamma}} \dx\\
    &\,- \int_\Rd ((\gradx V_G\cdot\gradx u)\colon(\gradx (\wx^{\frac{2}{1+\gamma}}) \otimes \gradx u) )  \dx=:I_{2,1}+I_{2,2}+I_{2,3}+I_{2,4}+I_{2,5}
\end{align*}
We estimate the first two terms using the Cauchy-Schwarz inequality, \eqref{eq:u-uf} and the bounds \eqref{eq:weightsAsymp}, obtaining
\begin{align*}
    \abs{I_{2,1}} &\leq \norm{u-u_f}_{\sfL^2(\rho_G\wx^{-\ell})}\norm{\Delta u}_{\sfL^2(\rho_G\wx^{{\frac{4}{1+\gamma}}-\ell})} \lesssim I_1^{\tfrac{1}{2}} \norm{\nabla_x^2 u}_{\sfL^2(P_G\wx^{\frac{2}{1+\gamma}})}
\end{align*}
and
\begin{align*}
    \abs{I_{2,2}}&\leq \norm{u-u_f}_{\sfL^2(\rho_G\wx^{-\ell})}\norm{\gradx u}_{\sfL^2(\rho_G\wx^{{\frac{4}{1+\gamma}}-2-\ell})} \lesssim I_1^{\tfrac{1}{2}} \norm{\gradx u}_{\sfL^2(V_G\wx^{{\frac{2}{1+\gamma}}-2})} \leq I_1.
\end{align*}
For the last terms we use the Cauchy-Schwarz inequality
\begin{align*}
    \abs{I_{2,3}}&\leq \norm{\gradx(\wx^{\frac{2}{1+\gamma}})\otimes\gradx u}_{\sfL^2(V_G\wx^{-{\frac{2}{1+\gamma}}})}\norm{\nabla_x^2 u}_{\sfL^2(V_G\wx^{\frac{2}{1+\gamma}})} \\
    &\leq \norm{\gradx u}_{\sfL^2(V_G\wx^{{\frac{2}{1+\gamma}}-2})}\norm{\nabla_x^2 u}_{\sfL^2(V_G\wx^{\frac{2}{1+\gamma}})} \leq I_1^{\frac{1}{2}}\norm{\nabla_x^2 u}_{\sfL^2(V_G\wx^{\frac{2}{1+\gamma}})} ,
\end{align*}
\begin{align*}
    \abs{I_{2,4}}&\leq \norm{\gradx V_G\cdot\gradx u}_{\sfL^2(V_G^{-1}\wx^{{\frac{2}{1+\gamma}}})} \norm{\nabla_x^2 u}_{\sfL^2(V_G\wx^{\frac{2}{1+\gamma}})}\\
    &\leq \norm{\gradx u}_{\sfL^2(V_G^{-1}\abs{\gradx V_G}^2\wx^{{\frac{2}{1+\gamma}}})} \norm{\nabla_x^2 u}_{\sfL^2(V_G\wx^{\frac{2}{1+\gamma}})}
\end{align*}
and
\begin{align*}
    \abs{I_{2,5}}&\leq\norm{\gradx V_G\cdot\gradx u}_{\sfL^2(V_G^{-1}\wx^{{\frac{2}{1+\gamma}}})}\norm{\gradx(\wx^{\frac{2}{1+\gamma}})\otimes\gradx u}_{\sfL^2(V_G\wx^{-{\frac{2}{1+\gamma}}})}\\ 
    &\leq \norm{\gradx u}_{\sfL^2(V_G^{-1}\abs{\gradx V_G}^2\wx^{{\frac{2}{1+\gamma}}})} \norm{\gradx u}_{\sfL^2(V_G\wx^{{\frac{2}{1+\gamma}}-2})}
\end{align*}
By \Cref{Hyp:asympRho} we have
\begin{align*}
    \abs{V_G^{-2}}\abs{\gradx V_G}^2\wx^{\frac{2}{1+\gamma}}\lesssim 1,
\end{align*}
so we have
\begin{align*}
    \abs{I_{2,4}}&\lesssim \norm{\nabla_x^2 u}_{\sfL^2(V_G\wx^{\frac{2}{1+\gamma}})} I_1^{\frac{1}{2}}& \abs{I_{2,5}}&\lesssim\norm{\nabla_x^2 u}_{\sfL^2(V_G\wx^{\frac{2}{1+\gamma}})} I_1^{\frac{1}{2}}
\end{align*}
We conculde
\begin{align*}
    \norm{\nabla_x^2 u}^2_{\sfL^2(V_G\wx^{\frac{2}{1+\gamma}})}\lesssim \norm{\nabla_x^2 u}_{\sfL^2(V_G\wx^{\frac{2}{1+\gamma}})} I_1^{\frac{1}{2}} + I_{1}, 
\end{align*}
that implies
\[
\norm{\nabla_x^2 u}_{\sfL^2(V_G\wx^{\frac{2}{1+\gamma}})}\lesssim I_1^{\frac{1}{2}}.
\]
According to \eqref{eq:weightsAsymp} and \Cref{lem:VG_PG}, the weights $V_G\wx^{\frac{2}{1+\gamma}}$ and $P_G^{(k)}$ generate equivalent norms, so we conclude
\[
\abs{I_2}\leq \norm{(1-\Pi)f}_{\sfL^2(G^{-1})} I_1^{\frac{1}{2}}.
\]

Concerning $I_3$ we have
\begin{align*}
    \abs{I_3} &= \abs{\bangle{ A_f-A_{\Pi f}\,,\,\nabla_x  u \, \,}_{\sfL^2}}\\
    &\leq  \norm{A_f-A_{\Pi f}}_{\sfL^2(V_G^{-1})}\norm{\nabla_x  u}_{\sfL^2(V_G)}\\
    &\lesssim \norm{(1-\Pi)f}_{\sfL^2(G^{-1})}\, \norm{\nabla_x  u}_{\sfL^2(V_G)} \lesssim  \norm{(1-\Pi)f}_{\sfL^2(G^{-1})}\,I_1^{1/2}.
\end{align*}
Consider now 
\[
I_4 = \bangle{ m_f,\, \nabla_x \(B^{-1}(\dive_x  m_f)\) }_{\sfL^2}.
\]
Define $h=B^{-1}(\dive_x m_f)$, that is
\begin{equation*}
    \rho_G\,h- \wangle{x}^{\ell} \dive_x ( V_G\nabla_x {h}) = \dive_x  m_f.
\end{equation*}
Test with $h \wangle{x}^{-\ell} $ and integrate by parts
\begin{align*}
    \norm{h}^2_{\sfL^2(\rho_G\wx^{-\ell})}+ \norm{\nabla_x  h}^2_{\sfL^2(V_G)}  =&\, \bangle{ \dive_x m_f,\,h }_{\sfL^2\( \wx^{-\ell} \)}\\
    =&-\bangle{ m_f ,\,\nabla_x  h\, }_{\sfL^2\(\wx^{-\ell}\)} - \bangle{ m_f,\,\nabla_x\(\wx^{-\ell}\) h}_{\sfL^2}\\
    =&\,I_{4,1} + I_{4,2}.
\end{align*}
We bound the first term as 
\begin{align*}
    \abs{I_{4,1}}&\leq \frac{1}{2} \norm{m_f}^2_{\sfL^2( V_G^{-1} \wx^{-2\ell}) } +\frac{1}{2} \norm{\nabla_x  h}^2_{\sfL^2(V_G)}
\end{align*}
and the second as
\[
\abs{I_{4,2}}\leq \frac{1}{4}  \norm{m_f \cdot\nabla_x \( \wangle{x}^{-\ell}\) }^2_{\sfL^2(\rho_G^{-1}\wx^\ell)} +\norm{ h}^2_{\sfL^2(\rho_G\wx^{-\ell})}.
\]
Therefore 
\begin{align*}
    \norm{\nabla_x  h}^2_{\sfL^2(V_G)} &\leq  \norm{m_f}^2_{\sfL^2(V_G^{-1}\wx^{-2\ell}) } + \frac{1}{2} \norm{m_f \cdot\nabla_x \( \wangle{x}^{-\ell}\) }^2_{\sfL^2(\rho_G^{-1}\wx^\ell)}\\
    &\lesssim\norm{m_f}^2_{\sfL^2(V_G^{-1}) }+\norm{m_f }^2_{\sfL^2(P_G^{-1}\wx^{{\frac{2}{1+\gamma}}-2\ell-2})}\\
    &\lesssim \norm{m_f}^2_{\sfL^2(V_G^{-1}) } \lesssim \norm{(1-\Pi)f}^2_{\sfL^2(G^{-1})}.
\end{align*}
Finally
\begin{align*}
    \abs{I_4}&\leq \norm{m_f}_{\sfL^2(V_G^{-1})}\norm{\gradx h}_{\sfL^2(V_G)} \lesssim \norm{(1-\Pi)f}^2_{\sfL^2(G^{-1})}.
\end{align*}

$\#$\textit{Step 3: Conclusion}
Collecting the estimates of the previous paragraphs we have that, if $\eps>0$ is small enough, there exists $\kappa>0$ such that
\begin{align*}
    \ddt\sfH[f]&\leq -\((1-\chi)-C\eps\)\norm{(1-\Pi)f}^2_{\sfL^2(G^{-1})} + C\eps I_1^{1/2}\norm{(1-\Pi)f}_{\sfL^2(G^{-1})}- \eps I_1\\
    &\leq -\kappa (\norm{(1-\Pi)f}^2_{\sfL^2(G^{-1})} + I_1).
\end{align*}

\end{proof}

\begin{lem}\label{lem:1}
We have the following bound
\begin{equation*}
    \norm{\rho_f}^2_{\sfL^2(\rho_G\wx^{-\ell})} \dx \lesssim I_1.
\end{equation*}
As a consequence, we have the following estimate on the entropy production
\begin{equation}
    \ddt\sfH[f]\lesssim - \(\norm{(1-\Pi)f}^2_{\sfL^2(G^{-1})} + \norm{\Pi f}^2_{\sfL^2(G^{-1}\wx^{-\ell})}\).
\end{equation}

\end{lem}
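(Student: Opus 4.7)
The key identity to exploit is that $u = B^{-1}\rho_f$ solves $\rho_G\, u - \wx^\ell \dive_x(V_G\nabla_x u) = \rho_f$, so that pointwise $\rho_f = \rho_G\,u - \wx^{\ell}\dive_x(V_G\nabla_x u)$. Squaring and weighting by $\rho_G^{-1}\wx^{-\ell}$ and using the elementary inequality $(a+b)^2 \leq 2a^2+2b^2$, the plan is to estimate
\begin{equation*}
\norm{\rho_f}^2_{\sfL^2(\rho_G^{-1}\wx^{-\ell})} \leq 2\norm{u}^2_{\sfL^2(\rho_G\wx^{-\ell})} + 2\norm{\dive_x(V_G\nabla_x u)}^2_{\sfL^2(\rho_G^{-1}\wx^{\ell})}.
\end{equation*}
The second term is already bounded by $I_1$ by definition (here one reads $\wx^{\ell}$ in the weight, consistent with the derivation of $I_1$ in \Cref{lem:disspation}). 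Note also that $\norm{\rho_f}^2_{\sfL^2(\rho_G^{-1}\wx^{-\ell})} = \norm{\Pi f}^2_{\sfL^2(G^{-1}\wx^{-\ell})}$, so this bound is exactly what is needed to close the entropy dissipation.

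The first term is the crux. I would use the weighted Poincaré inequality from \Cref{prop:Poincaré}. Thanks to the asymptotics \eqref{eq:weightsAsymp}, one has $P_G\asymp \wx^{\tfrac{2}{1+\gamma}-\ell}\rho_G$, and by \Cref{lem:VG_PG} the weights $V_G$ and $P_G$ are equivalent in the vector sense. Hence $\wx^{-\tfrac{2}{1+\gamma}}P_G \asymp \rho_G\wx^{-\ell}$, and \eqref{eq:Poincaré} rewrites (up to constants) as
\begin{equation*}
\norm{u-\bar u}^2_{\sfL^2(\rho_G\wx^{-\ell})} \lesssim \norm{\nabla_x u}^2_{\sfL^2(V_G)} \leq I_1,
\end{equation*}
where $\bar u = \int u\,\rho_G\,\dx$. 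Writing $u = (u-\bar u) + \bar u$, it only remains to control the constant $\bar u$.

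To estimate $\bar u$, I would integrate the elliptic equation over $\Rd$: since $\int \rho_f\,\dx = 0$ (mass conservation for mean-zero data), one gets $\bar u = \int \wx^{\ell}\dive_x(V_G\nabla_x u)\,\dx$. A Cauchy–Schwarz split with weights $\rho_G^{1/2}\wx^{\ell/2}$ and $\rho_G^{-1/2}\wx^{\ell/2}$ then yields
\begin{equation*}
|\bar u|^2 \leq \left(\int \rho_G\wx^{\ell}\dx\right)\norm{\dive_x(V_G\nabla_x u)}^2_{\sfL^2(\rho_G^{-1}\wx^{\ell})} \lesssim I_1,
\end{equation*}
since the sub-exponential decay of $\rho_G$ guarantees $\int \rho_G\wx^{\ell}\dx<\infty$. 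Combining with the Poincaré step gives $\norm{u}^2_{\sfL^2(\rho_G\wx^{-\ell})}\lesssim I_1$, and hence the desired bound $\norm{\rho_f}^2_{\sfL^2(\rho_G^{-1}\wx^{-\ell})}\lesssim I_1$.

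The main obstacle was locating $u$ (not just $u-\bar u$) in a weighted $\sfL^2$ space without an explicit Poincaré–Wirtinger-type inequality for $u$ itself; this is resolved by using mass conservation to convert $\bar u$ into something already controlled by the second piece of $I_1$. Once this estimate is in hand, the entropy dissipation estimate of \Cref{lem:disspation} directly implies
\begin{equation*}
\ddt \sfH[f] \leq -\kappa\,\norm{(1-\Pi)f}^2_{\sfL^2(G^{-1})} - \kappa\,I_1 \lesssim -\Bigl(\norm{(1-\Pi)f}^2_{\sfL^2(G^{-1})} + \norm{\Pi f}^2_{\sfL^2(G^{-1}\wx^{-\ell})}\Bigr),
\end{equation*}
completing the lemma.
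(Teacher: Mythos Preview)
Your proof is correct and follows essentially the same route as the paper: square the elliptic identity for $u=B^{-1}\rho_f$, control the $u$-term via the weighted Poincar\'e inequality of \Cref{prop:Poincaré} (after translating weights through \eqref{eq:weightsAsymp} and \Cref{lem:VG_PG}), and eliminate the constant $\bar u$ by integrating the equation and using zero mass together with Cauchy--Schwarz. The only cosmetic difference is that the paper expands $\rho_f^2$ exactly (the cross term integrates by parts to $2\norm{\nabla_x u}^2_{\sfL^2(V_G)}$, already part of $I_1$), whereas you use $(a+b)^2\leq 2a^2+2b^2$; both yield the same conclusion.
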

\begin{proof}[{\bf Proof of \Cref{lem:1}}]
We square the equation \eqref{u} to write 
\begin{align}\label{eq:1654}
    \norm{\rho_f}^2_{\sfL^2(\rho_G^{-1}\wx^{-\ell})}  &= \norm{u}^2_{\sfL^2(\rho_G\wx^{-\ell})}  +2\norm{\nabla_x  u}^2_{\sfL^2(V_G )}  + \norm{\dive_x (V_G\nabla_x  u)}^2_{\sfL^2( \rho_G^{-1}\wx^{\ell})}\\ &\leq \norm{u}^2_{\sfL^2(\rho_G\wx^{-\ell})} +2I_1.
\end{align}
The goal is to also bound the norm of $u$ on the right hand side by $I_1$. For this purpose we will use the weighted Poincaré inequality \eqref{eq:Poincaré} with non classical average $\Bar{u}=\int_\Rd u\rho_G\dx$, that is  
\begin{equation*}
    \norm{u-\Bar{u}}^2_{\sfL^2(P_G \wx^{-{\frac{2}{1+\gamma}}})}\leq C_P \norm{\nabla_x u}^2_{\sfL^2( P_G)}.
\end{equation*}
This inequality implies in particular
\begin{equation}
    \norm{u-\Bar{u}}^2_{\sfL^2(\rho_G \wx^{-\ell})}\lesssim  \norm{\nabla_x u}^2_{\sfL^2( V_G)}.
\end{equation}
Since we are considering functions $f$ with mass zero, we can integrate \eqref{u} and obtain
\begin{align}\label{eq:2511}
\begin{split}
    \Bar{u}&= \int_\Rd u\,\rho_G\dx = \int_\Rd \dive_x (V_G\nabla_x u)\wangle{x}^{\ell}\dx\\
    &\leq \norm{\dive_x(V_G\nabla_x u)}_{\sfL^2(\rho_G^{-1}\wangle{x}^{\ell} )} \(\int_\Rd \rho_G\wangle{x}^{\ell}\dx \)^{1/2} \lesssim I_1^{1/2}
\end{split}
\end{align}
Therefore we have
\begin{align*}
    \norm{\rho_f}^2_{\sfL^2(\rho_G^{-1}\wx^{-\ell})}  &\leq \norm{u}^2_{\sfL^2(\rho_G\wx^{-\ell})} +2I_1 \leq 2\norm{u-\Bar{u}}^2_{\sfL^2(\rho_G\wx^{-\ell})} +2 \norm{\Bar{u}}^2_{\sfL^2(\rho_G\wx^{-\ell})}+2I_1\\
    &\lesssim \norm{\nabla_x u}^2_{\sfL^2( P_G)} + I_1 \lesssim I_1
\end{align*}

\end{proof}

\subsection{Weighted $\sfL^2$ estimates}
The goal of this section is the following Proposition. 
\begin{prop}\label{MomentsProp}
Let $m$ be defined as in \Cref{Prop:Lyapunov} or in \Cref{Prop:PolyLyap} and let $f_0\in \sfL^2(m\,G^{-1}\dx\dv )$. Then 
\begin{equation}
    \iint_{\RRd}\abs{S_\cL(t)f_0}^2 m G^{-1}\dx\dv \lesssim  \iint_{\RRd}\abs{f_0}^2 m G^{-1}\dx\dv .
\end{equation}
for any $t\geq0$. 
\end{prop}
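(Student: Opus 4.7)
The result is a propagation-of-weight estimate in $\sfL^2$. The natural change of variable is $h=f/G$, under which $\|f\|_{\sfL^2(mG^{-1})}^2=\iint h^2 mG\,dx\,dv$. Using the stationary identity $\cL G=0$, one checks that $h$ satisfies
\[
\partial_t h+v\cdot\nabla_x h=\frac{\M(v)}{G(x,v)}\int_{\R^d}\Lambda(x,v')\bigl(h(x,v')-h(x,v)\bigr)G(x,v')\,dv'.
\]
My strategy is to derive a Gronwall-type inequality for $\phi(t):=\iint h(t)^2 mG\,dx\,dv$ by exploiting the Lyapunov inequalities already established in Section \ref{Sec:Lyapunov}.

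First I would differentiate $\phi$ in time. After integration by parts, the transport contribution becomes $\iint h^2\,v\cdot\nabla_x(mG)\,dx\,dv$; splitting $v\cdot\nabla_x(mG)=G(v\cdot\nabla_x m)+m(v\cdot\nabla_x G)$ and substituting $v\cdot\nabla_x G=\M\int\Lambda'G'\,dv'-\Lambda G$ from $\cL G=0$ exposes a macroscopic tumbling term. The gain contribution is controlled by Cauchy--Schwarz,
\[
\Bigl(\int\Lambda'h'G'\,dv'\Bigr)^{2}\le\Bigl(\int\Lambda'G'\,dv'\Bigr)\Bigl(\int\Lambda'(h')^{2}G'\,dv'\Bigr),
\]
together with $2ab\le a^{2}+b^{2}$; the $h^2 m\M\int\Lambda'G'$ contributions then cancel exactly, and a Fubini swap of $v$ and $v'$ in the surviving piece introduces the averaged weight $M(x):=\int m(x,v)\M(v)\,dv$. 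The outcome should be the clean identity
\[
\phi'(t)\le \iint h^2 G\bigl[v\cdot\nabla_x m-\Lambda(m-M(x))\bigr]\,dx\,dv = \iint h^2 G\,(\cL^\ast m)\,dx\,dv,
\]
where the last equality uses the definition \eqref{eq:cL*} of $\cL^\ast$.

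Next I would inject the Lyapunov bound $\cL^\ast m\le C\nu-\epsilon\wx^{a-1}m$ from Proposition \ref{Prop:Lyapunov} (respectively the polynomial analogue of Proposition \ref{Prop:PolyLyap}), yielding
\[
\phi'(t)\le -\epsilon\iint h^2\wx^{a-1}mG\,dx\,dv + C\iint h^2 G\,dx\,dv.
\]
The error term is exactly $C\|f\|_{\sfL^2(G^{-1})}^2$, which by the microscopic coercivity of Lemma \ref{lem:microcoercivity} is non-increasing along the flow and therefore bounded uniformly in $t$ by $\|f_0\|_{\sfL^2(G^{-1})}^2\le\|f_0\|_{\sfL^2(mG^{-1})}^2$.

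The hard part is closing the Gronwall inequality: because the dissipation carries the degenerate factor $\wx^{a-1}\to 0$ at infinity, it is weaker than $\phi$ itself and a direct application would only yield linear-in-time growth. To upgrade this to a uniform bound I would mimic the time-threshold trick already used in the proof of Lemma \ref{lem:SB decay}: split the $x$-integration at a level $\abs{x}=\rho$, use the lower bound $\wx^{a-1}\ge \wangle{\rho}^{a-1}$ on the bulk $\set{\wx\le\rho}$ to convert the dissipation into coercivity on $\phi_\rho:=\iint_{\wx\le\rho}h^2 mG$, and control the tail $\phi-\phi_\rho$ on $\set{\wx>\rho}$ by trading a power of $\wx^{a-1}$ against the strong growth of $m=e^{\nu\wx^a}+e^{b\abs{v}^\gamma}$ times the initial bound. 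Optimising $\rho$ for each $t$, or iterating on dyadic shells, should yield $\phi(t)\lesssim\phi(0)+C\|f_0\|_{\sfL^2(mG^{-1})}^2$ uniformly in $t$. The polynomial-weight statement of Proposition \ref{Prop:PolyLyap} follows by the same scheme with $\wx^{a-1}$ replaced by $m^{-1/k}$.
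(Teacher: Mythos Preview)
Your identity $\phi'(t)\le\iint h^2 G\,\cL^\ast m$ is correct and elegant, and the subsequent bound
\[
\phi'(t)\le -\epsilon\iint h^2\,\wx^{a-1}mG\,dx\,dv + C\|f_0\|_{\sfL^2(G^{-1})}^2
\]
follows as you say. The gap is in the closure step. The time-threshold trick of Lemma~\ref{lem:SB decay} works for $S_\cB$ because the cutoff $\eta_R$ removes the constant in the Lyapunov inequality: one has $\cB^\ast m^\ell\le -\epsilon\wx^{a-1}m^\ell$ with no zeroth-order term, hence $S_\cB$ is a genuine contraction in $\sfL^1(m^\ell)$, and this a~priori bound on the solution at time $t$ in the \emph{stronger} norm is what controls the tail $\int_{|x|>\rho}|f(t)|\,m$. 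For $S_\cL$ you only have $\cL^\ast(m^\ell)\le C-\epsilon\wx^{a-1}m^\ell$ with $C>0$; feeding this back into your own identity gives at best $\phi_\ell'(t)\le C\|f_0\|_{\sfL^2(G^{-1})}^2$, i.e.\ linear growth of $\phi_\ell$, not a uniform bound. So the tail control ``by the initial bound'' is not available: you need $\phi_\ell(t)$ bounded to close the estimate on $\phi(t)$, which is the same statement you are proving, only with a stronger weight. The argument is circular, and there is no maximal admissible weight to anchor an induction.

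The paper sidesteps this by interpolation. It first establishes the uniform bound $\|S_\cL(t)\|_{\sfL^1(m)\to\sfL^1(m)}\lesssim 1$ via the Duhamel formula $S_\cL=S_\cB+S_\cB\star\cA S_\cL$, exploiting precisely that $S_\cB$ (not $S_\cL$) enjoys the decay of Lemma~\ref{lem:SB decay}, together with mass conservation of $S_\cL$ to handle the $\cA S_\cL$ factor. This $\sfL^1(m)$ bound is then interpolated, via the Stein--Weiss theorem, with the trivial contraction $\|S_\cL(t)\|_{\sfL^\infty(G^{-1})\to\sfL^\infty(G^{-1})}\le 1$ (an immediate consequence of positivity and $S_\cL G=G$), yielding the $\sfL^2(mG^{-1})$ bound directly. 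The point is that the threshold trick is applied to $S_\cB$, where it is legitimate, and the passage to $S_\cL$ is handled by Duhamel and interpolation rather than by a direct $\sfL^2$ Gronwall argument.
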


We prove this result following the idea of \cite[Sec. 3]{BDL22}, that is based on the Stein-Weiss interpolation theorem \cite[Thm. 2]{S56}, see also \cite{SW58,BL76}.
\begin{thm}[Stein-Weiss]
Assume that $1\leq p_0,\, p_1,\, q_1,\, q_1\leq \infty$ and that 
\begin{align*}
    T&\colon L^{p_0}(w_0)\to L^{q_0}(\widetilde{w}_0) & T&\colon L^{p_1}(w_1)\to L^{q_1}(\widetilde{w}_1)
\end{align*}
with norms $M_0$ and $M_1$ respectively. Then
\[
T\colon L^{p}(w)\to L^{q}(\widetilde{w})
\]
with norm $M\leq M_0^{1-\theta}M_1^{\theta}$ where
\begin{align*}
    \frac1p&=\frac{1-\theta}{p_0}+\frac{\theta}{p_1} & \frac1q&=\frac{1-\theta}{q_0}+\frac{\theta}{q_1}\\
    w&=w_0^{1-\theta} w_1^{\theta} & \widetilde{w}&=\widetilde{w}_0^{1-\theta} \widetilde{w}_1^{\theta}.
\end{align*}
\end{thm}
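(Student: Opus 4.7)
The plan is to invoke the Stein--Weiss interpolation theorem just stated, with $\theta=\tfrac12$, $p_0=q_0=1$, $p_1=q_1=\infty$, and endpoint weights $w_0=\widetilde w_0=m$ and $w_1=\widetilde w_1=G^{-1}$. Read through the Stein--Weiss outer-weight convention $\norm{f}_{\sfL^p(w)}=\norm{f\,w}_{\sfL^p}$ (so that $\norm{f}_{\sfL^\infty(G^{-1})}=\norm{f/G}_{\sfL^\infty}$), the interpolated weight $w_0^{1/2}w_1^{1/2}=(mG^{-1})^{1/2}$ precisely reproduces the $\sfL^2$ norm $\bigl(\iint|f|^2\,mG^{-1}\,\dx\dv\bigr)^{1/2}$. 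Consequently, the proposition reduces to verifying the two uniform-in-time endpoint estimates
\[
\norm{S_\cL(t)f}_{\sfL^1(m)}\lesssim\norm{f}_{\sfL^1(m)}, \qquad \norm{S_\cL(t)f}_{\sfL^\infty(G^{-1})}\lesssim\norm{f}_{\sfL^\infty(G^{-1})}.
\]

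The $\sfL^1(m)$ endpoint follows directly from the weak Lyapunov inequalities of \Cref{Prop:Lyapunov} (exponential case) and \Cref{Prop:PolyLyap} (polynomial case). Differentiating $\iint|S_\cL(t)f|\,m\,\dx\dv$ in time, using Kato's inequality to handle the absolute value and the mass conservation $\norm{S_\cL(t)f}_{\sfL^1}\leq\norm{f}_{\sfL^1}$, one obtains a Gr\"onwall-type inequality from which the uniform-in-$t$ bound follows by the standard Foster--Lyapunov mechanism, exactly as in the proof of \Cref{thm:harrisapplied}.

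The $\sfL^\infty(G^{-1})$ endpoint is a pure positivity-and-stationarity argument. Given $f$ with $\kappa:=\norm{f/G}_{\sfL^\infty}<\infty$, the pointwise bounds $-\kappa\,G\leq f\leq \kappa\,G$ hold. Applying the linear, order-preserving semigroup $S_\cL(t)$ and invoking the stationarity $S_\cL(t)G=G$ provided by \Cref{thm:main}, we obtain $|S_\cL(t)f|\leq \kappa\,G$ everywhere, i.e.\ $\norm{S_\cL(t)f}_{\sfL^\infty(G^{-1})}\leq \kappa$, with constant one. The strict positivity of $G$ from \Cref{cor:positivity} makes the ratio well defined.

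Once both endpoints are in hand, Stein--Weiss delivers $\norm{S_\cL(t)}_{\sfL^2(mG^{-1})\to \sfL^2(mG^{-1})}\lesssim 1$ uniformly in $t\geq 0$. The most delicate point will be matching the Stein--Weiss weight convention to the paper's measure-weighted norms so that the geometric-mean weight truly is $mG^{-1}$ in the correct sense; this is what forces the choice of $w_0$, $w_1$ above. A secondary technicality is that a general $f_0\in\sfL^2(mG^{-1})$ need not belong to $\sfL^\infty(G^{-1})$, so Stein--Weiss is first applied on the dense subspace $\sfL^1(m)\cap\sfL^\infty(G^{-1})$ and the resulting bound is extended to all of $\sfL^2(mG^{-1})$ by density. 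Density follows by truncation thanks to the two-sided bounds \eqref{eq:mainbounds} on $G$ and its strict positivity.
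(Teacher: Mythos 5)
The statement under review is the Stein--Weiss interpolation theorem itself, and your argument never proves it: your opening move is to ``invoke the Stein--Weiss interpolation theorem just stated'', which is circular with respect to the target. What you then establish is a different result of the paper, namely the uniform weighted $\sfL^2$ bound of \Cref{MomentsProp}, for which the paper indeed uses exactly your two endpoints (the $\sfL^\infty(G^{-1})$ contraction by positivity and stationarity, which is \Cref{lem:LinftyBound}, and the uniform $\sfL^1(m)$ bound, which is \Cref{lem:L1Bound}) together with the conjugated semigroup $S(t)h=G^{-1}S_\cL(t)(Gh)$ and Stein--Weiss. The paper does not prove the interpolation theorem either: it is quoted from \cite{S56} (see also \cite{SW58,BL76}). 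A genuine proof of the statement would have to proceed, for instance, by Stein's interpolation theorem for analytic families, or, in the special case actually needed here ($p_0=q_0=1$, $p_1=q_1=\infty$, identical source and target weights), by conjugating with suitable powers of the weights so as to reduce to Riesz--Thorin; none of this appears in your proposal, so the statement you were asked to prove remains unproved.

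Even read as a sketch of \Cref{MomentsProp}, one step is too quick: you claim the $\sfL^1(m)$ endpoint ``follows directly from the weak Lyapunov inequalities\ldots by the standard Foster--Lyapunov mechanism, exactly as in the proof of \Cref{thm:harrisapplied}''. The drift in \Cref{Prop:Lyapunov} is sub-geometric, $\cL^*m\leq C\nu-\epsilon\,\wx^{a-1}m$ (and similarly $\cL^*m\leq C\1_R-\eps(\wx^{k-1}+\wangle{v}^{2k})$ in \Cref{Prop:PolyLyap}), so the dissipation does not dominate a fixed multiple of $m$; a Gr\"onwall argument on $\norm{f}_{\sfL^1(m)}$ does not close, and Jensen's inequality for the concave rate function goes in the wrong direction, so uniform-in-time boundedness in $\sfL^1(m)$ is not an automatic consequence of the weak drift (nor is it what \Cref{thm:harrisapplied} provides, which is decay of the \emph{unweighted} $\sfL^1$ distance). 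This is precisely why the paper proves \Cref{lem:L1Bound} through the Duhamel formula $S_\cL=S_\cB+S_\cB\star\cA S_\cL$, combining the dissipativity and decay of $S_\cB$ (\Cref{lem:dissipSB}, \Cref{lem:SB decay}, \Cref{lem:SB Polydecay}) with the bounds on $\cA$ (\Cref{lem:A}) and mass conservation. Your $\sfL^\infty(G^{-1})$ endpoint and the weight bookkeeping for the geometric mean $mG^{-1}$ are correct and match \Cref{lem:LinftyBound} and the proof of \Cref{MomentsProp}, but neither point repairs the central defect identified above.
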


We need to prove the boundedness of $S_\cL(t)$ in  $\sfL^{\infty}(G^{-1}\dx\dv )$ and in  $\sfL^1(m\,\dx\dv )$.

\subsubsection{Boundedness in $\sfL^{\infty}(G^{-1}\dx\dv )$}
\begin{lem}\label{lem:LinftyBound}
Let $S_\cL$ be the semigroup associated to the run and tumble equation and $G$ the steady state. Then 
\begin{equation}
    \norm{S_\cL(t)}_{\sfL^\infty(G^{-1}\dx\dv ) \to \sfL^\infty(G^{-1}\dx\dv )}\leq 1
\end{equation}
for all $t\geq 0$ .
\end{lem}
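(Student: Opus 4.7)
The plan is to exploit three ingredients: (a) the semigroup $S_\cL$ preserves positivity (this is a standard fact for run and tumble equations with nonnegative kernel, and has been used implicitly throughout the paper, e.g.\ in the Harris argument of \Cref{Sec:Harris}); (b) $G$ is a genuine steady state, so $S_\cL(t)G = G$ for every $t\geq 0$; (c) since $G \in X = \sfL^1(m)\cap\sfL^\infty(m)$ by \Cref{Prop:SL_bounded}, any $f_0 \in \sfL^\infty(G^{-1})$ automatically satisfies $|f_0|\leq CG$ with $C=\norm{f_0}_{\sfL^\infty(G^{-1})}$ and therefore lies in $\sfL^1 \cap \sfL^\infty(m)$, on which $S_\cL$ is well-defined.

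First I would fix $f_0 \in \sfL^\infty(G^{-1})$ and set $C = \norm{f_0}_{\sfL^\infty(G^{-1})}$, so that the two functions $CG - f_0$ and $CG + f_0$ are both nonnegative a.e. Applying the positivity of $S_\cL(t)$ and linearity, together with $S_\cL(t)G = G$, yields
\[
-G(x,v)\, C \;\leq\; (S_\cL(t)f_0)(x,v) \;\leq\; G(x,v)\, C \qquad\text{a.e.\ } (x,v)\in\RRd.
\]
Dividing by $G(x,v)>0$ (positivity of the density of $G$ has been established in \Cref{cor:positivity} for $\rho_G$; the same type of lower bound for $G$ itself follows from the representation formula \eqref{eq:representation}) gives the claim.

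The only subtle point is the justification of the pointwise inequality $S_\cL(t)(CG - f_0)\geq 0$. I would argue this by approximating $f_0$ by a sequence in $X$: since $|f_0|\leq CG$ and $G\in X$, truncating $f_0$ by $\min(\max(f_0,-CG\wedge n), CG\wedge n)$ produces an approximating sequence of bounded, compactly supported functions to which the standard positivity-preserving property of the Kolmogorov forward equation applies, and then passing to the limit in the weak formulation preserves the sign. I expect no substantive obstacle here; the lemma is essentially a consequence of the invariance of $G$ and the Markov character of $S_\cL$.
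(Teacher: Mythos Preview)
Your proposal is correct and follows essentially the same approach as the paper: both arguments use that $|f_0|\le C G$ with $C=\norm{f_0}_{\sfL^\infty(G^{-1})}$, then apply positivity of $S_\cL(t)$ together with $S_\cL(t)G=G$ to conclude $|S_\cL(t)f_0|\le C G$. Your treatment is in fact slightly more careful, handling both the upper and lower bounds explicitly and commenting on the positivity of $G$ and the approximation needed to justify sign preservation, whereas the paper records only the one-sided inequality (the other side being symmetric).
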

\begin{proof}[{\bf Proof of \Cref{lem:LinftyBound}}]
Let $f_0 \in \sfL^{\infty}(G^{-1}\dx\dv )$, so that
\[
f_0(x,v)\leq \norm{G^{-1}f_0}_{\sfL^\infty(\dx\dv )}G(x,v) \qquad \text{for every } x,v\in\Rd.
\]
Then the function $\norm{G^{-1}f_0}_{\sfL^\infty(\dx\dv )}\,G - f_0$ is positive, and by the positivity of $S_\cL(t)$ we have
\begin{align*}
    \norm{G^{-1}f_0}_{\sfL^\infty(\dx\dv )}\,G - S_\cL(t)f_0 = S_\cL(t)(\norm{G^{-1}f_0}_{\sfL^\infty(\dx\dv )}\,G - f_0) \geq 0.
\end{align*}
This is nothing but
\[
(S_\cL(t)f_0)(x,v) \leq \norm{G^{-1}f_0}_{\sfL^\infty(\dx\dv )}\,G(x,v) \qquad \text{for every } x,v\in\Rd,
\]
that is exactly
\[
\norm{G^{-1}S_\cL(t)f_0}_{\sfL^\infty(\dx\dv )}\leq \norm{G^{-1}f_0}_{\sfL^\infty(\dx\dv )}. \qedhere
\]

\end{proof}

\subsubsection{Boundedness in $\sfL^1(m\,\dx\dv )$}

\begin{lem}\label{lem:L1Bound}
Let $m$ be defined as in \Cref{Prop:Lyapunov} or in \Cref{Prop:PolyLyap}. Then 
\begin{equation}
    \norm{S_\cL(t)}_{\sfL^1(m) \to \sfL^1(m)}\lesssim 1
\end{equation}
for all $t\geq 0$ .
\end{lem}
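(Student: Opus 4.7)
The argument mirrors that of \Cref{lem:dissipSB}, but for the full semigroup $S_\cL$ instead of just $S_\cB$. Since $S_\cL$ is positive and mass preserving, splitting $f_0 = f_0^+ - f_0^-$ reduces the problem to $f_0\geq 0$. Setting $f(t)=S_\cL(t)f_0\geq 0$, I differentiate
\[
\frac{d}{dt}\int_{\RRd} f(t)\,m\,\dx\dv \;=\; \int_{\RRd} f(t)\,\cL^\ast m\,\dx\dv
\]
and plug in the weak Lyapunov inequalities from \Cref{Prop:Lyapunov} and \Cref{Prop:PolyLyap}.

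For the polynomial weight from \Cref{Prop:PolyLyap}, a second look at the computation carried out in that proof reveals that once $R$ is enlarged if necessary, $\cL^\ast m \leq 0$ in fact holds pointwise on $\RRd$. Indeed, on $\{|x|\leq R\}$ we have $\eta_R\equiv 1$, reducing the bound to $-\tfrac{B(1-\chi)}{2}\wangle{v}^{2k}-\tfrac{1-\chi}{2}<0$; on $\{|x|\geq R\}$ the negative contribution $-\tfrac{k\chi(1-\chi)\zeta}{2(1+\chi)}(1-\eta_R)\wx^{k-1}$ absorbs the only remaining positive piece $B(1+\chi)c_{\gamma,k}(1-\eta_R)$. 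Integrating then yields $\|S_\cL(t)f_0\|_{\sfL^1(m)}\leq \|f_0\|_{\sfL^1(m)}$ uniformly in $t\geq 0$.

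For the sub-exponential weight from \Cref{Prop:Lyapunov}, the bound $\cL^\ast m\leq C\nu - \epsilon\wx^{a-1}m$ does not yield $\cL^\ast m\leq 0$ globally because the dissipation $\wx^{a-1}$ degenerates at infinity, and a direct Gronwall argument gives only the weaker linear-in-time estimate $\|S_\cL(t)\|_{\sfL^1(m)\to\sfL^1(m)}\lesssim 1+t$. To obtain the uniform bound I use the $\cL=\cA+\cB$ splitting of \Cref{sec:Duhamel} and the iterated Duhamel formula
\[
S_\cL \;=\; \sum_{j=0}^{d+1} S_\cB\star (\cA S_\cB)^{\star j} \;+\; S_\cB\star (\cA S_\cB)^{\star(d+1)}\star \cA S_\cL,
\]
exactly as in the proof of \Cref{Prop:SL_bounded}. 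The finite sum is controlled in $\sfL^1(m)\to\sfL^1(m)$ by combining the dissipativity $\|S_\cB(t)\|_{\sfL^1(m)\to\sfL^1(m)}\leq 1$ from \Cref{lem:dissipSB} with the sub-exponential decay of $\|\cA S_\cB(t)\|$ in the weighted spaces of \Cref{lem:ASB decay}; each convolution remains uniformly bounded in $t$. The tail term is handled by observing that after sufficiently many iterations the composition $(\cA S_\cB)^{\star(d+1)}$ regularizes any $\sfL^1(m)$ datum into $X=\sfL^1(m)\cap \sfL^\infty(m)$, where \Cref{Prop:SL_bounded} provides the uniform control $\|S_\cL(t)\|_{X\to X}\lesssim 1$ and $X\hookrightarrow \sfL^1(m)$ continuously.

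The main obstacle is thus the sub-exponential case: the weak Lyapunov condition is genuinely non-coercive because the factor $\wx^{a-1}$ vanishes at infinity, so uniformity in $t$ cannot be obtained from a direct Gronwall argument and instead requires the Duhamel machinery developed for \Cref{Prop:SL_bounded}, coupled with the sub-exponential decay of the iterated kernels $\cA S_\cB$ proved in \Cref{lem:ASB decay}.
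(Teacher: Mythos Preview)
Your polynomial-weight argument contains a genuine error: you invoke the cutoff $\eta_R$ when bounding $\cL^\ast m$, but $\eta_R$ appears only in the splitting $\cL=\cA+\cB$, not in $\cL$ itself. The computation you quote (with the cases $\eta_R\equiv 1$ and $1-\eta_R$) is the estimate on $\cB^\ast m$ from the proof of \Cref{lem:SB Polydecay}, not on $\cL^\ast m$. In fact $\cL^\ast m\leq 0$ is false: from the end of the proof of \Cref{Prop:PolyLyap} one has, near the origin, the positive constant $B(1+\chi)c_{2k,\gamma}$ competing against $-\tfrac{B(1-\chi)}{2}\wangle{v}^{2k}-\tfrac{k\chi(1-\chi)\zeta}{2(1+\chi)}\1_{|x|\ge1}\wx^{k-1}$, and for $(x,v)$ small the constant wins. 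So the direct approach does not yield $\tfrac{d}{dt}\|f\|_{\sfL^1(m)}\le 0$, and the polynomial case needs the same Duhamel treatment as the sub-exponential one.

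For the sub-exponential weight your strategy is in the right spirit but overcomplicated, and the tail argument as written does not close: in $S_\cB\star(\cA S_\cB)^{\star(d+1)}\star\cA S_\cL$ the leftmost factor is $S_\cB$, not $S_\cL$, so invoking \Cref{Prop:SL_bounded} there is not the right move. The paper's proof is much simpler and works uniformly for both weights. It uses only the first Duhamel iterate
\[
S_\cL=S_\cB+S_\cB\star\cA S_\cL,
\]
and the key observation is that mass conservation gives $\|S_\cL(t)\|_{\sfL^1(m)\to\sfL^1}\le 1$; since $\cA$ is compactly supported in $x$, \Cref{lem:A} yields $\|\cA\|_{\sfL^1\to\sfL^1(\omega^\ell)}\lesssim 1$, so $\cA S_\cL:\sfL^1(m)\to\sfL^1(\omega^\ell)$ is bounded uniformly in $t$. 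One then convolves with the integrable decay $\|S_\cB(t)\|_{\sfL^1(\omega^\ell)\to\sfL^1(m)}\lesssim e^{-\lambda_\ell t^a}$ from \Cref{lem:SB decay} (respectively $\lesssim\wangle{t}^{-k(\ell-1)}$ from \Cref{lem:SB Polydecay} in the polynomial case, with $\ell>1+1/k$). No higher iterates, no $\sfL^\infty$ regularisation, and no appeal to \Cref{Prop:SL_bounded} are needed.
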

\begin{proof}[{\bf Proof of \Cref{lem:L1Bound}}]
We split the operator $\cL=\cA + \cB$ and we consider the Duhamel formula
\[
S_\cL=S_\cB + S_\cB\star\cA S_\cL.
\]
Suppose that $m\asymp e^{\nu\wx^a} +e^{b\abs{v}^\gamma}$ is defined as in \Cref{Prop:Lyapunov}. Then, by \Cref{lem:dissipSB}, \Cref{lem:SB decay}, \Cref{lem:A} and the mass conservation property, we have
\begin{align*}
    \norm{S_\cL}_{\sfL^1(m)\to\sfL^1(m)}&\leq \norm{S_\cB}_{\sfL^1(m)\to\sfL^1(m)} + \norm{S_\cB}_{\sfL^1(\omega^\ell)\to\sfL^1(m)}\star\norm{\cA}_{\sfL^1\to\sfL^1(\omega^\ell)} \norm{S_\cL}_{\sfL^1(m)\to\sfL^1}\\
    &\lesssim 1+ \int_0^\infty e^{-\lambda_\ell t^a}dt \lesssim 1.
\end{align*}

Suppose now that $m\asymp \wx^k+\wangle{v}^{2k}$ is defined as in \Cref{Prop:PolyLyap}. From \Cref{lem:SB Polydecay} we have
\begin{align*}
    \norm{S_\cL}_{\sfL^1(m)\to\sfL^1(m)}&\lesssim \norm{S_\cB}_{\sfL^1(m)\to\sfL^1(m)}+  \norm{S_\cL}_{\sfL^1(m^\ell)\to\sfL^1(m)} \star \norm{\cA }_{\sfL^1\to\sfL^1(m^\ell)}\norm{S_\cL}_{\sfL^1(m)\to\sfL^1}\d s\\
    &\lesssim 1+\int_0^\infty \wangle{t}^{-k(\ell-1)}dt \lesssim 1
\end{align*}
by choosing $\ell>1+\frac{1}{k}$.
\end{proof}

\begin{proof}[\bf Proof of Proposition \ref{MomentsProp}]
Consider the semigroup $S(t)$ defined by
\[
(S(t)h)(x,v):=(G^{-1}S_\cL(t)(G\,h))(x,v),
\]
that is the composition of the multiplication by $G$, the semigroup $S_\cL$ and the multiplication by $G^{-1}$. Thanks to Lemma \ref{lem:L1Bound} we have
\begin{align*}
    \norm{S(t)h}_{\sfL^1(m\,G )} =\norm{G^{-1}S_\cL(t)(G\,h)}_{\sfL^1(m\,G )} =\norm{S_\cL(t)(G\,h)}_{\sfL^1(m )} \lesssim\norm{G\,h}_{\sfL^1(m )} =\norm{h}_{\sfL^1(m\,G )},
\end{align*}
namely
\begin{equation*}
    \norm{S(t)}_{\sfL^1(m\,G )\to \sfL^1(m\,G )}\lesssim1.
\end{equation*}
On the other hand, thanks to Lemma \ref{lem:LinftyBound} we have
\begin{align*}
    \norm{S(t)h}_{\sfL^\infty  }&=\norm{G^{-1}S_\cL(t)(G\,h)}_{\sfL^\infty  } =\norm{S_\cL(t)(G\,h)}_{\sfL^\infty(G^{-1})} \leq\norm{G\,h}_{\sfL^\infty(G^{-1})} =\norm{h}_{\sfL^\infty  },
\end{align*}
that is 
\begin{equation*}
    \norm{S(t)}_{\sfL^\infty  \to \sfL^\infty}\leq 1.
\end{equation*}
The Stein-Weiss interpolation theorem implies that
\begin{equation*}
    \norm{S(t)}_{\sfL^2(m\,G )\to \sfL^2(m\,G )}\lesssim1.
\end{equation*}
Now we just have to go back to $S_\cL$ as follows
\begin{align*}
    \norm{S_\cL(t)f}_{\sfL^2(m\,G^{-1} )} &= \norm{G\,S(t)(G^{-1}\,f)}_{\sfL^2(m\,G^{-1} )} =\norm{S(t)(G^{-1}\,f)}_{\sfL^2(m\,G )} \lesssim\norm{G^{-1}\,f}_{\sfL^2(m\,G )} =\norm{f}_{\sfL^2(m\,G^{-1} )}
\end{align*}
and this is the conclusion.

\end{proof}

\begin{proof}[\bf Proof of \Cref{thm:DMS}]

Consider first $m\asymp e^{\nu\wx^a} + e^{b\abs{v}^\gamma}$ as defined in \Cref{Prop:Lyapunov}. Then for any $\varrho>0$ we have
\begin{align*}
    \ddt\sfH[f] &\lesssim - \(\norm{(1-\Pi)f}_{\sfL^2(G^{-1})}^2 + \int_\Rd \abs{\Pi f}^2\wx^{-\ell}G^{-1}\dx\dv\)\\
    &\leq -\iint_{\Rd\times\Rd} \abs{f}^2\, \wx^{-\ell}\,G^{-1}\dx\dv  \\
    &\leq -\iint_{\abs{x}\leq \varrho} \abs{f}^2\, \wx^{-\ell}\,G^{-1}\dx\dv  \\
    &\leq -\wangle{\varrho}^{-\ell}\iint_{\abs{x}\leq \varrho} \abs{f}^2\, \,G^{-1}\dx\dv  \\
    &\leq -\wangle{\varrho}^{-\ell}\iint_{\Rd\times\Rd} \abs{f}^2\, \,G^{-1}\dx\dv  +\wangle{\varrho}^{-\ell}\iint_{\abs{x}\geq\varrho} \abs{f}^2\, \,G^{-1}\dx\dv   \\
    &\lesssim- \wangle{\varrho}^{-\ell}\sfH[f] +\wangle{\varrho}^{-\ell}\max_{\abs{x}\geq \varrho}\{m^{-1}\}\iint_{\abs{x}\geq\varrho} \abs{f}^2\,m\, \,G^{-1}\dx\dv  \\
    &\lesssim- \wangle{\varrho}^{-\ell}\sfH[f] +\wangle{\varrho}^{-\ell}e^{-\nu\wangle{\varrho}^a}\norm{f_0}_{\sfL^2(m\,G^{-1})}^2,
\end{align*}
where in the last step we used \Cref{MomentsProp}. By the Gronwall lemma we deduce
\begin{align*}
    \sfH[f]\leq e^{-\nu\wangle{\varrho}^a}\norm{f_0}_{\sfL^2(m\,G^{-1})}^2 + e^{-\lambda_1 \wangle{\varrho}^{-\ell}t} \sfH[f_0]
\end{align*}
for some constant $\lambda_1>0$. Choosing $\varrho>0$ such that $\lambda\wangle{\varrho}^{-\ell}  t=\nu \wangle{\varrho}^{a}$ we have the decay
\begin{equation*}
    \sfH[f]\lesssim e^{-\lambda t^{\tfrac{a}{a+\ell}}}\norm{f_0}^2_{\sfL^2(m\,G^{-1})}.
\end{equation*}

Consider now $m\asymp \wx^k+\wangle{v}^{2k}$ as defined in \Cref{Prop:PolyLyap} and suppose $f_0\in\sfL^2(m\,G^{-1})$. By  H\"older inequality
\begin{align*}
    \norm{\Pi f}^2_{\sfL^2(G^{-1})} = \int_\Rd \frac{\rho_f^2}{\rho_G}\dx\leq \(\int_\Rd \frac{\rho_f^2}{\rho_G}\wangle{x}^{-\ell} \dx\)^\eta\(\int_\Rd \frac{\rho_f^2}{\rho_G}\wx^k\dx\)^{1-\eta}
\end{align*}
where $\eta= \frac{k}{k+\ell}$. Thanks to \Cref{MomentsProp}, we have 
\begin{align*}
    \int_\Rd \frac{\rho_f^2}{\rho_G}\wx^k\dx&=\int_\Rd\(\int_\Rd f\dv \)^2\frac{\wx^k}{\rho_G}\dx \leq \int_\Rd\(\int_\Rd f^2 G^{-1}\dv \)\(\int_\Rd G\dv  \)\frac{\wx^k}{\rho_G}\dx\\
    &\leq\int_{\Rd\times\Rd}f^2(\wx^k+\wangle{v}^{2k})G^{-1}\dx\dv \\
    &\lesssim\norm{f_0}_{\sfL^2(m\,G^{-1})}^2
\end{align*}
Therefore
\begin{equation}
    \norm{\Pi f}^2_{\sfL^2(G^{-1})} \lesssim  \(\int_\Rd \frac{\rho_f^2}{\rho_G}\wangle{x}^{-\ell} \dx\)^\eta \norm{f_0}_{\sfL^2(m\,G^{-1})}^{2(1-\eta)}
\end{equation}
and we can conclude that
\begin{align*}
    \ddt\sfH[f] &\lesssim - \(\norm{(1-\Pi)f}^2_{\sfL^2(G^{-1})} + \int_\Rd \frac{\rho_f^2}{\rho_G}\wangle{x}^{-\ell} \dx\) \\
    &\lesssim - \(\norm{(1-\Pi)f}^2_{\sfL^2(G^{-1})} + \norm{f_0}_{\sfL^2(m\,G^{-1})}^{-2\frac{1-\eta}{\eta}}\norm{\Pi f}^{2/\eta}_{\sfL^2(G^{-1})}\) \lesssim - \norm{f_0}_{\sfL^2(m\,G^{-1})}^{-2\frac{1-\eta}{\eta}}\sfH[f]^{1/\eta}.
\end{align*}
By applying the Gronwall Lemma we finally have
\begin{equation*}
    \sfH[f]\lesssim \(\sfH[f_0]^{-\frac{1-\eta}{\eta}}+\norm{f_0}_{\sfL^2(m\,G^{-1})}^{-2\frac{1-\eta}{\eta}} \,t \)^{-\frac{\eta}{1-\eta}} \lesssim \frac{1}{\wangle{t}^{\frac{\eta}{1-\eta}}}\norm{f_0}_{\sfL^2(m\,G^{-1})}^{2}=\frac{1}{\wangle{t}^{\frac{k}{\ell}}}\norm{f_0}_{\sfL^2(m\,G^{-1})}^{2} .
\end{equation*}

\end{proof}


\section{Acknowledgments}

The authors thank warmly Jean Dolbeault and Stéphane Mischler for stimulating discussions.

J. Evans is supported by a Royal Society University Research Fellowship R1\_251808 (since October 2025) and before this a Leverhulme early career fellowship ECF-2021-134. For the purpose of open access, the author has applied a Creative Commons Attribution (CC-BY) licence to any Author Accepted Manuscript version arising from this submission.

L. Ziviani has
received funding from the European Union’s Horizon 2020 research and innovation programme
under the Marie Skłodowska-Curie grant agreement No 945332.

\appendix
\section{Reminder on Harris' theorem}\label{App:Harris}

Harris's type theorems are one of the main tool in the Theory of Markov processes and PDEs. These methods provides existence of a stationary measure and convergence to it under a Lyapunov condition and a minorisation condition, that we are going to recall below. 

The first works concerning stability of Markov processes go back to Doeblin \cite{Doeblin1940} and Harris \cite{H56}. The former is presenting what in many articles \cite{CM21, EY23} is now called as the Doeblin theorem, and is showing exponential convergence for Markov processes whose transition probabilities possess a uniform lower bound. The latter gives some sufficient conditions for the existence of a stationary measure which is unique up to a multiplicative constant, but it is not providing any convergence rate. In more recent years, these results have been reworked and extended to then be used to obtain qualitative convergence rates \cite{HM11,DMT95,MT09,MT93,CM21}. See also \cite{H16} for a proof for Doeblin's theorem and Harris' theorem. The development of these tools was encouraged by their applications in the field of PDEs, as they can complement hypocoercivity methods for kinetic equations. Some examples can be found in Hu and Wang \cite{HW19}, Eberle et al. \cite{EGZ16}, Canizo et al. \cite{CCEY20}, Cao \cite{C21} and Laflèche \cite{L20}.

The Harris theorem is working in the space of measures, for this reason we have to look at the semigroup $S_\cL$ defined in the space of probability measures $\mathcal{P}(\R^d\times\R^d)$, in other words for any probability measure $\mu\in \mathcal{P}(\R^d\times\R^d) $ we define $S_\cL(t)\mu$ as the weak solution to the run and tumble equation with initial data $\mu$. 

In this work, we are going to use the version of Harris' Theorem proposed in \cite[Thm 5.6]{CM21}, so we briefly recall the setting. For every probability measure $\mu$ on $\R^d\times\R^d$ we denote by $\mu_+$ and $\mu_-$ the positive and negative part of $\mu$ as defined for the Hahn-Banach decomposition, so that $\mu=\mu_+-\mu_-$ and $|\mu|=\mu_++\mu_-$. The total variation norm of $\mu$ is defined as 
\[
\norm{\mu}_{TV}:=\int_{\R^d\times\R^d}\d\abs{\mu}.
\]
Moreover, for a positive weight function $m\colon\R^d\times\R^d\to [1,\infty)$ we consider the subspace of probability measures $\mathcal{P}_m$ defined by weighted norm
\[
\norm{\mu}_m:=\int_{\R^d\times\R^d}m\d\abs{\mu}
\]
and we also consider the space $\mathcal{N}$ of all zero mean signed measures, that is $\mu\in\mathcal{N}$ if and only if $\int_{\R^d\times\R^d}d\mu=0$.

The fundamental hypotheses are the following.

\begin{hypothesis}[Weak Lyapunov condition]\label{Hyp:Lyapunov}
There exists a continuous function $m\colon \R^d\times \R^d \to [1,+\infty)$ with pre-compact level sets such that 
\begin{equation}
\cL^* m\leq C -\epsilon\phi(m) 
\end{equation}
for some constants $C,\epsilon>0$ and some strictly concave function $\phi\colon\R_+\to\R$ with $\phi(0)=0$ and increasing to infinity.
\end{hypothesis}

\begin{hypothesis}[Minorisation condition]\label{Hyp:minorisation}
We say that the stochastic semigroup $S_\cL$ satisfies the minorisation condition on a set $\mathcal{C}$ if there exists a probability measure $\mu_*$ and a constant $\alpha\in(0,1)$  such that for a certain $T>0$
\begin{equation}\label{con:minorisation}
S_{\cL}(T)\mu \geq \alpha\mu_*\int_\mathcal{C}\mu 
\end{equation}
for all positive measures $\mu$.
\end{hypothesis}

We can now state the subgeometric version of the Harris Theorem, which in particular is taken from \cite[Thm 5.6]{CM21}.

\begin{thm}[Subgeometric Harris' Theorem] \label{thm:harris}
Consider a stochastic semigroup $S_\cL$ with generator $\cL$ which satisfies \Cref{Hyp:Lyapunov} for a continuous function $m\colon \RRd\to[1,\infty)$ and \Cref{Hyp:minorisation} on a set $\mathcal{C}=\{(x,v)\in\R^d\times\R^d\;\colon\; m(x,v)\leq C \}$ for large enough $C$. Then there exists a unique invariant measure $\mu_G\in \mathcal{P}(\R^d\times\R^d)$ such that
\begin{equation}\label{Harris:L1}
    \int_{\R^d\times\R^d}\phi(m)d\mu_G<\infty
\end{equation}
and there exist a decay
rate function $\Theta(t)$ such that
\begin{equation}\label{Harris:rate}
    \norm{S_{\cL}(t)\mu-\mu_G}_{TV}\lesssim \Theta(t)\norm{\mu-\mu_G}_m
\end{equation}
for any probability measure $\mu$. 
\end{thm}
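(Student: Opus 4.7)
My plan is to follow the Meyn--Tweedie / Hairer--Mattingly approach adapted to subgeometric rates as carried out in \cite{CM21} (building on Douc--Fort--Moulines--Soulier and Bansaye--Cloez--Gabriel). First I would reduce to a discrete-time problem by setting $P := S_\cL(T)$ where $T>0$ is the time appearing in \Cref{Hyp:minorisation}. Both hypotheses descend to $P$: integrating $\cL^\ast m \leq C - \epsilon \phi(m)$ along the flow gives a discrete Lyapunov inequality $P^\ast m \leq m - \epsilon' \phi(m) + K \mathbf{1}_\mathcal{C}$ with $\mathcal{C} = \{m \leq R\}$ for $R$ large enough, while \Cref{Hyp:minorisation} gives exactly a Doeblin-type minorisation for $P$ restricted to $\mathcal{C}$.

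Next I would introduce on the zero-mean space $\mathcal{N}$ the equivalent seminorm
\begin{equation*}
    \inorm{\mu}_\beta := \int_\RRd (1 + \beta\, m)\, \mathrm{d}|\mu|,
\end{equation*}
for $\beta>0$ small, and prove a \emph{weak} contraction of the form
\begin{equation*}
    \inorm{P\mu}_\beta \leq \inorm{\mu}_\beta - \epsilon'' \int_\RRd \phi(1+\beta m)\, \mathrm{d}|\mu|, \qquad \mu \in \mathcal{N}.
\end{equation*}
The argument is the standard coupling one: decompose $\mu = \mu_+ - \mu_-$, normalise to probability measures $\pi_\pm$, and use the minorisation on $\mathcal{C}$ to remove a fraction $\alpha (\pi_+ \wedge \pi_-)(\mathcal{C})$ of the total variation at each step, while controlling the mass outside $\mathcal{C}$ via the Lyapunov bound (which transfers to the new weight using concavity of $\phi$ and the subadditivity $\phi(a+b) \leq \phi(a)+\phi(b)$). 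Iterating $P$ then yields that $(P^n \mu)_n$ is Cauchy in $\inorm{\cdot}_\beta$ for any $\mu$ of finite $m$-moment, and its limit defines the invariant measure $\mu_G$; uniqueness is immediate since the difference of two invariant measures lies in $\mathcal{N}$ and is contracted to zero, and integrating the Lyapunov inequality against $\mu_G$ gives the moment bound $\int \phi(m)\, \mathrm{d}\mu_G \leq C/\epsilon$.

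The main obstacle, and the heart of the subgeometric theory, is converting the above weak contraction into a quantitative decay rate $\Theta(t)$. The key observation of Douc--Fort--Moulines--Soulier is that, given the concave dissipation $\phi$, one can bootstrap the weak contraction through a sequence of auxiliary concave functions $\psi_k$ compatible with $\phi$ (roughly, $\psi_0 = \mathrm{id}$ and $\psi_{k+1}(s) \phi(\psi_k^{-1}(s)) \sim s$); this produces a sequence of iterated dissipation bounds whose inverse function determines $\Theta$. Explicitly, setting $H_\phi(s) := \int_1^s \phi(u)^{-1}\, \mathrm{d}u$, the optimal rate takes the form $\Theta(t) \asymp 1/\phi(H_\phi^{-1}(t))$, which in the two cases used in \Cref{thm:harrisapplied} specialises to the stretched exponential $e^{-\lambda t^a}$ when $\phi(m) = m(\ln m)^{-(1-a)/a}$ and to the polynomial $\wangle{t}^{-k}$ when $\phi(m) = m^{1-1/k}$. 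All of this is encapsulated in \cite[Thm 5.6]{CM21} and in the body of the paper we would invoke it as a black box; the hard technical step is precisely the bookkeeping of these auxiliary functions, since unlike the geometric case the per-step contraction factor is not uniform but degenerates with the Lyapunov level, so the rate is recovered only after balancing the minorisation-induced decay on $\mathcal{C}$ against the slow return of trajectories to $\mathcal{C}$ controlled by $\phi$.
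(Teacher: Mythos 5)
The first thing to note is that the paper contains no proof of \Cref{thm:harris} at all: it is imported verbatim from \cite[Thm 5.6]{CM21}, and \Cref{App:Harris} merely recalls the hypotheses and the shape of the rate $\Theta$. Your sketch reproduces the standard machinery behind that reference (time discretisation $P=S_\cL(T)$, a weighted total-variation contraction obtained by combining the coupling/minorisation on the sublevel set with the weak drift, and a Douc--Fort--Moulines--Soulier-type bootstrapping to extract a rate), and since you ultimately defer to \cite{CM21} as a black box, this is compatible with what the paper does. The steps you leave implicit are standard but not free: passing from $\cL^\ast m\le C-\epsilon\,\phi(m)$ to the discrete drift $P^\ast m\le m-\epsilon'\phi(m)+K\1_{\mathcal C}$ needs a comparison/Gronwall argument together with the concavity of $\phi$; the single-step inequality you write for $\inorm{\cdot}_\beta$ on $\mathcal N$ is not literally correct as stated and in the subgeometric setting is replaced by a family of coupled estimates; and the moment bound $\int\phi(m)\,\mathrm{d}\mu_G<\infty$ requires a truncation argument since $\mu_G$ is a priori only a measure.

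The one concrete error, which matters downstream, is your closing rate formula $\Theta(t)\asymp 1/\phi\bigl(H_\phi^{-1}(t)\bigr)$ with $H_\phi(s)=\int_1^s\phi(u)^{-1}\d u$, which you call optimal. That is the rate the drift alone yields when the target is only known to satisfy \eqref{Harris:L1}; it is not the rate stated in \Cref{thm:harris} and used in the paper. For $\phi(m)=m^{1-\kappa}$ it gives $(1+t)^{-(1-\kappa)/\kappa}$, i.e. $\wangle{t}^{-(k-1)}$ in the setting of \Cref{Prop:PolyLyap} where $\kappa=1/k$, whereas the appendix, \Cref{thm:harrisapplied} and \Cref{thm:main}(iii) require $\wangle{t}^{-k}$, which corresponds to $\Theta(t)\asymp 1/H_\phi^{-1}(t)$; in the stretched-exponential case the two formulas agree to leading order (both give $e^{-a^{-a}t^{a}}$ up to polynomial factors), which is why the discrepancy is invisible there. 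The stronger rate is genuinely tied to the fact that \emph{both} $\mu$ and $\mu_G$ are measured in the full $m$-weighted norm $\norm{\mu-\mu_G}_m$ --- finiteness of the $m$-moment of the invariant measure is extra information beyond the drift --- and is recovered by trading the contraction available on the sublevel sets $\{m\le\rho\}$ against the $m$-moment outside and optimising $\rho=\rho(t)$, exactly the optimisation the paper performs by hand for $S_\cB$ in \Cref{lem:SB decay}. As written, your bookkeeping would propagate into a strictly weaker polynomial rate in \Cref{thm:harrisapplied}; either quote the rate of \cite{CM21} correctly or add this interpolation step to your bootstrapping argument.
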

The $\Theta(t)$ function is constructed from the $\phi$ function that appears in the Lyapunov condition, for all the details on its construction we refer to \cite[Sec. 4]{CM21}, where also some examples can be found. In this article we just recall the two rates of the most recurring examples, namely when $\phi(m)=m^{1-\kappa}$ with $0<\kappa<1$ and $\phi(m)=m/(\log m)^\sigma$ with $\sigma>0$.

Suppose first that the Lyapunov condition \Cref{Hyp:Lyapunov} holds with $\phi(m)=m^{1-\kappa}$ for a certain $0<\kappa<1$, then the decay functon $\Theta(t)$ given from the Harris Theorem is 
\[
\Theta(t)=\frac{1}{(1+t)^{1/\kappa}},
\]
that is a polynomial decay. Suppose now that $\phi(m)=m/(\log m)^\sigma$ with $\sigma>0$, then the function $\Theta(t)$ is given by
\[
\Theta(t)=e^{-\lambda t^{\frac{1}{1+\sigma}}}
\]
for an explicitly computable constant $\lambda>0$. It is worth noticing that the stronger the Lyapunov condition, the better the decay rate in \eqref{Harris:rate}. In the two examples shown, the Lyapunov condition is more restrictive with the function $\phi(m)=m/(\log m)^\sigma$ than with $\phi(m)=m^{\kappa-1}$ , in fact the rate of decay is faster. On the other hand, the stronger the Lyapunov function, the closer (in a weighted $\sfL^1$ sense) to the stationary measure the initial state $\mu$ must be taken. Hence, in the contest of convergence of Markov processes, finding better Lyapunov functions is always of great interest.


\section{Asymptotic analysis of integrals of Laplace-type}\label{App:Laplace}

In this appendix we review some useful results about the asymptotic behavior of integrals of Laplace type, namely integrals of the form
\[
I(X)=\int_{a}^{b} t^{\lambda-1} g(t)e^{-Xw(t)}\d t, \qquad X>0
\]
with $\lambda>0$ and $-\infty\leq a<b\leq +\infty$. The main observation, who goes back to Laplace, is that the major contribution to the integral $I(X)$ comes from the neighborhood of the points where $w$ attains its smallest value. By subdividing the interval $[a,b]$ if necessary, one can assume, without loss of generality, that $w$ has only one minimum in $[a,b]$ at $x = a$. Under some smoothness conditions on $w$, Laplace's result is
\[
\int_{a}^{b} g(t)e^{-Xw(t)}\d t \sim g(a)\sqrt{\frac{\pi}{2Xw''(a)}}e^{-w(a)X}\qquad \text{as } X\to\infty,
\]
see for example the monograph \cite{E56} or the books \cite{T14,O74,W01}. The proof is based on the Watson Lemma.
\begin{lem}[Watson \cite{W95}]\label{lem:Watson}
Assume that the function $g\colon [0,\infty)\to\R$ has a finite number of discontinuities, 
\[
g(t)\sim \sum_{n=0}^{\infty}a_nt^n\qquad \text{as } t\to0
\]
for some $a_n\in\R$, and the integral $I(X)$ is convergent for sufficiently large positive $X$. Then
\[
\int_0^\infty t^{\lambda-1}g(t)e^{-Xt}\d t \sim \sum_{n=0}^\infty a_n\frac{\Gamma(n+\lambda)}{X^{n+\lambda}}
\]
as $X\to\infty$.
\end{lem}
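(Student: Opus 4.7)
The plan is to prove Watson's lemma by the standard head-tail splitting: cut the integral at some fixed $\delta > 0$, show that the contribution from $[\delta,\infty)$ is exponentially small in $X$, and on $[0,\delta]$ replace $g(t)$ by the $N$-th partial sum of its asymptotic expansion plus a controlled remainder. The key identity underpinning the conclusion is the elementary Gamma integral
\[
\int_0^\infty t^{n+\lambda-1} e^{-Xt}\, \mathrm{d}t = \frac{\Gamma(n+\lambda)}{X^{n+\lambda}}, \qquad X>0,\; n+\lambda >0,
\]
which already accounts for each term on the right-hand side of the claimed expansion.

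First I would handle the tail. Fix $X_0$ such that $I(X_0)$ converges. For $X \geq X_0 + 1$, write $e^{-Xt} = e^{-(X-X_0)t} e^{-X_0 t}$ and use $e^{-(X-X_0)t} \leq e^{-(X-X_0)\delta}$ for $t \geq \delta$, so that
\[
\left| \int_\delta^\infty t^{\lambda-1} g(t) e^{-Xt}\, \mathrm{d}t \right| \leq e^{-(X-X_0)\delta} \int_\delta^\infty t^{\lambda-1} |g(t)| e^{-X_0 t}\, \mathrm{d}t \leq C e^{-\delta X},
\]
which decays faster than any power $X^{-(N+\lambda)}$ and is therefore absorbable into the error term of the asymptotic expansion.

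Next, for the head $\int_0^\delta$, I would use the hypothesis $g(t) = \sum_{n=0}^{N} a_n t^n + R_N(t)$ with $|R_N(t)| \leq C_N t^{N+1}$ on $[0,\delta]$, for $\delta$ small enough. Splitting the integral,
\[
\int_0^\delta t^{\lambda-1} g(t) e^{-Xt}\, \mathrm{d}t = \sum_{n=0}^{N} a_n \int_0^\delta t^{n+\lambda-1} e^{-Xt}\, \mathrm{d}t + \int_0^\delta t^{\lambda-1} R_N(t) e^{-Xt}\, \mathrm{d}t.
\]
For each term in the sum, extending back to $[0,\infty)$ introduces an exponentially small error (by the same tail estimate as above, with $g$ replaced by a power of $t$), and the full integral evaluates to $a_n \Gamma(n+\lambda)/X^{n+\lambda}$. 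For the remainder,
\[
\left| \int_0^\delta t^{\lambda-1} R_N(t) e^{-Xt}\, \mathrm{d}t \right| \leq C_N \int_0^\infty t^{N+\lambda} e^{-Xt}\, \mathrm{d}t = \frac{C_N \,\Gamma(N+\lambda+1)}{X^{N+\lambda+1}} = O\!\left(X^{-(N+\lambda+1)}\right),
\]
which is exactly one order better than the last kept term, giving the claimed asymptotic relation to order $N$. Since $N$ is arbitrary, the full expansion follows.

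The main (small) obstacle is ensuring that the finitely many discontinuities of $g$ and the extension of partial-sum integrals from $[0,\delta]$ to $[0,\infty)$ do not introduce non-negligible contributions; both are handled by the exponential tail bound for $X$ large, which dominates any polynomial loss. No analyticity of $g$ is needed: only the one-sided asymptotic behavior at $t=0$ and local integrability with respect to $e^{-X_0 t}\mathrm{d}t$ for some $X_0$ are used.
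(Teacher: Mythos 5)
The paper does not actually prove this lemma: it is quoted as a classical result, with the remark ``See \cite{T14,O74} for a recent proof'', so there is no in-paper argument to compare against. Your proposal is the standard head--tail proof found in those references (split at a fixed $\delta$, exponentially small tail, Gamma integral $\int_0^\infty t^{n+\lambda-1}e^{-Xt}\,\mathrm{d}t=\Gamma(n+\lambda)X^{-(n+\lambda)}$ for the head, remainder one power better), and it is essentially correct; it also correctly identifies that only the behaviour at $t=0$ plus integrability against $e^{-X_0t}$ matter, so the finitely many discontinuities are harmless. Two small points to tighten. First, your tail estimate bounds $\int_\delta^\infty t^{\lambda-1}\abs{g(t)}e^{-X_0t}\,\mathrm{d}t$ by a constant, which implicitly upgrades the hypothesis ``$I(X_0)$ converges'' to absolute convergence; if one insists on mere (possibly conditional) convergence, the same exponential bound follows by writing $\int_\delta^\infty t^{\lambda-1}g(t)e^{-Xt}\,\mathrm{d}t=\int_\delta^\infty e^{-(X-X_0)t}\,\mathrm{d}h(t)$ with $h(t)=\int_\delta^t s^{\lambda-1}g(s)e^{-X_0 s}\,\mathrm{d}s$ bounded, and integrating by parts. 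Second, the bound $\abs{R_N(t)}\le C_N t^{N+1}$ is not what the asymptotic relation at order $N$ gives (that is only $o(t^N)$); it is obtained by invoking the expansion to order $N+1$, which is legitimate here since the full series is assumed --- worth saying explicitly, though either bound suffices to close the argument.
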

See \cite{T14,O74} for a recent proof. The Laplace approximation has been generalised to higher dimension, see for example \cite[Chapter 9]{W01}. Concerning our work, the most useful result is the following lemma.
\begin{lem}\label{lem:phi2}
Let $\gamma>0$ and $n\in\NN$. Then there holds
\begin{equation}\label{eq:AppINF}
\int_{0}^\infty u^{n-1}e^{-\frac{u^\gamma}{\gamma}-\frac{\abs{y}}{u}}\d u\sim \sqrt{\frac{2\pi}{1+\gamma}} \, \abs{y}^{\frac{k}{\gamma+1} - \frac{\gamma}{2(1+\gamma)}}e^{-\frac{1+\gamma}{\gamma}\abs{y}^{\frac{\gamma}{1+\gamma}}}\qquad \text{as } \abs{y}\to \infty,
\end{equation}
and
\begin{equation}\label{eq:App0}
\int_{0}^\infty u^{n-1}e^{-\frac{u^\gamma}{\gamma}-\frac{\abs{y}}{u}}\d u\sim
\begin{cases}
    \gamma^{\frac{n}{\gamma}-1}\Gamma\(\frac{n}{\gamma}\) & \text{if } n\geq 1\\
    \abs{\ln\abs{y}} & \text{if } n=0
\end{cases} \qquad \text{as } \abs{y}\to 0.
\end{equation}
\end{lem}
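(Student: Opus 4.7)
The plan is to reduce both asymptotic regimes to classical Laplace-type integrals in one variable, treating the two limits by essentially independent arguments.

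For the large-$|y|$ asymptotic \eqref{eq:AppINF}, I would first rescale to extract the large parameter. Setting $u = |y|^{1/(\gamma+1)} w$ transforms the integral into
\[
\int_0^\infty u^{n-1} e^{-\frac{u^\gamma}{\gamma}-\frac{|y|}{u}} du = |y|^{\frac{n}{\gamma+1}} \int_0^\infty w^{n-1} e^{-X\psi(w)} dw,
\]
where $X := |y|^{\gamma/(\gamma+1)}$ is the large parameter and $\psi(w) := \frac{w^\gamma}{\gamma} + \frac{1}{w}$. An elementary computation shows that $\psi$ has a unique global minimum on $(0,\infty)$ at $w_* = 1$, with $\psi(1) = \frac{1+\gamma}{\gamma}$ and $\psi''(1) = \gamma + 1$. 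Standard Laplace's method (or equivalently a quadratic change of variable around $w_*$ followed by an application of \Cref{lem:Watson}) then gives
\[
\int_0^\infty w^{n-1} e^{-X\psi(w)} dw \sim \sqrt{\frac{2\pi}{(\gamma+1)X}}\, e^{-\frac{1+\gamma}{\gamma} X}, \qquad X\to\infty,
\]
and multiplying by the prefactor $|y|^{n/(\gamma+1)}$ produces the stated asymptotic after collecting the powers of $|y|$.

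For the small-$|y|$ regime \eqref{eq:App0}, the two subcases require different treatments. When $n\geq 1$, the integrand is dominated uniformly in $|y|$ by the integrable function $u^{n-1} e^{-u^\gamma/\gamma}$ and converges pointwise on $(0,\infty)$ to this function as $|y|\to 0$, so the dominated convergence theorem yields
\[
I(y) \longrightarrow \int_0^\infty u^{n-1} e^{-\frac{u^\gamma}{\gamma}} du.
\]
The substitution $t = u^\gamma/\gamma$ evaluates this limit to $\gamma^{n/\gamma - 1} \Gamma(n/\gamma)$. The borderline case $n=0$ is the only genuine obstacle: the candidate limit $u^{-1}e^{-u^\gamma/\gamma}$ is not integrable near zero, and this is precisely what produces the logarithmic divergence. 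I would split $I(y) = \int_0^{|y|} + \int_{|y|}^\infty$; in the first piece the substitution $v = |y|/u$ gives $\int_1^\infty v^{-1} e^{-v - (|y|/v)^\gamma/\gamma}\, dv$, which stays bounded as $|y|\to 0$ by dominated convergence, while in the second piece we can bound $e^{-|y|/u} = 1 + O(|y|/u)$ uniformly for $u\geq |y|$ and get
\[
\int_{|y|}^\infty u^{-1} e^{-\frac{u^\gamma}{\gamma}} du = -\log|y| + O(1) \sim |\log|y||,
\]
which dominates as $|y|\to 0$.

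The only technical point requiring mild care is a book-keeping one in the Laplace step: one must verify that $\psi(w) - \psi(1)$ admits a quadratic lower bound near $w_*=1$ and grows without bound at both $w\to 0^+$ and $w\to\infty$, so that the contribution away from the critical point is exponentially negligible relative to the Gaussian window of width $X^{-1/2}$ around $w_*$. Both properties are immediate from the explicit form of $\psi$, so the error control is routine and the asymptotics follow from standard references on the Laplace method.
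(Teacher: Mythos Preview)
Your proof is correct. For the large-$|y|$ asymptotic and for the $n\geq 1$ case as $|y|\to 0$, your argument coincides with the paper's: the same rescaling $u=|y|^{1/(1+\gamma)}w$, the same identification of the minimum of $\psi$ at $w_*=1$ with $\psi''(1)=\gamma+1$, and the same appeal to Laplace's method / Watson's lemma; likewise dominated convergence for $n\geq 1$.

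Where you differ is the $n=0$ small-$|y|$ case. The paper re-uses the rescaling $u=|y|^{1/(1+\gamma)}z$, splits at $z=1$, and then inverts $z\mapsto z^\gamma/\gamma+1/z$ on each monotone branch, working out the asymptotics of the two inverse functions and reducing each piece to an exponential-integral expression whose leading behaviour is $|\ln(|y|^{\gamma/(1+\gamma)})|$. Your approach---splitting the original integral at $u=|y|$, mapping the inner piece to a bounded integral via $v=|y|/u$, and extracting $-\ln|y|$ directly from $\int_{|y|}^\infty u^{-1}e^{-u^\gamma/\gamma}\,du$---is considerably more elementary and avoids the branch-inversion machinery entirely. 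One small point to make explicit: the $O(|y|/u)$ correction from $e^{-|y|/u}$, when integrated against $u^{-1}e^{-u^\gamma/\gamma}$ over $[|y|,\infty)$, contributes $O(1)$ rather than $o(1)$ (since $\int_{|y|}^1 u^{-2}\,du\sim |y|^{-1}$), but this is still lower order than $|\ln|y||$, so the conclusion stands.
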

\begin{proof}[{\bf Proof of \Cref{lem:phi2}}]
Concerning \eqref{eq:AppINF}, we first have to write the integral under Laplace form, so we have to change variable $u=\abs{y}^{\frac{1}{1+\gamma}}z$
\begin{align*}
\int_0^\infty u^{n-1}e^{-\frac{u^\gamma}{\gamma}-\frac{\abs{y}}{u}}\d u= \abs{y}^{\frac{n}{\gamma+1}}\int_0^\infty z^{n-1}e^{-\abs{y}^{\frac{\gamma}{1+\gamma}}\(\frac{z^\gamma}{\gamma}+\frac{1}{z}\)}\d z=\abs{y}^{\frac{n}{\gamma+1}} I(\abs{y}^{\frac{\gamma}{1+\gamma}})
\end{align*}
where
\[
I(Y):=\int_0^\infty z^{n-1}e^{-Y\(\frac{z^\gamma}{\gamma}+\frac{1}{z}\)}\d z.
\]
Now the function $Y\mapsto I(Y)$ can be studied through the Watson \Cref{lem:Watson}, thus we need to study the minima of the function $z\mapsto\frac{1}{z}+\frac{z^\gamma}{\gamma}$. We immediately notice that the point $z_0=1$ is a critical point and we have the Taylor expansion
\begin{align*}
    \frac{z^\gamma}{\gamma}+\frac{1}{z}= \frac{\gamma+1}{\gamma} +\frac{\gamma+1}{2} z^2 +o(z^2)
\end{align*}
as $z\to1$. As a consequence the Watson Lemma gives
\[
I(Y)\sim \sqrt{\frac{2\pi}{\gamma+1}}\, Y^{-\frac{1}{2}}e^{-\frac{\gamma+1}{\gamma} Y}
\]
and substituting $Y=\abs{y}^{\a}$ we obtain \eqref{eq:AppINF}.

For the estimate \eqref{eq:App0} we have to split the cases $n\geq 1$ and $n=0$. If $n\geq 1$, we have that
\[
\int_0^\infty u^{n-1}e^{-\frac{u^\gamma}{\gamma}-\frac{\abs{y}}{u}}\d u\to \int_0^\infty u^{n-1}e^{-\frac{u^\gamma}{\gamma}}\d u= \gamma^{\frac{n}{\gamma}-1}\Gamma\(\frac{n}{\gamma}\)
\]
for $\abs{y}\to 0$, as wanted. In the case $n=0$ we have that the integral $\int_0^\infty \frac{1}{u} e^{ -\frac{\abs{y}}{u} -\frac{u^\gamma}{\gamma}} \d u$ is singular for $y\to 0$. We change variable $u=\abs{y}^{\frac{1}{1+\gamma}}z$ and we write
\begin{align*}
    \int_0^\infty \frac{1}{u} e^{ -\frac{\abs{y}}{u} -\frac{u^\gamma}{\gamma}} \d u &= \int_0^\infty \frac{1}{z}e^{-\abs{y}^{\frac{\gamma}{1+\gamma}} \(\frac{z^\gamma}{\gamma}+\frac{1}{z}\)}\d z\\
    &=\int_0^1 \frac{1}{z}e^{-\abs{y}^{\frac{\gamma}{1+\gamma}} \(\frac{z^\gamma}{\gamma}+\frac{1}{z}\)}\d z+\int_1^\infty \frac{1}{z}e^{-\abs{y}^{\frac{\gamma}{1+\gamma}} \(\frac{z^\gamma}{\gamma}+\frac{1}{z}\)}\d z.
\end{align*}
The function $z\mapsto \frac{z^\gamma}{\gamma}+\frac{1}{z}$ is invertible in $(0,1]$ and in $[1,\infty)$, so we can change variable using these two inverse \begin{align}\label{eq:invert}
    \frac{z_1(w_1)^\gamma}{\gamma}+ \frac{1}{z_1(w_1)}&=w_{1} & \frac{z_2(w_2)^\gamma}{\gamma}+ \frac{1}{z_2(w_2)}&=w_{2}
\end{align}
where $z_1(w_1)\in(0,1]$ and $z_2(w_2)\in[1,\infty)$, with $w_1,w_2\in[\frac{\gamma+1}{\gamma},\infty)$. Thus
\begin{align}\label{eq:0212}
    \int_0^\infty \frac{1}{u} e^{ -\frac{\abs{y}}{u} -\frac{u^\gamma}{\gamma}} \d u &= \int_\infty^\frac{\gamma+1}{\gamma}e^{-\abs{y}^{\frac{\gamma}{1+\gamma}}w_1}\frac{1}{z_1(w_1)} z'_1(w_1)dw_1 + \int_\frac{\gamma+1}{\gamma}^\infty e^{-\abs{y}^{\frac{\gamma}{1+\gamma}}w_2}\frac{1}{z_2(w_2)} z'_2(w_2)dw_2
\end{align}
Notice that if $w_1\to \infty$, then $z_1(w_1)\to 0$, and if $w_2\to \infty$ then $z_2(w_2)\to \infty$. Moreover thanks to \eqref{eq:invert} we also see that
\begin{align*}
    z_1(w_1)\sim \frac{1}{w_1}&\text{ and } z'_1(w_1)\sim -\frac{1}{w_1^2}&&\text{ as } w_1\to \infty\\
    z_2(w_2)\sim (\gamma w_2)^{1/\gamma}&\text{ and } z'_2(w_2)\sim  (\gamma w_2)^{1/\gamma-1}&&\text{ as } w_2\to \infty.
\end{align*}
Therefore, changing variables $w=-\abs{y}^{\frac{\gamma}{1+\gamma}}w_1$ in the first integral in \eqref{eq:0212}, we have
\begin{align*}
    \int_\infty^\frac{\gamma+1}{\gamma}e^{-\abs{y}^{\frac{\gamma}{1+\gamma}}w_1}\frac{1}{z_1(w_1)} z'_1(w_1)dw_1&=\int_\infty^{\frac{\gamma+1}{\gamma}\abs{y}^{\frac{\gamma}{1+\gamma}}}e^{-w}\frac{1}{z_1(\abs{y}^{-\frac{\gamma}{1+\gamma}}w)} z'_1(\abs{y}^{-\frac{\gamma}{1+\gamma}}w)\abs{y}^{-\frac{\gamma}{1+\gamma}}dw\\
    &\sim\int_{\frac{\gamma+1}{\gamma}\abs{y}^{\frac{\gamma}{1+\gamma}}}^\infty e^{-w} \abs{y}^{-\frac{\gamma}{1+\gamma}}w \frac{1}{\(\abs{y}^{-\frac{\gamma}{1+\gamma}}w\)^2}\abs{y}^{-\frac{\gamma}{1+\gamma}}dw\\
    &\sim\int_{\frac{\gamma+1}{\gamma}\abs{y}^{\frac{\gamma}{1+\gamma}}}^\infty \frac{e^{-w}}{w} dw\\
    &\sim \abs*{\ln\(\frac{\gamma+1}{\gamma}\abs{y}^{\frac{\gamma}{1+\gamma}}\)}
\end{align*}
as $y\to 0$. By similar computations for the second integral in \eqref{eq:0212}, we find
\[
\int_\frac{\gamma+1}{\gamma}^\infty e^{-\abs{y}^{\frac{\gamma}{1+\gamma}}w_2}\frac{1}{z_2(w_2)} z'_2(w_2)dw_2\sim \frac{1}{\gamma}\abs*{\ln\(\frac{\gamma+1}{\gamma}\abs{y}^{\frac{\gamma}{1+\gamma}}\)}.
\]
We finally have the asymptotic behavior
\[
\int_0^\infty \frac{1}{u} e^{ -\frac{u^\gamma}{\gamma}-\frac{\abs{y}}{u} } \d u \sim \frac{\gamma+1}{\gamma}\abs*{\ln\(\frac{\gamma+1}{\gamma}\abs{y}^{\frac{\gamma}{1+\gamma}}\)}\sim \abs{\ln(\abs{y})}
\]
as $y\to 0$.
\end{proof}

\begin{lem}[Sub-exponential convolution]\label{lem:subexpconv}
The convolution of two exponential decays is exponentially decaying; whereas the convolution of an exponential decay with a sub-exponential decay is sub-exponentially decaying. More precisely we have
\begin{align*}
\int_0^t e^{-b_1s}e^{-b_2(t-s)}\d s&\lesssim e^{b_3 t}, & \int_0^t e^{-b_1s^a}e^{-b_2(t-s)}\d s&\lesssim e^{b_1 t^a}
\end{align*}
with $a\in(0,1)$ and $b_i>0$
\end{lem}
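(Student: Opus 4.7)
The plan is to establish each inequality by a direct computation, with the obvious correction that the right-hand sides should read $e^{-b_3 t}$ (for some $b_3>0$) and $e^{-b_1' t^a}$ (for some $b_1'>0$), since sub-exponential/exponential convolutions decay rather than grow.

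For the first inequality, I would simply integrate explicitly. If $b_1 \neq b_2$, then
\[
\int_0^t e^{-b_1 s}e^{-b_2(t-s)}\,\d s = e^{-b_2 t}\int_0^t e^{(b_2-b_1)s}\,\d s = \frac{e^{-b_1 t} - e^{-b_2 t}}{b_2 - b_1},
\]
which is bounded by $|b_2-b_1|^{-1}\,e^{-\min(b_1,b_2)\,t}$. In the resonant case $b_1 = b_2$, the integral equals $t\,e^{-b_1 t}$, and for any $0<b_3<b_1$ one has $t\,e^{-b_1 t} \lesssim e^{-b_3 t}$. In both situations, one obtains the desired exponential decay.

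For the second inequality, the key idea is the standard splitting trick: cut the integral at $t/2$. On $[0,t/2]$, one bounds the transport factor uniformly by $e^{-b_2(t-s)} \leq e^{-b_2 t/2}$ and uses the fact that $s\mapsto e^{-b_1 s^a}$ is integrable on $\mathbb R_+$, yielding
\[
\int_0^{t/2} e^{-b_1 s^a}e^{-b_2(t-s)}\,\d s \leq e^{-b_2 t/2}\int_0^{\infty}e^{-b_1 s^a}\,\d s \lesssim e^{-b_2 t/2}.
\]
On $[t/2, t]$, one instead freezes $e^{-b_1 s^a} \leq e^{-b_1 (t/2)^a}$ and uses integrability of $e^{-b_2(t-s)}$, giving
\[
\int_{t/2}^{t} e^{-b_1 s^a}e^{-b_2(t-s)}\,\d s \leq e^{-b_1 (t/2)^a}\int_0^{\infty}e^{-b_2 u}\,\d u \lesssim e^{-b_1 (t/2)^a}.
\]
Summing and noting that for $a \in (0,1)$ one has $e^{-b_2 t/2} \lesssim e^{-b_1 (t/2)^a}$ for $t$ large, while both terms are bounded for $t$ small, one concludes $\int_0^t e^{-b_1 s^a}e^{-b_2(t-s)}\,\d s \lesssim e^{-b_1' t^a}$ with $b_1' = b_1/2^a$.

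There is essentially no conceptual obstacle here; the only mildly delicate point is making sure one ends up with a sub-exponential decay rate of the form $e^{-b_1' t^a}$ with an honest positive constant $b_1'$, which is why the cut at $t/2$ (rather than, say, at $t-1$) is convenient: it produces the clean rate $b_1 (t/2)^a = (b_1/2^a) t^a$ on the second piece and absorbs the exponential decay on the first piece without losing the $t^a$ exponent.
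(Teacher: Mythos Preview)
Your proof is correct. For the first inequality you do essentially what the paper does (explicit integration), and you are in fact more careful than the paper in treating the resonant case $b_1=b_2$.

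For the second inequality your route genuinely differs. The paper factors out $e^{-b_2 t}$ and studies $F(t)=\int_0^t e^{-b_1 s^a + b_2 s}\,\d s$ via L'H\^opital, showing $F(t)/e^{-b_1 t^a + b_2 t}\to 1/b_2$, from which it extracts
\[
\int_0^t e^{-b_1 s^a}e^{-b_2(t-s)}\,\d s \lesssim e^{-b_1 t^a}
\]
with the \emph{same} constant $b_1$ in the exponent (this is exactly what the stated inequality claims, once the obvious sign typo is fixed). Your $t/2$--splitting is more elementary and entirely robust, but it only yields the rate $e^{-(b_1/2^a)t^a}$. For the qualitative purposes of the paper this loss is harmless, since the sub-exponential rate $\lambda_\ell$ that gets convolved with exponentials is just \emph{some} positive constant; however, if one reads the lemma literally as preserving the exact constant $b_1$, your argument falls slightly short and the paper's asymptotic comparison is what is needed.
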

\begin{proof}[{\bf Proof of \Cref{lem:subexpconv}}]
We just compute
\begin{align*}
\int_0^t e^{-b_1s}e^{-b_2(t-s)}\d s{b_2-b_1}=\frac{1}{b_2-b_1}(e^{-b_1t}-e^{-b_2t})\lesssim e^{-b_3 t}
\end{align*}
with $b_3=\max\{b_1,b_2\}$. 

For the second case we notice that 
\begin{align*}
\lim_{t\to \infty}\frac{\int_0^t e^{-b_1s^a+b_2s}\d s}{e^{-b_1t^a+b_2t}}=\lim_{t\to\infty} \frac{e^{-b_1t^a+b_2t}}{(-ab_1t^{a-1}+b_2)e^{-b_1t^a+b_2t}}=\frac{1}{b_2}
\end{align*}
Then by definition of limit there exists $R>0$ large enough such that 
\[
\int_0^t e^{-b_1s^a+b_2s}\d s\leq\(\frac{1}{b_2}+1\)e^{-b_1t^a+b_2t}
\]
for $t\geq R$. Moreover, since both functions are continuous, we can extend this estimate up to a constant to the remaining positive $t$, i.e. for any $t\geq 0$ we have
\[
\int_0^t e^{-b_1s^a+b_2s}\d s\lesssim e^{-b_1t^a+b_2t}.
\]
This is nothing but the second inequality in the statement.
\end{proof}

\bibliographystyle{plain} 
\bibliography{Run-and-Tumble} 



\end{document}